 
\documentclass[12pt,reqno]{amsart}
\usepackage{amssymb,amsmath,amsthm,hyperref}
\oddsidemargin = 0.0cm \evensidemargin = 0.0cm \textwidth =6.3in
\textheight =8.1in

\newcommand{\sg}{\textnormal{sg}}

\newtheorem{theorem}{Theorem}
\newtheorem{lemma}[theorem]{Lemma}
\newtheorem{corollary}[theorem]{Corollary}
\newtheorem{proposition}[theorem]{Proposition}

\theoremstyle{remark}
\newtheorem*{remark}{Remark}
\theoremstyle{definition}

\newtheorem{definition}[theorem]{Definition}
\newtheorem{example}{Example}

\numberwithin{theorem}{section} \numberwithin{equation}{section}
\numberwithin{example}{section}

\renewcommand{\Re}{\text {\rm Re}}

\title{Hecke-type double sums, Appell-Lerch sums, and~mock~theta~functions (I)}

\author{Dean Hickerson and Eric Mortenson}

\dedicatory{For George Andrews in honor of his 70th birthday}

\begin{document}

\date{7 August  2012}

\subjclass[2000]{11B65, 11F11, 11F27}

\keywords{Hecke-type double sums, Appell-Lerch sums, mock theta functions, indefinite theta series}

\begin{abstract}
By developing a connection between partial theta functions and Appell-Lerch sums, we find and prove a formula which expresses Hecke-type double sums in terms of Appell-Lerch sums and theta functions.  Not only does our formula prove classical Hecke-type double sum identities such as those found in work Kac and Peterson on affine Lie Algebras and Hecke modular forms, but once we have the Hecke-type forms for Ramanujan's mock theta functions our formula gives straightforward proofs of many of the classical mock theta function identities.  In particular, we obtain a new proof of the mock theta conjectures.  Our formula also applies to positive-level string functions associated with admissable representations of the affine Lie Algebra $A_1^{(1)}$ as introduced by Kac and Wakimoto.
\end{abstract}
\address{Eureka, California, 95501}
\address{School of Mathematics and Physics, The University of Queensland,
Brisbane, Australia 4072}
\email{dean.hickerson@yahoo.com, etmortenson@gmail.com}
\maketitle
\setcounter{section}{-1}

\section{introduction}\label{section:introduction}

 Let $q$ be a complex number with $0<|q|<1$ and define $\mathbb{C}^*:=\mathbb{C}-\{0\}$.  We recall some basics:
\begin{gather*}
(x)_n=(x;q)_n:=\prod_{i=0}^{n-1}(1-q^ix), \ \ (x)_{\infty}=(x;q)_{\infty}:=\prod_{i\ge 0}(1-q^ix),\\
 j(x;q):=(x)_{\infty}(q/x)_{\infty}(q)_{\infty}=\sum_{n}(-1)^nq^{\binom{n}{2}}x^n,\\
 {\text{and }}\ \ j(x_1,x_2,\dots,x_n;q):=j(x_1;q)j(x_2;q)\cdots j(x_n;q).
\end{gather*}
where in the last line the equivalence of product and sum follows from Jacobi's triple product identity.  We also keep in mind the easily deduced fact that $j(q^n,q)=0$ for $n\in \mathbb{Z}.$  The following are special cases of the above definition.  Let $a$ and $m$ be integers with $m$ positive.  Define
\begin{gather*}
J_{a,m}:=j(q^a;q^m), \ \ \overline{J}_{a,m}:=j(-q^a;q^m), \ {\text{and }}J_m:=J_{m,3m}=\prod_{i\ge 1}(1-q^{mi}).
\end{gather*}

In his last letter to Hardy, Ramanujan gave a list of seventeen functions which he called ``mock theta functions.''   Each mock theta function was defined by Ramanujan as a $q$-series convergent for $|q|<1$.  He stated that they have certain asymptotic properties as $q$ approaches a root of unity, similar to the properties of ordinary theta functions, but that they are not theta functions.  He also stated several identities relating some of the mock theta functions to each other.  Later, many more mock theta function identities were found in the lost notebook \cite{RLN}.

Mock theta functions have many representations---Eulerian forms, Appell-Lerch sums, Hecke-type double sums, and Fourier coefficients of meromorphic Jacobi forms.  In terms of Eulerian forms, the mock theta conjecture for the fifth order mock theta function $f_0(q)$ is stated
\begin{equation}
f_0(q):=\sum_{n\ge 0}\frac{q^{n^2}}{(-q)_n}=2-2\sum_{n\ge 0}\frac{q^{10n^2}}{(q^2;q^{10})_{n+1}(q^8;q^{10})_{n}}+\frac{(q^5;q^{5})_{\infty}(q^5;q^{10})_{\infty}}{(q;q^{5})_{\infty}(q^4;q^{5})_{\infty}}.\label{equation:mtcf0}
\end{equation}
To facilitate studying mock theta functions, it is useful to translate the Eulerian form into other representations.  Translating from one form to another has been an historically difficult problem; moreover, unifying any two of the various guises with a single equation has been an unsolved problem. 

Early attempts to unify Eulerian forms and Appell-Lerch sum forms used the theory of basic hypergeometric series.  In some cases, this led to identities between the mock theta functions and helped determine modular properties \cite{W3}.  We will use the following definition of an Appell-Lerch sum.  
\begin{definition}  \label{definition:mdef} Let $x,z\in\mathbb{C}^*$ with neither $z$ nor $xz$ an integral power of $q$. Then
\begin{equation}
m(x,q,z):=\frac{1}{j(z;q)}\sum_r\frac{(-1)^rq^{\binom{r}{2}}z^r}{1-q^{r-1}xz}.\label{equation:mxqz-def}
\end{equation}
\end{definition}
\noindent These sums were first studied by Appell \cite{Ap} and then by Lerch \cite{L1}.  Appell-Lerch sums appear in the literature under various names such as Lerch sums, Appell functions, Appell theta functions, and mock Jacobi forms.  In Section \ref{section:known}, we will see that (\ref{equation:mtcf0}) is equivalent to
\begin{equation*}
f_0(q)=2m(q^{14},q^{30},q^4)+2q^{-2}m(q^{4},q^{30},q^4)+\frac{(q^5;q^{5})_{\infty}(q^5;q^{10})_{\infty}}{(q;q^{5})_{\infty}(q^4;q^{5})_{\infty}}.
\end{equation*}
In attempts to prove the mock theta conjectures, Andrews introduced techniques to translate mock theta functions into two new representations.  He used Bailey's Lemma to go from Eulerian forms to Hecke-type sums \cite{A} and then used the constant term method to go from Hecke-type sums to Fourier coefficients of meromorphic Jacobi forms \cite{A4}.  In \cite{A}, he showed
\begin{equation}
f_0(q)\cdot (q)_{\infty}=\sum_{\substack{n\ge0\\ |j|\le n}}(-1)^jq^{\tfrac{5}{2}n^2+\tfrac12 n-j^2}(1-q^{4n+2}).\label{equation:heckef0}
\end{equation}
For this paper, we will use the following definition of the building block of Hecke-type double sums.  
\begin{definition} \label{definition:fabc-def}  Let $x,y\in\mathbb{C}^*$ and define $\sg (r):=1$ for $r\ge 0$ and $\sg(r):=-1$ for $r<0$. Then
\begin{equation}
f_{a,b,c}(x,y,q):=\sum_{\substack{\sg (r)=\sg(s)}} \sg(r)(-1)^{r+s}x^ry^sq^{a\binom{r}{2}+brs+c\binom{s}{2}}.\label{equation:fabc-def}\\
\end{equation}
\end{definition}
\noindent In terms of our building block, (\ref{equation:heckef0}) becomes
\begin{equation*}
 f_0(q)\cdot (q)_{\infty}=f_{3,7,3}(q^2,q^2,q)+q^3f_{3,7,3}(q^7,q^7,q).
\end{equation*}
\noindent Hecke \cite{He} conducted the first systematic study of such double sums, special cases of which appeared earlier in work of Rogers \cite{R}. 

Although using the methods of basic hypergeometric series, constant term, and Bailey pairs have provided a great deal of insight, researchers have been unable to unify all of the mock theta functions under a single formula.  Existing results are limited to functions within the same order, and no available method works for all orders.  Also, the known methods of basic hypergeometric series, constant term, and Bailey pairs are not robust---slight changes in the Eulerian form result in dramatic changes in the technique which needs to be applied.  Polishchuk \cite{P1} used homological mirror symmetry to show that one can always expand an indefinite theta series of signature (1,1) in terms of Appell-Lerch sums; however, examples could only be produced on a case-by-case basis.  Zwegers \cite{Zw} showed that indefinite theta series, Appell-Lerch sums, and Fourier coefficients of meromorphic Jacobi forms exhibit the same near-modular behavior.  Zwegers thus established the modularity theory for mock theta functions, and his work allows mock theta functions to be cast as the holomorphic parts of weak Maass forms \cite{BruF, Za, BrO1, BrO2}).  

Another important problem is understanding the various types of Eulerian forms and how they relate to each other.  For example, in \cite{RLN} one finds scores of Eulerian forms in mock theta function identities, Rogers-Ramanujan type identities, and partial theta function identities.  In this direction, Andrews \cite{A5} has recently produced $q$-hypergeometric formulas which simultaneously prove mock theta function identities and Rogers-Ramanujan type identities.

By developing a connection between partial theta functions and Appell-Lerch sums---the building block of mock theta functions---we obtain a master formula that expands a certain family of Hecke-type double sums in terms of Appell-Lerch sums and theta functions.  In Section \ref{section:known}, we use results from the literature to write the classical mock theta functions in terms of Appell-Lerch sums.  Given the Hecke-type sum form of a mock theta function,  our formula produces the same Appell-Lerch sum forms found in Section \ref{section:known}.    In particular, for the fifth order  $f_0(q)$ we will show
\begin{align*}
f_{3,7,3}(q^2,q^2,q)+q^3f_{3,7,3}(q^7,q^7,q)= (q)_{\infty}\cdot \Big [&2m(q^{14},q^{30},q^4)+2q^{-2}m(q^{4},q^{30},q^4)\\
&\ \ \ \ \ +\frac{(q^5;q^{5})_{\infty}(q^5;q^{10})_{\infty}}{(q;q^{5})_{\infty}(q^4;q^{5})_{\infty}}\Big ].
\end{align*}
As a consequence, our formula combined with Appell-Lerch sum properties, such as Theorems \ref{theorem:changing-z-theorem} and \ref{theorem:msplit-general-n}, reduces many of the mock theta function identities to straightforward exercises.  In particular, our formula gives a new proof of the mock theta conjectures.  Indeed, given the Hecke-type sum form, it gives us the mock theta conjecture.  Our formula also proves the Hecke-type double sum identities found in Polishchuk \cite{P1} and Kac and Peterson \cite{KP}.  One compares this with the approach using Zwegers' near-modularity result and the theory of weak Maass forms. Here one computes the correction term (i.e. a period integral of some weight $3/2$ unary theta series) for each mock theta function to make it transform like a modular form, one verifies that the modular transformation properties match those of the theta function provided by Ramanujan, one verifies that the correction terms cancel, and finally one computes the first few coefficients and compares.

In future work, we will use the techniques of this paper to develop a duality between identities involving mock theta functions and identities involving partial theta functions.  We will see that not only do the dual partial theta function expansions appear to have a structure similar to the Appell-Lerch sum expansions in this paper, but also that the dual identities in terms of partial theta functions are much easier to prove.

For the statements of our theorems, $x$ and $y$ are generic, i.e. $x,y,q$ do not cause poles in the Appell-Lerch sums or in the quotients of theta functions.   To state our results, we define the following expression:
\begin{align}
g_{a,b,c}&(x,y,q,z_1,z_0)\notag\\
:=&\sum_{t=0}^{a-1}(-y)^tq^{c\binom{t}{2}}j(q^{bt}x;q^a)m\Big (-q^{a\binom{b+1}{2}-c\binom{a+1}{2}-t(b^2-ac)}\frac{(-y)^a}{(-x)^b},q^{a(b^2-ac)},z_0\Big )\label{equation:mdef-2}\\
&+\sum_{t=0}^{c-1}(-x)^tq^{a\binom{t}{2}}j(q^{bt}y;q^c)m\Big (-q^{c\binom{b+1}{2}-a\binom{c+1}{2}-t(b^2-ac)}\frac{(-x)^c}{(-y)^b},q^{c(b^2-ac)},z_1\Big ).\notag
\end{align}

\begin{theorem}   \label{theorem:masterFnp} Let $n$ and $p$ be positive integers with $(n,p)=1$.  For generic $x,y\in \mathbb{C}^*$
\begin{align*}
f_{n,n+p,n}(x,y,q)=g_{n,n+p,n}(x,y,q,-1,-1)+\frac{1}{\overline{J}_{0,np(2n+p)}}\cdot \theta_{n,p}(x,y,q),
\end{align*}
where
\begin{align*}
&\theta_{n,p}(x,y,q):=\sum_{r^*=0}^{p-1}\sum_{s^*=0}^{p-1}q^{n\binom{r-(n-1)/2}{2}+(n+p)\big (r-(n-1)/2\big )\big (s+(n+1)/2\big )+n\binom{s+(n+1)/2}{2}}  (-x)^{r-(n-1)/2}\notag\\
 & \cdot   \frac{(-y)^{s+(n+1)/2}J_{p^2(2n+p)}^3j(-q^{np(s-r)}x^n/y^n;q^{np^2})j(q^{p(2n+p)(r+s)+p(n+p)}x^py^p;q^{p^2(2n+p)})}{j(q^{p(2n+p)r+p(n+p)/2}(-y)^{n+p}/(-x)^n,q^{p(2n+p)s+p(n+p)/2}(-x)^{n+p}/(-y)^n;q^{p^2(2n+p)})}.\notag
\end{align*}
Here $r:=r^*+\{(n-1)/2\}$ and $s:=s^*+\{ (n-1)/2\}$, with $0\le \{ \alpha\}<1$ denoting the fractional part of $\alpha$. 
\end{theorem}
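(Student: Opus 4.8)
The mechanism is a connection between \emph{partial theta functions} and \emph{Appell-Lerch sums}. Splitting the summation variables of $f_{n,n+p,n}$ in Definition~\ref{definition:fabc-def} by sign, one has for the $\sg(r)=\sg(s)=1$ block
\[
\sum_{r,s\ge 0}(-1)^{r+s}x^ry^sq^{n\binom r2+(n+p)rs+n\binom s2}=\sum_{s\ge 0}(-1)^sy^sq^{n\binom s2}\sum_{r\ge 0}(-1)^r(xq^{(n+p)s})^rq^{n\binom r2},
\]
whose inner sum $\sum_{r\ge 0}(-1)^rw^rq^{n\binom r2}$ is a partial theta function, and similarly for the $r,s<0$ block. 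The plan: (i) rewrite each one-sided inner sum as an Appell-Lerch sum plus a theta correction, via the partial-theta/Appell-Lerch correspondence (which carries a free scaling parameter $z$); (ii) resum in the outer variable; (iii) gather the Appell-Lerch pieces into $g_{n,n+p,n}(x,y,q,z_1,z_0)$, and the theta corrections together with the discrepancy between the emergent region of summation and the double cone $\{\sg(r)=\sg(s)\}$ into one theta quotient; (iv) normalise at $z_0=z_1=-1$.

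For (i)--(ii): after completing the square in the subordinate variable, the one-sided sum in $r$ is governed by the reduced form of modulus $q^{n(b^2-n^2)}=q^{np(2n+p)}$ with $b=n+p$; its residue $t$ modulo $n$ ($0\le t\le n-1$) is the index $t$ of $g_{n,n+p,n}$; for fixed $t$ the subordinate sum telescopes to the theta factor $j(q^{(n+p)t}x;q^n)$; and the remaining one-sided sum over the dominant variable becomes, through the correspondence, precisely the Appell-Lerch sum $m\!\left(-q^{n\binom{b+1}{2}-n\binom{n+1}{2}-t(b^2-n^2)}(-y)^n/(-x)^b,\,q^{n(b^2-n^2)},\,z_0\right)$. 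The $x\leftrightarrow y$, $r\leftrightarrow s$ symmetry of $f_{n,n+p,n}$ then supplies the second family of terms in $g_{n,n+p,n}$, with parameter $z_1$. This yields $f_{n,n+p,n}(x,y,q)=g_{n,n+p,n}(x,y,q,z_1,z_0)+E(x,y,q,z_1,z_0)$ for generic $z_0,z_1$, where $E$ collects all theta corrections together with the net over/undercounting between the double cone and the two half-plane-type regions unfolded from the $m$'s; by Theorem~\ref{theorem:changing-z-theorem} the dependence of $g_{n,n+p,n}$ on $z_0,z_1$ is only through theta quotients, so any consistent choice of these parameters is admissible, and the claimed identity is the choice $z_0=z_1=-1$.

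Steps (iii)--(iv) are the substance. The set supporting $E$ is bounded, so $E$ is a \emph{finite} combination of products of geometric series in $q^{r}$, $q^{s}$ along boundary rays, each telescoping to a ratio of theta functions through $\sum_{n\in\Z}(-1)^nq^{\binom n2}w^n=j(w;q)$. Carrying this out and simplifying with the elementary shift $j(q^kx;q)=(-1)^kq^{-\binom k2}x^{-k}j(x;q)$, the standard dissection identities expressing $j(\,\cdot\,;q)$ through $j(\,\cdot\,;q^{m})$, and Jacobi's and the quintuple product identities, one matches $E(x,y,q,-1,-1)$ with $\frac{1}{\overline{J}_{0,np(2n+p)}}\,\theta_{n,p}(x,y,q)$. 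The hypothesis $(n,p)=1$ enters exactly here: it makes the cosets filling the discrepancy strip be indexed bijectively by $(r^*,s^*)\in\{0,\dots,p-1\}^2$, so that $\theta_{n,p}$ is a genuine $p^2$-fold (non-overlapping) sum, and it fixes the canonical representatives $r=r^*+\{(n-1)/2\}$, $s=s^*+\{(n-1)/2\}$, whose shift by $\{(n-1)/2\}$ is what turns $x^{r},y^{s}$ into $(-x)^{r-(n-1)/2},(-y)^{s+(n+1)/2}$ and produces the constant $\overline{J}_{0,np(2n+p)}$. Throughout, the genericity of $x,y$ is precisely what keeps every $m$ and every theta denominator off its poles and justifies the rearrangements of absolutely convergent series.

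The main obstacle is step (iii): extracting the \emph{closed form} $\theta_{n,p}$ --- with every exponent, every theta argument, and the normalising constant correct --- from the a priori only implicitly described $E$. This is a long and delicate theta-function computation, and the bookkeeping of coset representatives (hence of the fractional-part shifts) and of signs is where essentially all the care lies; by contrast step (i), the partial-theta/Appell-Lerch correspondence, is a soft, reusable input.
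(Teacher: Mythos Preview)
Your outline captures the \emph{heuristic} behind the shape of $g_{n,n+p,n}$, but it is not a proof, and it is not the paper's argument. The central gap is step (i): there is no identity expressing a convergent partial theta $\sum_{r\ge 0}(-1)^rw^rq^{n\binom r2}$ as an Appell--Lerch sum plus a theta correction. The paper itself makes this explicit: it writes $m(x,q,z)\sim\sum_{r\ge 0}(-1)^rq^{-\binom{r+1}{2}}x^r$ with a $\sim$ rather than an $=$ and immediately remarks that the right side \emph{diverges} for $|q|<1$. This ``correspondence'' is used only to \emph{guess} the Appell--Lerch piece and to motivate results like Theorem~\ref{theorem:msplit-general-n}; it is never invoked as an equality. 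So your step (i) is not, as you say, ``soft and reusable''---it is not well-defined, and without it the error term $E$ is never rigorously produced. (Your description of $E$ is in any case inconsistent: you assert it is supported on a bounded set, hence a finite sum, yet then claim each piece telescopes via the bilateral identity $\sum_{n\in\Z}(-1)^nq^{\binom n2}w^n=j(w;q)$.)

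The paper's proof is entirely different and never manipulates the double sum. It is a Liouville-type argument in the elliptic-function spirit: (a) show directly that $f_{n,n+p,n}$ and $g_{n,n+p,n}(\,\cdot\,,\,\cdot\,,\,\cdot\,,-1,-1)$ obey the \emph{same} inhomogeneous functional equation under $x\mapsto q^{p(2n+p)}x$ (Proposition~\ref{proposition:Genfg-functional}), while $\theta_{n,p}$ obeys the compatible homogeneous one (Lemma~\ref{lemma:Fnp-thetafunctional}); (b) compute the poles and residues of $g_{n,n+p,n}$ (Proposition~\ref{proposition:g-residues}) and of $\theta_{n,p}$ (Lemma~\ref{lemma:thetamasterFnp-residue}) straight from their defining series, check they cancel off a discrete set, and invoke Hartog's theorem at the remaining intersection points---this shows $g+\theta_{n,p}/\overline J_{0,np(2n+p)}$ is analytic for $x,y\in\mathbb C^*$ (Proposition~\ref{proposition:MT-gen}); (c) since $f$ is also entire, the difference $D$ is analytic and satisfies the homogeneous functional equation; expanding $D=\sum_m C_mx^m$ and iterating the relation $C_m=(\text{const})\,q^{-p(2n+p)m+\cdots}C_{m-n}$ forces every $C_r=0$, because otherwise the Laurent coefficients would blow up like $q^{-np(2n+p)k^2/2}$. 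Thus the closed form $\theta_{n,p}$ is not \emph{derived} from the Hecke sum; it is \emph{posited} (guided exactly by the heuristic you describe) and then \emph{verified} by matching functional equations and residues.
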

\noindent For our secondary result, we consider the special case of (\ref{equation:fabc-def}) in which  $b$ is divisible by $a$ and $c$.  
\begin{theorem} \label{theorem:main-acdivb}Let $a,b,$ and $c$ be positive integers with $ac<b^2$ and $b$ divisible by $a$ and $c$. Then
\begin{align*}
& f_{a,b,c}(x,y,q)=h_{a,b,c}(x,y,q,-1,-1)-\frac{1}{\overline{J}_{0,b^2/a-c}\overline{J}_{0,b^2/c-a}}\cdot \theta_{a,b,c}(x,y,q),
\end{align*}
where 
\begin{align*}
h_{a,b,c}(x,y,q,z_1,z_0):&=j(x;q^a)m\Big( -q^{a\binom{b/a+1}{2}-c}{(-y)}{(-x)^{-b/a}},q^{b^2/a-c},z_1 \Big )\\
& \ \ \ \ \ \ +j(y;q^c)m\Big( -q^{c\binom{b/c+1}{2}-a}{(-x)}{(-y)^{-b/c}},q^{b^2/c-a},z_0 \Big ),
\end{align*}
and
\begin{align*}
&\theta_{a,b,c}(x,y,q):=\sum_{d=0}^{b/c-1}\sum_{e=0}^{b/a-1}\sum_{f=0}^{b/a-1}
q^{(b^2/a-c)\binom{d+1}{2}+(b^2/c-a)\binom{e+f+1}{2}+a\binom{f}{2}}j\big (q^{(b^2/a-c)(d+1)+bf}y,q^{b^2/a}\big ) \\
&\ \ \ \ \ \ \ \ \ \  \cdot(-x)^{f} j\big (q^{b(b^2/(ac)-1)(e+f+1)-(b^2/a-c)(d+1)+b^3(b-a)/(2a^2c)}(-x)^{b/a}y^{-1};q^{(b^2/a)(b^2/(ac)-1)}\big ) \\
& \cdot \frac{J_{b(b^2/(ac)-1)}^3j\big (q^{ (b^2/c-a)(e+1)+(b^2/a-c)(d+1)-c\binom{b/c}{2}-a\binom{b/a}{2}}(-x)^{1-b/a}(-y)^{1-b/c};q^{b(b^2/(ac)-1)}\big )}
{j\big (q^{(b^2/c-a)(e+1)-c\binom{b/c}{2}}(-x)(-y)^{-b/c},q^{(b^2/a-c)(d+1)-a\binom{b/a}{2}}(-x)^{-b/a}(-y);q^{b(b^2/(ac)-1)}\big )}.
\end{align*}

\end{theorem}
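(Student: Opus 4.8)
The plan is to push the method that establishes Theorem~\ref{theorem:masterFnp} through for a general $f_{a,b,c}$ with $ac<b^2$ and then specialize; the point of assuming $a\mid b$ and $c\mid b$ is that the resulting expression collapses to the simple shape $h_{a,b,c}$, in which each of the two Appell--Lerch pieces is a single term rather than a residue sum $\sum_t$ as in $g_{n,n+p,n}$. First I would split the sign constraint $\sg(r)=\sg(s)$ in \eqref{equation:fabc-def} into the two sectors $r,s\ge 0$ and $r,s<0$ (handled identically) and complete squares in the quadratic exponent. Since $a\mid b$, putting $A:=b/a\in\Z$ the elementary identity $\binom r2+Ars=\binom{r+As}2-\binom{As}2$ gives
\[
a\binom r2+brs+c\binom s2=a\binom{r+As}2-\Big(\tfrac{b^2}{a}-c\Big)\binom s2+(\text{a term linear in }s),
\]
so after the integer shift $r\mapsto r-As$ the inner sum over $r$ is a theta/partial-theta function of modulus $q^{a}$ while the outer parameter in $s$ now carries the modulus $q^{\,b^2/a-c}$; doing the same with the roles of $r$ and $s$ interchanged (using $c\mid b$, $C:=b/c\in\Z$) produces the modulus $q^{\,b^2/c-a}$.

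Next I would feed these expressions into the connection between partial theta functions and Appell--Lerch sums $m(x,q,z)$ developed in this paper — the device that produces the Appell--Lerch sums in \eqref{equation:mdef-2} — so as to convert each one-sided inner sum into an Appell--Lerch sum plus a ``boundary'' theta term. This is exactly where the divisibility earns its keep: since $q^{brs}=(q^a)^{Ars}$ is already a power of $q^{a}$, no decomposition of $r$ into residues modulo $a$ is needed (and likewise for $s$ modulo $c$), so the first sector yields the single term $j(x;q^a)\,m\big(\cdot\,,q^{\,b^2/a-c},z_1\big)$ and the second yields $j(y;q^c)\,m\big(\cdot\,,q^{\,b^2/c-a},z_0\big)$. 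The quasi-periodicity $j(q^{am}w;q^a)=(-1)^mq^{-a\binom m2}w^{-m}j(w;q^a)$, together with the change-of-$z$ identity and the $m$-splitting identity (Theorems~\ref{theorem:changing-z-theorem} and~\ref{theorem:msplit-general-n}), would then normalize the arguments of the $m$'s into exactly the form displayed in $h_{a,b,c}(x,y,q,-1,-1)$ and force the free parameters to $z_1=z_0=-1$.

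Finally I would assemble the leftover boundary terms: re-expanding them by Jacobi's triple product and regrouping over the residues $d\in\{0,\dots,b/c-1\}$ and $e,f\in\{0,\dots,b/a-1\}$ — the classes over which the two completed squares range, measured against the common multiple $b\big(b^2/(ac)-1\big)$ of $b^2/a-c$ and $b^2/c-a$ — should identify their sum with $-\theta_{a,b,c}(x,y,q)\big/\big(\overline J_{0,\,b^2/a-c}\,\overline J_{0,\,b^2/c-a}\big)$, the stated theta part. The analytic content (the sector splitting, the completion of squares, and the passage from partial theta functions to Appell--Lerch sums) is the same as in Theorem~\ref{theorem:masterFnp}, so I expect the main obstacle to be the bookkeeping in this last stage: pinning down the exact powers of $q,x,y$, the triple-sum ranges, and the precise theta quotient in $\theta_{a,b,c}$, and verifying that the boundary contributions of the two variables combine with neither overlap nor gap. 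A useful consistency check along the way is that the case $a=c=1$ (which forces $n=1$, $p=b-1$ in the notation of Theorem~\ref{theorem:masterFnp}) must reduce the identity to Theorem~\ref{theorem:masterFnp}.
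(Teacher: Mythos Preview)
Your completing-the-square computation is correct and does explain heuristically why the two Appell--Lerch moduli are $q^{b^2/a-c}$ and $q^{b^2/c-a}$ and why the divisibility $a\mid b$, $c\mid b$ collapses the sums $\sum_{t=0}^{a-1}$, $\sum_{t=0}^{c-1}$ in \eqref{equation:mdef-2} to single terms. But the step you label ``feed these expressions into the connection between partial theta functions and Appell--Lerch sums $m(x,q,z)$ developed in this paper'' is the gap. That connection, namely \eqref{equation:mxqz-heuristic}, is explicitly a \emph{divergent heuristic}: the paper uses it only to guess the shape of identities, never to prove them. After your shift $r\mapsto r-As$ the inner sum becomes a tail $\sum_{r'\ge As}(-x)^{r'}q^{a\binom{r'}{2}}$, and the outer sum carries a factor $q^{-(b^2/a-c)\binom{s}{2}}$ with a \emph{negative} exponent of $q$; the indefiniteness of the quadratic form (discriminant $b^2-ac>0$) means these rearranged pieces diverge individually, and there is no rigorous identity in the paper that converts such a partial-theta tail into an $m(\cdot,\cdot,-1)$. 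Likewise, the ``boundary terms'' you plan to regroup into $\theta_{a,b,c}$ are not produced anywhere in the paper by a direct computation; $\theta_{a,b,c}$ is not derived, it is posited.

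The paper's proof is entirely indirect and avoids this obstacle. It first shows (Proposition~\ref{proposition:Genfh-functional}) that $f_{a,b,c}$ and $h_{a,b,c}(\cdot,\cdot,\cdot,-1,-1)$ satisfy the \emph{same} functional equation under $x\mapsto q^{b^2/c-a}x$, and checks the analogous equation for $\theta_{a,b,c}$. It then computes the poles and residues of $h_{a,b,c}$ (Proposition~\ref{proposition:h-residues}) and of $\theta_{a,b,c}$ (Lemma~\ref{lemma:thetaacdivb-residue}) and verifies they cancel, so that $h_{a,b,c}-\theta_{a,b,c}/(\overline J_{0,b^2/a-c}\overline J_{0,b^2/c-a})$ extends analytically to all of $(\mathbb C^*)^2$ (Proposition~\ref{proposition:MTacdivb-gen}; Hartog's theorem disposes of the finitely many intersection points of the singular loci). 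Finally the difference $D:=f_{a,b,c}-h_{a,b,c}+\theta_{a,b,c}/(\overline J\cdot\overline J)$ is analytic and satisfies the functional equation, and the Laurent-series argument at the end of \S\ref{subsection:thm-main} forces every coefficient of $D$ to vanish. In short: the paper \emph{verifies} the identity by residue matching and a rigidity argument, rather than \emph{constructing} the right-hand side from $f_{a,b,c}$ as you propose.
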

\noindent Although all eight Hecke-type double sum identities found in Kac and Peterson \cite{KP} and Polishchuk \cite{P1} are special cases of the above theorems, we will only demonstrate one namely \cite[$(5.22)$]{KP}:
\begin{gather}
\sum_{\substack{k,\ell\in \mathbb{Z}\\2k\ge \ell\ge 0}}(-1)^{k}q^{[5(2k+1)^2-(2\ell+1)^2]/4}=q\prod_{n\ge 1}(1-q^{4n})(1-q^{20n}).\label{equation:KP-(5.22)}
\end{gather}
\begin{corollary}  \label{corollary:cor-P1-KP} Identity  (\ref{equation:KP-(5.22)}) is a special case of Theorem \ref{theorem:main-acdivb}.
\end{corollary}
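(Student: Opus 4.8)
The plan is to realize (\ref{equation:KP-(5.22)}) as the specialization of Theorem~\ref{theorem:main-acdivb} at $(a,b,c)=(10,10,2)$, $x=q^{10}$, $y=q^{4}$. First I would rewrite the left-hand side as a single Hecke-type double sum. Since $[5(2k+1)^2-(2\ell+1)^2]/4=5k^2+5k+1-\ell^2-\ell$, I would cut the triangular region $2k\ge\ell\ge 0$ into the part with $\ell$ even and the part with $\ell$ odd. Writing $\ell=2s$, $k=r+s$ parametrizes the even part by $r,s\ge 0$, and writing $\ell=2s+1$, $k=r+s+1$ parametrizes the odd part by $r,s\ge 0$; comparing the resulting quadratic exponents identifies the even part with $q$ times the $\sg(r)=\sg(s)=1$ part of $f_{10,10,2}(q^{10},q^{4},q)$ and the odd part with $q$ times its $\sg(r)=\sg(s)=-1$ part (under $r\mapsto -1-r$, $s\mapsto -1-s$). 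Thus the left side of (\ref{equation:KP-(5.22)}) equals $q\,f_{10,10,2}(q^{10},q^{4},q)$, and the hypotheses of Theorem~\ref{theorem:main-acdivb} are met: $a=10$ and $c=2$ divide $b=10$, and $ac=20<100=b^2$. One also checks that the specialization $x=q^{10}$, $y=q^{4}$ is admissible, the only denominators appearing being $\overline{J}_{0,8}$, $\overline{J}_{0,40}$ and the theta products $j(q^{10},q^{8d+2};q^{40})$ for $d=0,\dots,4$, all of which are nonzero.

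Next I would observe that the Appell--Lerch part $h_{10,10,2}(q^{10},q^{4},q,-1,-1)$ vanishes. Its two summands carry the factors $j(x;q^{a})=j(q^{10};q^{10})$ and $j(y;q^{c})=j(q^{4};q^{2})$, both of which are $0$ since $j(q^{mn};q^{m})=0$, while the Appell--Lerch sums multiplying them, $m(-q^{2},q^{8},-1)$ and $m(-q^{10},q^{40},-1)$, are finite (in each the equation $q^{r-1}xz=1$ has no integer solution $r$). Hence $h_{10,10,2}(q^{10},q^{4},q,-1,-1)=0$, and Theorem~\ref{theorem:main-acdivb} gives
\begin{equation*}
\sum_{\substack{k,\ell\in\Z\\ 2k\ge\ell\ge 0}}(-1)^{k}q^{[5(2k+1)^2-(2\ell+1)^2]/4}=-\frac{q}{\overline{J}_{0,8}\,\overline{J}_{0,40}}\;\theta_{10,10,2}(q^{10},q^{4},q).
\end{equation*}

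It then remains to evaluate $\theta_{10,10,2}(q^{10},q^{4},q)$. Because $b/a=1$, the sums over $e$ and $f$ in $\theta_{a,b,c}$ are trivial, so this is a sum over $d\in\{0,1,2,3,4\}$ of theta quotients, the $d$-th equal to $q^{8\binom{d+1}{2}}\,j(q^{8d+12};q^{10})\,j(-q^{38-8d};q^{40})\,J_{40}^{3}\,j(q^{8d+12};q^{40})/j(q^{10},q^{8d+2};q^{40})$, and the $d=1$ term drops since $j(q^{20};q^{10})=0$. I would bring every argument into a fixed fundamental domain via the quasi-periodicity $j(q^{mn}w;q^{m})=(-1)^{n}q^{-m\binom{n}{2}}w^{-n}j(w;q^{m})$, and then recombine the four remaining quotients — in which the negative powers of $q$ present in individual terms must cancel — into the single product $\theta_{10,10,2}(q^{10},q^{4},q)=-\overline{J}_{0,8}\,\overline{J}_{0,40}\,J_{4}\,J_{20}$. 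Substituting this back collapses the right side to $q\,J_{4}\,J_{20}=q\prod_{n\ge 1}(1-q^{4n})(1-q^{20n})$, which is (\ref{equation:KP-(5.22)}).

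The parity dissection and the vanishing of $h$ are routine; the obstacle is the last step, the explicit collapse of the four theta quotients, which simultaneously involve the moduli $10$, $40$, and $4$, into $\overline{J}_{0,8}\,\overline{J}_{0,40}\,J_{4}\,J_{20}$. I expect to handle it with a dissection identity for $j$ of the type recorded elsewhere in the paper, exhibiting the four terms as pieces of a single theta function, followed by Jacobi's triple product identity to recognize the product form.
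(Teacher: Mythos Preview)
Your setup is correct and, up to the harmless rescaling $q\mapsto q^{2}$, identical to the paper's: the paper rewrites (\ref{equation:KP-(5.22)}) as $f_{5,5,1}(q^{5},q^{2},q)=J_{2}J_{10}$, which is exactly your $f_{10,10,2}(q^{10},q^{4},q)=J_{4}J_{20}$ after $q\to q^{1/2}$. You correctly observe that both theta coefficients in $h_{a,b,c}$ vanish and that one of the five $d$-terms in $\theta_{a,b,c}$ drops out.

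Where your proposal falls short is precisely the step you flag as the obstacle, and your suggested mechanism for it --- a single dissection identity displaying the four surviving quotients as pieces of one theta function --- is not how the collapse actually goes. The paper's argument is more layered: after normalizing with (\ref{equation:1.8}), it \emph{pairs} the four terms into two, applying (\ref{equation:H1Thm1.2A}) to one pair and (\ref{equation:H1Thm1.2B}) to the other; then uses the product-splitting identities (\ref{equation:1.10}), (\ref{equation:1.12}), (\ref{equation:1.11}) and two auxiliary relations of the form $J_{1,5}j(q^{4};-q^{5})=J_{2,4}\overline{J}_{2,10}$ to massage the result; and finishes with a second application of (\ref{equation:H1Thm1.2B}). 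So the reduction is a sequence of Riemann-relation type addition formulas rather than a single $m$-dissection of the shape (\ref{equation:jsplit}). If you want to complete your proof along the paper's lines, the key move is to group the $d=0$ term with $d=2$ and the $d=3$ term with $d=4$ (in the paper's $(5,5,1)$ indexing) and invoke (\ref{equation:H1Thm1.2A})/(\ref{equation:H1Thm1.2B}) on each pair.
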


We compute three examples using Theorem \ref{theorem:masterFnp} with the $n=1$, $p=1$ specialization:
\begin{align}
f_{1,2,1}(x,y,q)&=j(y;q)m\big (\tfrac{q^2x}{y^2},q^3,-1\big )+j(x;q)m\big (\tfrac{q^2y}{x^2},q^3,-1\big )
- \frac{yJ_{3}^3j(-x/y;q)j(q^2xy;q^3)}
{\overline{J}_{0,3}j(-qy^2/x,-qx^2/y;q^3)}.\label{equation:n1p1}
\end{align}
The first two examples will involve two of Ramanujan's sixth order mock theta functions:
\begin{equation*}
\phi(q):=\sum_{n\ge 0}\frac{(-1)^nq^{n^2}(q;q^2)_n}{(-q)_{2n}} \ \ {\text{  and  }}\ \ 
\sigma(q):=\sum_{n\ge 0}\frac{q^{\binom{n+2}{2}}(-q)_n}{(q;q^2)_{n+1}}.
\end{equation*}
\begin{example}We recall two expressions for $\phi(q)$ obtained via constant term and Bailey pair techniques respectively \cite[$(0.10)_R$]{AH}, \cite[$(2.19)$]{AH}:
\begin{align}
&\phi(q)\cdot \prod_{n\ge1}(1-q^{3n})(1+q^{3n-1})(1+q^{3n-2})=2\sum_{n}\frac{q^{n(3n+1)}}{1+q^{3n}},\label{equation:phiex1a}\\
&\phi(q)\cdot \prod_{n\ge1}(1-q^{4n})(1+q^{4n-1})(1+q^{4n-3}) =\sum_{n}(-1)^nq^{3n^2+n}\sum_{j=-n}^{n}(-1)^jq^{-j^2}.\label{equation:phiex1b}
\end{align}
It is straightforward to rewrite (\ref{equation:phiex1a}) and (\ref{equation:phiex1b}) in our notation as
\begin{align*}
\phi(q)=&2m(q,q^3,-1) \ \ {\text{ and }}\ \ \overline{J}_{1,4}\cdot \phi(q)=f_{1,2,1}(q,-q,q).
\end{align*}
Using (\ref{equation:n1p1}), it is easy to translate from the Hecke form to the Appell-Lerch form,
\begin{align}
f_{1,2,1}(q,-q,q)&=j(-q;q)m(q,q^3,-1)+j(q;q)m(-q,q^3,-1)+ \frac{qJ_{3}^3j(1;q)j(-q^4;q^3)}
{\overline{J}_{0,3}j(-q^2,q^2;q^3)}\notag\\
&=j(-q;q)m(q,q^3,-1)+0+0=2\overline{J}_{1,4}m(q,q^3,-1).\label{equation:phi-lerch}
\end{align}
\end{example}

\begin{example}   From \cite[$(2.21)$]{AH} we know that $J_{1,2}\sigma(q)=qf_{1,2,1}(q^4,q^3,q^2).$  Using (\ref{equation:n1p1}) gives
\begin{align}
qf_{1,2,1}(q^4,q^3,q^2)&=-J_{1,2}m(q^2,q^6,-1)+0
+\frac{J_6^3\overline{J}_{1,2}J_{5,6}}{\overline{J}_{0,6}\overline{J}_{4,6}\overline{J}_{1,6}}.\label{equation:psi-lerch}
\end{align}
Using  (\ref{equation:phi-lerch}) and (\ref{equation:psi-lerch}), we immediately obtain
\begin{align*}
\phi(q^2)+2\sigma(q)&=2m(q^2,q^6,-1)-2m(q^2,q^6,-1)+\frac{2}{J_{1,2}}\cdot \frac{J_6^3\overline{J}_{1,2}J_{5,6}}{\overline{J}_{0,6}\overline{J}_{4,6}\overline{J}_{1,6}}\\
&=\prod_{n\ge 1}(1+q^{2n-1})^2(1-q^{6n})(1+q^{6n-3})^2,
\end{align*}
 a sixth order mock theta function identity \cite[$(0.19)_R$]{AH}.  For the general mock theta function identity, use Theorem \ref{theorem:masterFnp} and Theorem \ref{theorem:msplit-general-n} to line up all of the Appell-Lerch sum expressions such that they cancel. Then all that remains to be verified is a theta function identity, which can be done by classical means. 
\end{example}
\begin{example}  We discuss the integral-level string functions associated with admissable representations of the affine Lie Algebra $A_1^{(1)}$ \cite{KW1,KW2}.  Here $m\in \mathbb{Z}$, $N\in\mathbb{N}$,  $\ell \in \{ 0,1,2,\dots, N\}$, $m\equiv \ell \pmod 2$, where $N$ is the level.  In \cite[p. 236]{SW}, one finds the Hecke-type  form for the general integral-level string function:
\begin{align}
C_{m,\ell}^N(q)&=\frac{1}{(q)_{\infty}^3}\Big \{ \sum_{\substack{j\ge 1\\ k\le 0}} - \sum_{\substack{j\le 0 \\ k\ge 1}} \Big \}(-1)^{k-j}q^{\binom{k-j}{2}-Njk+\tfrac{1}{2}k(m-\ell)+\tfrac{1}{2}j(m+\ell)}\notag \\
&=\frac{1}{J_1^3}\cdot f_{1,1+N,1}(q^{1+\tfrac{1}{2}(m+\ell)},q^{1-\tfrac{1}{2}(m-\ell)},q),
\label{equation:integer-string}
\end{align}
which can be evaluated  with Theorem \ref{theorem:masterFnp}. The last equality in (\ref{equation:integer-string}) follows from the substitutions $k\rightarrow -k$, $j\rightarrow j+1$ and the identity $f_{a,b,c}(x,y,q)=-yf_{a,b,c}(q^bx,q^cy,q)+j(x;q^a).$  

   Let us set $N=1$.  Here $\ell\in \{0,1\}$.  Using specialization (\ref{equation:n1p1}),
\begin{align*}
C_{m,\ell}^1(q)&=\frac{1}{J_1^3}\cdot f_{1,2,1}(q^{1+\tfrac{1}{2}(m+\ell)},q^{1-\tfrac{1}{2}(m-\ell)},q)\\
&=\frac{1}{J_1^3}\cdot \Big [ j(q^{1-\tfrac{1}{2}(m-\ell)};q)m(q^{(3m-\ell)/2},q^{3},-1)+j(q^{1+\tfrac{1}{2}(m+\ell)};q)m(q^{-(3m+\ell)/2},q^{3},-1)\\
&\ \ \ \ -\frac{1}{\overline{J}_{0,3}}\cdot \frac{q^{1-\frac{1}{2}(m-\ell)}J_3^3j(-q^m;q)j(q^{4+\ell};q^3)}{j(q^{2-\frac{1}{2}(3m-\ell)},q^{2+\frac{1}{2}(3m+\ell)};q^3)}\Big ].
\end{align*}
Both Appell-Lerch sums are defined, and their theta function coefficients are zero.  Hence the Appell-Lerch sum expression is zero, and only the theta function quotient remains.  By considering the cases $\ell={0,1}$, it is easy to show the well-known form of the level-1 string function,  \cite[Section $4.6$, Ex. 3]{KP}:
\begin{equation*}
C_{m,\ell}^1(q)=\frac{q^{\frac14(m^2-\ell^2)}}{(q)_{\infty}}.
\end{equation*}
\end{example}

In Theorem \ref{theorem:masterFnp}, we set $z_1=z_0=-1$ in the Appell-Lerch expression (\ref{equation:mdef-2}).  For other examples where $p=1,2,3,$ or $4$, we can set $z_1=z_0^{-1}=y^n/x^n$ in order to reduce the number of theta quotients.    

\begin{theorem} \label{theorem:genfn1} Let $n$ be a positive integer.  For generic $x,y\in \mathbb{C}^{*}$
\begin{align*}
f_{n,n+1,n}(x,y,q)=g_{n,n+1,n}(x,y,q,y^n/x^n,x^n/y^n).
\end{align*}
\end{theorem}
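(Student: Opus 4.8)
The plan is to deduce Theorem \ref{theorem:genfn1} from the $p=1$ case of Theorem \ref{theorem:masterFnp} together with the change-of-$z$ formula for Appell--Lerch sums, Theorem \ref{theorem:changing-z-theorem}. Setting $p=1$ in Theorem \ref{theorem:masterFnp} and observing that the outer double sum defining $\theta_{n,p}$ runs over $0\le r^*,s^*\le p-1$ and hence reduces to the single term $r^*=s^*=0$, one obtains $f_{n,n+1,n}(x,y,q)=g_{n,n+1,n}(x,y,q,-1,-1)+\overline{J}_{0,n(2n+1)}^{-1}\theta_{n,1}(x,y,q)$ with a completely explicit theta-quotient correction. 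So Theorem \ref{theorem:genfn1} becomes equivalent to the identity
\[
g_{n,n+1,n}(x,y,q,y^n/x^n,x^n/y^n)-g_{n,n+1,n}(x,y,q,-1,-1)=\overline{J}_{0,n(2n+1)}^{-1}\,\theta_{n,1}(x,y,q),
\]
which no longer involves $f_{n,n+1,n}$ and speaks only about Appell--Lerch sums and theta functions.

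The next step is to expand the left-hand side with Theorem \ref{theorem:changing-z-theorem}. In $g_{n,n+1,n}(x,y,q,z_1,z_0)$ the first sum over $t=0,\dots,n-1$ carries the Appell--Lerch sums $m(X_t,q^{n(2n+1)},z_0)$ with $X_t=-q^{n(n+1)-t(2n+1)}\frac{(-y)^n}{(-x)^{n+1}}$ and prefactor $(-y)^tq^{n\binom{t}{2}}j(q^{(n+1)t}x;q^n)$; the second sum carries $m(Y_t,q^{n(2n+1)},z_1)$, where $Y_t$ and its prefactor arise from $X_t$ and its prefactor by exchanging $x$ and $y$. Applying the change-of-$z$ formula to replace $z_0=-1$ by $x^n/y^n$ in the first sum and $z_1=-1$ by $y^n/x^n$ in the second turns the left side into a finite sum over $t$ of ratios of theta functions of moduli $q^{n(2n+1)}$ and $q$. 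In doing so one uses the elementary identities $(x^n/y^n)(y^n/x^n)=1$ and $X_t\cdot(x^n/y^n)=q^{n(n+1)-t(2n+1)}/x$, together with the theta translation $j(q^{m}w;q)=(-1)^{m}q^{-\binom{m}{2}}w^{-m}j(w;q)$, to absorb the $t$-dependent prefactors; afterwards the $t$-dependence of each summand lies only in powers of $q$ that are linear in $t$, shifting theta arguments by powers of $q^{2n+1}$, and in monomials in $x$ and $y$.

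What remains is a pure theta-function identity: that this finite sum over $t$ collapses to the single quotient $\overline{J}_{0,n(2n+1)}^{-1}\theta_{n,1}(x,y,q)$. I would prove it by classical means --- reduce the mixed-modulus theta functions to a common modulus by dissection and then verify the resulting identity by the elliptic-function method, namely fix $y$, view both sides as functions of $x$, check that they have the same zeros, poles, and behaviour under $x\mapsto qx$, infer that their ratio is a constant, and evaluate at one convenient value of $x$. The main obstacle is precisely this collapse: tracking the quadratic-in-$t$ exponents produced by the $\binom{t}{2}$ in $g_{n,n+1,n}$ and by the theta translations, and checking that the shifted theta factors in numerator and denominator reassemble into $\theta_{n,1}(x,y,q)$. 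A secondary, purely bookkeeping point is the parity of $n$: when $n$ is even the fractional part $\{(n-1)/2\}=\tfrac12$ appearing in the statement of Theorem \ref{theorem:masterFnp} puts half-integral exponents into $\theta_{n,1}$, which one simply carries along. For $n=1$ the identity involves only theta functions of moduli $q$ and $q^3$ and is directly checkable; the general case proceeds in the same way.
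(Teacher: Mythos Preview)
Your approach is valid in principle --- indeed, the paper explicitly notes at the start of Section~\ref{section:thm-proofs} that the four subtheorems ``can be obtained from Theorem~\ref{theorem:masterFnp} by using Appell-Lerch sum properties such as Theorems~\ref{theorem:changing-z-theorem} and~\ref{theorem:msplit-general-n} as well as theta function properties'' --- but the paper then deliberately sets this aside and proves Theorem~\ref{theorem:genfn1} \emph{directly}, without invoking Theorem~\ref{theorem:masterFnp} at all. The paper's argument runs as follows: writing $M_{n,1}(x,y,q):=g_{n,n+1,n}(x,y,q,y^n/x^n,x^n/y^n)$ and $\Theta_{n,1}(x,y,q):=0$, one first shows $M_{n,1}$ is analytic for $x,y\in\mathbb{C}^*$ by a residue computation. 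The potential poles in $x$ come in two families; for one family the residue function $R_{n,1}^1(y,q)$ is shown to have removable singularities, satisfy a functional equation $f(q^{2n+1}y)=y^{-2}\zeta_n^{-1}q^n f(y)$, and possess at least four zeros in a fundamental annulus, whence by Proposition~\ref{proposition:H1Thm1.7} it must vanish identically; for the other family the pole is seen directly to be removable since the theta coefficient $j(x;q^n)$ vanishes there. Once $M_{n,1}$ is analytic, the difference $f_{n,n+1,n}-M_{n,1}$ satisfies the same functional equation as in Proposition~\ref{proposition:Genfg-functional}, and the Laurent-series argument from the proof of Theorem~\ref{theorem:masterFnp} forces it to vanish.

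The trade-off is this: your route reduces everything to a single theta-function identity, but that identity --- the collapse of a sum of $2n$ change-of-$z$ correction terms into $\overline{J}_{0,n(2n+1)}^{-1}\theta_{n,1}(x,y,q)$ --- is precisely the part you leave unproved, and it is not obviously easier for general $n$ than the paper's residue bookkeeping. The paper's direct approach avoids ever writing down this identity; it handles the four subtheorems ($p=1,2,3,4$) in a uniform framework, with the $p=1$ case being the cleanest because $\Theta_{n,1}=0$. Your elliptic-function sketch for the theta identity is plausible but would need the same sort of zero-counting and functional-equation work that the paper already does at the level of $M_{n,1}$ itself, so in the end the two arguments are of comparable depth; the paper's is just more self-contained.
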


\begin{corollary} \label{corollary:f232-tenth}Theorem \ref{theorem:genfn1} with $n=2$ yields the Appell-Lerch sum representations of Section \ref{section:known} for the tenth order mock theta functions $\phi(q)$, $\psi(q)$, $X(q)$, $\chi(q)$.  
\end{corollary}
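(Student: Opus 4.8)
The plan is to treat Corollary~\ref{corollary:f232-tenth} as a bookkeeping computation driven by Theorem~\ref{theorem:genfn1}: substitute the Hecke-type forms of $\phi,\psi,X,\chi$ into the $n=2$ case and simplify. The advantage of Theorem~\ref{theorem:genfn1} over Theorem~\ref{theorem:masterFnp} is that the special choice $z_1=y^n/x^n$, $z_0=x^n/y^n$ kills the leftover theta quotient $\theta_{n,p}$, so the right-hand side is a pure sum of Appell-Lerch sums with theta coefficients. First I would write out this sum explicitly. Taking $a=c=2$, $b=3$ in~(\ref{equation:mdef-2}) gives $b^2-ac=5$, hence $a(b^2-ac)=c(b^2-ac)=10$; using the quasi-periodicity $j(q^{3}w;q^2)=-(qw)^{-1}j(qw;q^2)$ to reduce $j(q^{3}x;q^2)$ and $j(q^{3}y;q^2)$, each inner sum contributes two terms and one obtains
\begin{align*}
f_{2,3,2}(x,y,q)=\ &j(x;q^2)\,m\big(q^{6}y^2/x^3,q^{10},x^2/y^2\big)+\frac{y}{qx}\,j(qx;q^2)\,m\big(qy^2/x^3,q^{10},x^2/y^2\big)\\
&+j(y;q^2)\,m\big(q^{6}x^2/y^3,q^{10},y^2/x^2\big)+\frac{x}{qy}\,j(qy;q^2)\,m\big(qx^2/y^3,q^{10},y^2/x^2\big).
\end{align*}
This displayed identity is the engine for all four evaluations.

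Next I would feed in the Hecke forms. From Section~\ref{section:known} (and the literature) each of $\phi,\psi,X,\chi$, after multiplication by an explicit theta quotient, equals $f_{2,3,2}(q^{\alpha},q^{\beta},q)$ for suitable integers $\alpha,\beta$. Substituting $x=q^{\alpha}$, $y=q^{\beta}$, the Appell-Lerch arguments and the parameters $z_0=q^{2(\alpha-\beta)}$, $z_1=q^{2(\beta-\alpha)}$ all become explicit powers of $q$. The crucial simplification is parity-driven: since $j(q^{m};q^2)=0$ exactly for even $m$, within the first pair of terms exactly one of $j(q^{\alpha};q^2)$, $j(q^{\alpha+1};q^2)$ survives (according to the parity of $\alpha$), and likewise exactly one of the second pair survives (according to the parity of $\beta$). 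Thus for each function Theorem~\ref{theorem:genfn1} collapses to a sum of exactly two Appell-Lerch sums with monomial arguments, which already matches the shape of the representations recorded in Section~\ref{section:known}.

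It then remains to normalize these two surviving sums. Their $z$-parameters are powers of $q$ inherited from $z_0,z_1$, whereas Section~\ref{section:known} states the representations with a canonical third argument (typically $z=-1$). I would bridge this with the change-of-$z$ formula, Theorem~\ref{theorem:changing-z-theorem}, supplemented by the standard functional properties of $m(x,q,z)$ under $x\mapsto qx$ and $x\mapsto x^{-1}$, to rewrite each $m(\,\cdot\,,q^{10},q^{\bullet})$ in the required form while tracking the theta quotients that this generates. For each of the four functions this is a finite, mechanical reduction, repeated with different exponents.

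The main obstacle is the residual theta-function bookkeeping rather than any conceptual step. After the change-of-$z$ normalization one must check that the theta quotients it produces, combined with the normalizing theta quotient attached to each Hecke form, collapse to exactly the single infinite product recorded in Section~\ref{section:known}; this is a finite theta identity for each function, provable by Jacobi's triple product together with the quasi-periodicity and vanishing properties of $j(\,\cdot\,;q)$. The one place that requires care is matching the arguments of the two surviving Appell-Lerch sums to those in Section~\ref{section:known}: in some cases the companion argument appears in inverted form, and one must apply the inversion property of $m$ to identify, say, $m(q^{6}y^2/x^3,q^{10},\cdot)$ with the normalization used there before the theta terms can be compared.
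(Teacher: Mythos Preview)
Your plan has a real gap at the first substitution step. For all four tenth-order functions the Hecke data have $x=y$ (namely $x=y=q^2$ for $\phi$, $x=y=q^4$ for $\psi$, and $x=y=-q^{\pm 3}$ in base $q^2$ for $X,\chi$). With your $n=2$ formula the third arguments are $z_0=x^2/y^2=1$ and $z_1=y^2/x^2=1$, so every surviving Appell--Lerch term is $m(\,\cdot\,,q^{10},1)$, which is undefined because $j(1;q^{10})=0$. The two terms whose theta coefficient vanishes do not rescue this: the remaining two terms each carry a nonzero $j(q^{\alpha+1};q^2)$ times a divergent $m$, and they do not cancel (they are equal, not opposite). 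Theorem~\ref{theorem:genfn1} is only asserted for generic $x,y$, and $x=y$ is exactly the non-generic locus; applying Theorem~\ref{theorem:changing-z-theorem} \emph{after} specializing is too late. The paper avoids this by first proving Proposition~\ref{proposition:prop-singshift}, which (using $\Theta_{n,1}=0$) shifts the third arguments to $q^{2\ell}y^2/x^2$ and $q^{-2\ell}x^2/y^2$ for any integer $\ell$, and then specializes with $\ell=1$ or $2$ so that $z_0,z_1\in\{q^{\pm 2},q^{\pm 4}\}$ are nonsingular.

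There is a second issue for $X$ and $\chi$. Their Hecke forms live in base $q^2$, so your $n=2$ formula produces Appell--Lerch sums with modulus $q^{20}$, whereas the representations recorded in Section~\ref{section:known} are in modulus $q^{5}$. The functional properties you cite (shift $x\mapsto qx$, inversion $x\mapsto x^{-1}$, and Theorem~\ref{theorem:changing-z-theorem}) do not change the modulus; one needs the splitting identity, here Corollary~\ref{corollary:msplit-n=2}, to pass from four $m(\,\cdot\,,q^{20},\cdot)$ terms to two $m(\,\cdot\,,q^{5},\cdot)$ terms. The paper's proof of $X(q)$ does exactly this.
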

\begin{corollary} \label{corollary:f343-seventh}Theorem \ref{theorem:genfn1} with $n=3$ yields the Appell-Lerch sum representations of Section \ref{section:known} for the seventh order mock theta functions ${\mathcal{F}}_0(q)$, ${\mathcal{F}}_1(q)$, ${\mathcal{F}}_2(q)$.
\end{corollary}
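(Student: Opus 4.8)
The plan is to match the output of Theorem \ref{theorem:genfn1} at $n=3$ against the known Appell-Lerch representations of the seventh order functions in Section \ref{section:known}. First I would record, in the normalization of Definition \ref{definition:fabc-def}, the classical Hecke-type double sum forms of $\mathcal{F}_0(q),\mathcal{F}_1(q),\mathcal{F}_2(q)$: each is an explicit power of $q$ times a short $\mathbb{Z}$-linear combination of expressions $f_{3,4,3}(x,y,q)$ with $x,y$ (signed) monomials in $q$, possibly modulo an explicit theta quotient. When a cited Hecke sum comes with shifted summation ranges I would first bring it into the exact shape of (\ref{equation:fabc-def}) using the elementary relation $f_{a,b,c}(x,y,q)=-yf_{a,b,c}(q^{b}x,q^{c}y,q)+j(x;q^{a})$, just as in the level-one string function example. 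Since $n=3$ forces $a=c=3$, $b=4$ and $b^{2}-ac=7$, every Appell-Lerch sum produced below carries modulus $a(b^{2}-ac)=21$, in agreement with the seventh order moduli of Section \ref{section:known}.

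Next I would apply Theorem \ref{theorem:genfn1}, namely $f_{3,4,3}(x,y,q)=g_{3,4,3}(x,y,q,y^{3}/x^{3},x^{3}/y^{3})$, and expand $g_{3,4,3}$ via (\ref{equation:mdef-2}) into two blocks of three terms. The first block is
\[
\sum_{t=0}^{2}(-y)^{t}q^{3\binom{t}{2}}\,j(q^{4t}x;q^{3})\,m\Big(-q^{12-7t}\tfrac{(-y)^{3}}{(-x)^{4}},\,q^{21},\,x^{3}/y^{3}\Big),
\]
and the second is the same expression with $x$ and $y$ interchanged and third $m$-argument $y^{3}/x^{3}$. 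Keeping $x,y$ generic, I would convert each $m(\,\cdot\,,q^{21},x^{3}/y^{3})$ and $m(\,\cdot\,,q^{21},y^{3}/x^{3})$ to the reference form $m(\,\cdot\,,q^{21},-1)$ by Theorem \ref{theorem:changing-z-theorem}, which costs only theta quotients; should a modulus occurring in Section \ref{section:known} be a proper multiple or divisor of $21$, I would further split or merge the $m$'s by Theorem \ref{theorem:msplit-general-n}. Only after these reductions would I specialize $x=x_i,\ y=y_i$: several of the prefactors $j(q^{4t}x_i;q^{3})$ and $j(q^{4t}y_i;q^{3})$ then vanish (whenever the relevant exponent is divisible by $3$, since $j(q^{3k};q^{3})=0$), and the surviving Appell-Lerch sums should line up term-by-term with those of Section \ref{section:known}.

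What is then left is a pure theta-function identity: the leftover $j$-prefactors from $g_{3,4,3}$, the theta quotients generated by Theorems \ref{theorem:changing-z-theorem} and \ref{theorem:msplit-general-n}, and the theta quotient carried along from the Hecke form of $\mathcal{F}_i$, set equal to the theta-quotient part of the Section \ref{section:known} representation. This I would verify by classical means: Jacobi's triple product identity and elementary manipulations with the symbols $J_{a,m}$, $\overline{J}_{a,m}$ and $J_m$, exactly of the kind in the worked examples above. Running the three cases $i=0,1,2$ in turn then gives the corollary.

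The main obstacle is bookkeeping rather than any new idea: correctly tracking the powers of $q$ and the signs coming from the $m$-argument $-q^{12-7t}(-y)^{3}/(-x)^{4}$ under the substitutions $x=x_i,\ y=y_i$ and under the symmetries of $f_{a,b,c}$ and $m$ used to align arguments, and then pushing the final theta identity through cleanly. A secondary point is that one must confirm Theorem \ref{theorem:genfn1} is genuinely applicable for the chosen data, i.e. that $(x_i,y_i)$ is reached as a limit of generic $(x,y)$ without landing on a pole of any $m$ occurring in $g_{3,4,3}$ or of any surviving theta quotient; this is precisely why the changing-$z$ reduction is carried out before, and not after, specialization.
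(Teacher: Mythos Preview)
Your broad strategy---apply Theorem~\ref{theorem:genfn1} at $n=3$, manage the $z$-arguments, specialize, and match against Section~\ref{section:known}---is the right one, but the execution differs from the paper's in a way that creates a real gap.

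The Hecke forms here all have $x=y$ (for instance $J_1\mathcal{F}_0(q)=f_{3,4,3}(q^2,q^2,q)$), so the naive $z$-argument $y^3/x^3$ in Theorem~\ref{theorem:genfn1} specializes to $1$, a pole of $m$. Your fix is to change $z$ to $-1$ via Theorem~\ref{theorem:changing-z-theorem} before specializing. But the theta quotient produced by that theorem carries $j(y^3/x^3;q^{21})$ in its denominator, which still vanishes at $x=y$. So you have not avoided the pole; you have only moved it from the $m$-terms to the correction terms. The \emph{sum} of all six correction quotients does extend analytically to $x=y$ (since it equals $f_{3,4,3}$ minus the now-regular $m$-terms), but evaluating that limit is a genuine residue computation you do not address. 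Your final paragraph asserts the specialization lands on no pole ``of any surviving theta quotient''; this is false term-by-term.

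A second, smaller issue: the Section~\ref{section:known} representations have $z=q^{\pm 9},q^{\pm 3},q^{\pm 6}$, not $z=-1$, so even after your reduction you would need a further round of $z$-changes and a theta identity to match.

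The paper sidesteps both problems with Proposition~\ref{proposition:prop-singshift}: since $\Theta_{n,1}\equiv 0$, one has
\[
f_{3,4,3}(x,y,q)=g_{3,4,3}\bigl(x,y,q,\,q^{3\ell}y^3/x^3,\,q^{-3\ell}x^3/y^3\bigr)
\]
for any integer $\ell$, with \emph{no} theta correction. Choosing $\ell$ so that at $x=y$ the $z$-argument is exactly the one appearing in Section~\ref{section:known} (e.g.\ $\ell=3$ for $\mathcal{F}_0$, giving $z=q^{\pm 9}$) makes the specialization regular and produces the target representation directly, with only (\ref{equation:mxqz-flip}) needed to tidy up. No theta identity remains to be verified.
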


\begin{theorem} \label{theorem:genfn2} Let $n$ be a positive odd integer.  For generic $x,y\in\mathbb{C}^*$
\begin{align*}
f_{n,n+2,n}(x,y,q)=g_{n,n+2,n}(x,y,q,y^n/x^n,x^n/y^n)-\Theta_{n,2}(x,y,q),
\end{align*}
where
\begin{align*}
\Theta_{n,2}(x,y,q):=\frac{y^{\frac{n+1}{2}}J_{2n,4n}J_{4(n+1),8(n+1)}j(y/x,q^{n+2}xy;q^{4(n+1)})j(q^{2n}/x^2y^2;q^{8(n+1)})}{q^{\frac{n^2-3}{2}}x^{\frac{n-3}{2}}j(y^n/x^n;q^{4n(n+1)})j(-q^{n+2}x^2,-q^{n+2}y^2;q^{4(n+1)})}.
\end{align*}
\end{theorem}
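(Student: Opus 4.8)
The plan is to deduce Theorem~\ref{theorem:genfn2} from the master formula, Theorem~\ref{theorem:masterFnp} (in the case $p=2$), together with the change-of-$z$ theorem, Theorem~\ref{theorem:changing-z-theorem}. First note why $n$ must be odd: Theorem~\ref{theorem:masterFnp} requires $(n,p)=1$, and with $p=2$ this forces $n$ odd; moreover $n$ odd is exactly what renders the exponents $\tfrac{n+1}{2}$, $\tfrac{n^2-3}{2}$, $\tfrac{n-3}{2}$ in $\Theta_{n,2}$ (and the shifts $r-\tfrac{n-1}{2}$, $s+\tfrac{n+1}{2}$ in $\theta_{n,2}$) genuine integers, and it makes $\{(n-1)/2\}=0$ so that $r=r^*$, $s=s^*$. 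With $b=n+2$ one has $b^2-ac=(n+2)^2-n^2=4(n+1)$ and $np(2n+p)=4n(n+1)$, so Theorem~\ref{theorem:masterFnp} becomes
\begin{align*}
f_{n,n+2,n}(x,y,q)=g_{n,n+2,n}(x,y,q,-1,-1)+\frac{1}{\overline{J}_{0,4n(n+1)}}\,\theta_{n,2}(x,y,q),
\end{align*}
where $\theta_{n,2}$ is a sum of four explicit theta quotients indexed by $r^*,s^*\in\{0,1\}$, all at modulus $q^{4n(n+1)}$.

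Next one converts $g_{n,n+2,n}(x,y,q,-1,-1)$ into $g_{n,n+2,n}(x,y,q,y^n/x^n,x^n/y^n)$. Each Appell-Lerch sum $m(w,q^{4n(n+1)},-1)$ occurring in the two $t$-sums of $g_{n,n+2,n}$ is rewritten, by Theorem~\ref{theorem:changing-z-theorem}, as $m(w,q^{4n(n+1)},x^n/y^n)$ (respectively $m(w,q^{4n(n+1)},y^n/x^n)$) plus one theta quotient whose $j$-factors have modulus $q^{4n(n+1)}$. In each $t$-sum the first arguments $w=w_t$ form a geometric progression with ratio $q^{-4(n+1)}$, so after summing the correction quotients against the weights $(-y)^tq^{n\binom t2}j(q^{(n+2)t}x;q^n)$, resp.\ $(-x)^tq^{n\binom t2}j(q^{(n+2)t}y;q^n)$, over $t=0,\dots,n-1$ and collapsing via a theta addition formula, one obtains
\begin{align*}
g_{n,n+2,n}(x,y,q,-1,-1)=g_{n,n+2,n}(x,y,q,y^n/x^n,x^n/y^n)+E(x,y,q),
\end{align*}
with $E$ an explicit combination of theta quotients.

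It then remains to verify the pure theta-function identity
\begin{align*}
\frac{1}{\overline{J}_{0,4n(n+1)}}\,\theta_{n,2}(x,y,q)+E(x,y,q)=-\Theta_{n,2}(x,y,q).
\end{align*}
I would prove this by the standard ``ratio of theta functions'' argument: fix generic $y,q$; using the elementary quasi-periodicity relations for $j$ together with classical theta manipulations (Jacobi triple product, quintuple product), normalize every $j$ on the left-hand side to the moduli $q^{4n}$, $q^{4(n+1)}$, $q^{8(n+1)}$, $q^{4n(n+1)}$ appearing in $\Theta_{n,2}$; then, viewing both sides as functions of $x$, check that they share the same quasi-period multiplier under multiplication of $x$ by the relevant power of $q$ and the same zeros and poles in a fundamental annulus; conclude that the ratio is constant in $x$; and finally pin the constant down by evaluating at a convenient point such as $x=\pm q^{k}y$, where several terms of $\theta_{n,2}$ and of $E$ vanish.

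The main obstacle is exactly this last identity, and within it the combinatorial bookkeeping: the four terms of $\theta_{n,2}$ and the correction $E$ must conspire so that, after all $j$-arguments have been brought to matching moduli, almost everything cancels and precisely the single comparatively simple quotient $\Theta_{n,2}$ survives. Getting the exponents in the $t$-sum collapse exactly right, and checking that no stray monomial or scalar prefactor is left over, is where essentially all of the effort goes; the function-theoretic step that finishes the argument is routine once the two sides are written with identical arguments. (For the $p=1$ analogue, Theorem~\ref{theorem:genfn1}, the same scheme applies, the difference being that the surviving theta term is $0$ rather than $-\Theta_{n,2}$.)
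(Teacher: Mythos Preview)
Your strategy is valid---the paper explicitly acknowledges it at the start of Section~\ref{section:thm-proofs}: ``The four subtheorems can be obtained from Theorem~\ref{theorem:masterFnp} by using Appell-Lerch sum properties such as Theorems~\ref{theorem:changing-z-theorem} and~\ref{theorem:msplit-general-n} as well as theta function properties; however, we will prove these four subtheorems directly.'' So you are not wrong, but you are not doing what the paper does.

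The paper's direct route bypasses Theorem~\ref{theorem:masterFnp} entirely. Writing $M_{n,p}(x,y,q):=g_{n,n+p,n}(x,y,q,y^n/x^n,x^n/y^n)$, it first checks (Lemma~\ref{lemma:GenTheta-functional}) that $\Theta_{n,2}$ satisfies the same functional equation as $M_{n,2}$ and $f_{n,n+2,n}$, then proves (Proposition~\ref{proposition:MTsub-gen}) that $M_{n,2}-\Theta_{n,2}$ extends analytically to all of $(\mathbb{C}^*)^2$ by explicit residue matching at the three families of simple poles, with Hartog's theorem disposing of the isolated intersection points. Once analyticity is established, the Laurent-series argument from the proof of Theorem~\ref{theorem:masterFnp} forces $f_{n,n+2,n}-(M_{n,2}-\Theta_{n,2})=0$.

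What each approach buys: your route reuses the master formula and so reduces the problem to a pure theta identity, but that identity---your $\tfrac{1}{\overline{J}_{0,4n(n+1)}}\theta_{n,2}+E=-\Theta_{n,2}$---involves four terms of $\theta_{n,2}$ plus $2n$ change-of-$z$ corrections, and your ``collapsing via a theta addition formula'' step is where all the difficulty hides; you have correctly identified this as the obstacle but have not actually carried it out. The paper's route trades that algebraic collapse for a residue computation (Sections~7.1--7.4) which, while lengthy, is mechanical once the pole locations are written down, and has the advantage of being logically independent of Theorem~\ref{theorem:masterFnp}. In effect the paper proves the subtheorems in parallel with the master theorem rather than as corollaries of it.
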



\begin{theorem} \label{theorem:genfn3} Let $n$ be a positive integer with $(n,3)=1$.  For generic $x,y\in\mathbb{C}^{*}$
\begin{align*}
f_{n,n+3,n}(x,y,q)=g_{n,n+3,n}(x,y,q,y^n/x^n,x^n/y^n)-\Theta_{n,3}(x,y,q),
\end{align*}
where
\begin{align*}
\Theta_{n,3}&(x,y,q):=\frac{q^{n\binom{n+1}{2}}(-x)(-y)^{n}J_{3n}J_{3(2n+3)}j(y/x;q^{3(2n+3)})j(q^{n^2+n}x,q^{n^2+n}y;q^{2n+3})}
 {J_{2n+3}^2j(y^{n}/x^{n};q^{3n(2n+3)})j(q^{3n^2+3n}x^3,q^{3n^2+3n}y^3;q^{3(2n+3)})}\\
&\cdot  \Big \{ j(q^{3n^2+5n+3}x^2y,q^{3n^2+5n+3}xy^2;q^{3(2n+3)})\\
&\ \ \ \ \  -{q^{2n^2+2n}}{xy}j(q^{3n^2+7n+6}x^2y,q^{3n^2+7n+6}xy^2;q^{3(2n+3)})\Big \}.
\end{align*}

\end{theorem}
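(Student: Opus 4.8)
The plan is to obtain Theorem~\ref{theorem:genfn3} as a specialization of the master formula Theorem~\ref{theorem:masterFnp} followed by a change of the parameters $z_0,z_1$ inside the Appell--Lerch pieces. Since $(n,3)=1$, the hypothesis $(n,p)=1$ of Theorem~\ref{theorem:masterFnp} is satisfied with $p=3$, so
\[
f_{n,n+3,n}(x,y,q)=g_{n,n+3,n}(x,y,q,-1,-1)+\frac{1}{\overline{J}_{0,3n(2n+3)}}\,\theta_{n,3}(x,y,q).
\]
The target $g_{n,n+3,n}(x,y,q,y^n/x^n,x^n/y^n)$ differs from $g_{n,n+3,n}(x,y,q,-1,-1)$ only through the arguments $z_0,z_1$ of the two families of Appell--Lerch sums appearing in~\eqref{equation:mdef-2}. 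I would therefore apply the change-of-$z$ formula (Theorem~\ref{theorem:changing-z-theorem}) to each $m(\cdots,-1)$, rewriting it as $m(\cdots,z)$ with $z=y^n/x^n$ in the second sum over $t$ and $z=x^n/y^n$ in the first sum over $t$, at the cost of one theta-quotient correction term for each value of the index $t$ (and, where convenient, first splitting the $m$'s via Theorem~\ref{theorem:msplit-general-n} so that the shapes of the correction terms line up with the summands of $\theta_{n,3}$).

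What then remains is a pure theta-function identity: one must show that
\[
\frac{1}{\overline{J}_{0,3n(2n+3)}}\,\theta_{n,3}(x,y,q)+\bigl(\text{corrections from the $t$-sum for $z_0$}\bigr)+\bigl(\text{corrections from the $t$-sum for $z_1$}\bigr)=-\,\Theta_{n,3}(x,y,q).
\]
The point of the special choice $z_1=y^n/x^n$, $z_0=x^n/y^n$ is precisely that it forces most of these terms to cancel or telescope, so that the nine-term double sum $\theta_{n,3}$ (the indices $r^{*},s^{*}$ each run over $\{0,1,2\}$ because $p=3$) collapses, together with the changing-$z$ byproducts, to the single compact quotient $\Theta_{n,3}$. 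To carry this out I would expand every $j(\cdot\,;q^{\bullet})$ by the Jacobi triple product, normalize moduli using the elementary quasi-periodicity $j(q^{k}w;q)=(-1)^{k}q^{-\binom{k}{2}}w^{-k}j(w;q)$, and then invoke the standard splitting lemma that consolidates a finite sum $\sum_{k}w^{k}j(q^{\bullet k}u;q^{\bullet})$ of theta values into one theta quotient; the coprimality $(n,3)=1$ enters here, as it lets one reindex residues mod $3$ against residues mod $n$ so that the sums actually consolidate (when $3\mid n$ the combinatorics of the reindexing would fail, which is why that case is excluded).

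I expect the genuine obstacle to be this theta-function bookkeeping rather than any conceptual difficulty: matching the combined $\theta_{n,3}$-plus-corrections expression against $\Theta_{n,3}$ means tracking a sizeable collection of theta factors whose arguments are monomials in $q,x,y$, keeping the $q$-exponents (which are $\binom{\cdot}{2}$ of linear forms in the summation indices) mutually consistent, and pinning down the overall power of $q$ and the sign. Once the identity has been reduced to an equality of finite products of $(q^{\bullet};q^{\bullet})_{\infty}$'s and theta functions in $x,y$ with no free summation left, it becomes a finite verification that can be completed by classical means, e.g.\ by clearing denominators and comparing both sides as Laurent series in $x$. The companion results Theorem~\ref{theorem:genfn1} ($p=1$) and Theorem~\ref{theorem:genfn2} ($p=2$) are handled by the identical three-step scheme, with the same—but shorter—theta computations producing the error term $0$ and $\Theta_{n,2}(x,y,q)$ respectively.
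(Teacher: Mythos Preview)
Your approach is valid in principle—the paper explicitly acknowledges at the start of Section~\ref{section:thm-proofs} that the subtheorems ``can be obtained from Theorem~\ref{theorem:masterFnp} by using Appell-Lerch sum properties such as Theorems~\ref{theorem:changing-z-theorem} and~\ref{theorem:msplit-general-n} as well as theta function properties''—but the paper then deliberately does \emph{not} take this route and instead proves Theorem~\ref{theorem:genfn3} (and the other three subtheorems) directly, by rerunning the same functional-equation-plus-residue machinery that proved Theorem~\ref{theorem:masterFnp}.

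Concretely, the paper sets $M_{n,p}(x,y,q):=g_{n,n+p,n}(x,y,q,y^n/x^n,x^n/y^n)$, checks that both $M_{n,p}$ and $\Theta_{n,p}$ satisfy the same functional equation in each variable (Proposition~\ref{proposition:Genfg-functional}, Lemma~\ref{lemma:GenTheta-functional}), and then proves that $M_{n,p}-\Theta_{n,p}$ is analytic (Proposition~\ref{proposition:MTsub-gen}) by computing and matching residues at the three families of candidate poles (from $y^n/x^n=q^{\bullet}$, from $x^p=\pm q^{\bullet}$, and from $y^p=\pm q^{\bullet}$). For $p=3$ this is carried out via Corollaries~\ref{corollary:Rn31-residue}, \ref{corollary:Rnp2-residue} and Lemmas~\ref{lemma:Tn31-residue}, \ref{lemma:Tn32-residue}; the argument that the residues agree uses Proposition~\ref{proposition:prop-f141} (a cube-root-of-unity theta identity) as the key input, and finishes with an Atkin--Swinnerton-Dyer zero count (Proposition~\ref{proposition:H1Thm1.7}) plus Hartog's theorem at the intersection points. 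Once analyticity is established, the conclusion $f=M-\Theta$ follows exactly as in the proof of Theorem~\ref{theorem:masterFnp}.

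What each approach buys: your route is conceptually economical because it reuses the master theorem, but it front-loads all the difficulty into a single theta identity (nine terms from $\theta_{n,3}$ plus $2n$ change-of-$z$ corrections collapsing to $\Theta_{n,3}$) that you have only sketched and which would itself require nontrivial machinery to verify. The paper's route does more bookkeeping overall but breaks the work into a sequence of concrete residue computations, each of which is a local calculation with a clear target; it also makes the role of $(n,3)=1$ transparent (the removable-pole analysis in Lemma~\ref{lemma:Tn32-residue} splits on residues of $2\ell+t$ mod $3$).
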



\begin{theorem}  \label{theorem:genfn4} Let $n$ be a positive odd integer. For generic $x,y\in\mathbb{C}^{*}$
\begin{align*}
f_{n,n+4,n}(x,y,q)=g_{n,n+4,n}(x,y,q,y^n/x^n,x^n/y^n)-\Theta_{n,4}(x,y,q),
\end{align*}
where
\begin{align*}
\Theta_{n,4}(x,y,q):=&\frac{q^{-(n^2+n-3)}x^{-(n-3)/2}y^{(n+1)/2}j(y/x;q^{4(2n+4)})}{j(y^n/x^n;q^{4n(2n+4)})j(-q^{2n+8}x^4,-q^{2n+8}y^4;q^{4(2n+4)})}\Big \{ J_{4n,16n}  S_1-qJ_{8n,16n} S_2\Big \},
\end{align*}
with
\begin{align*}
S_1:&=\frac{j(q^{6n+16}x^2y^2,-q^{2(2n+4)}y/x;q^{4(2n+4)})j(q^{n+4}xy;q^{2(2n+4)})}{J_{2(2n+4)}^3J_{8(2n+4)}} \\
& \cdot  \Big \{j(-q^{2n+8}x^2y^2,q^{2(2n+4)}y^2/x^2;q^{4(2n+4)})J_{4(2n+4)}^2\\
&\ \ \ \ +\frac{q^{n+4}x^2 j(-q^{6n+16}x^2y^2;q^{4(2n+4)})j(q^{2(2n+4)}y/x,-y/x;q^{4(2n+4)})^2}{J_{4(2n+4)}}\Big \},\\
S_2:&=\frac{j(q^{2n+8}x^2y^2,-y/x;q^{4(2n+4)})j(q^{3n+8}xy;q^{2(2n+4)})}{J_{2(2n+4)}^2} \\
& \cdot  \Big \{\frac{q^{n+1}j(-q^{2n+8}x^2y^2,q^{2(2n+4)}y^2/x^2;q^{4(2n+4)})J_{8(2n+4)}}{yJ_{4(2n+4)} }\\ 
&\ \ \ \ +\frac{qxj(-q^{6n+16}x^2y^2;q^{4(2n+4)})j(q^{4(2n+4)}y^2/x^2;q^{8(2n+4)})^2}{J_{8(2n+4)} }\Big \}.
\end{align*}
\end{theorem}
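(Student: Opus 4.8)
The plan is to obtain Theorem \ref{theorem:genfn4} as a specialization of the master formula, Theorem \ref{theorem:masterFnp}, followed by a change of the free parameters in the Appell--Lerch piece $g_{n,n+4,n}$. Since $n$ is odd we have $(n,4)=1$, so Theorem \ref{theorem:masterFnp} applies with $p=4$ and gives
\[
f_{n,n+4,n}(x,y,q)=g_{n,n+4,n}(x,y,q,-1,-1)+\frac{1}{\overline{J}_{0,4n(2n+4)}}\,\theta_{n,4}(x,y,q),
\]
with $\theta_{n,4}$ the explicit double sum over $r^*,s^*\in\{0,1,2,3\}$ from that theorem. The target expression instead evaluates $g_{n,n+4,n}$ at $z_1=y^n/x^n$, $z_0=x^n/y^n$, so the first task is to bridge that gap.

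I would next use the Appell--Lerch ``changing $z$'' identity, Theorem \ref{theorem:changing-z-theorem}. The quantity $g_{n,n+4,n}(x,y,q,-1,-1)$ is built from $2n$ Appell--Lerch sums, all with base $q^{\,n((n+4)^2-n^2)}=q^{\,8n(n+2)}$ and third argument $-1$; applying Theorem \ref{theorem:changing-z-theorem} to the first block of $n$ sums converts $-1$ to $y^n/x^n$ and to the second block converts $-1$ to $x^n/y^n$, each conversion costing one extra theta quotient. Collecting these yields
\[
g_{n,n+4,n}(x,y,q,-1,-1)=g_{n,n+4,n}(x,y,q,y^n/x^n,x^n/y^n)+T(x,y,q),
\]
where $T$ is an explicit sum of at most $2n$ theta quotients in $q$. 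Substituting, Theorem \ref{theorem:genfn4} is reduced to the purely theta-function identity
\[
T(x,y,q)+\frac{1}{\overline{J}_{0,4n(2n+4)}}\,\theta_{n,4}(x,y,q)=-\,\Theta_{n,4}(x,y,q).
\]

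To establish this identity I would dissect the theta quotients in $T$ and in $\theta_{n,4}$ with respect to the larger moduli $q^{\,8(n+2)}$, $q^{\,8n(n+2)}$ and $q^{\,16n}$ via Jacobi triple product splittings; because $n$ enters only through these moduli, the bookkeeping of which theta factors survive is uniform in $n$, and bundling the surviving factors is what produces the nested brackets $S_1$ and $S_2$. One then argues that both sides, viewed as functions of $x$ with $y,q$ fixed, are elliptic of the same type---the same quasi-periodicity under $x\mapsto qx$ and the same divisor of zeros and poles---so their difference is a constant, which is forced to vanish by evaluating at a convenient specialization (for instance $x=y$, or $x$ tending to a common zero). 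This makes the verification a finite one.

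The main obstacle is precisely this last step. In the $p=1$ case (Theorem \ref{theorem:genfn1}) the changing-$z$ terms cancel $\theta_{n,1}$ completely and no residual theta quotient survives; for $p=4$ no such cancellation occurs, and the several theta quotients comprising $T$ together with $\theta_{n,4}$ must be consolidated, through repeated use of addition theorems and triple product identities for theta functions, into the single nested expression $-\Theta_{n,4}$. Tracking this consolidation---confirming that every theta factor recombines as claimed and that no spurious pieces remain---is the delicate and lengthy computation; it is the ``classical means'' alluded to in Example 2, applied in the most involved of the cases $p\le 4$. The ellipticity argument guarantees finiteness, but the resulting finite theta identity is itself intricate, which is why $\Theta_{n,4}$ has the multi-layered shape recorded in the statement.
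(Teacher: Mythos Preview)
Your route---specialize the master formula Theorem \ref{theorem:masterFnp} at $p=4$ and then shift the free parameters in $g_{n,n+4,n}$ from $(-1,-1)$ to $(y^n/x^n,x^n/y^n)$ via Theorem \ref{theorem:changing-z-theorem}---is exactly the approach the paper acknowledges at the start of Section~\ref{section:thm-proofs} but then explicitly declines to follow. Instead the paper proves Theorem \ref{theorem:genfn4} \emph{directly}, by the same residue/Hartog machinery used for Theorem \ref{theorem:masterFnp}: set $M_{n,4}(x,y,q):=g_{n,n+4,n}(x,y,q,y^n/x^n,x^n/y^n)$, verify that $M_{n,4}$ and $\Theta_{n,4}$ satisfy the same functional equation under $x\mapsto q^{4(2n+4)}x$ (Lemma \ref{lemma:GenTheta-functional}), and then prove that $M_{n,4}-\Theta_{n,4}$ is analytic (Proposition \ref{proposition:MTsub-gen}) by matching residues at the three families of simple poles. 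For $p=4$ this residue match is the content of Corollaries \ref{corollary:Rn41-residue}, \ref{corollary:Rnp2-residue} and Lemmas \ref{lemma:Tn41-residue}, \ref{lemma:Tn42-residue}. Once analyticity holds, the same Laurent-series argument as in the proof of Theorem \ref{theorem:masterFnp} forces $f_{n,n+4,n}-(M_{n,4}-\Theta_{n,4})=0$.

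The gap in your proposal is the final ``purely theta-function identity'' $T+\overline{J}_{0,4n(2n+4)}^{-1}\theta_{n,4}=-\Theta_{n,4}$, which you reduce to but do not prove. Your ellipticity sketch is too loose to close it: the relevant quasi-period is $q^{4(2n+4)}$, not $q$; and matching quasi-periods alone does not force the difference to be constant---you would need to show the difference is holomorphic, which means computing and cancelling the residues of $T$, $\theta_{n,4}$, and $\Theta_{n,4}$ at all their poles. But that residue computation is precisely what the paper does directly for $M_{n,4}-\Theta_{n,4}$, without ever introducing the intermediate $T$. So your route, once made rigorous, collapses back onto the paper's residue analysis plus the extra bookkeeping of carrying $T$ and $\theta_{n,4}$ separately. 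The paper's direct approach is cleaner because it sidesteps that bookkeeping entirely.
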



\begin{corollary} \label{corollary:f373-fifth} Theorem \ref{theorem:genfn4} with $n=3$  yields the Appell-Lerch sum representations of Section \ref{section:known} for the fifth order mock theta functions $f_0(q), f_1(q), F_0(q), F_1(q)$.
\end{corollary}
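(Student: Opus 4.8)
The strategy is to substitute $n=3$ into Theorem \ref{theorem:genfn4} and apply it to the Hecke-type double sum forms of the four fifth-order functions, then simplify the output into the Appell-Lerch representations collected in Section \ref{section:known}. First I would recall from the literature (see \cite{A}, and compare (\ref{equation:heckef0})) that $(q)_{\infty}f_0(q)$, $(q)_{\infty}f_1(q)$, $F_0(q)$, and $F_1(q)$ are each a short $\Z$-linear combination of terms of the shape $q^{\alpha}f_{3,7,3}(q^{\beta},q^{\gamma},q)$; for instance $f_0(q)\cdot(q)_{\infty}=f_{3,7,3}(q^2,q^2,q)+q^3f_{3,7,3}(q^7,q^7,q)$, with analogous expansions for the other three functions (the quadratic form in all four cases is of the type $5(r+s)^2-2(r-s)^2$, which is exactly the form carried by $f_{3,7,3}$). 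Hence the corollary amounts to evaluating $f_{3,7,3}(x,y,q)$ at the finitely many monomial specializations of $x,y$ that occur, and Theorem \ref{theorem:genfn4} with $a=c=3$, $b=7$ (so that $b^2-ac=40$) supplies exactly such an evaluation,
\[
f_{3,7,3}(x,y,q)=g_{3,7,3}(x,y,q,y^3/x^3,x^3/y^3)-\Theta_{3,4}(x,y,q).
\]

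Next I would expand and simplify. By (\ref{equation:mdef-2}) the term $g_{3,7,3}(x,y,q,y^3/x^3,x^3/y^3)$ is a sum of $a+c=6$ Appell-Lerch sums of modulus $q^{a(b^2-ac)}=q^{120}$, each weighted by a theta factor $j(q^{7t}x;q^3)$ or $j(q^{7t}y;q^3)$; at the monomial specializations one term in each inner sum has its argument equal to a power of $q^3$ and hence drops out, and when $x=y$ the two inner sums coincide, leaving only a few Appell-Lerch terms and accounting for the coefficient $2$ seen in $2m(q^{14},q^{30},q^4)+2q^{-2}m(q^4,q^{30},q^4)$. I would then adjust the $z$-parameters from $y^3/x^3,x^3/y^3$ to the values appearing in Section \ref{section:known} using the changing-$z$ result Theorem \ref{theorem:changing-z-theorem}, and recognize the surviving combination of $q^{120}$-modulus sums as the $n=2$ splitting of $q^{30}$-modulus sums via Theorem \ref{theorem:msplit-general-n} (note $120=2^2\cdot 30$). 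Finally I would substitute $n=3$ and the monomial values of $x,y$ into the long expression $\Theta_{3,4}$ and its constituents $S_1,S_2$: many of the products $J_{\bullet}$ and theta quotients $j(\bullet;\bullet)$ collapse or vanish, and the residual theta quotient is rewritten using Jacobi's triple product together with the elementary relations $j(q^kx;q)=(-1)^kq^{-\binom{k}{2}}x^{-k}j(x;q)$, $j(x;q)=j(q/x;q)$, and $j(x^2;q^2)=j(x;q)j(-x;q)$, until it matches the theta part of the Section \ref{section:known} representation for each of $f_0,f_1,F_0,F_1$.

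The principal difficulty is bookkeeping rather than conceptual: $\Theta_{n,4}$ is a long two-layered theta quotient, and one must push $n=3$ together with four distinct monomial specializations of $(x,y)$ all the way through, tracking exponents carefully and identifying which theta factors telescope or vanish, while organizing the computation so that the cancellations among the six $q^{120}$-modulus Appell-Lerch sums (after $z$-normalization and modulus consolidation) are transparent. A secondary technical point is that some specializations send the naive $z$-parameters $y^3/x^3$ to an integral power of $q$, where an individual Appell-Lerch sum $m$ is undefined; as elsewhere in the paper one handles this by first proving the identity for generic $x,y$ and then specializing, the apparent poles cancelling among the grouped terms. Once the Appell-Lerch parts are matched, all that remains is a finite list of classical theta identities, each verifiable directly from the triple product identity, which completes the proof.
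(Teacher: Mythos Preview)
Your outline is reasonable in spirit, but there is a genuine technical obstacle at the specialization step that you wave away too quickly. For $f_0$ and $f_1$ the two Hecke pieces are $f_{3,7,3}(q^2,q^2,q)$, $f_{3,7,3}(q^7,q^7,q)$, etc., all with $x=y$; for $F_0,F_1$ the base is $q^2$ and $y/x$ is an integral power of $q^2$. In every case the $z$-parameters $y^3/x^3$, $x^3/y^3$ of Theorem~\ref{theorem:genfn4} land on an integral power of the base, so each summand $m(\,\cdot\,,q^{120},y^3/x^3)$ is individually undefined, and simultaneously $j(y^3/x^3;q^{120})$ vanishes in the denominator of $\Theta_{3,4}$. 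Proposition~\ref{proposition:MTsub-gen} guarantees the \emph{difference} $g-\Theta$ is analytic there, but extracting its value at $x=y$ would require an honest residue/limit computation that you do not supply; ``prove for generic $x,y$ and then specialize'' is not enough here, since the quantities you want to read off (the individual $m$'s and the theta quotient) do not exist separately at the point in question.

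The paper sidesteps this entirely by first invoking Proposition~\ref{proposition:fabc-mod2} (together with Proposition~\ref{proposition:H7eq1.14} and Corollary~\ref{corollary:fabc-funceqnspecial}) to collapse each pair of Hecke sums into a \emph{single} $f_{3,7,3}$ at twisted arguments, e.g.\ $J_1 f_0(q)=f_{3,7,3}(q^{5/8},-q^{5/8},-q^{1/4})$. Now $y/x=-1$, so $y^3/x^3=-1$ is not a power of the base and Corollary~\ref{corollary:f373} applies cleanly. A second dividend of this manoeuvre is that the base becomes $-q^{1/4}$, so $q^{np(2n+p)}=(-q^{1/4})^{120}=q^{30}$: the Appell--Lerch sums emerge directly at modulus $q^{30}$, and no appeal to Theorem~\ref{theorem:msplit-general-n} is needed. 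After a single use of Theorem~\ref{theorem:changing-z-theorem} to normalize the $z$'s, the residual theta identity is not dispatched by generic triple-product manipulations as you suggest, but is reduced (via Proposition~\ref{proposition:CHcorollary}) to the two prepared identities of Proposition~\ref{proposition:prop-f373eqs}, specialized at $x=1,q^6$ for $f_0,f_1$ and the companion identity for $F_0,F_1$.
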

Corollary \ref{corollary:f343-seventh} gives a new proof of Hickerson's identities for the seventh order functions \cite{H2}.  Corollary  \ref{corollary:f373-fifth} gives a new proof of the mock theta conjectures, which were first proved in \cite{H1}.

 We note that Theorem \ref{theorem:genfn1} with $n=1$  proves the  Appell-Lerch sum representations of Section \ref{section:known} for the sixth order mock theta functions $\phi(q)$, $\psi(q)$, $\rho(q)$, $\sigma(q)$; Theorem \ref{theorem:genfn2} with $n=1$ proves representations for the three second orders  as well as those for the eight eighth  orders; Theorem \ref{theorem:genfn3} with $n=1$ yields the representations for the fifth order mock theta functions $\psi_0(q)$, $\psi_1(q),$ $\phi_0(q),$ $\phi_1(q)$; Theorem \ref{theorem:genfn4} with $n=1$ can proves the representations for the sixth orders  $\lambda(q)$, $\mu(q)$, $\phi\_(q)$, ${\psi}\_(q)$. 

This paper is organized as follows.  In Section \ref{section:prelim}, we recall from the literature several theta function identities and other useful facts.  In Section \ref{section:prop-mxqz}, we recall known Appell-Lerch sum properties and prove new ones.  In Section \ref{section:genlambert}, we write generalized Lambert series, which have been used to express mock theta functions, in terms of Appell-Lerch sums.  We then use this information in Section \ref{section:known} to write the classical mock theta functions, as well as other functions found in the lost notebook, in terms of the Appell-Lerch sums.  In Section \ref{section:maintheorems-proofs}, we prove Theorems  \ref{theorem:masterFnp} and \ref{theorem:main-acdivb}.   We prove the four subtheorems in Section \ref{section:thm-proofs}.   In Section \ref{section:P1-KP-corollaries}, we prove Corollary \ref{corollary:cor-P1-KP}, and in Section \ref{section:cor-proofs}, we prove the corollaries to the four subtheorems.


\section{Preliminaries}\label{section:prelim}
We will frequently use the following identities without mention.  They easily follow from the definitions.
\begin{subequations}
\begin{gather}
\overline{J}_{0,1}=2\overline{J}_{1,4}=\frac{2J_2^2}{J_1},  \overline{J}_{1,2}=\frac{J_2^5}{J_1^2J_4^2},   J_{1,2}=\frac{J_1^2}{J_2},   \overline{J}_{1,3}=\frac{J_2J_3^2}{J_1J_6}, \notag\\
J_{1,4}=\frac{J_1J_4}{J_2},   J_{1,6}=\frac{J_1J_6^2}{J_2J_3},   \overline{J}_{1,6}=\frac{J_2^2J_3J_{12}}{J_1J_4J_6}.\notag
\end{gather}
\end{subequations}
Also following from the definitions are the following general identities:
\begin{subequations}
\begin{gather}
j(q^n x;q)=(-1)^nq^{-\binom{n}{2}}x^{-n}j(x;q), \ \ n\in\mathbb{Z},\label{equation:1.8}\\
j(x;q)=j(q/x;q)=-xj(x^{-1};q)\label{equation:1.7},\\
j(-x;q)={J_{1,2}j(x^2;q^2)}/{j(x;q)} \ \ {\text{if $x$ is not an integral power of $q$,}}\label{equation:1.9}\\
j(x;q)={J_1}j(x,qx,\dots,q^{n-1}x;q^n)/{J_n^n} \ \ {\text{if $n\ge 1$,}}\label{equation:1.10}\\
j(x;-q)={j(x;q^2)j(-qx;q^2)}/{J_{1,4}},\label{equation:1.11}\\
j(z;q)=\sum_{k=0}^{m-1}(-1)^k q^{\binom{k}{2}}z^k
j\big ((-1)^{m+1}q^{\binom{m}{2}+mk}z^m;q^{m^2}\big ),\label{equation:jsplit}\\
j(x^n;q^n)={J_n}j(x,\zeta_nx,\dots,\zeta_n^{n-1}x;q^n)/{J_1^n} \ \ {\text{if $n\ge 1$.}}\label{equation:1.12}
\end{gather}
\end{subequations}
\noindent  Here, $\zeta_n$ an $n$-th primitive root of unity. We recall the classical partial fraction  expansion for the reciprocal of Jacobi's theta product
\begin{equation}
\sum_n\frac{(-1)^nq^{\binom{n+1}{2}}}{1-q^nz}=\frac{J_1^3}{j(z;q)},\label{equation:Reciprocal}
\end{equation}
\noindent where $z$ is not an integral power of $q$.  A convenient form of the Riemann relation for theta functions is,
\begin{proposition}\label{proposition:CHcorollary} For generic $a,b,c,d\in \mathbb{C}^*$
\begin{align*}
j(ac,a/c,bd,b/d;q)=j(ad,a/d,bc,b/c;q)+b/c \cdot j(ab,a/b,cd,c/d;q).
\end{align*}
\end{proposition}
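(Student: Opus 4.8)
The plan is to reduce the identity to a single classical theta-function relation (the Weierstrass/Riemann quartic identity for the Jacobi theta functions), which in the product notation $j(x;q)=(x)_\infty(q/x)_\infty(q)_\infty$ can be verified directly. First I would exploit the Jacobi triple product expansion $j(x;q)=\sum_n(-1)^nq^{\binom n2}x^n$ to write each of the three products of four $j$'s as a fourfold sum over $\Z^4$. After collecting terms, the natural move is to pass to new summation variables that diagonalize the quadratic form in the exponent: for a product like $j(ac,a/c,bd,b/d;q)$, the four inner summation indices $(m_1,m_2,m_3,m_4)$ contribute $q^{\binom{m_1}2+\binom{m_2}2+\binom{m_3}2+\binom{m_4}2}$ times a monomial in $a,b,c,d$, and one substitutes, say, $m_1=i+j$, $m_2=i-j$, $m_3=k+\ell$, $m_4=k-\ell$ (with the usual parity constraint), so that the $a,b$-dependence and the $c,d$-dependence separate into two independent theta sums. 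This is the standard mechanism by which a product of four theta functions becomes a sum of products of two theta functions; carrying it out on all three terms and matching coefficients is the computational heart of the argument.

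An equivalent and perhaps cleaner route, which I would actually prefer to present, is to treat both sides as functions of the variable $c$ (with $a,b,d$ fixed) and compare them as elements of a one-dimensional space of theta functions. Using the quasi-periodicity relation \eqref{equation:1.8}, namely $j(q^nx;q)=(-1)^nq^{-\binom n2}x^{-n}j(x;q)$, one checks that each of $j(ac,a/c;q)$, $j(bc,b/c;q)$, and $j(cd,c/d;q)$ transforms under $c\mapsto qc$ by the same automorphy factor (up to an explicit monomial), so that after dividing through by a suitable theta function all three terms of the claimed identity become elliptic functions of $c$ with at most a simple pole. A function in this space is determined by its value and residue at one point, or by its values at two points; so it suffices to verify the identity at two convenient specializations of $c$ — for instance $c=a$ and $c=b$ — where several of the $j$-factors vanish because of $j(q^n;q)=0$, and the identity collapses to a triviality or to the elementary relation $j(x;q)=-xj(1/x;q)$ from \eqref{equation:1.7}. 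One must also track the zeros of the common denominator to be sure no spurious pole has been introduced, which is where the genericity hypothesis on $a,b,c,d$ is used.

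The main obstacle is bookkeeping rather than conceptual: one has to keep the automorphy factors, the powers of $q$, and the stray monomial prefactor $b/c$ in front of the last term exactly consistent across all three terms, and verify that the residues (or the two specialized values) match with the correct signs. The coefficient $b/c$ is precisely what is forced by the transformation law, so getting it right is a good internal check. A reference-based shortcut is available as well: this is a form of the Riemann theta relation and appears, for example, in the work of Hickerson on the seventh-order mock theta functions; if desired one can simply cite it, but the two-point-evaluation argument above makes the paper self-contained. I expect the write-up to be short once the reduction to "equality of elliptic functions with a simple pole" is in place, with the bulk of the length devoted to the two evaluations at $c=a$ and $c=b$.
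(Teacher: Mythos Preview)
The paper does not prove this proposition at all: it is simply introduced as ``a convenient form of the Riemann relation for theta functions'' and used freely thereafter as a known identity. So there is no paper proof to compare against, and either of your routes would make the paper self-contained.

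Your second approach is standard and essentially correct, but one detail needs adjustment. After dividing by, say, $j(cd;q)j(c/d;q)$, the resulting elliptic function of $c$ has at most \emph{two} simple poles (at $c=d$ and $c=d^{-1}$), not one; the relevant Riemann--Roch space is two-dimensional. Equivalently, if you skip the division and apply Proposition~\ref{proposition:H1Thm1.7} directly to the difference $F(c)$ of the two sides, the functional equation is $F(qc)=q^{-1}c^{-2}F(c)$ with $n=2$, so a nonzero $F$ has exactly two zeros in the fundamental annulus. Your evaluations at $c=a$ and $c=b$ supply only two zeros, which is not yet enough to conclude. A third zero is immediate: at $c=q/a$ (or $c=q/b$) the relation $j(x;q)=j(q/x;q)$ from \eqref{equation:1.7} makes the verification collapse just as at $c=a$, and for generic $a,b$ the three points $a,b,q/a$ are distinct in the annulus. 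With that extra evaluation, Proposition~\ref{proposition:H1Thm1.7} forces $F\equiv 0$ and the argument is complete.
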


We collect several useful results about theta functions in terms of a proposition.  Among other places, these can be found in  \cite{H1}, \cite{H2}, and \cite{AH}.
\begin{proposition}   For generic $x,y,z\in \mathbb{C}^*$ 
 \begin{subequations}
\begin{gather}
j(qx^3;q^3)+xj(q^2x^3;q^3)=j(-x;q)j(qx^2;q^2)/J_2={J_1j(x^2;q)}/{j(x;q)},\label{equation:H1Thm1.0}\\
j(x;q)j(y;q)=j(-xy;q^2)j(-qx^{-1}y;q^2)-xj(-qxy;q^2)j(-x^{-1}y;q^2),\label{equation:H1Thm1.1}\\
j(-x;q)j(y;q)-j(x;q)j(-y;q)=2xj(x^{-1}y;q^2)j(qxy;q^2),\label{equation:H1Thm1.2A}\\
j(-x;q)j(y;q)+j(x;q)j(-y;q)=2j(xy;q^2)j(qx^{-1}y;q^2),\label{equation:H1Thm1.2B}\\
\frac{J_1^3j(xz;q)j(x^n;q^n)}{J_n^3j(x;q)j(z;q)}=\sum_{k=0}^{n-1}\frac{x^kj(q^kx^nz;q^n)}{j(q^kz;q^n)},
\label{equation:ThmAH1.1}\\
j(x;q)j(y;q^n)=\sum_{k=0}^n(-1)^kq^{\binom{k}{2}}x^kj\big ((-1)^nq^{\binom{n}{2}+kn}x^ny;q^{n(n+1)}\big )j\big (-q^{1-k}x^{-1}y;q^{n+1} \big ).\label{equation:Thm1.3AH6}
\end{gather}
\end{subequations}
\end{proposition}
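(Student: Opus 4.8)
All six identities are classical---they appear in \cite{H1,H2,AH}---and each reduces, via Jacobi's triple product (the series--product equivalence of Section~\ref{section:introduction}), to an elementary lattice--sum computation together with the rules \eqref{equation:1.7}--\eqref{equation:1.12}, the partial-fraction expansion \eqref{equation:Reciprocal}, and the Riemann relation of Proposition~\ref{proposition:CHcorollary}. For \eqref{equation:H1Thm1.1} the plan is to write $j(x;q)j(y;q)=\sum_{m,n\in\Z}(-1)^{m+n}q^{\binom m2+\binom n2}x^my^n$ and split $\Z^2$ by the parity of $m+n$: on the even part substitute $(m,n)=(i+j,i-j)$, so $\binom m2+\binom n2=(i^2-i)+j^2$ and $x^my^n=(xy)^i(x/y)^j$, giving the product $j(-xy;q^2)\,j(-qx^{-1}y;q^2)$; on the odd part substitute $(m,n)=(i+j+1,i-j)$, so the exponent becomes $i^2+(j^2+j)$ and $x^my^n=x\,(xy)^i(x/y)^j$, which after a reindexing and one application of \eqref{equation:1.7} becomes $-x\,j(-qxy;q^2)\,j(-x^{-1}y;q^2)$. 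Identities \eqref{equation:H1Thm1.2A} and \eqref{equation:H1Thm1.2B} then follow formally by replacing $x\mapsto-x$ and (separately) $y\mapsto-y$ in \eqref{equation:H1Thm1.1} and subtracting, respectively adding, the two resulting identities.

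For \eqref{equation:H1Thm1.0} the second equality is pure bookkeeping: \eqref{equation:1.9} gives $j(-x;q)j(x;q)=J_{1,2}\,j(x^2;q^2)$, \eqref{equation:1.10} with $n=2$ gives $j(x^2;q)=(J_1/J_2^2)\,j(x^2;q^2)\,j(qx^2;q^2)$, and these combine (using $J_{1,2}=J_1^2/J_2$) to the claimed quotient. The first equality is the quintuple product identity, which in the present notation reads $j(z;q)\,j(qz^2;q^2)/J_2=j(-qz^3;q^3)-z\,j(-q^2z^3;q^3)$; substituting $z=-x$ and simplifying $(-x)^2$ and $(-x)^3$ gives the stated form. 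The quintuple product itself can be obtained from a residue-class-mod-$3$ dissection of $j(z;q)$ combined with Proposition~\ref{proposition:CHcorollary}, or simply quoted.

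For \eqref{equation:ThmAH1.1} I would use the elliptic-function (Liouville) principle in the variable $z$, with $x$ and $q$ generic. Both sides are meromorphic on $\C^{*}$ with simple poles exactly at $z\in q^{\Z}$ (on the right, the poles $z=q^{nj-k}$ with $0\le k\le n-1$, $j\in\Z$, exhaust $q^{\Z}$), both satisfy $F(qz)=x^{-1}F(z)$ by \eqref{equation:1.8}, and their residues at $z=1$ agree: only the $k=0$ summand on the right is singular there, and since $j(z;q)\sim(1-z)J_1^3$ and $j(z;q^n)\sim(1-z)J_n^3$ as $z\to1$, both residues equal $-\,j(x^n;q^n)/J_n^3$. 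Hence $F:=\text{LHS}-\text{RHS}$ is holomorphic on $\C^{*}$ with $F(qz)=x^{-1}F(z)$; writing $F(z)=\sum_k a_kz^k$ forces $a_k(q^k-x^{-1})=0$ for all $k$, so genericity of $x$ gives $F\equiv0$.

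Identity \eqref{equation:Thm1.3AH6} can be handled the same way in the variable $x$: both sides are entire on $\C^{*}$ and satisfy $F(qx)=-x^{-1}F(x)$, which only forces $F=c(y,q)\,j(x;q)$, so one extra specialization---say choosing $x$ so that the factor $j(-q\,x^{-1}y;q^{n+1})$ in the $k=0$ term vanishes, or reducing to the case $n=1$, which is \eqref{equation:H1Thm1.1}---is needed to conclude $c=0$. Alternatively one can dissect directly: in $j(x;q)j(y;q^n)=\sum_{m,\ell}(-1)^{m+\ell}q^{\binom m2+n\binom\ell2}x^my^\ell$ one sets $e=\ell-c$ and $m=(n+1)c-\ell+k$ with $k$ the unique element of $\{0,\dots,n\}$ satisfying $0\le k\le n$, which produces the right-hand side termwise. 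In every case the main obstacle is the same: the quadratic-exponent bookkeeping---checking that $\binom m2+\binom n2$ (respectively $\binom m2+n\binom\ell2$) really reorganizes into the advertised quadratic forms with all powers of $q$, all signs $(-1)^{(\cdot)}$, and all powers of $x$ and $y$ correct---together with, for the Liouville arguments, making sure the genericity hypothesis rules out the finitely many exceptional $q$-powers so that ``same poles, same quasi-period'' forces equality.
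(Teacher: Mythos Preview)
The paper does not actually prove this proposition: it merely states the six identities and cites \cite{H1}, \cite{H2}, \cite{AH} for proofs. So there is no argument in the paper to compare yours against; you have supplied more than the authors did.

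Your sketches are essentially correct. The parity dissection for \eqref{equation:H1Thm1.1} is the standard one and your exponent arithmetic checks out; \eqref{equation:H1Thm1.2A}--\eqref{equation:H1Thm1.2B} do follow by the sign substitutions you indicate; your derivation of the second equality in \eqref{equation:H1Thm1.0} from \eqref{equation:1.9}--\eqref{equation:1.10} is clean, and the first equality is indeed just the quintuple product with $z=-x$; and your Liouville argument for \eqref{equation:ThmAH1.1} is sound---the functional equation $F(qz)=x^{-1}F(z)$ and the residue match at $z=1$ are both correct as you compute them.

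The one place that remains genuinely sketchy is \eqref{equation:Thm1.3AH6}. Your Liouville-in-$x$ outline is fine as far as it goes (and one can check that the right-hand side really does satisfy $F(qx)=-x^{-1}F(x)$: the shifted $k'=n+1$ term collapses to the $k'=0$ term after two applications of \eqref{equation:1.8}), but as you note this only pins the difference down to $c(y,q)\,j(x;q)$, and your proposed ways to kill $c$ are not fleshed out. The direct dissection you allude to---writing $m=k+nr-s$, $\ell=r+s$ with $0\le k\le n$---is the cleanest route, but the substitution you wrote (``$e=\ell-c$ and $m=(n+1)c-\ell+k$'') is garbled and would need to be redone carefully. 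None of this is a real obstacle; it is just where the bookkeeping you warn about is heaviest.
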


\noindent Identity (\ref{equation:H1Thm1.0}) is the quintuple product identity. 

The next proposition follows immediately from \cite[Lemma $2$]{ASD} see also \cite[Theorem $1.7$]{H1}.
\begin{proposition}\label{proposition:H1Thm1.7} Let $C$ be a nonzero complex number, and let $n$ be a nonnegative integer.  Suppose that $F(z)$ is analytic for $z\ne 0$ and satisfies $F(qz)=Cz^{-n}F(z)$.  Then either $F(z)$ has exactly $n$ zeros in the annulus $|q|<|z|\le 1$ or $F(z)=0$ for all $z$.
\end{proposition}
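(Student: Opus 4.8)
The plan is to prove this by the argument principle, which is the natural tool for counting zeros of a function with multiplicative quasi-periodicity. We may assume $F\not\equiv 0$, and must then show that $F$ has exactly $n$ zeros, counted with multiplicity, in the fundamental annulus $\{z:|q|<|z|\le 1\}$. First I would record two easy preliminaries. Since $F$ is analytic and not identically zero on $\C^*$, its zeros are isolated there; and since any closed subannulus $\{r_1\le|z|\le r_2\}$ with $0<r_1\le r_2$ is a compact subset of $\C^*$, $F$ has only finitely many zeros in it. Next, iterating $F(qz)=Cz^{-n}F(z)$ shows that if $z_0$ is a zero then so is $q^kz_0$ for every $k\in\Z$, and a local computation shows the multiplicity is the same at every point of the orbit; thus the zero set of $F$ is a union of full $q$-orbits. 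Every such orbit meets each annulus of the form $\{\rho|q|<|z|\le\rho\}$, $\rho>0$, in exactly one point (this is just the statement that such an annulus is a set of representatives for $z\mapsto qz$ on $\C^*$), so the zero count of $F$ in $\{|q|<|z|\le 1\}$ equals its zero count in $\{\rho|q|<|z|<\rho\}$ for any $\rho>0$ both of whose bounding circles avoid the zeros of $F$; such $\rho$ exist because $F$ has only finitely many zeros in any fixed compact subannulus of $\C^*$.

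Now fix such a $\rho$ and let $N$ be the number of zeros of $F$ in $\{\rho|q|<|z|<\rho\}$. Since $F$ is holomorphic on $\C^*$ (no poles), the argument principle gives
\[
N=\frac{1}{2\pi i}\left(\int_{|z|=\rho}-\int_{|z|=\rho|q|}\right)\frac{F'(z)}{F(z)}\,dz,
\]
both circles positively oriented. In the second integral I substitute $z=qw$, so $dz=q\,dw$ and $|w|=\rho$; differentiating the local identity $\log F(qw)=\log C-n\log w+\log F(w)$ yields $q\,F'(qw)/F(qw)=F'(w)/F(w)-n/w$, whence
\[
\int_{|z|=\rho|q|}\frac{F'(z)}{F(z)}\,dz=\int_{|w|=\rho}\left(\frac{F'(w)}{F(w)}-\frac{n}{w}\right)dw.
\]
The two copies of $\int F'/F$ cancel, leaving
\[
N=\frac{1}{2\pi i}\int_{|w|=\rho}\frac{n}{w}\,dw=n,
\]
which is the assertion.

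The only point requiring any care --- the ``hard part,'' such as it is --- is the bookkeeping around the half-open annulus and possible zeros on its boundary circles; this is entirely dispatched by the fundamental-domain remark above, namely that any annulus $\{\rho|q|<|z|\le\rho\}$ is a complete set of orbit representatives for the $q$-action on $\C^*$ and hence carries the same zero count as $\{|q|<|z|\le 1\}$. (Alternatively, as noted in the statement, one may simply quote \cite{ASD}.) Everything else --- isolatedness and local finiteness of zeros, the argument principle, and the single use of the functional equation to identify the inner and outer contour integrals --- is routine.
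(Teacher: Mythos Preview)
Your proof is correct. The paper does not actually prove this proposition; it merely states that it ``follows immediately from \cite[Lemma~2]{ASD}, see also \cite[Theorem~1.7]{H1}.'' Your argument-principle computation, together with the observation that zeros of $F$ come in full $q$-orbits of constant multiplicity (so that any fundamental annulus for the $q$-action carries the same zero count), is exactly the standard proof of this fact and is presumably what appears in those references. One tiny quibble: your parenthetical ``as noted in the statement, one may simply quote \cite{ASD}'' is slightly off, since the citation appears in the paper's text preceding the proposition rather than in the statement itself---but this does not affect the mathematics.
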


The next two propositions involve computing residues.  Because the proofs are straightforward, they have been omitted.  The first proposition is \cite[Theorem 1.3]{H1}.
\begin{proposition} \label{proposition:H1Thm1.3} Define $G(z):={1}/{j(\beta z^b;q^m)}.$  $G(z)$ is meromorphic for $z\ne 0$ with simple poles at points $z_0$ such that $z_0^b=q^{km}/\beta$.  The residue at such $z_0$ is ${(-1)^{k+1}q^{m\binom{k}{2}}z_0}/{bJ_m^3}$.
\end{proposition}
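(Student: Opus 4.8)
The plan is to reduce the statement to the single residue of $1/j(\,\cdot\,;q^m)$ at a power of $q^m$ and then transport that answer through the substitution $w=\beta z^{b}$. First I would read off the zero set of the theta product: from $j(w;q^m)=(w;q^m)_\infty(q^m/w;q^m)_\infty(q^m;q^m)_\infty$ one sees that $w\mapsto j(w;q^m)$ is holomorphic with only \emph{simple} zeros, located exactly at $w=q^{mk}$, $k\in\mathbb{Z}$. Since $z\mapsto\beta z^{b}$ is holomorphic with nonvanishing derivative for $z\ne0$, the composite $z\mapsto j(\beta z^{b};q^m)$ is holomorphic for $z\ne0$ with simple zeros precisely at the points $z_0$ with $z_0^{b}=q^{mk}/\beta$; hence $G$ is meromorphic away from $0$ with simple poles there, which is the first assertion.

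For the residue, at such a simple pole $z_0$ (so $\beta z_0^{b}=q^{mk}$) the chain rule gives
\[
\operatorname*{Res}_{z=z_0}G(z)=\frac{1}{\left.\partial_z\, j(\beta z^{b};q^m)\right|_{z=z_0}}=\frac{z_0}{b\,q^{mk}}\operatorname*{Res}_{w=q^{mk}}\frac{1}{j(w;q^m)},
\]
where I used $b\beta z_0^{\,b-1}=b\,q^{mk}/z_0$. So everything reduces to the residue of $1/j(w;q^m)$ at $w=q^{mk}$. I would obtain this from the partial fraction expansion \eqref{equation:Reciprocal} with $q$ replaced by $q^m$: in $\sum_n(-1)^nq^{m\binom{n+1}{2}}/(1-q^{mn}w)=J_m^3/j(w;q^m)$ the $n$-th summand has a simple pole at $w=q^{-mn}$ with residue $(-1)^{n+1}q^{m\binom{n}{2}}$, and setting $n=-k$ together with $\binom{-k}{2}=\binom{k+1}{2}=\binom{k}{2}+k$ yields $\operatorname*{Res}_{w=q^{mk}}\bigl(1/j(w;q^m)\bigr)=(-1)^{k+1}q^{m\binom{k}{2}+mk}/J_m^3$. (Equivalently one can bypass \eqref{equation:Reciprocal}: the quasi-periodicity \eqref{equation:1.8} applied with $q\mapsto q^m$ reduces $\partial_w j(w;q^m)$ at $w=q^{mk}$ to its value at $w=1$, and writing $j(w;q^m)=(1-w)(q^mw;q^m)_\infty(q^m/w;q^m)_\infty(q^m;q^m)_\infty$ shows that value is $-J_m^3$.) Substituting this into the display collapses the factor $q^{mk}$ and gives $\operatorname*{Res}_{z=z_0}G(z)=(-1)^{k+1}q^{m\binom{k}{2}}z_0/(bJ_m^3)$, the asserted value.

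I do not expect a genuine obstacle: the whole argument is elementary complex analysis combined with theta-product identities already recorded in the excerpt. The only points that merit care are (i) the simplicity of the pole, which rests on both the simplicity of the zeros of $j(\,\cdot\,;q^m)$ and the fact that $z\mapsto\beta z^{b}$ is a local biholomorphism away from $0$, and (ii) the exponent bookkeeping $\binom{-k}{2}=\binom{k}{2}+k$ incurred when pushing the residue of $1/j$ through the power map.
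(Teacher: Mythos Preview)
Your proof is correct. The paper in fact omits the proof of this proposition entirely, declaring it straightforward and citing \cite[Theorem~1.3]{H1}; your argument---reducing via the chain rule to the residue of $1/j(w;q^m)$ at $w=q^{mk}$ and reading that off from the partial-fraction identity~\eqref{equation:Reciprocal} (or equivalently from the factorization $j(w;q^m)=(1-w)(q^mw;q^m)_\infty(q^m/w;q^m)_\infty(q^m;q^m)_\infty$ together with~\eqref{equation:1.8})---is exactly the kind of straightforward computation the authors have in mind.
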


\begin{proposition}\label{proposition:mnp2Lerch-residue}
Define
\begin{equation}
G(x):=\sum_{m}\frac{(-1)^mq^{\binom{m}{2}}z^m}{1-q^{m-1}\beta x^{-p}}.
\end{equation}
$G(x)$ is meromorphic with simple poles at points $x_0$ such that $x_0^p=\beta q^k$.  The residue at such an $x_0$ is ${(-1)^{k+1}q^{\binom{k+1}{2}}z^{k+1} x_0}/{p}.$
\end{proposition}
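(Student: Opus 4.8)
The plan is to fix a prospective pole $x_0$ with $x_0^{p}=\beta q^{k}$ for some $k\in\Z$, observe that exactly one summand of $G(x)$ is singular there, and read off the residue from that single term; this parallels the residue computation in Proposition~\ref{proposition:H1Thm1.3}.

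First I would check that $G(x)$ is meromorphic on $\C^{*}$ with at worst simple poles at the stated points. Since $0<|q|<1$, the numerators $(-1)^{m}q^{\binom{m}{2}}z^{m}$ decay super-exponentially as $m\to+\infty$. For $m\to-\infty$ one has $1-q^{m-1}\beta x^{-p}\to\infty$, so the $m$-th summand is asymptotically a bounded multiple of $z^{m}q^{\binom{m}{2}-(m-1)}=z^{m}q^{\binom{m-1}{2}}$, using the identity $\binom{m}{2}-(m-1)=\binom{m-1}{2}$; since $\binom{m-1}{2}$ grows quadratically, this decays as well. Hence the series converges uniformly on compact subsets of $\C^{*}$ disjoint from $P:=\{x:\ x^{p}=\beta q^{m-1}\text{ for some }m\in\Z\}$, so $G$ is meromorphic with poles contained in $P$. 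Moreover, $x_0^{p}=\beta q^{m-1}=\beta q^{m'-1}$ forces $m=m'$, so each point of $P$ arises from a unique value of $m$.

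Now fix $x_0$ with $x_0^{p}=\beta q^{k}$, i.e.\ $\beta x_0^{-p}=q^{-k}$. Then $1-q^{m-1}\beta x_0^{-p}=1-q^{m-1-k}$ vanishes exactly when $m=k+1$, so $G(x)$ minus its $m=k+1$ summand is holomorphic near $x_0$ and
\[
\operatorname{Res}_{x=x_0}G(x)=(-1)^{k+1}q^{\binom{k+1}{2}}z^{k+1}\cdot\operatorname{Res}_{x=x_0}\frac{1}{1-q^{k}\beta x^{-p}}.
\]
Writing $f(x):=1-q^{k}\beta x^{-p}$, we have $f(x_0)=0$ and $f'(x_0)=p\,q^{k}\beta x_0^{-p-1}=(p/x_0)(q^{k}\beta x_0^{-p})=p/x_0\neq0$, so the zero is simple and $\operatorname{Res}_{x=x_0}1/f(x)=1/f'(x_0)=x_0/p$. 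Substituting gives $\operatorname{Res}_{x=x_0}G(x)=(-1)^{k+1}q^{\binom{k+1}{2}}z^{k+1}x_0/p$, as claimed. The only delicate point is the convergence bookkeeping in the second paragraph, which is what legitimizes the term-by-term residue extraction and confirms that the poles are simple; after that the argument reduces to the one-line formula $\operatorname{Res}_{x_0}(1/f)=1/f'(x_0)$ for a simple zero.
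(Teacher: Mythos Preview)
Your proof is correct and is precisely the straightforward computation the paper has in mind; indeed, the paper omits the proof of this proposition entirely, remarking only that it is routine. Your convergence bookkeeping, the identification of the unique singular summand $m=k+1$, and the residue calculation via $1/f'(x_0)$ are all sound.
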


We present three identities, which appear to be new.

\begin{proposition}\label{proposition:prop-f141} For $\omega$ a primitive cube root of unity 
\begin{equation}
\frac{1}{J_3}\cdot j(\omega^2y;q)j(qy;q^3)j(y;q^3)=\omega yj(y;q^3)j(q^2y^2;q^3)+j(qy;q^3)j(y^2;q^3).
\end{equation}
\end{proposition}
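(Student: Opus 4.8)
The plan is to reduce the assertion to an identity among theta functions of base $q^9$, obtained by combining the quintuple product identity with the three-dissection (\ref{equation:jsplit}).

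\emph{Clearing the common factors.} Since $y$ is generic, I would first divide both sides by $j(y;q^3)\,j(qy;q^3)$, which vanishes only on the discrete set $\{q^{3k},q^{3k-1}\}$, so that it suffices to prove
\[
\frac{1}{J_3}\,j(\omega^2 y;q)=\omega y\,\frac{j(q^2y^2;q^3)}{j(qy;q^3)}+\frac{j(y^2;q^3)}{j(y;q^3)};
\]
both sides of the original identity are entire in $y$, so multiplying back through afterwards recovers the stated form for all $y$.

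\emph{Expanding the right-hand side.} Next I would apply the quintuple product identity (\ref{equation:H1Thm1.0}), with $q$ replaced by $q^3$: once with $x=y$, and once with $x=qy$, the latter being legitimate because $q^2y^2=(qy)^2$. Together with (\ref{equation:1.8}) (to rewrite $j(q^9y^3;q^9)=-y^{-3}j(y^3;q^9)$) this gives
\[
\frac{j(y^2;q^3)}{j(y;q^3)}=\frac{1}{J_3}\big(j(q^3y^3;q^9)+y\,j(q^6y^3;q^9)\big),\qquad \frac{j(q^2y^2;q^3)}{j(qy;q^3)}=\frac{1}{J_3}\big(j(q^6y^3;q^9)-q\,y^{-2}j(y^3;q^9)\big).
\]
Substituting and collecting the coefficient of $j(q^6y^3;q^9)$, which equals $(1+\omega)y=-\omega^2 y$ since $1+\omega+\omega^2=0$, the right-hand side becomes $\tfrac{1}{J_3}\big(j(q^3y^3;q^9)-\omega^2 y\,j(q^6y^3;q^9)-\omega q\,y^{-1}j(y^3;q^9)\big)$.

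\emph{Recognizing the left-hand side, and the main point.} Finally, applying the $m=3$ case of (\ref{equation:jsplit}) to $j(\omega^2 y;q)$ and simplifying with $(\omega^2y)^3=y^3$, $(\omega^2)^2=\omega$, and again $j(q^9y^3;q^9)=-y^{-3}j(y^3;q^9)$, produces exactly the expression just obtained for the right-hand side; multiplying back by $j(y;q^3)j(qy;q^3)$ then recovers the stated identity for all $y\in\mathbb{C}^*$. The computation is routine cube-root-of-unity bookkeeping; the only thing requiring attention is the structural alignment — that $q^2y^2=(qy)^2$ so the quintuple product applies to the second quotient, and that the quintuple product in base $q^3$ outputs precisely the base-$q^9$ thetas (as $3^2=9$) appearing in the three-dissection of a base-$q$ theta. (One could instead try applying Proposition~\ref{proposition:H1Thm1.7} to the difference $F(y)$ of the two sides, which satisfies $F(q^3y)=-q^{-4}y^{-5}F(y)$, but the zeros of the theta factors only furnish two of the six zeros in a fundamental annulus that this would require, so the direct route above is preferable.)
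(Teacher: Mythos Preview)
Your proof is correct and uses essentially the same approach as the paper: the $m=3$ case of (\ref{equation:jsplit}) applied to $j(\omega^2 y;q)$, together with the quintuple product identity (\ref{equation:H1Thm1.0}) in base $q^3$ applied with $x=y$ and $x=qy$, linked by the relation $1+\omega=-\omega^2$. The only cosmetic difference is direction---the paper starts from $j(\omega^2 y;q)$, splits via (\ref{equation:jsplit}), regroups using $-\omega^2=1+\omega$, and then recognizes each bracket as a quintuple product, whereas you run the same computation from the other side.
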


\begin{proof}[Proof of Proposition \ref{proposition:prop-f141}]  We use (\ref{equation:jsplit}) to write
\begin{equation}
j(\omega^2y;q)=j(q^3y^3;q^9)-\omega^2yj(q^6y^3;q^9)+\omega qy^2 j(q^9y^3;q^9).
\end{equation}
\noindent In the middle term use $-\omega^2=1+\omega,$ to obtain
\begin{equation}
j(\omega^2y;q)=\big [j(q^3y^3;q^9)+yj(q^6y^3;q^9)\big ]
+\omega y\big [j(q^6y^3;q^9)+qyj(q^9y^3;q^9)\big ].
\end{equation}
\noindent Each bracketed term can be rewritten using the quintuple product identity (\ref{equation:H1Thm1.0}), resulting in
\begin{equation*}
j(\omega^2y;q)=\frac{J_3j(y^2;q^3)}{j(y;q^3)}+\omega y \frac{J_3j(q^2y^2;q^3)}{j(qy; q^3)}.\qedhere
\end{equation*}
\end{proof}

\begin{proposition}\label{proposition:prop-f373eqs}  For $x\in\mathbb{C}^*$ 
\begin{align}
\overline{J}_{5,20}j(-q^{6}x;q^{30})j(-q^3x^3;q^{30})-&q^2x^2\overline{J}_{30,120}j(-qx;q^{5})j(-q^{27}x^2,q^{30})\label{equation:f0f1}\\
&=j(q^2x^2;q^{20})j(q^6x;q^{15})j(q^{12}x^2;q^{60}),\notag\\
J_{10,20}j(q^{21}x;q^{30})j(-q^3x^3;q^{30})+&qxJ_{15,30}j(q^2x^2;q^{20})j(-q^{27}x^2;q^{30})\label{equation:F0F1}\\
&=j(-qx;q^{5})j(q^6x;q^{15})j(q^{12}x^2;q^{60}).\notag
\end{align}
\end{proposition}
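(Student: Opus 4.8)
Both identities are equalities between functions of $x$ that are entire on $\mathbb{C}^*$ (for $q$ fixed, $0<|q|<1$), and my plan is to pin each one down by a quasi-periodicity argument of the kind supplied by Proposition \ref{proposition:H1Thm1.7}. Consider (\ref{equation:f0f1}) first. Apart from the $x$-free theta constants $\overline{J}_{5,20}$, $\overline{J}_{30,120}$, every term on each side is a finite product of factors $j(q^{a}x;q^{m})$, $j(q^{a}x^{2};q^{m})$, $j(q^{a}x^{3};q^{30})$. The cubic factor $j(-q^{3}x^{3};q^{30})$ forces $x^{3}\mapsto q^{90}x^{3}$ to be a period, so the relevant quasi-period is $x\mapsto q^{30}x$. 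Using (\ref{equation:1.8}) I would verify, one term at a time, that
\[
F(q^{30}x)=q^{-105}\,x^{-10}\,F(x)
\]
holds for each of the two summands on the left of (\ref{equation:f0f1}) as well as for the product on the right --- the point being that all three pieces carry the \emph{same} multiplier. (The exponents check out because the total degree in $x$, counting the $x^{2}$ and $x^{3}$ factors with weight $2$ and $3$, equals $10$ on each side.) The same computation for (\ref{equation:F0F1}) produces the common multiplier $-q^{-120}x^{-10}$ under $x\mapsto q^{30}x$.

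Granting this, the difference $D(x)$ of the two sides of (\ref{equation:f0f1}) is holomorphic on $\mathbb{C}^*$ and satisfies $D(q^{30}x)=q^{-105}x^{-10}D(x)$, so Proposition \ref{proposition:H1Thm1.7} (with $q$ replaced by $q^{30}$ and $n=10$) gives $D\equiv 0$ or else $D$ has exactly ten zeros in $|q^{30}|<|x|\le 1$. To rule out the latter I would locate the zeros of the right side using $j(\,\cdot\,;q)=0\iff(\,\cdot\,)\in q^{\mathbb{Z}}$: the factor $j(q^{2}x^{2};q^{20})$ vanishes at $x=\pm q^{10m-1}$, the factor $j(q^{6}x;q^{15})$ at $x=q^{15m-6}$, and $j(q^{12}x^{2};q^{60})$ at $x=\pm q^{30m-6}$; modulo $q^{30\mathbb{Z}}$ these account for all ten zeros, with double zeros at $x=q^{9}$ and $x=q^{-6}$. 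At each of them one checks that the left side vanishes to at least the same order --- equivalently, that the two summands on the left agree there to that order. Since at a zero of one of the right-hand factors many of the $j$'s collapse via (\ref{equation:1.7})--(\ref{equation:1.10}) and the vanishing $j(q^{n};q)=0$, each such check reduces to an identity in $q$ alone, provable from Jacobi's triple product, the quintuple product (\ref{equation:H1Thm1.0}), and the Riemann relation of Proposition \ref{proposition:CHcorollary}. With all ten zeros of the right side shown to be zeros of the left side, the ratio of the two sides is holomorphic on $\mathbb{C}^*$ and invariant under $x\mapsto q^{30}x$, hence descends to a holomorphic --- thus constant --- function on $\mathbb{C}^*/q^{30\mathbb{Z}}$; evaluating at one convenient $x$, say a value making $j(-q^{3}x^{3};q^{30})\to 0$, which kills the first summand and reduces (\ref{equation:f0f1}) to a two-factor theta identity, shows the constant is $1$. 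Identity (\ref{equation:F0F1}) is handled the same way, with multiplier $-q^{-120}x^{-10}$ and the analogous list of right-hand zeros.

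An alternative, more computational route --- in the spirit of the proof of Proposition \ref{proposition:prop-f141} --- would be to introduce the cubic factor $j(-q^{3}x^{3};q^{30})$ on the right side of (\ref{equation:f0f1}) via the quintuple product (\ref{equation:H1Thm1.0}) together with the splittings (\ref{equation:jsplit}) and (\ref{equation:1.10}), regroup the right side into two pieces sharing a common base-$q^{30}$ theta factor, and then collapse the four-fold theta products onto the left side using Proposition \ref{proposition:CHcorollary}. Either way, I expect the main obstacle to be bookkeeping: in the quasi-periodicity approach it is carrying out the ten constant theta identities at the shared zeros (and correctly handling the two double zeros); in the manipulation approach it is sequencing the applications of (\ref{equation:1.8})--(\ref{equation:1.12}) so that the cubic and quadratic theta factors line up rather than proliferating into higher powers of $x$.
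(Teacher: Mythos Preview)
Your overall strategy matches the paper's exactly: write the difference $f(x)$ of the two sides, check the functional equation $f(q^{30}x)=q^{-105}x^{-10}f(x)$ (and $-q^{-120}x^{-10}$ for the second identity), and invoke Proposition~\ref{proposition:H1Thm1.7} with $n=10$. Where you diverge is in how you produce the zeros.

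You restrict attention to the ten zeros of the \emph{right-hand} product and then must verify that the left side vanishes to the same order at each --- which forces first-derivative checks at the two double zeros $x=q^{9}$ and $x=q^{24}$, followed by an extra evaluation to pin down the constant of proportionality. The paper instead looks for points at which \emph{any one} of the three terms in $f(x)$ vanishes. This yields eleven such points in the fundamental annulus,
\[
\pm q^{3/2},\ -q^{4},\ q^{9},\ -q^{14},\ \pm q^{33/2},\ q^{19},\ \pm q^{24},\ q^{29},
\]
and at each the check that the remaining two terms cancel is a simple two-product theta identity --- no derivatives, no final normalisation. Since eleven exceeds ten, Proposition~\ref{proposition:H1Thm1.7} forces $f\equiv 0$ immediately. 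So your route works, but the paper's trick of letting any term vanish (not just the right side) trades your two second-order checks and the constant evaluation for one extra first-order check.
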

\begin{proof}[Proof of Proposition \ref{proposition:prop-f373eqs}]  We prove the first identity and omit the proof of the second, which is similar.  Denote by $f(x)$ the difference between the left and right hand sides of (\ref{equation:f0f1}).  For fixed $q$, all three terms are analytic for $x\ne 0$.  It is straightforward to verify that $f(x)$ satisfies the functional equation $f(q^{30}x)=q^{-105}x^{-10}f(x).$  By Proposition \ref{proposition:H1Thm1.7}, $f(x)$ has either exactly ten zeros in the annulus $|q|^{30}<|x|<1$ or it equals zero for all $x$.  But there are at least eleven such values of $x$ for which at least one of the terms in $f(x)$ vanishes:  $\pm q^{3/2}$, $-q^4$, $q^9$, $-q^{14}$, $\pm q^{33/2}$, $q^{19}$, $\pm q^{24}$, $q^{29}$.  Verifying $f(x)$ vanishes for each of these just involves proving that the remaining two theta products sum to zero, which is easy.
\end{proof}

We finish with Hartog's Theorem:

\begin{theorem}\cite[p. 7]{GH} A holomorphic function on the complement of a point in an open set $U\subset \mathbb{C}^n$ ($n>1$) extends to a holomorphic function in all of $U$.
\end{theorem}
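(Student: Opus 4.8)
The plan is to give the classical ``slicing plus Cauchy integral'' proof of Hartogs' extension theorem. We may assume the exceptional point is the origin $0\in\C^n$ and, after shrinking $U$, that $U$ contains the closed polydisc $\overline{\Delta}=\{z=(z_1,\dots,z_n):|z_i|\le\rho\text{ for }1\le i\le n\}$ for some $\rho>0$. It suffices to produce a holomorphic extension of $f$ across $0$ on the open polydisc $\Delta$: such an extension automatically agrees with $f$ on the nonempty open set $\Delta\setminus\{0\}$, so it glues with $f$ to a holomorphic function on all of $U$.

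Writing $z=(z',z_n)$ with $z'=(z_1,\dots,z_{n-1})$, I would define
\begin{equation*}
F(z',z_n):=\frac{1}{2\pi\imag}\int_{|\zeta|=\rho}\frac{f(z',\zeta)}{\zeta-z_n}\,d\zeta,\qquad |z_i|<\rho\ (1\le i\le n).
\end{equation*}
Here is the single place where $n>1$ is essential: for every $z'$ with $|z_i|<\rho$ and every $\zeta$ with $|\zeta|=\rho$ one has $(z',\zeta)\ne 0$ (since $\zeta\ne 0$) and $(z',\zeta)\in\overline{\Delta}\subset U$, so $f(z',\zeta)$ is defined and depends holomorphically on $(z',z_n)$ there. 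Standard parameter-integral arguments (continuity of $F$ plus holomorphy in each variable separately, via Morera or differentiation under the integral sign) then show that $F$ is holomorphic on all of $\Delta$, including along $\{z'=0\}$.

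Next I would identify $F$ with $f$ on $\Delta\setminus\{0\}$. For fixed $z'\ne 0$ the slice $\zeta\mapsto f(z',\zeta)$ is holomorphic on the full disc $|\zeta|<\rho$, so the one-variable Cauchy integral formula gives $F(z',z_n)=f(z',z_n)$ for all $|z_n|<\rho$. Since $n>1$, the set $\{z'\ne 0\}$ is dense in $\Delta$, and both $F$ and $f$ are holomorphic on $\Delta\setminus\{0\}$; hence $F\equiv f$ on $\Delta\setminus\{0\}$ by the identity theorem. Thus $F$ is the desired holomorphic extension of $f$ across $0$, and gluing with $f$ finishes the proof.

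I do not expect any genuine obstacle here beyond bookkeeping; the crux — and the only conceptual point — is the observation that slicing in the last coordinate along the circle $|\zeta|=\rho$ keeps the evaluation points of $f$ uniformly away from the bad point $0$, which is exactly what fails when $n=1$ (where $1/z_1$ shows no such extension can exist).
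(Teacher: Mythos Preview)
Your proof is correct and is precisely the standard slicing/Cauchy-integral argument for removing an isolated singularity in $\C^n$ with $n>1$ (as found, e.g., in Griffiths--Harris). The paper itself does not supply a proof: this theorem is simply quoted with a citation to \cite[p.~7]{GH} and then invoked as a black box in the proofs of Propositions~\ref{proposition:MT-gen}, \ref{proposition:MTacdivb-gen}, and \ref{proposition:MTsub-gen}, so there is nothing further to compare.
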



\section{Properties of the Appell-Lerch sum $m(x,q,z)$}\label{section:prop-mxqz}

Changing $r$ to $r+1$ in $(\ref{equation:mxqz-def})$ gives another useful form for $m(x,q,z)$:
\begin{equation}
m(x,q,z)=\frac{-z}{j(z;q)}\sum_r\frac{(-1)^rq^{\binom{r+1}{2}}z^r}{1-q^{r}xz}.\label{equation:mxqz-altdef}
\end{equation}

The Appell-Lerch sum $m(x,q,z)$ satisfies several functional equations and identities, which we collect in the form of a proposition.  The proofs are straightforward and will be omitted.  A list of Appell-Lerch sum properties with proofs can be found in \cite{Zw}.

\begin{proposition}  For generic $x,z\in \mathbb{C}^*$
{\allowdisplaybreaks \begin{subequations}
\begin{gather}
m(x,q,z)=m(x,q,qz),\label{equation:mxqz-fnq-z}\\
m(x,q,z)=x^{-1}m(x^{-1},q,z^{-1}),\label{equation:mxqz-flip}\\
m(qx,q,z)=1-xm(x,q,z),\label{equation:mxqz-fnq-x}\\
m(x,q,z)=1-q^{-1}xm(q^{-1}x,q,z)\label{equation:mxqz-altdef0},\\
m(x,q,z)=x^{-1}-x^{-1}m(qx,q,z). \label{equation:mxqz-altdef1}
\end{gather}
\end{subequations}}
\end{proposition}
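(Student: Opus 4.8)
The plan is to derive all five parts from the two series representations of $m(x,q,z)$ — the defining one (\ref{equation:mxqz-def}) and the shifted form (\ref{equation:mxqz-altdef}) — together with the Jacobi triple product $j(z;q)=\sum_n(-1)^nq^{\binom{n}{2}}z^n$ and the elementary theta transformations (\ref{equation:1.7}) and (\ref{equation:1.8}). The only real content is careful bookkeeping of summation shifts and powers of $q$, via the identities $\binom{r}{2}+r=\binom{r+1}{2}$ and $\binom{-r}{2}=\binom{r+1}{2}$.

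First I would prove (\ref{equation:mxqz-fnq-z}). Substituting $qz$ for $z$ in (\ref{equation:mxqz-def}) and using $j(qz;q)=-z^{-1}j(z;q)$ from (\ref{equation:1.8}), the factor $(qz)^r$ contributes an extra $q^r$, and since $\binom{r}{2}+r=\binom{r+1}{2}$ the resulting series is exactly the right-hand side of (\ref{equation:mxqz-altdef}); hence $m(x,q,qz)=m(x,q,z)$. Next, for the key identity (\ref{equation:mxqz-fnq-x}): replacing $x$ by $qx$ in (\ref{equation:mxqz-def}) turns the denominator $1-q^{r-1}xz$ into $1-q^{r}xz$, while shifting $r\mapsto r+1$ in $x\cdot m(x,q,z)$ produces $\frac{-xz}{j(z;q)}\sum_r\frac{(-1)^rq^{\binom{r+1}{2}}z^r}{1-q^rxz}$. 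Adding the two series over the common denominator $1-q^rxz$, the numerator collapses as $(-1)^rz^r\big(q^{\binom{r}{2}}-xzq^{\binom{r+1}{2}}\big)=(-1)^rz^rq^{\binom{r}{2}}(1-q^rxz)$, so the denominators cancel and what remains is $\frac{1}{j(z;q)}\sum_r(-1)^rq^{\binom{r}{2}}z^r=1$ by the triple product. This gives $m(qx,q,z)=1-xm(x,q,z)$, and then (\ref{equation:mxqz-altdef0}) is (\ref{equation:mxqz-fnq-x}) with $x$ replaced by $q^{-1}x$, while (\ref{equation:mxqz-altdef1}) is just a rearrangement of (\ref{equation:mxqz-fnq-x}).

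Finally, for (\ref{equation:mxqz-flip}) I would start from (\ref{equation:mxqz-def}) written for $m(x^{-1},q,z^{-1})$, use $j(z^{-1};q)=-z^{-1}j(z;q)$ from (\ref{equation:1.7}), and apply $r\mapsto -r$; since $\binom{-r}{2}=\binom{r+1}{2}$ the sum becomes $\sum_r\frac{(-1)^rq^{\binom{r+1}{2}}z^r}{1-q^{-r-1}x^{-1}z^{-1}}$. Clearing the denominator by multiplying numerator and denominator by $-q^{r+1}xz$, and then shifting the index down by one, converts the series into $x$ times the right-hand side of (\ref{equation:mxqz-altdef}), so $m(x^{-1},q,z^{-1})=x\,m(x,q,z)$, which is (\ref{equation:mxqz-flip}). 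The main (mild) obstacle throughout is simply to perform the reflection $r\mapsto -r$ and the shift $r\mapsto r\pm 1$ in the correct order and to keep the signs $(-1)^r$ versus $(-1)^{r+1}$ and the $q$-exponents straight; once one uses (\ref{equation:mxqz-altdef}) to recognize the output, each verification is mechanical.
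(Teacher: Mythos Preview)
Your proposal is correct; each of the five identities checks out exactly as you outline, and the index shifts and uses of $\binom{r}{2}+r=\binom{r+1}{2}$, $\binom{-r}{2}=\binom{r+1}{2}$, together with (\ref{equation:1.7}), (\ref{equation:1.8}) and the alternative form (\ref{equation:mxqz-altdef}), are precisely what is needed. The paper itself omits the proof as ``straightforward'' (referring to \cite{Zw}), so your direct series manipulations are exactly the kind of verification the authors had in mind.
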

Some simple evaluations of the Appell-Lerch sum follow.

\begin{corollary} \label{corollary:mxqz-eval} We have
\begin{align}
m(q,q^2,-1)&=1/2,\label{equation:mxqz-eval-a}\\
m(-1,q^2,q)&=0. \label{equation:mxqz-eval-b}
\end{align}
\end{corollary}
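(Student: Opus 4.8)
The plan is to derive both evaluations directly from the functional equations collected in the preceding proposition, rather than by manipulating the defining series. For \eqref{equation:mxqz-eval-a}, I would specialize \eqref{equation:mxqz-flip}, namely $m(x,q,z)=x^{-1}m(x^{-1},q,z^{-1})$, to the point $x=q$, $q\mapsto q^2$, $z=-1$. Since $z^{-1}=-1$ as well, this gives $m(q,q^2,-1)=q^{-1}m(q^{-1},q^2,-1)$. Next I would use \eqref{equation:mxqz-fnq-x} in the form $m(qx,q,z)=1-xm(x,q,z)$, again with $q\mapsto q^2$ and with $x=q^{-1}$, so that $qx=q\cdot q^{-1}=q$; this yields $m(q,q^2,-1)=1-q^{-1}m(q^{-1},q^2,-1)$. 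Combining the two relations, write $A:=m(q,q^2,-1)$ and $B:=q^{-1}m(q^{-1},q^2,-1)$; the first equation says $A=B$ and the second says $A=1-B$, whence $2A=1$ and $A=1/2$, which is \eqref{equation:mxqz-eval-a}.

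For \eqref{equation:mxqz-eval-b}, I would similarly play the symmetry \eqref{equation:mxqz-flip} against the shift equation. Apply \eqref{equation:mxqz-flip} with $x=-1$, $q\mapsto q^2$, $z=q$: this gives $m(-1,q^2,q)=(-1)^{-1}m(-1,q^2,q^{-1})=-m(-1,q^2,q^{-1})$. Then apply the $z$-invariance \eqref{equation:mxqz-fnq-z}, $m(x,q,z)=m(x,q,qz)$, with $q\mapsto q^2$ and $z=q^{-1}$, to get $m(-1,q^2,q^{-1})=m(-1,q^2,q^{2}\cdot q^{-1})=m(-1,q^2,q)$. Substituting back, $m(-1,q^2,q)=-m(-1,q^2,q)$, so $2m(-1,q^2,q)=0$ and the value is $0$, which is \eqref{equation:mxqz-eval-b}. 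One should check that these specializations are legitimate, i.e. that they do not hit the excluded locus in Definition \ref{definition:mdef}: for $m(q,q^2,-1)$ we need $-1$ and $-q$ to not be powers of $q^2$, and for $m(-1,q^2,q)$ we need $q$ and $-q$ to not be powers of $q^2$; all of these hold for generic $q$ with $0<|q|<1$.

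The only genuinely delicate point is bookkeeping with which base (here $q^2$ in place of $q$) each functional equation is being applied over, together with verifying the pole conditions; there is no real analytic obstacle. An alternative, equally short route for \eqref{equation:mxqz-eval-b} is to note that the series \eqref{equation:mxqz-altdef} for $m(-1,q^2,q)$ is, up to the prefactor, an odd function of the summation structure and telescopes to zero after pairing $r$ with a reflected index; but the functional-equation argument above is cleaner and avoids convergence discussion, so that is the one I would present.
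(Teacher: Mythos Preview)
Your proof is correct and follows essentially the same route as the paper's. The paper cites \eqref{equation:mxqz-flip} together with \eqref{equation:mxqz-altdef1} for the first identity and \eqref{equation:mxqz-flip} together with \eqref{equation:mxqz-fnq-z} for the second; your use of \eqref{equation:mxqz-fnq-x} in place of \eqref{equation:mxqz-altdef1} is harmless since these two relations are rearrangements of one another.
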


\begin{proof} [Proof of Corollary \ref{corollary:mxqz-eval}]  The first identity is a straightforward application of identities (\ref{equation:mxqz-flip}) and (\ref{equation:mxqz-altdef1}); the second is a straightforward consequence of identities (\ref{equation:mxqz-flip}) and (\ref{equation:mxqz-fnq-z}).
\end{proof}

We now introduce a heuristic point of view which will guide our further study of $m(x,q,z)$.  This heuristic leads us to new Appell-Lerch sum properties such as Theorem \ref{theorem:msplit-general-n} and also guides us to the Appell-Lerch sum expressions of Theorems \ref{theorem:masterFnp} and \ref{theorem:main-acdivb}.  If we iterate (\ref{equation:mxqz-altdef0}), we obtain
\begin{align}
m(x,q,z)&=1-q^{-1}xm(q^{-1}x,q,z)\notag\\
&=1-q^{-1}x+q^{-3}x^2m(q^{-2}x,q,z)\notag\\
&=1-q^{-1}x+q^{-3}x^2-q^{-6}x^3m(q^{-3}x,q,z)\notag\\
& \ \ \ \vdots\notag\\
&\sim 1-q^{-1}x+q^{-3}x^2-q^{-6}x^3+q^{-10}x^4-\dots \ ;\notag
\end{align}
that is,
\begin{equation}
m(x,q,z)\sim \sum_{r\ge 0}(-1)^rq^{-\binom{r+1}{2}}x^r.\label{equation:mxqz-heuristic}
\end{equation}
Of course, we cannot use an equal sign here, since the infinite series on the right diverges for $|q|<1$.  However, it is often useful to think of $m(x,q,z)$ as a partial theta series with $q$ replaced by $q^{-1}$.  

Roughly speaking, we may think of ``$\sim$'' as congruence `mod theta'.  For example, since the series (\ref{equation:mxqz-heuristic}) does not depend on $z$, we may write
\begin{equation}
m(x,q,z_0)\sim m(x,q,z_1).
\end{equation}
In fact, the difference between these two quantities is a theta function, as we see in the following well-known theorem.

\begin{theorem} \label{theorem:changing-z-theorem}  For generic $x,z_0,z_1\in \mathbb{C}^*$
\begin{equation}
m(x,q,z_1)-m(x,q,z_0)=\frac{z_0J_1^3j(z_1/z_0;q)j(xz_0z_1;q)}{j(z_0;q)j(z_1;q)j(xz_0;q)j(xz_1;q)}.
\end{equation}
\end{theorem}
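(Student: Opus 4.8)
The plan is to show that the two sides of the claimed identity have the same behavior as functions of $z_1$ (with $x$ and $z_0$ regarded as fixed generic parameters), and then to pin down the constant of proportionality by evaluating at a convenient point. Write $L(z_1)$ for the left-hand side $m(x,q,z_1)-m(x,q,z_0)$ and $R(z_1)$ for the right-hand side. First I would establish the transformation law of $L$ under $z_1 \mapsto qz_1$: by \eqref{equation:mxqz-fnq-z} we have $m(x,q,qz_1)=m(x,q,z_1)$, so $L(qz_1)=L(z_1)$, i.e. $L$ is invariant under $z_1\mapsto qz_1$. On the other side, using the elementary theta relations \eqref{equation:1.8} to compute $j(qw;q)=-w^{-1}j(w;q)$ for each of the four theta factors depending on $z_1$, namely $j(z_1/z_0;q)$, $j(xz_0z_1;q)$, $j(z_1;q)$, and $j(xz_1;q)$, the scaling factors are $-(z_1/z_0)^{-1}$, $-(xz_0z_1)^{-1}$ in the numerator against $-(z_1)^{-1}$, $-(xz_1)^{-1}$ in the denominator; these cancel in pairs, so $R(qz_1)=R(z_1)$ as well.

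Next I would locate the poles of each side as a function of $z_1$. From \eqref{equation:mxqz-def}, $m(x,q,z_1)$ has, as a function of $z_1$, at worst simple poles where $z_1$ is an integral power of $q$ (from the $1/j(z_1;q)$ factor) or where $q^{r-1}xz_1=1$ for some $r$ (i.e. $xz_1\in q^{\Z}$); the difference $L(z_1)$ has the same potential poles, but one checks using Proposition \ref{proposition:mnp2Lerch-residue} (or a direct residue computation as in the proofs of the known properties of $m$) that the residues at $z_1\in q^{\Z}$ coming from $1/j(z_1;q)$ are cancelled by the corresponding terms in the sum, leaving only simple poles at $z_1\in q^{\Z}$ and $xz_1\in q^{\Z}$ — and by the quasi-periodicity just established it suffices to consider $z_1=1$ and $z_1=x^{-1}$ in a fundamental annulus. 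The right-hand side $R(z_1)$, by inspection of its denominator $j(z_0;q)j(z_1;q)j(xz_0;q)j(xz_1;q)$, has simple poles in $z_1$ exactly at $z_1\in q^{\Z}$ (from $j(z_1;q)$) and $xz_1\in q^{\Z}$ (from $j(xz_1;q)$). So $L/R$ is a holomorphic, $q$-periodic (hence bounded) function of $z_1$ on $\C^*$, provided the residues of $L$ and $R$ at these poles actually agree up to a common scalar; I would verify that the residue of $L$ at $z_1=1$ equals the residue of $R$ at $z_1=1$ (both are, up to the normalization, governed by $J_1^3/j(z_0;q)$-type expressions via \eqref{equation:Reciprocal} and Proposition \ref{proposition:H1Thm1.3}), which forces $L/R$ to extend to an entire, bounded, hence constant function of $z_1$, and the residue match forces that constant to be $1$.

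Alternatively — and this is probably the cleaner route to write up — once periodicity and the pole structure are in hand, $L(z_1)-R(z_1)$ is a $q$-periodic function of $z_1$ whose only possible poles are simple ones at $z_1\in q^{\Z}$ and $xz_1\in q^{\Z}$; showing the residues cancel at both makes $L-R$ entire and periodic, hence a constant $c(x,z_0)$ independent of $z_1$; then specializing, e.g. by symmetry setting $z_1=z_0$ (where $L=0$ and the factor $j(z_1/z_0;q)=j(1;q)=0$ kills $R$) shows $c=0$. One must check that $z_1=z_0$ is an admissible specialization or else argue by analytic continuation in $z_0$; this is routine since both sides are meromorphic in $z_0$. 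The main obstacle is the bookkeeping in the residue computations at $z_1=1$ and $z_1=x^{-1}$: one has to see that the apparent pole of $m(x,q,z_1)$ at $z_1=1$ (from $1/j(z_1;q)$) is genuinely cancelled inside $L$ — equivalently that $m(x,q,z_1)$ is in fact regular there — and matching the residues at $xz_1=1$ requires carefully extracting the $r$ such that $q^{r-1}xz_1=1$ from the defining sum and comparing with the residue of $R$ via \eqref{equation:Reciprocal}. Everything else is an application of the elementary theta identities in Section \ref{section:prelim} together with Proposition \ref{proposition:H1Thm1.7}.
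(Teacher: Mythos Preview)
The paper does not prove this theorem; it is stated as ``well-known'' with a general pointer to \cite{Zw} for Appell--Lerch sum properties, so there is no argument in the paper to compare against. Your overall strategy --- a Liouville-type argument in $z_1$, showing $L-R$ is analytic and $q$-periodic hence constant, then specializing $z_1=z_0$ --- is the standard one and does work. But one step is wrong and would derail the write-up if left as is.

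You claim that ``the apparent pole of $m(x,q,z_1)$ at $z_1=1$ \dots\ is genuinely cancelled inside $L$ --- equivalently that $m(x,q,z_1)$ is in fact regular there.'' This is false. Near $z_1=1$ one has $j(z_1;q)\sim -(z_1-1)J_1^3$, while the sum in \eqref{equation:mxqz-def} evaluates at $z_1=1$ to
\[
\sum_r \frac{(-1)^r q^{\binom{r}{2}}}{1-q^{r-1}x}=-\frac{J_1^3}{j(x;q)}
\]
by the shift $r\mapsto r+1$ and \eqref{equation:Reciprocal}. Hence $m(x,q,z_1)$ has a genuine simple pole at $z_1=1$ with residue $1/j(x;q)$, and since $m(x,q,z_0)$ is independent of $z_1$, so does $L(z_1)$. (Your earlier sentence about poles is also internally inconsistent: you say the residues at $z_1\in q^{\Z}$ ``are cancelled'' and then list $z_1\in q^{\Z}$ among the remaining poles.) What actually makes the argument go through is that $R(z_1)$ has the \emph{same} residue there: using $j(1/z_0;q)=-z_0^{-1}j(z_0;q)$ from \eqref{equation:1.7} and Proposition~\ref{proposition:H1Thm1.3}, one checks directly that $\operatorname{Res}_{z_1=1}R(z_1)=1/j(x;q)$ as well. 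A parallel computation handles $z_1=x^{-1}$. So $L-R$ is regular at both pole locations, and your second route concludes correctly --- but the mechanism is residue matching between $L$ and $R$, not regularity of $m$.
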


\begin{corollary}\label{corollary:mxqz-flip-xz}  For generic $x,z\in \mathbb{C}^*$ 
\begin{equation}
m(x,q,z)=m(x,q,x^{-1}z^{-1}).
\end{equation}
\end{corollary}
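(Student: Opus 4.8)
The plan is to derive this immediately from Theorem~\ref{theorem:changing-z-theorem} by a judicious specialization of the auxiliary parameters. Concretely, I would apply that theorem with
\begin{align*}
z_0 = z, \qquad z_1 = x^{-1}z^{-1},
\end{align*}
which are legitimate (generic) choices whenever $x,z$ are generic. The only computation needed is to evaluate the product appearing inside one of the theta factors in the numerator: here $xz_0z_1 = x\cdot z\cdot x^{-1}z^{-1} = 1$, so that factor becomes $j(1;q)$.

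The key observation is then that $j(1;q)=0$, since $j(q^n;q)=0$ for every $n\in\mathbb{Z}$ and in particular for $n=0$ (this fact is recorded in the introduction right after the definition of $j(x;q)$). Consequently the entire right-hand side of the identity in Theorem~\ref{theorem:changing-z-theorem} vanishes, which gives $m(x,q,z_1)-m(x,q,z_0)=0$, i.e. $m(x,q,x^{-1}z^{-1})=m(x,q,z)$, as claimed. There is no real obstacle here beyond checking that the specialization stays within the generic range so that none of the theta quotients or Appell--Lerch sums develop poles; this is immediate for generic $x,z$. One could alternatively note that this is simply the "$z$-independence mod theta'' heuristic of \eqref{equation:mxqz-heuristic} made precise, but the one-line deduction from Theorem~\ref{theorem:changing-z-theorem} is cleanest.
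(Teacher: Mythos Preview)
Your proposal is correct and is exactly the argument the paper intends: Corollary~\ref{corollary:mxqz-flip-xz} is stated immediately after Theorem~\ref{theorem:changing-z-theorem} with no separate proof, and the specialization $z_0=z$, $z_1=x^{-1}z^{-1}$ making $j(xz_0z_1;q)=j(1;q)=0$ is the one-line deduction implied by its placement as a corollary.
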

  Let us break the sum (\ref{equation:mxqz-heuristic}) into two parts, depending on the parity of $r$.  We obtain
\begin{align}
m(x,q,z)&\sim \sum_{r\ge 0}(-1)^rq^{-\binom{r+1}{2}}x^r\label{equation:H5.0}\\
&\sim \sum_{r\ge 0}q^{-\binom{2r+1}{2}}x^{2r}-\sum_{r\ge 0}q^{-\binom{2r+2}{2}}x^{2r+1}\notag\\
&\sim \sum_{r\ge 0}(-1)^rq^{-4\binom{r+1}{2}}(-qx^2)^r-q^{-1}x\sum_{r\ge 0}(-1)^rq^{-4\binom{r+1}{2}}(-q^{-1}x^2)^r\notag\\
&\sim m(-qx^2,q^4,z_0)-q^{-1}xm(-q^{-1}x^2,q^4,z_1).\notag
\end{align}
More generally, if we break the sum into $n$ parts depending on the value of $r$ mod $n$, we find
\begin{equation}
m(x,q,z)\sim \sum_{r=0}^{n-1}(-1)^rq^{-\binom{r+1}{2}}x^rm\big ( -q^{\binom{n}{2}-nr}(-x)^n,q^{n^2},z_r\big ).\label{equation:msplit-heuristc}
\end{equation}
So we expect the difference between the two sides to be a theta function.  We see that in the next theorem, which to the best of our knowledge is new.

\begin{theorem} \label{theorem:msplit-general-n} For generic $x,z,z'\in \mathbb{C}^*$ 
\begin{align*}
m(&x,q,z) = \sum_{r=0}^{n-1} q^{{-\binom{r+1}{2}}} (-x)^r m\big(-q^{{\binom{n}{2}-nr}} (-x)^n, q^{n^2}, z' \big)\notag\\
& + \frac{z' J_n^3}{j(xz;q) j(z';q^{n^2})}  \sum_{r=0}^{n-1}
\frac{q^{{\binom{r}{2}}} (-xz)^r
j\big(-q^{{\binom{n}{2}+r}} (-x)^n z z';q^n\big)
j(q^{nr} z^n/z';q^{n^2})}
{ j\big(-q^{{\binom{n}{2}}} (-x)^n z', q^r z;q^n\big )}.
\end{align*}
\end{theorem}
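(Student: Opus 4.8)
The plan is to prove this by the standard "functional equation / count the zeros" technique that recurs throughout the preliminaries, combined with an appeal to Theorem \ref{theorem:changing-z-theorem} to reduce to a concrete theta identity. First I would observe that both sides of the claimed identity, viewed as functions of $x$ (with $q,z,z',n$ fixed and generic), are meromorphic on $\mathbb{C}^*$, and that the identity is trivially invariant in $z'$: the left side does not involve $z'$, and on the right side one checks, using Theorem \ref{theorem:changing-z-theorem} applied to each summand $m(-q^{\binom{n}{2}-nr}(-x)^n,q^{n^2},z')$, that the $z'$-dependence of the first sum is cancelled by the $z'$-dependence of the second sum. This reduces the problem to establishing the identity for one convenient choice of $z'$ — the natural choice being $z' = z^n$, where many of the theta quotients in the second sum degenerate. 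Alternatively, one may simply carry the general $z'$ through; the key point is that it suffices to pin down the difference of the two sides as a theta function and then identify that theta function.

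The heart of the argument is the following. Call $D(x)$ the difference between the left side and the first sum on the right, i.e. the quantity that the heuristic (\ref{equation:msplit-heuristc}) predicts to be "a theta function." I would compute, using the functional equations (\ref{equation:mxqz-fnq-x}) and (\ref{equation:mxqz-fnq-z}) for $m$ and the transformation laws (\ref{equation:1.8})–(\ref{equation:1.7}) for $j$, how $D(x)$ transforms under $x \mapsto qx$. Each $m(-q^{\binom{n}{2}-nr}(-x)^n,q^{n^2},z')$ shifts in a controlled way — replacing $x$ by $qx$ cycles the residue classes $r \mapsto r-1$ and produces from (\ref{equation:mxqz-fnq-x}) a "constant" (i.e. $x$-power) correction term, and collecting these telescoping corrections against $m(x,q,z)\mapsto 1-xm(x,q,z)$ should yield a clean multiplier functional equation $D(qx) = C x^{-N} D(x)$ for an explicit monomial $Cx^{-N}$, with $N$ equal to the number of poles the right-hand theta quotient has modulo $x\mapsto q^{n}x$ periodicity — I expect $N$ on the order of $n$ (more precisely the pole count dictated by $j(xz;q)$ in the denominator). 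Once $D(x)$ satisfies such a functional equation, Proposition \ref{proposition:H1Thm1.7} forces $D(x)$ to be a specific elliptic-type expression, and I would then verify that the second sum on the right of the theorem is exactly that expression by checking it has the same multiplier and the same poles/residues. The residues are computed from Propositions \ref{proposition:H1Thm1.3} and \ref{proposition:mnp2Lerch-residue}: the left side $m(x,q,z)$ has simple poles where $q^{r-1}xz$ is $1$, with residues given by Proposition \ref{proposition:mnp2Lerch-residue} (with $p=1$), and one matches these against the poles of $j(xz;q)$ in the denominator of the second sum.

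The step I expect to be the main obstacle is the bookkeeping in the functional-equation computation: tracking the powers of $q$ and $(-x)$ that accumulate when $r$ cycles through $\{0,\dots,n-1\}$ under $x\mapsto qx$, and showing that all the "constant" correction terms from the $n$ applications of (\ref{equation:mxqz-fnq-x}) assemble — via (\ref{equation:Reciprocal}) or (\ref{equation:jsplit}) — into precisely the theta quotient written in the statement rather than some off-by-a-shift variant. A cleaner route to sidestep part of this is to prove the $n=2$ case directly (it is exactly the computation (\ref{equation:H5.0})) and then induct, peeling off one residue class at a time via (\ref{equation:mxqz-altdef0}); but the splitting $n^2 = $ (inner modulus) still forces the same theta-identity verification at the end, which I would discharge by the zero-counting argument above — exhibiting enough explicit values of $x$ (powers of $q$ times roots of unity) at which both sides visibly vanish, as in the proof of Proposition \ref{proposition:prop-f373eqs}, to conclude by Proposition \ref{proposition:H1Thm1.7} that the two sides agree identically.
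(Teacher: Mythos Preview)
Your strategy---functional equation plus residue matching plus zero-counting via Proposition~\ref{proposition:H1Thm1.7}---is a plausible general template, but it is not what the paper does, and as written your proposal has real gaps. The paper's proof is constructive rather than characterizing: it first establishes an intermediate result (Theorem~\ref{theorem:rootsof1}) which evaluates $\sum_{t=0}^{n-1}\omega^{-kt}m(\omega^t x,q,z)$ for a primitive $n$-th root of unity $\omega$, by inserting the partial-fraction identity of Lemma~\ref{lemma:rootsof1} directly into the defining series~(\ref{equation:mxqz-def}) and then regrouping by residue classes mod~$n$. Summing that result over $k$ from $0$ to $n-1$ picks out $m(x,q,z)$ on the left by orthogonality of roots of unity, and the resulting double sum on the right is collapsed using the theta identity~(\ref{equation:ThmAH1.1}). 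A final substitution $z\mapsto qx^{-1}z^{-1}$ combined with Corollary~\ref{corollary:mxqz-flip-xz} puts the answer into the stated form. No functional-equation argument or zero-counting is used at all.

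On your proposal specifically: the observation that the right side is $z'$-invariant via Theorem~\ref{theorem:changing-z-theorem} is correct and useful, but it is only a reduction. The core step---computing the multiplier for $D(x)$ under $x\mapsto qx$ and showing the correction terms assemble into the claimed theta quotient---is precisely the content of the theorem, and you acknowledge this bookkeeping as the main obstacle without carrying it out. Your fallback suggestion of inducting from $n=2$ by ``peeling off one residue class'' does not work as stated: the inner modulus $q^{n^2}$ changes completely with $n$, so there is no natural inductive step from $n$ to $n+1$. Finally, Proposition~\ref{proposition:H1Thm1.7} applies to analytic functions, so before invoking it you must first prove that $D(x)$ minus the claimed theta quotient has no poles---which again is essentially the whole problem. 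The paper's roots-of-unity route sidesteps all of this by building the identity from the series definition rather than characterizing it after the fact.
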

\noindent Identity (\ref{equation:1.8}) easily yields the $n$ even and $n$ odd specializations:
\begin{corollary} \label{theorem:msplit-general-nparity} Let $n$ be a positive odd integer.   For generic $x,z,z'\in \mathbb{C}^*$ 
\begin{align}
m(x,&q,z)=\sum_{r=0}^{n-1}q^{-\binom{r+1}{2}}(-x)^rm\Big (q^{\binom{n}{2}-nr}x^n,q^{n^2},z'\Big )\\
&+\frac{z'J_n^3}{j(xz;q)j(z';q^{n^2})}\sum_{r=0}^{n-1}
\frac{q^{r(r-n)/2}(-x)^rz^{r-(n-1)/2}j(q^rx^nzz';q^n)j(q^{nr}z^n/z';q^{n^2})}{j(x^nz',q^rz;q^n)}.\notag
\end{align}
Let $n$ be a positive even integer.   For generic $x,z,z'\in \mathbb{C}^*$ 
\begin{align}
&m(x,q,z)=\sum_{r=0}^{n-1}q^{-\binom{r+1}{2}}(-x)^rm\Big (-q^{\binom{n}{2}-nr}x^n,q^{n^2},z'\Big )\\
&+\frac{z'J_n^3}{j(xz;q)j(z';q^{n^2})}\sum_{r=0}^{n-1}
\frac{q^{r(r-n+1)/2}(-x)^rz^{r+1-n/2}j(-q^{r+n/2}x^nzz';q^n)j(q^{nr}z^n/z';q^{n^2})}{j(-q^{n/2}x^nz',q^rz;q^n)}.\notag
\end{align}
\end{corollary}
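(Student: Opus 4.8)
The plan is to prove Theorem~\ref{theorem:msplit-general-n} by dissecting the defining series~(\ref{equation:mxqz-def}), making precise the heuristic~(\ref{equation:msplit-heuristc}) while keeping exact track of the theta-function error it discards. Write the summation index in~(\ref{equation:mxqz-def}) as $r=n\sigma+a$ with $0\le a\le n-1$ and $\sigma\in\Z$, and expand the Gauss exponent by $\binom{n\sigma+a}{2}=n^{2}\binom{\sigma}{2}+\binom{n}{2}\sigma+n\sigma a+\binom{a}{2}$. The only obstruction to reading the resulting inner $\sigma$-sum as an Appell--Lerch sum to the base $q^{n^{2}}$ is the denominator $1-q^{r-1}xz=1-q^{n\sigma+a-1}xz$, which as a function of $\sigma$ has ``spacing'' $q^{n}$ rather than $q^{n^{2}}$. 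I will repair this with the elementary identity $\tfrac{1}{1-t}=\tfrac{1+t+\cdots+t^{n-1}}{1-t^{n}}$ at $t=q^{n\sigma+a-1}xz$, so that $1-t^{n}=1-q^{n^{2}\sigma+n(a-1)}(xz)^{n}$ now has the right spacing, at the price of a second finite sum, over $b=0,\dots,n-1$. Collecting the $\sigma$-dependence and inserting a sign to recover the factor $(-1)^{\sigma}$, one recognizes the inner sum by Definition~\ref{definition:mdef} as $j(Z_{a,b};q^{n^{2}})\,m(X_{b},q^{n^{2}},Z_{a,b})$ with $X_{b}=-q^{\binom{n}{2}-nb}(-x)^{n}$ and $Z_{a,b}=(-1)^{n+1}q^{\binom{n}{2}+na+nb}z^{n}$, which yields
\[
m(x,q,z)=\frac{1}{j(z;q)}\sum_{a=0}^{n-1}\sum_{b=0}^{n-1}(-1)^{a}q^{\binom{a}{2}}z^{a}q^{b(a-1)}(xz)^{b}\,j(Z_{a,b};q^{n^{2}})\,m\big(X_{b},q^{n^{2}},Z_{a,b}\big).
\]

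Next I would apply Theorem~\ref{theorem:changing-z-theorem}, with $q$ replaced by $q^{n^{2}}$, to each inner term in order to replace its $z$-dependent third argument $Z_{a,b}$ by the common parameter $z'$; the factor $j(Z_{a,b};q^{n^{2}})$ then cancels the like factor in the denominator of the quotient furnished by Theorem~\ref{theorem:changing-z-theorem}, and the right-hand side splits as a ``main term'' $\sum_{b}(\cdots)m(X_{b},q^{n^{2}},z')$ plus a ``correction term''. For the main term the $a$-sum collapses: the coefficient of $m(X_{b},q^{n^{2}},z')$ is $\tfrac{q^{-b}(xz)^{b}}{j(z;q)}\sum_{a}(-1)^{a}q^{\binom{a}{2}+ab}z^{a}j\big((-1)^{n+1}q^{\binom{n}{2}+na+nb}z^{n};q^{n^{2}}\big)$, and by the $m=n$ case of the dissection identity~(\ref{equation:jsplit}) with ``$z$''$=q^{b}z$ this inner sum equals $j(q^{b}z;q)$, which by~(\ref{equation:1.8}) is $(-1)^{b}q^{-\binom{b}{2}}z^{-b}j(z;q)$; after cancelling $j(z;q)$ the coefficient is precisely $(-x)^{b}q^{-\binom{b+1}{2}}$. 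Relabelling $b\mapsto r$, the main term is exactly the first sum $\sum_{r=0}^{n-1}q^{-\binom{r+1}{2}}(-x)^{r}m\big(-q^{\binom{n}{2}-nr}(-x)^{n},q^{n^{2}},z'\big)$ of the Theorem.

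It then remains to identify the correction term with the second sum of the Theorem. Here one uses that $X_{b}Z_{a,b}=q^{n(n-1+a)}(xz)^{n}$ is independent of $b$ while $X_{b}z'=-q^{\binom{n}{2}-nb}(-x)^{n}z'$ is independent of $a$, which lets me peel off the $b$-sum; after flipping its denominator theta by~(\ref{equation:1.7}) this $b$-sum has precisely the shape summed by the $q\mapsto q^{n}$ case of~(\ref{equation:ThmAH1.1}), and the $q^{n^{2}}$-base theta it produces cancels $j(X_{b}Z_{a,b};q^{n^{2}})$, collapsing the whole correction to a single sum over $a$ of theta quotients to the bases $q^{n}$ and $q^{n^{2}}$. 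The final step is to match this single sum, term by term, with $\theta_{n}$: the residual discrepancy is only that the sum one obtains naturally carries the prefactor $1/j(z;q)$ and theta arguments built from $q^{a-1+n}xz$, whereas the stated $\theta_{n}$ carries $1/j(xz;q)$ and arguments $q^{r}z$, so the two forms must be reconciled by moving theta factors between the prefactor and the summand and between the bases $q^{n}$ and $q^{n^{2}}$ using~(\ref{equation:1.7})--(\ref{equation:1.12}) together with the Riemann-type relation of Proposition~\ref{proposition:CHcorollary}.

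I expect this last bookkeeping to be the main obstacle. A clean alternative that avoids it: once the claim has been reduced (via the main-term computation) to an identity between two explicit finite sums of theta quotients, regard both sides --- cleared of denominators --- as functions of $z$ for fixed $x,z',q$; one checks that each is analytic on $\C^{*}$ and satisfies a functional equation $G(qz)=Cz^{-N}G(z)$, so by Proposition~\ref{proposition:H1Thm1.7} it suffices to verify agreement at $N+1$ values of $z$, which may be taken among the zeros of the denominators, where the identity degenerates to elementary theta relations. Finally, the $n$ even and $n$ odd reformulations of Corollary~\ref{theorem:msplit-general-nparity} follow at once by substituting $(-x)^{n}=\mp x^{n}$ and applying~(\ref{equation:1.8}) to absorb the power $q^{\binom{n}{2}}$ into the relevant theta arguments.
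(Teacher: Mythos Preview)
Your approach is correct and closely parallels the paper's, though organized differently. The paper does not dissect the series for $m(x,q,z)$ directly; instead it first proves the intermediate Theorem~\ref{theorem:rootsof1}, evaluating $\sum_{t}\omega^{-kt}m(\omega^{t}x,q,z)$ via the partial-fraction identity $\tfrac{x^{k}}{1-x^{n}}=\tfrac{1}{n}\sum_{t}\tfrac{\omega^{-kt}}{1-\omega^{t}x}$ (Lemma~\ref{lemma:rootsof1}), and then averages over $k$ so that orthogonality of roots of unity isolates $m(x,q,z)$. Your geometric-series identity $\tfrac{1}{1-t}=\tfrac{1+\cdots+t^{n-1}}{1-t^{n}}$ is the dual device, and both routes land on the same double sum, which is then collapsed by (\ref{equation:jsplit}) for the main term and by (\ref{equation:ThmAH1.1}) for the correction term, exactly as you outline. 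Your route is arguably more direct, since it bypasses the auxiliary roots-of-unity theorem entirely.

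Where you diverge is the final matching step. You anticipate bookkeeping with (\ref{equation:1.7})--(\ref{equation:1.12}) and possibly Proposition~\ref{proposition:CHcorollary}, or else an analytic argument via Proposition~\ref{proposition:H1Thm1.7}. The paper instead uses a one-line trick: having obtained the correction term with prefactor $1/j(z;q)$ (your form, and the paper's equation~(\ref{equation:readytochangez})), it substitutes $z\mapsto qx^{-1}z^{-1}$ and invokes Corollary~\ref{corollary:mxqz-flip-xz}, which leaves the left side and the main term unchanged while sending $j(z;q)\to j(xz;q)$ and transforming every theta argument into the stated form; a final reindexing $t=n-1-r$ finishes. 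No Riemann relation is needed. Your closing remark on Corollary~\ref{theorem:msplit-general-nparity} is correct and matches the paper's: once Theorem~\ref{theorem:msplit-general-n} is in hand, the parity cases are immediate from $(-x)^{n}=\pm x^{n}$ and (\ref{equation:1.8}).
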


\noindent For special values of $x$, $q$, $z$ and $z'$, the sum in Theorem \ref{theorem:msplit-general-n} reduces to a single quotient of theta functions.  Here are two useful examples: 
\begin{corollary}  \label{corollary:msplit-n=2} For generic $x,z\in \mathbb{C}^*$
\begin{equation}
m(x,q,z)=m\big (-qx^2,q^4,z^4\big )-\frac{x}{q}m\big (-\frac{x^2}{q},q^4,z^4\big)-\frac{J_2J_4j(-xz^2;q)j(-xz^3;q)}{xj(xz;q)j(z^4;q^4)j(-qx^2z^4;q^2)}.\label{equation:msplit-n=2}
\end{equation}
\end{corollary}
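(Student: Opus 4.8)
The plan is to specialize Theorem \ref{theorem:msplit-general-n} (equivalently, the even-$n$ case of Corollary \ref{theorem:msplit-general-nparity}) to $n=2$ and $z'=z^{4}$, and then to show that the resulting two-term theta sum collapses to the single quotient displayed in the corollary. With $n=2$ one has $\binom{n}{2}=1$, so the $m$-part, $\sum_{r=0}^{1}q^{-\binom{r+1}{2}}(-x)^{r}m\bigl(-q^{1-2r}x^{2},q^{4},z^{4}\bigr)$, is exactly $m(-qx^{2},q^{4},z^{4})-\tfrac{x}{q}m(-x^{2}/q,q^{4},z^{4})$, which supplies the first two terms of the claimed identity. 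It then remains to evaluate the theta summand of Theorem \ref{theorem:msplit-general-n}; after substituting $n=2$, $z'=z^{4}$ (so that $z^{n}/z'=z^{-2}$ and $j(z';q^{n^{2}})=j(z^{4};q^{4})$) and pulling the common factor $j(-qx^{2}z^{4};q^{2})^{-1}$ out of the inner sum, this summand equals
\[
\frac{z^{4}J_{2}^{3}}{j(xz;q)\,j(z^{4};q^{4})\,j(-qx^{2}z^{4};q^{2})}\cdot B,\qquad
B:=\frac{j(-qx^{2}z^{5};q^{2})\,j(z^{-2};q^{4})}{j(z;q^{2})}-\frac{xz\,j(-q^{2}x^{2}z^{5};q^{2})\,j(q^{2}z^{-2};q^{4})}{j(qz;q^{2})}.
\]

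To simplify $B$ I would use only the elementary identities of Section \ref{section:prelim}. Identity (\ref{equation:1.7}) gives $j(z^{-2};q^{4})=-z^{-2}j(z^{2};q^{4})$ and $j(q^{2}z^{-2};q^{4})=j(q^{2}z^{2};q^{4})$; identity (\ref{equation:1.9}) in base $q^{2}$ gives $j(z^{2};q^{4})/j(z;q^{2})=j(-z;q^{2})/J_{2,4}$ and $j(q^{2}z^{2};q^{4})/j(qz;q^{2})=j(-qz;q^{2})/J_{2,4}$; and a one-step application of (\ref{equation:1.8}) in base $q^{2}$ gives $j(-q^{2}x^{2}z^{5};q^{2})=x^{-2}z^{-5}j(-x^{2}z^{5};q^{2})$. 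After these substitutions, $B=\dfrac{-1}{J_{2,4}\,x\,z^{4}}\bigl[\,j(-x^{2}z^{5};q^{2})\,j(-qz;q^{2})+xz^{2}\,j(-qx^{2}z^{5};q^{2})\,j(-z;q^{2})\,\bigr]$, and the bracketed quantity is precisely the right-hand side of the product-to-sum identity (\ref{equation:H1Thm1.1}) with its two arguments taken to be $-xz^{2}$ and $-xz^{3}$; hence it equals $j(-xz^{2};q)\,j(-xz^{3};q)$. Substituting back and using $J_{2,4}=J_{2}^{2}/J_{4}$, i.e.\ $J_{2}^{3}/J_{2,4}=J_{2}J_{4}$, converts the theta summand into $-\,\dfrac{J_{2}J_{4}\,j(-xz^{2};q)\,j(-xz^{3};q)}{x\,j(xz;q)\,j(z^{4};q^{4})\,j(-qx^{2}z^{4};q^{2})}$, which is exactly the theta term asserted in the corollary. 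Together with the two $m$-terms above, this proves the statement.

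The step I expect to require the most care is the first one: carrying out the $n=2$, $z'=z^{4}$ specialization of the rather ornate general formula of Theorem \ref{theorem:msplit-general-n} without slips in the signs, the powers of $q$, or the theta arguments — in particular, confirming that the prefactor $q^{\binom{r}{2}}(-xz)^{r}$ of the inner sum reduces to $1$ for $r=0$ and $-xz$ for $r=1$, and that the $m$-arguments are $-qx^{2}$ and $-x^{2}/q$. Once $B$ is recorded correctly, its collapse to a single quotient is a short, purely formal chain of the Section \ref{section:prelim} identities, so — unlike the proofs of Propositions \ref{proposition:prop-f141} and \ref{proposition:prop-f373eqs} — no functional-equation or zero-counting argument is needed here.
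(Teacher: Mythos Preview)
Your proposal is correct and follows essentially the same route as the paper's own proof: specialize Theorem \ref{theorem:msplit-general-n} at $n=2$, $z'=z^{4}$, then simplify the two-term theta sum using (\ref{equation:1.7}), (\ref{equation:1.8}), (\ref{equation:1.9}) and finally collapse it via (\ref{equation:H1Thm1.1}) with arguments $-xz^{2}$, $-xz^{3}$. The only cosmetic difference is the order in which you present the elementary reductions; the substance is identical.
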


\begin{corollary} \label{corollary:msplit-n=3} For generic $x\in \mathbb{C}^*$
\begin{equation}
m(x,q,-1)=m\big (q^3x^3,q^9,-1\big )-\frac{x}{q}m\big (x^3,q^9,-1\big )+\frac{x^2}{q^{3}}m\big (\frac{x^3}{q^{3}},q^9,-1\big )
+\frac{xJ_1J_3^2J_6J_9j(qx^2;q^2)}{2qJ_2^2J_{18}^2j(-x^3;q^3)}.\label{equation:msplit-n=3}
\end{equation}
\end{corollary}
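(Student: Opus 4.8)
The plan is to obtain Corollary \ref{corollary:msplit-n=3} as the $n=3$, $z=z'=-1$ specialization of Theorem \ref{theorem:msplit-general-n}---equivalently, of the odd case of Corollary \ref{theorem:msplit-general-nparity}---and then to collapse the resulting sum of three theta quotients into the single stated quotient. Since $\binom{3}{2}=3$, the Appell--Lerch part $\sum_{r=0}^{2}q^{-\binom{r+1}{2}}(-x)^r m(q^{3-3r}x^3,q^9,-1)$ already reads as $m(q^3x^3,q^9,-1)-\tfrac{x}{q}m(x^3,q^9,-1)+\tfrac{x^2}{q^3}m(\tfrac{x^3}{q^3},q^9,-1)$, so nothing has to be done there; the entire task is to simplify the theta sum $\dfrac{z'J_3^3}{j(xz;q)\,j(z';q^9)}\sum_{r=0}^{2}(\cdots)$ at $z=z'=-1$.

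First I would note that with $z=z'=-1$ the numerator factor $j(q^{3r}z^3/z';q^9)$ becomes $j(q^{3r};q^9)$, so the $r=0$ term vanishes, $1$ being an integral power of $q^9$. For $r=1,2$ one has $j(q^3;q^9)=J_{3,9}=J_3$, and by the reflection $j(w;q)=j(q/w;q)$ in (\ref{equation:1.7}) also $j(q^6;q^9)=j(q^3;q^9)=J_3$; the same reflection gives $j(-q;q^3)=j(-q^2;q^3)=\overline{J}_{1,3}$. Hence the $r=1$ and $r=2$ summands differ only by carrying $j(qx^3;q^3)$ against $x\,j(q^2x^3;q^3)$, and after factoring out the common $\overline{J}_{1,3}^{-1}$ what is left of the bracket is $j(qx^3;q^3)+x\,j(q^2x^3;q^3)$, which the quintuple product identity (\ref{equation:H1Thm1.0}) turns into $j(-x;q)\,j(qx^2;q^2)/J_2$.

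Finally I would assemble the scalars: the $j(-x;q)$ cancels the factor $j(xz;q)=j(-x;q)$ in the prefactor, and the remaining pure-$q$ factors collapse via the Preliminaries relations $\overline{J}_{1,3}=J_2J_3^2/(J_1J_6)$ and $j(-1;q^9)=\overline{J}_{0,9}=2J_{18}^2/J_9$; carrying the accumulated powers of $q$ and $x$ (and the signs from $(-x)^r$ and from $z^{r-(n-1)/2}$ in Corollary \ref{theorem:msplit-general-nparity}) then delivers exactly $\dfrac{xJ_1J_3^2J_6J_9\,j(qx^2;q^2)}{2qJ_2^2J_{18}^2\,j(-x^3;q^3)}$.

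The one real obstacle here is bookkeeping rather than any subtle step: one must keep the $q$-exponents, the signs, and the handful of $\overline{J}$/$J$ conversions straight. If one prefers not to invoke Theorem \ref{theorem:msplit-general-n}, the same identity can be produced directly from $m(x,q,z)=1-\tfrac{x}{q}+\tfrac{x^2}{q^3}-\tfrac{x^3}{q^6}m(q^{-3}x,q,z)$ (three iterations of (\ref{equation:mxqz-altdef0})) together with the analogous expansions of $m(q^3x^3,q^9,-1)$, $m(x^3,q^9,-1)$, $m(q^{-3}x^3,q^9,-1)$: matching the non-theta parts pins down the Appell--Lerch side, and Theorem \ref{theorem:changing-z-theorem} (or Proposition \ref{proposition:H1Thm1.7}) identifies the leftover theta function---but the specialization route above is the shorter one.
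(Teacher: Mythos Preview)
Your proposal is correct and follows essentially the paper's own argument: specialize Theorem~\ref{theorem:msplit-general-n} at $n=3$, $z=z'=-1$, drop the $r=0$ term, and collapse the surviving two summands via the quintuple product identity. The only cosmetic difference is that you invoke the $j(-x;q)j(qx^2;q^2)/J_2$ form of (\ref{equation:H1Thm1.0}), which cancels the $j(-x;q)$ in the prefactor immediately, whereas the paper uses the equivalent $J_1\,j(x^2;q)/j(x;q)$ form and then simplifies with (\ref{equation:1.9}) and (\ref{equation:1.10}) to reach the same endpoint.
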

Before we prove Theorem \ref{theorem:msplit-general-n} as well as Corollaries \ref{corollary:msplit-n=2} and \ref{corollary:msplit-n=3}, we establish an intermediate result, which also appears to be new.

\begin{theorem} \label{theorem:rootsof1}
Let $n$ and $k$ be integers with $0 \leq k < n$. Let $\omega$ be a primitive $n$-th root of unity. Then
\begin{align}
\sum_{t=0}^{n-1}& \omega^{-kt} m(\omega^t x,q,z) =  n q^{-\binom{k+1}{2}} (-x)^k m\big(-q^{{\binom{n}{2}-nk}} (-x)^n, q^{n^2}, z' \big)\label{equation:rootsof1}\\
&-\frac{n x^k z^{k+1} J_{n^2}^3} {j(z;q)j(z';q^{n^2})}\cdot\sum_{t=0}^{n-1} \frac{q^{{\binom{t+1}{2}+kt}} (-z)^t j\big(-q^{{\binom{n+1}{2}+nk+nt}} (-z)^n/z';q^{n^2}\big)
j(q^{nt} x^n z^n z';q^{n^2})}
{j\big(-q^{{\binom{n}{2}-nk}} (-x)^n z', q^{nt} x^n z^n;q^{n^2}\big)}.\notag
\end{align}
\end{theorem}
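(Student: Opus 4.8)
The plan is to start from the defining series \eqref{equation:mxqz-def} for each $m(\omega^t x,q,z)$ and form the weighted sum $\sum_{t=0}^{n-1}\omega^{-kt}m(\omega^t x,q,z)$ directly. Since $\omega$ is a primitive $n$-th root of unity and the theta factor $j(z;q)$ in the denominator does not involve $x$, we may pull it out and write
\[
\sum_{t=0}^{n-1}\omega^{-kt}m(\omega^t x,q,z)=\frac{1}{j(z;q)}\sum_r(-1)^rq^{\binom{r}{2}}z^r\sum_{t=0}^{n-1}\frac{\omega^{-kt}}{1-q^{r-1}\omega^t xz}.
\]
The inner sum over $t$ is a geometric-type sum of the $n$-th roots of unity twisted by the character $\omega^{-kt}$, and it evaluates in closed form: using $\prod_{t}(1-q^{r-1}\omega^t xz)=1-(q^{r-1}xz)^n$ and the partial-fraction identity for $\sum_t \omega^{-kt}/(1-\omega^t w)$, one finds it equals $n\,(q^{r-1}xz)^{\,k'}/(1-(q^{r-1}xz)^n)$ where $k'\equiv k$ is chosen in $\{0,\dots,n-1\}$ (this is exactly where the hypothesis $0\le k<n$ is used). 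This collapses the $t$-sum and leaves a single series in $r$.

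Next I would split the resulting series in $r$ according to the residue of $r$ modulo $n$, writing $r=ns+j$ with $0\le j<n$; the factor $q^{\binom{r}{2}}$ splits multiplicatively into a $q^{n^2\binom{s}{2}}$-type piece (the ``$q^{n^2}$'' theta modulus) times lower-order powers depending on $j$, and the denominator $1-(q^{r-1}xz)^n=1-q^{n^2 s+\,n(j-1)}(xz)^n$ becomes the denominator of a new Appell-Lerch sum in base $q^{n^2}$. Re-indexing and recognizing \eqref{equation:mxqz-def} (or its shifted form \eqref{equation:mxqz-altdef}) for the variable $-q^{\binom{n}{2}-nk}(-x)^n$ in base $q^{n^2}$ produces the first term $n\,q^{-\binom{k+1}{2}}(-x)^k\,m\big(-q^{\binom{n}{2}-nk}(-x)^n,q^{n^2},z'\big)$ on the right-hand side, with $z'$ arising as the free ``$z$''-parameter of the new Appell-Lerch sum — but only after inserting the appropriate normalizing theta quotient, since the naive re-indexing introduces $m(\cdot,q^{n^2},1)$-type objects that must be converted to the $z'$-version. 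That conversion is precisely Theorem \ref{theorem:changing-z-theorem}, and it is what generates the second, purely theta-quotient term.

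The main obstacle will be the bookkeeping in the last two steps: tracking the powers of $q$ and signs through the substitution $r=ns+j$ so that the exponent $\binom{n}{2}-nk$ and the prefactor $q^{-\binom{k+1}{2}}(-x)^k$ come out exactly right, and then correctly identifying the theta quotient produced by Theorem \ref{theorem:changing-z-theorem} with the displayed double sum $\sum_{t}q^{\binom{t+1}{2}+kt}(-z)^t\,j(\cdots)j(\cdots)/j(\cdots)$ — in particular matching the arguments $-q^{\binom{n+1}{2}+nk+nt}(-z)^n/z'$, $q^{nt}x^nz^nz'$, and the denominator $j\big(-q^{\binom{n}{2}-nk}(-x)^nz',q^{nt}x^nz^nz';q^{n^2}\big)$. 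An alternative, possibly cleaner route to the same end is to verify the claimed identity as an identity of meromorphic functions of $x$ (or of $z$): both sides are meromorphic for $x\ne 0$ with the same quasi-periodicity under $x\mapsto qx$ by Proposition \ref{proposition:H1Thm1.7}, both have only simple poles coming from the denominators $1-q^{r-1}\omega^t xz$, and one checks the residues agree at a full set of poles using Proposition \ref{proposition:mnp2Lerch-residue}; then the difference is a theta function with more zeros than its weight permits, hence identically zero. I would present the direct computational proof as the primary argument and mention the function-theoretic verification as a check, since the direct approach also explains \emph{why} $z'$ appears and cleanly feeds into the proof of Theorem \ref{theorem:msplit-general-n} via averaging \eqref{equation:rootsof1} over $k$.
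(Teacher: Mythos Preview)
Your proposal is correct and follows essentially the same route as the paper: collapse the $t$-sum via the partial-fraction identity $\sum_t \omega^{-kt}/(1-\omega^t w)=n\,w^k/(1-w^n)$ (the paper's Lemma~\ref{lemma:rootsof1}), split the remaining $r$-sum modulo $n$, recognize Appell--Lerch sums in base $q^{n^2}$, and then apply Theorem~\ref{theorem:changing-z-theorem} to pass to a common $z'$. One small correction to your description: the re-indexing does not produce $m(\cdot,q^{n^2},1)$-type terms but rather $m\big(-q^{\binom{n}{2}-nk}(-x)^n,\,q^{n^2},\,-q^{\binom{n+1}{2}+nk+nt}(-z)^n\big)$ with a \emph{different} third argument for each residue class $t$; after converting each of these to $z'$ via Theorem~\ref{theorem:changing-z-theorem}, the sum of the original-$z$ contributions collapses by \eqref{equation:jsplit} to a single factor $j(q^{k+1}z;q)=q^{-\binom{k+1}{2}}(-z)^{-k-1}j(z;q)$, which is exactly what produces the clean first term on the right.
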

\noindent To prove this, we need the partial fraction decomposition of $x^k/(1-x^n)$:
\begin{lemma}\label{lemma:rootsof1}
Let $n$ and $k$ be integers with $0 \leq k < n$. Let $\omega$ be a primitive $n$-th root of unity, and
suppose that $x^n \neq 1$. Then
\begin{equation}\label{parfrac}
\frac{x^k}{1-x^n} = \frac{1}{n}\sum_{t=0}^{n-1} \frac{\omega^{-kt}}{1-\omega^t x}.
\end{equation}
\end{lemma}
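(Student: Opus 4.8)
The final statement to prove is Lemma \ref{lemma:rootsof1}, the partial fraction decomposition
\[
\frac{x^k}{1-x^n} = \frac{1}{n}\sum_{t=0}^{n-1} \frac{\omega^{-kt}}{1-\omega^t x},
\]
valid for $0\le k<n$, $\omega$ a primitive $n$-th root of unity, and $x^n\neq 1$. This is a standard identity, so the plan is to give a short, self-contained derivation rather than anything clever.

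The plan is to compute the right-hand side directly as a geometric-series manipulation. First I would expand each summand $1/(1-\omega^t x)$ as a formal power series $\sum_{m\ge 0}\omega^{tm}x^m$ (or, to stay rigorous without convergence worries, multiply through by $1-x^n$ and check the polynomial identity). Summing over $t$ gives $\frac{1}{n}\sum_{m\ge 0}x^m\sum_{t=0}^{n-1}\omega^{t(m-k)}$, and the inner sum over the $n$-th roots of unity is $n$ when $m\equiv k\pmod n$ and $0$ otherwise. Hence the right-hand side collapses to $\sum_{\ell\ge 0}x^{k+\ell n}=x^k\sum_{\ell\ge 0}(x^n)^\ell = x^k/(1-x^n)$, which is the left-hand side.

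To avoid any appeal to convergence (the statement is purely algebraic, holding for all $x$ with $x^n\neq 1$), I would instead clear denominators: the claim is equivalent to the polynomial identity
\[
n x^k = (1-x^n)\sum_{t=0}^{n-1}\frac{\omega^{-kt}}{1-\omega^t x}
= \sum_{t=0}^{n-1}\omega^{-kt}\,\frac{1-x^n}{1-\omega^t x}.
\]
Since $1-x^n=\prod_{t=0}^{n-1}(1-\omega^t x)$, each term $\frac{1-x^n}{1-\omega^t x}=\prod_{u\neq t}(1-\omega^u x)$ is a polynomial of degree $n-1$, so both sides are polynomials of degree $<n$; it suffices to check they agree at the $n$ points $x=\omega^{-s}$, $s=0,\dots,n-1$. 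At $x=\omega^{-s}$ every factor $1-\omega^u x$ with $u\neq s$ is nonzero while $1-\omega^s x=0$, so only the $t=s$ term survives on the right, giving $\omega^{-ks}\prod_{u\neq s}(1-\omega^{u-s})$. Using $\prod_{u\neq s}(1-\omega^{u-s})=\prod_{v=1}^{n-1}(1-\omega^v)=n$ (the derivative of $z^n-1$ at $z=1$, up to sign bookkeeping), the right-hand side equals $n\omega^{-ks}=n x^k$ at $x=\omega^{-s}$, matching the left-hand side. Two degree-$<n$ polynomials agreeing at $n$ points are equal, which proves the lemma.

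There is essentially no obstacle here: the only mild care needed is the evaluation $\prod_{v=1}^{n-1}(1-\omega^v)=n$, which follows from factoring $z^n-1=\prod_{v=0}^{n-1}(z-\omega^v)$, dividing by $(z-1)$, and letting $z\to 1$ so that $1+z+\cdots+z^{n-1}\big|_{z=1}=n=\prod_{v=1}^{n-1}(1-\omega^v)$. The hypothesis $x^n\neq 1$ is exactly what guarantees the denominators on the right are nonzero, so the rational-function identity is legitimate wherever it is asserted.
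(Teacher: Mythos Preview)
Your proof is correct, but it takes a different route from the paper's. The paper argues by partial fractions and symmetry: writing $\frac{x^k}{1-x^n}=\sum_{t=0}^{n-1}\frac{c_t}{1-\omega^t x}$, it substitutes $\omega x$ for $x$ to obtain the recursion $c_t=\omega^{-k}c_{t-1}$, hence $c_t=\omega^{-kt}c_0$, and then determines $c_0=1/n$ by multiplying through by $1-x$ and letting $x\to 1$. Your approach instead clears denominators and verifies the resulting polynomial identity by evaluation at the $n$ points $x=\omega^{-s}$, using $\prod_{v=1}^{n-1}(1-\omega^v)=n$. Both arguments are short and elementary; the paper's version avoids computing the product $\prod_{v=1}^{n-1}(1-\omega^v)$ explicitly, while yours avoids the substitution trick and is perhaps more transparently a Lagrange-interpolation computation. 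Either is perfectly adequate for this lemma.
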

\begin{proof}[Proof of Lemma \ref{lemma:rootsof1}]
Since $1-x^n = \prod_{t=0}^{n-1} (1-\omega^t x)$ and $k<n$, we have
\begin{equation}
\frac{x^k}{1-x^n} = \sum_{t=0}^{n-1} \frac{c_t}{1-\omega^t x}
\end{equation}
for some constants $c_t$. Replacing $x$ by $\omega x$ gives
\begin{equation}
\frac{\omega^k x^k}{1-x^n} = \sum_{t=0}^{n-1} \frac{c_t}{1-\omega^{t+1} x}
= \sum_{t=0}^{n-1} \frac{c_{t-1}}{1-\omega^t x},
\end{equation}
where $c_{-1}: = c_{n-1}$. Combining these equations gives $c_t = \omega^{-k} c_{t-1}$, so
$c_t = \omega^{-kt} c_0$ and
\begin{equation}
\frac{x^k}{1-x^n} = c_0 \sum_{t=0}^{n-1} \frac{\omega^{-kt}}{1-\omega^t x}.
\end{equation}
Multiplying by $1-x$ and taking the limit as $x\rightarrow 1$ shows that $c_0 = 1/n$.
\end{proof}

\begin{proof}[Proof of Theorem \ref{theorem:rootsof1}]
By Definition \ref{definition:mdef},
\begin{align}
j(z,q) \sum_{t=0}^{n-1} \omega^{-kt} m(\omega^t x,q,z)
&=\sum_{t=0}^{n-1} \omega^{-kt} \sum_r \frac{q^{{\binom{r}{2}}} (-z)^r}{1-q^{r-1} z \omega^t x}\notag\\
&=\sum_r q^{{\binom{r}{2}}} (-z)^r \sum_{t=0}^{n-1} \frac{\omega^{-kt}}{1-\omega^t q^{r-1} x z}\notag\\
&=\sum_r q^{{\binom{r}{2}}} (-z)^r \frac{n (q^{r-1} x z)^k}{1-(q^{r-1} x z)^n} & \text{(by Lemma \ref{lemma:rootsof1})}\notag\\
&=n x^k z^k \sum_r \frac{q^{{\binom{r}{2}+kr-k}} (-z)^r} {1-q^{nr-n} x^n z^n}.
\end{align}
In the last sum we break up the terms according to the value of $r \text{ mod } n$.  Replacing $r$
by $nr+t+1$ with $0\leq t < n$, we obtain
\begin{align}
j(z,q) \sum_{t=0}^{n-1} \omega^{-kt} &m(\omega^t x,q,z)
=n x^k z^k \sum_{t=0}^{n-1} \sum_r \frac{q^{{\binom{nr+t+1}{2}+k(nr+t)}} (-z)^{nr+t+1}} {1-q^{n(nr+t)} x^n z^n} \notag\\
&=-n x^k z^{k+1} \sum_{t=0}^{n-1} q^{{\binom{t+1}{2}+kt}} \sum_r \frac{q^{{n^2\binom{r}{2}+r\binom{n+1}{2}+knr+nrt}} (-z)^{nr+t}} {1-q^{n^2 r+nt} x^n z^n} \notag\\
&=-n x^k z^{k+1} \sum_{t=0}^{n-1} q^{{\binom{t+1}{2}+kt}} (-z)^t \sum_r \frac{(q^{n^2})^{{\binom{r}{2}}}
\big(q^{{\binom{n+1}{2}+kn+nt}} (-z)^n\big)^r} {1-(q^{{n^2}})^{r-1} q^{n^2+nt} x^n z^n} \notag\\
&=-n x^k z^{k+1} \sum_{t=0}^{n-1} q^{{\binom{t+1}{2}+kt}} (-z)^t
j\big(-q^{{\binom{n+1}{2}+kn+nt}} (-z)^n;q^{n^2} \big) \notag\\
&\hspace{1in}\cdot m\big(-q^{{\binom{n}{2}-kn}} (-x)^n, q^{n^2}, -q^{{\binom{n+1}{2}+kn+nt}} (-z)^n\big).
\end{align}
By Theorem \ref{theorem:changing-z-theorem},
\begin{align}
&m\big(-q^{{\binom{n}{2}-kn}} (-x)^n, q^{n^2}, -q^{{\binom{n+1}{2}+kn+nt}} (-z)^n\big)\notag\\
&= m \big(-q^{{\binom{n}{2}-kn}} (-x)^n, q^{n^2}, z'\big)\\
&\ \ + \frac{z' J_{n^2}^3
\ j \big(-q^{{\binom{n+1}{2}+kn+nt}} (-z)^n/z';q^{n^2} \big)
\ j(q^{n^2+nt} x^n z^n z';q^{n^2})}
{j(z';q^{n^2})
\ j\big(-q^{{\binom{n+1}{2}+kn+nt}} (-z)^n;q^{n^2}\big)
\ j\big(-q^{{\binom{n}{2}-kn}}(-x)^n z';q^{n^2}\big)
\ j(q^{n^2+nt} x^n z^n;q^{n^2})}.\notag
\end{align}
By identities (\ref{equation:jsplit}) and (\ref{equation:1.8}),
\begin{align}
\sum_{t=0}^{n-1} q^{{\binom{t+1}{2}+kt}} (-z)^t j\big(-q^{{\binom{n+1}{2}+kn+nt}} (-z)^n; q^{n^2}\big)
&= j(q^{k+1} z;q)= q^{{-\binom{k+1}{2}}} (-z)^{-k-1} j(z;q).
\end{align}
Combining the last 3 equations, we obtain
\begin{align}
&j(z;q) \sum_{t=0}^{n-1} \omega^{-kt} m(\omega^t x,q,z)\notag\\
&=-n x^k z^{k+1} q^{{-\binom{k+1}{2}}}(-z)^{-k-1} j(z;q) m\big(-q^{{\binom{n}{2}-kn}} (-x)^n, q^{n^2}, z'\big) \notag\\
&\ \ -n x^k z^{k+1}
\sum_{t=0}^{n-1} q^{{\binom{t+1}{2}+kt}} (-z)^t
\frac{z' J_{n^2}^3
\ j\big(-q^{{\binom{n+1}{2}+kn+nt}} (-z)^n/z';q^{n^2}\big)
\ j(q^{n^2+nt} x^n z^n z';q^{n^2})}
{j(z';q^{n^2})
\ j\big(-q^{{\binom{n}{2}-kn}}(-x)^n z';q^{n^2}\big)
\ j(q^{n^2+nt} x^n z^n;q^{n^2})}\notag\\
&=n q^{{-\binom{k+1}{2}}} (-x)^k j(z,q) m\big(-q^{{\binom{n}{2}-kn}} (-x)^n, q^{n^2}, z'\big) \\
&\ \ - \frac{n x^k z^{k+1} J_{n^2}^3}
{j(z';q^{n^2})\ j\big(-q^{{\binom{n}{2}-kn}}(-x)^n z';q^{n^2}\big)}\notag\\
&\hspace{1in}\cdot \sum_{t=0}^{n-1}
\frac{q^{{\binom{t+1}{2}+kt}} (-z)^t
\ j\big(-q^{{\binom{n+1}{2}+kn+nt}} (-z)^n/z';q^{n^2}\big)
\ j(q^{nt} x^n z^n z';q^{n^2})}
{j(q^{nt} x^n z^n;q^{n^2})},\notag
\end{align}
where the last equality follows from 2 applications of (\ref{equation:1.8}). Dividing by $j(z,q)$ completes the proof.
\end{proof}
\begin{proof}[Proof of Theorem \ref{theorem:msplit-general-n}]
We add equation (\ref{equation:rootsof1}) for $k$ from $0$ to $n-1$ and divide by $n$. The left side is simple:
\begin{align} 
\frac{1}{n}\sum_{k=0}^{n-1} \sum_{t=0}^{n-1} \omega^{-kt} m(\omega^t x,q,z) =
\frac{1}{n}\sum_{t=0}^{n-1} m(\omega^t x,q,z)  \sum_{k=0}^{n-1}\omega^{-kt}.
\end{align}
The innermost sum equals $0$ unless $k=0$, in which case it equals $n$, so
\begin{align} 
\frac{1}{n}\sum_{k=0}^{n-1} \sum_{t=0}^{n-1} \omega^{-kt} m(\omega^t x,q,z) = m(x,q,z).
\end{align}
Hence
{\allowdisplaybreaks\begin{align} \label{equation:readytochangez}
m&(x,q,z) =
\sum_{k=0}^{n-1} q^{{-\binom{k+1}{2}}} (-x)^k m\big(-q^{{\binom{n}{2}-nk}} (-x)^n, q^{n^2}, z' \big)\notag\\*
& - \sum_{k=0}^{n-1} \frac{x^k z^{k+1} J_{n^2}^3}
{j(z;q)
j(z';q^{n^2})
j\big(-q^{{\binom{n}{2}-nk}} (-x)^n z'; q^{n^2}\big)}\notag\\*
&\hspace{1in}\cdot \sum_{t=0}^{n-1} \frac{q^{{\binom{t+1}{2}+kt}} (-z)^t j\big(-q^{{\binom{n+1}{2}+nk+nt}} (-z)^n/z';q^{n^2}\big)
j(q^{nt} x^n z^n z';q^{n^2})}
{j(q^{nt} x^n z^n;q^{n^2})}\notag\\
=&
\sum_{k=0}^{n-1} q^{{-\binom{k+1}{2}}} (-x)^k m\big(-q^{{\binom{n}{2}-nk}} (-x)^n, q^{n^2}, z' \big)\notag\\
& + \frac{J_{n^2}^3}{j(z;q) j(z';q^{n^2})}
\sum_{t=0}^{n-1}
\frac{q^{{\binom{t+1}{2}}} (-z)^{t+1} j(q^{nt} x^n z^n z';q^{n^2})} {j(q^{nt} x^n z^n;q^{n^2})} \notag\\
&\hspace{2in}\cdot\sum_{k=0}^{n-1}
\frac{q^{kt} x^k z^k j\big(-q^{{\binom{n+1}{2}+nk+nt}} (-z)^n/z';q^{n^2}\big)}
{j\big(-q^{{\binom{n}{2}-nk}} (-x)^n z'; q^{n^2}\big)} \notag\\
=&
\sum_{k=0}^{n-1} q^{{-\binom{k+1}{2}}} (-x)^k m\big(-q^{{\binom{n}{2}-nk}} (-x)^n, q^{n^2}, z' \big)\notag\\*
& + \frac{J_{n^2}^3}{j(z;q) j(z';q^{n^2})}
\sum_{t=0}^{n-1}
\frac{q^{{\binom{t+1}{2}}} (-z)^{t+1} j(q^{nt} x^n z^n z';q^{n^2})} {j(q^{nt} x^n z^n;q^{n^2})} \notag\\*
&\hspace{2in}\cdot\sum_{k=0}^{n-1}
\frac{(q^t x z)^k j\big(-q^{{\binom{n+1}{2}+nk+nt}} (-z)^n/z';q^{n^2}\big)}
{j\big(-q^{{\binom{n+1}{2}+nk}} (-x)^{-n}/z'; q^{n^2}\big)} \notag\\
=&
\sum_{k=0}^{n-1} q^{{-\binom{k+1}{2}}} (-x)^k m\big(-q^{{\binom{n}{2}-nk}} (-x)^n, q^{n^2}, z' \big)\notag\\*
& + \frac{J_{n^2}^3}{j(z;q) j(z';q^{n^2})}
\sum_{t=0}^{n-1}
\frac{q^{{\binom{t+1}{2}}} (-z)^{t+1} j(q^{nt} x^n z^n z';q^{n^2})} {j(q^{nt} x^n z^n;q^{n^2})} \notag\\*
&\hspace{2in}\cdot\frac{J_n^3\;j\big(q^{{\binom{n+1}{2}+t}} (-x)^{1-n} z/z'; q^n\big) j(q^{nt} x^n z^n;q^{n^2})}
{J_{n^2}^3\;j(q^t x z;q^n) j\big(-q^{{\binom{n+1}{2}}} (-x)^{-n}/z'; q^n\big)} \notag\\
=&
\sum_{k=0}^{n-1} q^{{-\binom{k+1}{2}}} (-x)^k m\big(-q^{{\binom{n}{2}-nk}} (-x)^n, q^{n^2}, z' \big)\notag\\*
& + \frac{J_n^3}{j(z;q) j\big(-q^{{\binom{n+1}{2}}} (-x)^{-n}/z'; q^n\big) j(z';q^{n^2})} \notag\\*
&\hspace{1in} \cdot \sum_{t=0}^{n-1}
\frac{q^{{\binom{t+1}{2}}} (-z)^{t+1}
j\big(q^{{\binom{n+1}{2}+t}} (-x)^{1-n} z/z'; q^n\big)
j(q^{nt} x^n z^n z';q^{n^2})}
{j(q^t x z;q^n)},
\end{align}}
where the fourth equality follows from (\ref{equation:ThmAH1.1}).  Now we substitute $q x^{-1} z^{-1}$ for $z$ in (\ref{equation:readytochangez}) and recall Corollary \ref{corollary:mxqz-flip-xz}:
{\allowdisplaybreaks\begin{align}
m&(x,q,z) =\ m(x,q,q x^{-1} z^{-1}) \notag\\
&= \sum_{k=0}^{n-1} q^{{-\binom{k+1}{2}}} (-x)^k m\big(-q^{{\binom{n}{2}-nk}} (-x)^n, q^{n^2}, z' \big)\notag\\*
& \ \ + \frac{J_n^3}{j(q x^{-1} z^{-1};q) j(z';q^{n^2})} \notag\\*
&\hspace{.5in} \cdot\sum_{t=0}^{n-1}
\frac{q^{{\binom{t+1}{2}}} (-q x^{-1} z^{-1})^{t+1}
j\big(-q^{{\binom{n+1}{2}+t+1}} (-x)^{-n} z^{-1}/z'; q^n\big)
j(q^{nt+n} z^{-n} z';q^{n^2})}
{ j\big(-q^{{\binom{n+1}{2}}} (-x)^{-n}/z', q^{t+1} z^{-1};q^n\big )} \notag\\
&=\sum_{k=0}^{n-1} q^{{-\binom{k+1}{2}}} (-x)^k m\big(-q^{{\binom{n}{2}-nk}} (-x)^n, q^{n^2}, z' \big)\notag\\*
&\  \ + \frac{z' J_n^3}{j(xz,q)j(z',q^{n^2})} \\*
&\hspace{.5in} \cdot\sum_{t=0}^{n-1}
\frac{q^{{\binom{n-1-t}{2}}} (-xz)^{n-1-t}
j\big(-q^{\binom{n}{2}+n-1-t} (-x)^n z z';q^n\big)
j(q^{n(n-1-t)} z^n/z';q^{n^2})}
{ j\big(-q^{\binom{n}{2}} (-x)^n z', q^{n-1-t} z;q^n\big)} ,\notag
\end{align}}%
by several applications of (\ref{equation:1.8}) and (\ref{equation:1.7}). Letting $t=n-1-r$ in the final sum gives the result.
\end{proof}


\begin{proof} [Proof of Corollary \ref{corollary:msplit-n=2}]
We specialize Theorem \ref{theorem:msplit-general-n} to the case $n=2$, $z'=z^4.$  This yields
\begin{align*}
m(x,q,z)&=m(-qx^2,q^4,z^4)-q^{-1}xm(-q^{-1}x^2,q^4,z^4)\\
&+\frac{z^4J_2^3}{j(xz;q)j(z^4;q^4)}
\Big\{\frac{j(-qx^2z^5;q^2)j(z^{-2};q^4)}{j(-qx^2z^4,z;q^2)}
-xz\frac{j(-q^2x^2z^5;q^2)j(q^2z^{-2};q^4)}{j(-qx^2z^4,qz;q^2)}\Big\}.
\end{align*}
We restrict ourselves to the sum of theta quotients.  Using (\ref{equation:1.7}) and (\ref{equation:1.8}) and simplifying, we have
\begin{align*}
-&\frac{J_2^3}{xj(xz;q)j(-qx^2z^4;q^2)j(z^4;q^4)}
\Big\{xz^2\frac{j(-qx^2z^5;q^2)j(z^{2};q^4)}{j(z;q^2)}
+\frac{j(-x^2z^5;q^2)j(q^2z^{2};q^4)}{j(qz;q^2)}\Big\}\\
&=-\frac{J_2J_4}{xj(xz;q)j(-qx^2z^4;q^2)j(z^4;q^4)}
\Big\{xz^2{j(-qx^2z^5,-z;q^2)}
+{j(-x^2z^5,-qz;q^2)}\Big\},
\end{align*}
where the second line follows from (\ref{equation:1.9}).  Applying (\ref{equation:H1Thm1.1}) produces the desired result.
\end{proof}

\begin{proof} [Proof of Corollary \ref{corollary:msplit-n=3}] Specializing Theorem \ref{theorem:msplit-general-n} to the case $n=3,z=z'=-1$, and simplifying yields
{\allowdisplaybreaks \begin{align*}
m&(x,q,-1)=m(q^3x^3,q^9,-1)-q^{-1}xm(x^3,q^9,-1)+q^{-3}x^2m(q^{-3}x^3,q^9,-1)\\
&\ \ +\frac{J_3^3}{j(-x;q)j(-x^3;q^3)j(-1;q^9)}\Big\{
\frac{q^{-1}xj(qx^3;q^3)j(q^3;q^9)}{j(-q;q^3)}
+\frac{q^{-1}x^2j(q^2x^3;q^3)j(q^6;q^9)}{j(-q^2;q^3)}\Big\}\\
&=m(q^3x^3,q^9,-1)-q^{-1}xm(x^3,q^9,-1)+q^{-3}x^2m(q^{-3}x^3,q^9,-1)\\
&\ \ +\frac{xJ_1J_3^2J_6J_9}{2qJ_2J_{18}^2j(-x;q)j(-x^3;q^3)}\Big\{ j(x^3q;q^3)+xj(q^2x^3;q^3)\Big\}.
\end{align*}}
We restrict ourselves to the sum of theta quotients and find
\begin{align*}
\frac{xJ_1J_3^2J_6J_9(j(x^3q;q^3)+xj(q^2x^3;q^3))}{2qJ_2J_{18}^2j(-x;q)j(-x^3;q^3)}&=\frac{xq^{-1}J_1J_3^2J_6J_9}{2J_2J_{18}^2j(-x;q)j(-x^3;q^3)} \frac{J_1j(x^2;q)}{j(x;q)}& {\text{(by (\ref{equation:H1Thm1.0})}})\\
&=\frac{xJ_3^2J_6J_9j(x^2;q)}{2qJ_{18}^2j(-x^3;q^3)j(x^2;q^2)}\qedhere
&{\text{(by (\ref{equation:1.9}))}}\\
&=\frac{xJ_1J_3^2J_6J_9j(qx^2;q^2)}{2qJ_2^2J_{18}^2j(-x^3;q^3)}.&{\text{(by (\ref{equation:1.10}))}}
\end{align*}
\end{proof}


\section{Other generalized Lambert series}\label{section:genlambert}

In this section we recall the functions $g(x,q),$ $h(x,q)$, and $k(x,q)$ which have been used to express mock theta functions.  We then derive representations for these functions in terms of Appell-Lerch sums.

\begin{definition} \label{definition:g-def} Let $x$ be neither $0$ nor an integral power of $q$. Then 
\begin{equation}
g(x,q):=x^{-1}\Big ( -1 +\sum_{n\ge0}\frac{q^{n^2}}{(x)_{n+1}(q/x)_{n}} \Big ).
\end{equation}
\end{definition}
\noindent It is not difficult to show that $g(x,q)$ can be expressed more simply,
\begin{equation}
g(x,q)=\sum_{n\ge0}\frac{q^{n(n+1)}}{(x)_{n+1}(q/x)_{n+1}}.\label{equation:g-alt-def}
\end{equation}

We begin by expressing $g(x,q)$ in terms of Appell-Lerch sums.
\begin{proposition}  \label{proposition:g-to-mxqz} For generic $x\in\mathbb{C}^*$
\begin{equation}
g(x,q)=-x^{-1}m(q^2x^{-3},q^3,x^2)-x^{-2}m(qx^{-3},q^3,x^2).
\end{equation}
\end{proposition}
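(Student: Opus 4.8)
The plan is to prove the identity by comparing the two sides as meromorphic functions of $x$ with $q$ fixed, $0<|q|<1$, using residue arguments of the kind behind Propositions \ref{proposition:H1Thm1.3} and \ref{proposition:mnp2Lerch-residue}.

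\emph{Left side.} From (\ref{equation:g-alt-def}) and the evident symmetry $g(x,q)=g(q/x,q)$, the function $g(x,q)$ is holomorphic on $\mathbb{C}^*$ apart from simple poles at $x=q^k$, $k\in\mathbb{Z}$. For $j\ge0$ only the factor $1-q^jx$ of $(x)_{n+1}$, and only for $n\ge j$, contributes a pole at $x=q^{-j}$; collecting those terms and simplifying the finite products (then putting $n=j+\ell$) gives
\[
\operatorname{Res}_{x=q^{-j}}g(x,q)=(-1)^{j+1}q^{\binom{j+1}{2}-j+j^2+j}\sum_{\ell\ge0}\frac{q^{\ell(\ell+2j+1)}}{(q)_\ell(q)_{\ell+2j+1}}.
\]
The inner sum is the Durfee-rectangle series $\sum_{\ell\ge0}q^{\ell(\ell+m)}/\big((q)_\ell(q)_{\ell+m}\big)=1/(q)_\infty$ with $m=2j+1$, so $\operatorname{Res}_{x=q^{-j}}g(x,q)=(-1)^{j+1}q^{j(3j+1)/2}/(q)_\infty$; using $g(x,q)=g(q/x,q)$ this extends to the uniform formula $\operatorname{Res}_{x=q^k}g(x,q)=(-1)^{k+1}q^{k(3k-1)/2}/(q)_\infty$ for every $k\in\mathbb{Z}$. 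Equivalently $g(x,q)=(q)_\infty^{-1}\sum_{k\in\mathbb{Z}}(-1)^{k+1}q^{k(3k-1)/2}/(x-q^k)$, the vanishing of the constant term here being exactly $j(1;q^3)=0$.

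\emph{Right side.} By Definition \ref{definition:mdef},
\[
-x^{-1}m(q^2x^{-3},q^3,x^2)-x^{-2}m(qx^{-3},q^3,x^2)=\frac{-1}{j(x^2;q^3)}\sum_r(-1)^rq^{3\binom r2}x^{2r}\left(\frac{x^{-1}}{1-q^{3r-1}x^{-1}}+\frac{x^{-2}}{1-q^{3r-2}x^{-1}}\right).
\]
Its possible poles lie at $x=q^{3r-1}$ and $x=q^{3r-2}$ (from the two bilateral Lambert sums, whose residues at these points are found by the direct computation of Proposition \ref{proposition:mnp2Lerch-residue}) and at the zeros of $j(x^2;q^3)$ (Proposition \ref{proposition:H1Thm1.3}). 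The delicate point is that $j(x^2;q^3)$ vanishes not only at $x=q^{3s}$ but also at the spurious points $x=-q^{3s}$ and $x=\pm q^{3s+3/2}$; one must show that the theta-type numerator $x\sum_r(-1)^rq^{3\binom r2}x^{2r}/(1-q^{3r-1}x^{-1})+\sum_r(-1)^rq^{3\binom r2}x^{2r}/(1-q^{3r-2}x^{-1})$ vanishes at precisely those spurious points, so that the right side is holomorphic there; this I would deduce from the theta identities of Section \ref{section:prelim} (Jacobi triple and quintuple products). Granting it, the right side has only simple poles, at $x=q^k$ for all $k\in\mathbb{Z}$ — the classes $k\equiv1,2\pmod3$ coming from the two Lambert sums, the class $k\equiv0\pmod3$ from $j(x^2;q^3)$ — and a short residue computation shows these match those of $g(x,q)$ found above.

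\emph{Conclusion.} Hence the difference of the two sides is holomorphic on all of $\mathbb{C}^*$. A direct estimate shows that both sides tend to $0$ as $x\to0$ and as $x\to\infty$ (on the left using $(x)_{n+1}\sim(-x)^{n+1}q^{\binom{n+1}{2}}$ and $(q/x)_{n+1}\to1$ as $x\to\infty$, and the symmetric statements as $x\to0$; on the right comparing the growth of each bilateral Lambert sum with that of $j(x^2;q^3)$, or iterating (\ref{equation:mxqz-altdef0}) after the flip (\ref{equation:mxqz-flip})). Thus the difference extends to a holomorphic function on $\mathbb{P}^1$ vanishing at $\infty$, so it is identically zero, which is the assertion. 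I expect the main obstacle to be the pole bookkeeping on the right side, in particular the vanishing of the theta-type numerator at the spurious zeros of $j(x^2;q^3)$; the left-side residue computation, by contrast, is routine once one recognizes the Durfee-rectangle series.
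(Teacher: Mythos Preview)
Your approach is genuinely different from the paper's. The paper does not compute residues at all: it quotes an identity of Hickerson (\cite[Theorem~2.2]{H1}) which already expresses $g(x,q)$ as two Appell--Lerch sums plus an explicit theta quotient, all depending on a free parameter $z$; setting $z=x^{-1}$ kills the theta term because $j(xz;q)=j(1;q)=0$, and the proposition drops out. So the paper's proof is essentially one line of specialization.

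Your residue-comparison strategy could in principle be made to work, but as written it has two real gaps.

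\textbf{The spurious zeros.} You correctly flag this as the main obstacle and then ``grant'' it. But this is not a bookkeeping matter: the numerator $N(x)=x\sum_r(-1)^rq^{3\binom r2}x^{2r}/(1-q^{3r-1}x^{-1})+\sum_r(-1)^rq^{3\binom r2}x^{2r}/(1-q^{3r-2}x^{-1})$ is not a theta function, and its vanishing at $x=-q^{3m}$ and $x=\pm q^{(3/2)(2m+1)}$ (three quarters of the zeros of $j(x^2;q^3)$) does not fall out of the triple or quintuple product. One needs something equivalent to the identity you are trying to prove; in effect this step hides the whole content of the proposition.

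\textbf{The Liouville step.} This is the more serious problem. Both sides have simple poles at every $x=q^k$, $k\in\mathbb Z$, and these accumulate at both $0$ and $\infty$; neither side has a limit there, so the sentence ``both sides tend to $0$ as $x\to0$ and as $x\to\infty$'' is not meaningful as stated. You would need to show that the \emph{difference} (which is holomorphic on $\mathbb C^*$) tends to $0$ at both ends. Your proposed estimates do not do this: the $(x)_{n+1}\sim(-x)^{n+1}q^{\binom{n+1}{2}}$ argument controls $g$ only on circles avoiding poles, and for the right side ``iterating (\ref{equation:mxqz-altdef0})'' produces the divergent heuristic (\ref{equation:mxqz-heuristic}), not a bound. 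Moreover, in $-x^{-1}m(q^2x^{-3},q^3,x^2)$ both the first and third arguments of $m$ move as $x\to\infty$, so there is no simple limiting value to read off. A workable substitute would be to show that the holomorphic difference satisfies a functional equation under $x\mapsto qx$ and then invoke Proposition~\ref{proposition:H1Thm1.7}; but that argument is not the one you sketched, and verifying the functional equation on the right side (where $z=x^2$ is moving) again requires nontrivial input, essentially Theorem~\ref{theorem:changing-z-theorem}.
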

\begin{proof} It was shown in \cite[Theorem $2.2$]{H1} that
\begin{align}
g(x,q)=\frac{1}{j(x^3z;q^3)}& \Big [\sum_r\frac{(-1)^rq^{3r(r+1)/2}x^{3r+1}z^{r+1}}{1-q^{3r+1}z}\notag\\
&+\sum_r \frac{(-1)^rq^{3r(r+3)/2+1}x^{-3r-1}z^{-r-1}}{1-q^{3r+1}z^{-1}}
+\frac{J_1^2j(xz;q)j(z;q^3)}{j(x;q)j(z;q)}\Big ].
\end{align}
Changing $r$ to $-1-r$ in the second sum and using (\ref{equation:mxqz-altdef}), we obtain
\begin{align}
g(x,q)=-x^{-2}m(qx^{-3},q^3,x^3z)-x^{-1}m(q^2x^{-3},q^3,x^3z)+\frac{J_1^2j(xz;q)j(z;q^3)}{j(x;q)j(z;q)j(x^3z;q^3)}.
\end{align}
Setting $z=x^{-1}$ gives the proposition, since then $j(xz,q)=0.$
\end{proof}
\begin{definition} Let  $x$ be neither $0$ nor an integral power of $q$. Then
\begin{equation}
h(x,q):=\frac{1}{j(q;q^2)}\sum_n \frac{(-1)^n q^{n(n+1)}}{1-q^n x}.\label{equation:h-def}
\end{equation}
\end{definition}
\begin{proposition} \label{proposition:h-to-mxqz} For generic $x\in \mathbb{C}^*$
\begin{equation}
h(x,q)=-x^{-1}m(x^{-2}q,q^2,x).
\end{equation}
\end{proposition}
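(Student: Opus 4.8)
The plan is to mimic the proof of Proposition \ref{proposition:g-to-mxqz}: start from a known Lambert-series/theta identity for $h(x,q)$ in which a free parameter $z$ appears, recognize the resulting sums over $n$ as Appell-Lerch sums $m(\cdot,q^2,\cdot)$ by splitting indices according to parity, and then specialize $z$ so that a controlling theta factor vanishes, killing the leftover theta quotient. Concretely, first I would recall (or derive, via the partial fraction expansion \eqref{equation:Reciprocal}) a three-term identity of the shape
\[
h(x,q)=\frac{1}{j(x^2 z;q^2)}\Big[\sum_n\frac{(-1)^n q^{\cdots}x^{\cdots}z^{\cdots}}{1-q^{2n-1}z}+\sum_n\frac{(-1)^n q^{\cdots}x^{-\cdots}z^{-\cdots}}{1-q^{2n-1}z^{-1}}+\frac{J_2^2\,j(xz;q)\,j(z;q^2)}{j(x;q)\,j(z;q)}\Big],
\]
the analogue for $h$ of the identity used for $g$ in \cite{H1}; the key structural point is that the denominator $1-q^nx$ in \eqref{equation:h-def}, after the substitution collecting powers of $z$, produces a single Appell-Lerch sum with modulus $q^2$.

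Second, I would fold the second sum into the first by replacing $n$ by $-n$ (or $1-n$, matching the index shift in Proposition \ref{proposition:g-to-mxqz}) and rewrite both pieces using the alternate form \eqref{equation:mxqz-altdef} of $m(x,q,z)$; this should collapse everything except the theta quotient into a single term $-x^{-1}m(x^{-2}q,q^2,x^2 z)$ (up to bookkeeping of the exponents, which I will not grind through here). Third, I would set $z=x^{-1}$. Then $j(xz;q)=j(1;q)=0$, so the theta-quotient term disappears, and — crucially — the remaining Appell-Lerch sum becomes $-x^{-1}m(x^{-2}q,q^2,x)$, which is exactly the claimed right-hand side. One should check along the way that $j(x^2z;q^2)=j(x;q^2)$ is nonzero for generic $x$ so the division is legitimate, and that the specialization does not introduce a pole in $m$, i.e. that neither $x$ nor $x^{-2}q\cdot x=x^{-1}q$ is an integral power of $q$ — both hold for generic $x$.

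Alternatively, and perhaps more cleanly, one can bypass the intermediate three-term identity: apply Theorem \ref{theorem:changing-z-theorem} or Corollary \ref{corollary:mxqz-flip-xz} to reconcile whatever value of $z$ naturally appears in a Lambert-series manipulation with the target value $z=x$, so that the "error" theta function is manifestly the piece that vanishes. The main obstacle I anticipate is purely the combinatorial exponent bookkeeping — getting the quadratic exponents in $q$ and the linear exponents in $x$ and $z$ to line up so that the two index-ranges genuinely assemble into one $m(x^{-2}q,q^2,z)$ with the correct arguments — rather than anything conceptually deep; the vanishing mechanism ($j(1;q)=0$) and the identification of the sum as an Appell-Lerch sum are both routine once the right starting identity is in hand. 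If the three-term identity for $h$ is not already in the literature in usable form, a short derivation from \eqref{equation:Reciprocal} applied to $1/j(x^2z;q^2)$, multiplied against $j(z;q^2)$ and the Lambert series, will supply it.
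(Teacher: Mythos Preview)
Your approach is correct and essentially matches the paper's: the paper cites the needed three-term identity as \cite[Theorem~2.10]{C1} (so you need not derive it), recognizes each of the two sums as an Appell-Lerch $m$-term with modulus $q^2$ via \eqref{equation:mxqz-altdef}, and then sets $z=x^{-1}$ so that $j(xz;q)=j(1;q)=0$ kills the theta quotient. The only bookkeeping wrinkle you did not anticipate is that the two sums do not immediately fold into one $m$-term by an index shift; rather each yields a separate $m$-term with a factor $\tfrac12$, and the paper combines them using \eqref{equation:mxqz-fnq-z} and \eqref{equation:mxqz-flip} before specializing $z$.
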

\begin{proof}  Recalling \cite[Theorem $2.10$]{C1}, we have that
\begin{align*}
j(x^2z;&q^2)h(x,q)-\frac{J_1^3j(xz;q)j(z;q^2)}{j(q;q^2)j(x;q)j(z;q)}\\
&=\frac12 \sum_{n}\frac{(-1)^nq^{n(n+1)}x^{2n+1}z^{n+1}}{1-q^{2n+1}z}
+\frac12 \sum_{n}\frac{(-1)^nq^{n(n+3)+1}x^{-2n-1}z^{-n-1}}{1-q^{2n+1}z^{-1}}\\
&=-\frac{1}{2}x^{-1}j(x^2z;q^2)m(x^{-2}q,q^2,x^2z)-\frac{1}{2}q^{-1}xj(q^2x^{-2}z^{-1};q^2)m(x^2q^{-1},q^2,q^2x^{-2}z^{-1})\\
&=-x^{-1}j(x^2z;q^2)m(x^{-2}q,q^2,x^2z),
\end{align*}
where the second equality follows from (\ref{equation:mxqz-altdef}) and the last equality follows from (\ref{equation:mxqz-fnq-z}) and (\ref{equation:mxqz-flip}).  Setting $z=x^{-1}$ gives the proposition.
\end{proof}

\begin{definition} Let $x^2$ be neither zero nor an integral power of $q^2$. Then
\begin{equation}
k(x,q):=\frac{1}{xj(-q;q^4)}\sum_n \frac{q^{n(2n+1)}}{1-q^{2n} x^2}.
\end{equation}
\end{definition}

\begin{proposition}\label{proposition:CH-k(x,q)} For generic $x\in \mathbb{C}^*$
\begin{align}
xk(x,q)&=m(-qx^4,q^4,-x^{-2}q^{-1})+q^{-1}x^2m(-q^{-1}x^4,q^4,-x^{-2}q^{-1})\\
&=m(-x^2,q,x^{-2})+\frac{J_1^4}{2J_2^2j(x^2;q)}.
\end{align}
\end{proposition}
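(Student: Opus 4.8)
The plan is to establish the two displayed equalities separately: I would prove the second one directly from the definitions and the reciprocal identity~(\ref{equation:Reciprocal}), and then obtain the first one from the $n=2$ case of the splitting formula, Theorem~\ref{theorem:msplit-general-n}.

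\emph{The second equality.} By Corollary~\ref{corollary:mxqz-flip-xz}, $m(-x^2,q,x^{-2})=m(-x^2,q,-1)$, so Definition~\ref{definition:mdef} gives $m(-x^2,q,x^{-2})=\frac{1}{\overline{J}_{0,1}}\sum_r \frac{q^{\binom{r}{2}}}{1-q^{r-1}x^2}$. On the theta side, (\ref{equation:Reciprocal}) with $z=x^2$ together with $\overline{J}_{0,1}=2J_2^2/J_1$ gives $\frac{J_1^4}{2J_2^2 j(x^2;q)}=\frac{1}{\overline{J}_{0,1}}\sum_n \frac{(-1)^n q^{\binom{n+1}{2}}}{1-q^n x^2}$. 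Adding the two sums and shifting $n\mapsto r-1$ in the second produces $\frac{1}{\overline{J}_{0,1}}\sum_r \frac{\big(1+(-1)^{r-1}\big)q^{\binom{r}{2}}}{1-q^{r-1}x^2}$; the terms with $r$ even cancel, and writing the surviving terms with $r=2n+1$ and using $\binom{2n+1}{2}=n(2n+1)$ together with $\overline{J}_{0,1}=2\overline{J}_{1,4}=2j(-q;q^4)$ collapses this to $\frac{1}{j(-q;q^4)}\sum_n \frac{q^{n(2n+1)}}{1-q^{2n}x^2}$, which is precisely $xk(x,q)$ by the definition of $k(x,q)$. (The rearrangements of these bilateral series are routine for $0<|q|<1$.)

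\emph{The first equality.} I would apply Theorem~\ref{theorem:msplit-general-n} with $n=2$, taking the role of ``$x$'' there to be $-x^2$, that of ``$z$'' to be $x^{-2}$, and that of ``$z'$'' to be $-q^{-1}x^{-2}$; every theta function in the denominators of that theorem stays nonzero for generic $x$, so the specialization is valid. The first sum of the theorem then becomes exactly $m(-qx^4,q^4,-q^{-1}x^{-2})+q^{-1}x^2 m(-q^{-1}x^4,q^4,-q^{-1}x^{-2})$, the claimed right-hand side. In the remaining double theta sum, the $r=0$ term carries the factor $j\big(-q^{\binom{2}{2}}(-x)^2 zz';q^2\big)=j(1;q^2)=0$ and hence vanishes, so only the $r=1$ term remains. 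That term simplifies --- using $j(-1;q)=\overline{J}_{0,1}$, $j(q;q^2)=J_{1,2}$, the reflection~(\ref{equation:1.7}) to write $j(qx^{-2};q^2)=j(qx^2;q^2)$, the translation~(\ref{equation:1.8}) to write $j(-q^3x^{-2};q^4)=qx^2\,j(-q^{-1}x^{-2};q^4)$, and the two-term splitting~(\ref{equation:1.10}) with $n=2$, $j(x^2;q)=J_1 j(x^2;q^2)j(qx^2;q^2)/J_2^2$ --- down to exactly $-\frac{J_1^4}{2J_2^2 j(x^2;q)}$. Thus $m(-x^2,q,x^{-2})$ equals the claimed right-hand side minus $\frac{J_1^4}{2J_2^2 j(x^2;q)}$, and combining this with the second equality yields the first.

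The only step that calls for real work is the theta-function bookkeeping just described, namely checking that the surviving $r=1$ term collapses all the way to $-J_1^4/(2J_2^2 j(x^2;q))$; everything else is routine reindexing. An alternative route, parallel to Propositions~\ref{proposition:g-to-mxqz} and~\ref{proposition:h-to-mxqz}, would be to begin from a $z$-parametrized generalized Lambert series identity for $k(x,q)$ and then specialize $z$ so that a factor $j(1;q)$ kills the theta-quotient correction, giving the first equality directly; the second equality would then follow by re-summing the two $q^4$-Appell-Lerch sums via Theorems~\ref{theorem:changing-z-theorem} and~\ref{theorem:msplit-general-n}.
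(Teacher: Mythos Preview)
Your proof is correct, but the order of attack differs from the paper's. The paper establishes the \emph{first} equality first, by specializing the $z$-parametrized generalized Lambert series identity \cite[Theorem~2.5.4]{C2} at $z=qx^{-2}$ (exactly the ``alternative route'' you sketch at the end) and then massaging with (\ref{equation:mxqz-fnq-x}) and (\ref{equation:mxqz-flip}); it then derives the \emph{second} equality from the first by the $n=2$ case of Theorem~\ref{theorem:msplit-general-n}, simplifying the surviving theta term to $-J_1^4/(2J_2^2 j(x^2;q))$ just as you do. You instead prove the second equality directly from the definition of $k(x,q)$, Corollary~\ref{corollary:mxqz-flip-xz}, and the reciprocal identity~(\ref{equation:Reciprocal}) via a clean parity cancellation, and then run the same $n=2$ splitting computation in the reverse direction to get the first equality. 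Both arguments hinge on the identical theta bookkeeping for the $r=1$ term; the advantage of your route is that it is entirely self-contained and avoids the external reference to \cite{C2}, while the paper's route keeps the treatment of $k(x,q)$ parallel to that of $g(x,q)$ and $h(x,q)$.
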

\begin{proof}
Working as for Proposition \ref{proposition:h-to-mxqz} and setting $z=qx^{-2}$ in \cite[Theorem $2.5.4$]{C2} yields the first equality:
\begin{align*}
xk(x,q)&=1-x^{-2}m(-x^{-4}q,q^4,-x^2q) -m(-x^{-4}q^3,q^4,-x^2q)\\
&=-x^{-2}m(-x^{-4}q,q^4,-x^2q) -q^{-1}x^{-4}m(-x^{-4}q^{-1},q^4,-x^2q) & ({\text{by }} (\ref{equation:mxqz-fnq-x}))\\
&=m(-qx^4,q^4,-x^{-2}q^{-1})+q^{-1}x^2m(-q^{-1}x^4,q^4,-x^{-2}q^{-1}). & ({\text{by }} (\ref{equation:mxqz-flip}))
\end{align*}
Using the $n=2$ case of Theorem \ref{theorem:msplit-general-n}, we have
\begin{align*}
m(-x^2,q,x^{-2})&=m(-qx^4,q^4,-x^{-2}q^{-1})+q^{-1}x^2m(-q^{-1}x^4,q^4,-x^{-2}q^{-1})\\
&\ -\frac{x^{-2}q^{-1}J_2^3}{j(-1;q)j(x^2;q^2)j(-x^{-2}q^{-1};q^4)}
\cdot\Big \{ \frac{j(q;q^2)j(-q^3x^{-2};q^4)}{j(qx^{-2};q^2)}\Big \}.
\end{align*}
The second equality of the proposition then follows from elementary theta function properties.
\end{proof}


\section{Known functions in terms of the Appell-Lerch sum $m(x,q,z)$}\label{section:known}
In this section we express known mock theta functions in terms of the Appell-Lerch sum $m(x,q,z)$.  For each of the mock theta functions we start by giving its $q$-series definition(s).  Then, if possible, we express the function in terms of either $g(x,q)$, $h(x,q)$, or $k(x,q),$ which were discussed in Section \ref{section:genlambert}.  Next we express the function entirely as a linear combination of $m(x,q,z)$ functions.  In some cases these linear combinations involve two or more terms of the form $m(x,q,z),$ with the same $x$ and $q$ but different values of $z$.  In such cases we also express the function without such duplication, by adding on a suitable theta function.  For example, the identities in (\ref{equation:3rd-chi(q)}) expresses the `3rd order' function $\chi(q)$ as a combination of two functions of the form $m(-q,q^3,z)$, and then as one such function plus a theta function.   Theorem \ref{theorem:changing-z-theorem} and the Riemann relation for theta functions are used frequently.  Throughout, `$i$' denotes $\sqrt{-1}$ and `$\omega$' denotes a primitive cube root of $1.$  As in \cite{AH}, the summation symbol $\sum^*$ for the `6th order' function $\mu(q)$ denotes the average of the sequence of even partial sums and the sequence of odd partial sums.

\noindent {\bf `2nd order' functions}
\begin{align}
A(q)=&\sum_{n\ge 0}\frac{q^{n+1}(-q^2;q^2)_n}{(q;q^2)_{n+1}}=\sum_{n\ge 0}\frac{q^{(n+1)^2}(-q;q^2)_n}{(q;q^2)_{n+1}^2}=-m(q,q^4,q^2)\label{equation:2nd-A(q)}\\
B(q)=&\sum_{n\ge 0}\frac{q^{n}(-q;q^2)_n}{(q;q^2)_{n+1}}=\sum_{n\ge 0}\frac{q^{n^2+n}(-q^2;q^2)_n}{(q;q^2)_{n+1}^2}=-q^{-1}m(1,q^4,q^3)\label{equation:2nd-B(q)}\\
\mu(q)=&\sum_{n\ge 0}\frac{(-1)^nq^{n^2}(q;q^2)_n}{(-q^2;q^2)_{n}^2}=2m(-q,q^4,-1)+2m(-q,q^4,q)=4m(-q,q^4,-1)-\frac{J_{2,4}^4}{J_1^3}\label{equation:2nd-mu(q)}
\end{align}
\noindent {\bf `3rd order' functions}
{\allowdisplaybreaks \begin{align}
f(q)&=\sum_{n\ge 0}\frac{q^{n^2}}{(-q)_n^2}=2-2g(-1,q)=2m(-q,q^3,q)+2m(-q,q^3,q^2)\label{equation:3rd-f(q)}\\
&=4m(-q,q^3,q)+\frac{J_{3,6}^2}{J_1}\notag\\
\phi(q)&=\sum_{n\ge 0}\frac{q^{n^2}}{(-q^2;q^2)_n}=(1-i)(1+ig(i,q))\label{equation:3rd-phi(q)}\\
&=(1+i)m(iq,q^3,-1)+(1-i)m(-iq,q^3,-1)\notag\\
&=m(q^5,q^{12},q^4)+m(q^5,q^{12},q^8)+q^{-1}m(q,q^{12},q^4)+q^{-1}m(q,q^{12},q^8)\notag\\
&=2m(q,-q^3,-1)+\frac{2qJ_{12}^3}{J_4 J_{3,12}} \notag \\
\psi(q)&=\sum_{n\ge 1}\frac{q^{n^2}}{(q;q^2)_n}=qg(q,q^4)=-q^{-1}m(q,q^{12},q^2)-m(q^5,q^{12},q^2)\label{equation:3rd-psi(q)}\\
&=-m(q,-q^3,-q)+\frac{qJ_{12}^3}{J_4 J_{3,12}}\notag\\
\chi(q)&=\sum_{n\ge 0}\frac{q^{n^2}(-q)_n}{(-q^3;q^3)_n}=(1+\omega)(1-\omega g(-\omega,q))=2m(-q,q^3,q^2)-m(-q,q^3,q)\label{equation:3rd-chi(q)}\\
&=m(-q,q^3,q)+\frac{J_{3,6}^2}{J_1}\notag\\
 \omega(q)&=\sum_{n\ge 0}\frac{q^{2n(n+1)}}{(q;q^2)_{n+1}^2}=g(q,q^2)=-q^{-1}m(q,q^6,q^2)-q^{-1}m(q,q^6,q^4)\label{equation:3rd-omega(q)}\\
&=-2q^{-1}m(q,q^6,q^2)+\frac{J_6^3}{J_2 J_{3,6}}\notag\\
\nu(q)&=\sum_{n\ge 0} \frac{q^{n(n+1)}} {(-q;q^2)_{n+1}}=g(i\sqrt{q},q)=iq^{-1/2}\Big (m(i\sqrt{q},q^3,-q)-m(-i\sqrt{q},q^3,-q^2)\Big )\label{equation:3rd-nu(q)}\\
&=q^{-1}m(q^2,q^{12},-q^3)+q^{-1}m(q^2,q^{12},-q^9)=2q^{-1}m(q^2,q^{12},-q^3)+\frac{J_1 J_{3,12}}{J_2}\notag\\
\rho(q)&= \sum_{n\ge 0}\frac{q^{2n(n+1)}(q;q^2)_{n+1}}{(q^3;q^6)_{n+1}}=g(\omega q,q^2)\label{equation:3rd-rho(q)}\\
&=-\omega q^{-1}m(q,q^6,\omega q^4)-\omega^2 q^{-1}m(q,q^6,\omega^2q^2)= q^{-1}m(q,q^6,-q)\notag
\end{align}}
\noindent {\bf `5th order' functions}
{\allowdisplaybreaks \begin{align}
f_0(q)&=\sum_{n\ge 0}\frac{q^{n^2}}{(-q)_n}=\frac{J_{5,10}J_{2,5}}{J_1}-2q^2g(q^2,q^{10})\label{equation:5th-f0(q)}\\
&=m(q^{14},q^{30},q^{14})+m(q^{14},q^{30},q^{29})+q^{-2}m(q^{4},q^{30},q^4)+q^{-2}m(q^{4},q^{30},q^{19}) \notag\\
&=2m(q^{14},q^{30},q^4)+2q^{-2}m(q^{4},q^{30},q^4)+\frac{J_{5,10}J_{2,5}}{J_1}\notag\\
\phi_0(q)&=\sum_{n\ge 0}q^{n^2}(-q;q^2)_n=qg(-q,-q^{5})+\frac{J_{10}j(-q^2;-q^5)}{J_{2,10}}\label{equation:5th-phi0(q)}\\
&=m(-q^{7},-q^{15},q^{9})-q^{-1}m(q^{2},-q^{15},q^{9})\notag \\
\psi_0(q)&=\sum_{n\ge 0}q^{\binom{n+2}{2}}(-q)_n=q^2g(q^2,q^{10})+\frac{qJ_{5}J_{1,10}}{J_{2,5}}=-m(q^{14},q^{30},q^{3})-q^{-2}m(q^{4},q^{30},q^{3})\label{equation:5th-psi0(q)}\\
F_0(q)&=\sum_{n\ge 0}\frac{q^{2n^2}}{(q;q^2)_n}=1+qg(q,q^{5})-\frac{qJ_{10}\overline{J}_{5,20}}{J_{4,10}}\label{equation:5th-BigF0(q)}\\
&=-\frac12 q^{-1}m(q^{2},q^{15},q^{2})-\frac12 q^{-1}m(q^{2},q^{15},-q^{2}) +\frac12 m(q^{8},q^{15},q^8)+\frac12 m(q^{8},q^{15},-q^{8}) \notag\\
&=-q^{-1}m(q^{2},q^{15},q)+m(q^{8},q^{15},q^4)-\frac{qJ_{10}\overline{J}_{5,20}}{J_{4,10}}\notag\\
\chi_0(q)&=\sum_{n\ge 0}\frac{q^n}{(q^{n+1})_n}=1+\sum_{n\ge 0}\frac{q^{2n+1}}{(q^{n+1})_{n+1}}=2+3qg(q,q^5)-\frac{J_5^2J_{2,5}}{J_{1,5}^2}\label{equation:5th-chi0(q)}\\
&=2-2m(q^7,q^{15},q^{12})-m(q^7,q^{15},q^9)-2q^{-1}m(q^2,q^{15},q^{12})-q^{-1}m(q^2,q^{15},q^9)\notag\\
&= 2-3m(q^7,q^{15},q^9)-3q^{-1}m(q^2,q^{15},q^4)+\frac{2J_5^2J_{2,5}}{J_{1,5}^2}\notag \\
f_1(q)&=\sum_{n\ge 0}\frac{q^{n(n+1)}}{(-q)_n}=\frac{J_{5,10}J_{1,5}}{J_1}-2q^3g(q^4,q^{10})\label{equation:5th-f1(q)}\\
&=q^{-1}m(q^{8},q^{30},q^{8})+q^{-1}m(q^{8},q^{30},q^{23})+q^{-3}m(q^{2},q^{30},q^2)+q^{-3}m(q^{2},q^{30},q^{17}) \notag\\
&=2q^{-1}m(q^{8},q^{30},q^8)+2q^{-3}m(q^{2},q^{30},q^{-8})+\frac{J_{5,10}J_{1,5}}{J_1}\notag\\
\phi_1(q)&=\sum_{n\ge 0}q^{(n+1)^2}(-q;q^2)_n=q^2g(q^2,-q^{5})+\frac{qJ_{10}j(q;-q^5)}{J_{4,10}}\label{equation:5th-phi1(q)}\\
&=q^{-1}m(-q,-q^{15},q^{-3})-m(q^{4},-q^{15},q^{3}) \notag\\
\psi_1(q)&=\sum_{n\ge 0}q^{\binom{n+1}{2}}(-q)_n=q^3g(q^4,q^{10})+\frac{J_{5}J_{3,10}}{J_{1,5}}=-q^{-1}m(q^{8},q^{30},q^{-9})-q^{-3}m(q^{2},q^{30},q^{9})\label{equation:5th-psi1(q)}\\
F_1(q)&=\sum_{n\ge 0}\frac{q^{2n(n+1)}}{(q;q^2)_{n+1}}=qg(q^2,q^{5})+\frac{J_{10}\overline{J}_{5,20}}{J_{2,10}}\label{equation:5th-BigF1(q)}\\
&=-\frac12 q^{-2}m(q,q^{15},q)-\frac12 q^{-2}m(q,q^{15},-q)\notag\\
&\ \ \ \ \ -\frac12 q^{-1}m(q^{4},q^{15},q^4)-\frac12 q^{-1}m(q^{4},q^{15},-q^{4}) \notag\\
&=-q^{-2}m(q,q^{15},q^{-4})-q^{-1}m(q^{4},q^{15},q^4)+\frac{J_{10}\overline{J}_{5,20}}{J_{2,10}}\notag\\
\chi_1(q)&=\sum_{n\ge 0}\frac{q^n}{(q^{n+1})_{n+1}}=1+\sum_{n\ge 0}\frac{q^{2n+1}(1+q^n)}{(q^{n+1})_{n+1}}=3qg(q^2,q^5)+\frac{J_5^2J_{1,5}}{J_{2,5}^2}\label{equation:5th-chi1(q)}\\
&=-2q^{-1}m(q^4,q^{15},q^{-6})-q^{-1}m(q^4,q^{15},q^3)-2q^{-2}m(q,q^{15},q^{6})-q^{-2}m(q,q^{15},q^{-3})\notag\\
&= -3q^{-1}m(q^4,q^{15},q^3)-3q^{-2}m(q,q^{15},q^2)-\frac{2J_5^2J_{1,5}}{J_{2,5}^2} \notag \\
\Phi(q)&=-1+\sum_{n\ge 0}\frac{q^{5n^2}}{(q;q^5)_{n+1}(q^4;q^5)_n}=qg(q,q^5)=-q^{-1}m(q^2,q^{15},q^2)-m(q^7,q^{15},q^2)\label{equation:5th-AuxPhi(q)}\\
\Psi(q)&=-1+\sum_{n\ge 0}\frac{q^{5n^2}}{(q^2;q^5)_{n+1}(q^3;q^5)_n}=q^2g(q^2,q^5)=-q^{-1}m(q,q^{15},q^{-4})-m(q^4,q^{15},q^4)\label{equation:5th-AuxPsi(q)}
\end{align}}
\noindent {\bf `6th order' functions}
{\allowdisplaybreaks \begin{align}
\phi(q)&=\sum_{n\ge 0}\frac{(-1)^nq^{n^2}(q;q^2)_n}{(-q)_{2n}}=2m(q,q^3,-1)\label{equation:6th-phi(q)}\\
\psi(q)&=\sum_{n\ge 0}\frac{(-1)^nq^{(n+1)^2}(q;q^2)_n}{(-q)_{2n+1}}=m(1,q^3,-q)\label{equation:6th-psi(q)}\\
\rho(q)&=\sum_{n\ge 0}\frac{q^{\binom{n+1}{2}}(-q)_n}{(q;q^2)_{n+1}}=-q^{-1}m(1,q^6,q)\label{equation:6th-rho(q)}\\
\sigma(q)&=\sum_{n\ge 0}\frac{q^{\binom{n+2}{2}}(-q)_n}{(q;q^2)_{n+1}}=-m(q^2,q^6,q)\label{equation:6th-sigma(q)}\\
\lambda(q)&=\sum_{n\ge 0}\frac{(-1)^nq^n(q;q^2)_n}{(-q)_n}
=q^{-1}m(1,q^6,-q^2)+q^{-1}m(1,q^6,-q)\label{equation:6th-lambda(q)}\\
&=2q^{-1}m(1,q^6,-q^2)+\frac{J_{1,2}\overline{J}_{3,12}}{\overline{J}_{1,4}}\notag\\
\mu(q)&={\sum_{n\ge 0}}^*\frac{(-1)^n(q;q^2)_n}{(-q)_n}=\frac12 +\frac12 \sum_{n\ge 0}\frac{(-1)^nq^{n+1}(1+q^n)(q;q^2)_n}{(-q;q)_{n+1}}\label{equation:6th-mu(q)}\\
&=m(q^2,q^6,-1)+ m(q^2,q^6,-q^3)=2m(q^2,q^6,-1)-\frac{J_{1,2}\overline{J}_{1,3}}{2\overline{J}_{1,4}}\notag\\
\gamma(q)&=\sum_{n\ge 0}\frac{q^{n^2}(q)_n}{(q^3;q^3)_n}=(1-\omega)(1+\omega g(\omega,q))\label{equation:6th-gamma(q)}\\
&=2 m(q,q^3,-1)+ m(q,q^3,-q)=3m(q,q^3,-q)+\frac{J_{1,2}^2}{{\overline{J}}_{1,3}}\notag\\
{\phi}\_(q)&=\sum_{n\ge 1}\frac{q^n(-q;q)_{2n-1}}{(q;q^2)_n}=-\frac{3}{4}m(q,q^3,q)-\frac{1}{4}m(q,q^3,-q)=-m(q,q^3,q)-q\frac{\overline{J}_{3,12}^3}{J_1\overline{J}_{1,4}}\label{equation:phibar}\\
{\psi}\_(q)&=\sum_{n\ge 1}\frac{q^n(-q;q)_{2n-2}}{(q;q^2)_n}=-\frac{3}{4}m(1,q^3,q)+\frac{1}{4}m(1,q^3,-q)=-\frac{1}{2}m(1,q^3,q)+q\frac{{J}_{6}^3}{2J_1 J_2}\label{equation:psibar}
\end{align}}
\noindent {\bf `7th order' functions}
{\allowdisplaybreaks \begin{align}
{\mathcal{F}}_0(q)&=\sum_{n\ge 0}\frac{q^{n^2}}{(q^{n+1};q)_{n}}=2+2qg(q,q^{7})-\frac{J_{3,7}^2}{J_1}\label{equation:7th-F0(q)}\\
&=m(q^{10},q^{21},q^9)+m(q^{10},q^{21},q^{-9})-q^{-1}m(q^{4},q^{21},q^9)-q^{-1}m(q^{4},q^{21},q^{-9}) \notag\\
&=2m(q^{10},q^{21},q^9)-2q^{-1}m(q^{4},q^{21},q^{-9})+\frac{J_{3,7}^2}{J_1}\notag\\
{\mathcal{F}}_1(q)&=\sum_{n\ge 1}\frac{q^{n^2}}{(q^{n};q)_{n}}=2q^2g(q^2,q^{7})+\frac{qJ_{1,7}^2}{J_1}\label{equation:7th-F1(q)}\\
&=-m(q^{8},q^{21},q^3)-m(q^{8},q^{21},q^{-3})-q^{-2}m(q,q^{21},q^3)-q^{-2}m(q,q^{21},q^{-3}) \notag\\
&=-2m(q^{8},q^{21},q^3)-2q^{-2}m(q,q^{21},q^{3})-\frac{qJ_{1,7}^2}{J_1}\notag\\
{\mathcal{F}}_2(q)&=\sum_{n\ge 0}\frac{q^{n(n+1)}}{(q^{n+1};q)_{n+1}}=2q^2g(q^3,q^{7})+\frac{J_{2,7}^2}{J_1}\label{equation:7th-F2(q)}\\
&=-q^{-1}m(q^{5},q^{21},q^6)-q^{-1}m(q^{5},q^{21},q^{-6})-q^{-2}m(q^2,q^{21},q^6)-q^{-2}m(q^2,q^{21},q^{-6}) \notag\\
&=-2q^{-1}m(q^{5},q^{21},q^6)-2q^{-2}m(q^2,q^{21},q^{-6})+\frac{J_{2,7}^2}{J_1}\notag
\end{align}}
\noindent {\bf `8th order' functions}
{\allowdisplaybreaks \begin{align}
S_0(q)&=\sum_{n\ge 0}\frac{q^{n^2}(-q;q^2)_n}{(-q^2;q^2)_{n}}=m(-q^3,q^8,-q^2)+m(-q^3,q^8,-q^6)\label{equation:8th-S0(q)}\\
&=2m(-q^3,q^8,-1) +\frac{q{\overline{J}}_{1,8}J_{2,8}^2}{J_{3,8}^2}\notag\\
S_1(q)&=\sum_{n\ge 0}\frac{q^{n(n+2)}(-q;q^2)_n}{(-q^2;q^2)_{n}}=-q^{-1}m(-q,q^8,-q^2)-q^{-1}m(-q,q^8,-q^6)\label{equation:8th-S1(q)}\\
&=-2q^{-1}m(-q,q^8,-1) +\frac{{\overline{J}}_{3,8}J_{2,8}^2}{qJ_{1,8}^2}\notag\\
T_0(q)&=\sum_{n\ge 0}\frac{q^{(n+1)(n+2)}(-q^2;q^2)_n}{(-q;q^2)_{n+1}}=-m(-q^3,q^8,q^2)\label{equation:8th-T0(q)}\\
T_1(q)&=\sum_{n\ge 0}\frac{q^{n(n+1)}(-q^2;q^2)_n}{(-q;q^2)_{n+1}}=q^{-1}m(-q,q^8,q^6)\label{equation:8th-T1(q)}\\
U_0(q)&=\sum_{n\ge 0}\frac{q^{n^2}(-q;q^2)_n}{(-q^4;q^4)_{n}}=2m(-q,q^4,-1)\label{equation:8th-U0(q)}\\
U_1(q)&=\sum_{n\ge 0}\frac{q^{(n+1)^2}(-q;q^2)_n}{(-q^2;q^4)_{n+1}}=-m(-q,q^4,-q^2)\label{equation:8th-U1(q)}\\
V_0(q)&=-1+2\sum_{n\ge 0}\frac{q^{n^2}(-q;q^2)_n}{(q;q^2)_{n}}=-1+2\sum_{n\ge 0}\frac{q^{2n^2}(-q^2;q^4)_n}{(q;q^2)_{2n+1}}\label{equation:3rd-chi(q)5}\\
&=-q^{-1}m(1,q^8,q)-q^{-1}m(1,q^8,q^3)=-2q^{-1}m(1,q^8,q) -\frac{{\overline{J}}_{1,4}^2}{J_{2,8}}\notag\\
V_1(q)&=\sum_{n\ge 0}\frac{q^{(n+1)^2}(-q;q^2)_n}{(q;q^2)_{n+1}}=\sum_{n\ge 0}\frac{q^{2n^2+2n+1}(-q^4;q^4)_n}{(q;q^2)_{2n+2}}=\sum_{n\ge 0}\frac{q^{n+1}(-q)_{2n}}{(-q^2;q^4)_{n+1}} \label{equation:8th-V1(q)}\\
&=-m(q^2,q^8,q)\notag
\end{align}}
\noindent {\bf `10th order' functions}
{\allowdisplaybreaks \begin{align}
{\phi}(q)&=\sum_{n\ge 0}\frac{q^{\binom{n+1}{2}}}{(q;q^2)_{n+1}}=2qh(q^2,q^{5})+\frac{J_{5}J_{10}J_{4,10}}{J_{2,5}J_{2,10}}\label{equation:10th-phi(q)}\\
&=-q^{-1}m(q,q^{10},q)-q^{-1}m(q,q^{10},q^{2})
=-2q^{-1}m(q,q^{10},q^2)+\frac{J_{5}J_{10}J_{4,10}}{J_{2,5}J_{2,10}}\notag\\
{\psi}(q)&=\sum_{n\ge 0}\frac{q^{\binom{n+2}{2}}}{(q;q^2)_{n+1}}=2qh(q,q^{5})-\frac{qJ_5J_{10}J_{2,10}}{J_{1,5}J_{4,10}}\label{equation:10th-psi(q)}\\
&=-m(q^3,q^{10},q)-m(q^3,q^{10},q^{3})
=-2m(q^3,q^{10},q)-\frac{qJ_5J_{10}J_{2,10}}{J_{1,5}J_{4,10}}\notag\\
{X}(q)&=\sum_{n\ge 0}\frac{(-1)^nq^{n^2}}{(-q;q)_{2n}}
=2qk(q,q^5)-\frac{J_5J_{10}J_{2,5}}{J_{2,10}J_{1,5}}\label{equation:10th-BigX(q)}\\
&=m(-q^2,q^{5},q)+m(-q^2,q^{5},q^{4})
=2m(-q^2,q^5,q^4)-\frac{{{J}}_{3,10}{{J}}_{5,10} }{J_{1,5}}\notag\\
{\chi}(q)&=\sum_{n\ge 0}\frac{(-1)^nq^{(n+1)^2}}{(-q;q)_{2n+1}}
=2-2q^2k(q^2,q^5)+q\frac{J_5J_{10}J_{1,5}}{J_{4,10}J_{2,5}}\label{equation:10th-chi(q)}\\
&=m(-q,q^{5},q^2)+m(-q,q^{5},q^{3})
=2m(-q,q^5,q^2)+\frac{q{{J}}_{1,10}{{J}}_{5,10} }{J_{2,5}}\notag
\end{align}}
\subsection{Proofs of Identities}

{\bf `2nd order' functions}  To prove (\ref{equation:2nd-A(q)}), we recall \cite[(4)]{M2}:
\begin{align*}
A(q)&=\frac{1}{2}\Big (V_1(q^{1/2})+V_1(-q^{1/2})\Big )
=-\frac{1}{2}\big (m(q,q^4,q^{1/2})+m(q,q^4,-q^{1/2})\Big ) & ({\text{by }} (\ref{equation:8th-V1(q)}))\\
&=-m(q,q^4,q^2),
\end{align*}
where the last equality follows from using Theorem \ref{theorem:changing-z-theorem} twice.  To prove (\ref{equation:2nd-B(q)}), we first recall \cite[(4.3)]{AM}:
\begin{align*}
B(q)&=\frac{(-q^2;q^2)_{\infty}}{(q^2;q^2)_{\infty}}\sum_{n}\frac{(-1)^nq^{2n^2+2n}}{1-q^{2n+1}}
=\frac{J_4}{J_2^2}\sum_{n}\frac{(-1)^nq^{2n^2+2n}}{1-q^{4n+2}}(1+q^{2n+1})\\
&=\frac{J_4}{J_2^2}\Big [ \frac{J_4^3}{J_{2,4}} +qj(q^6;q^4)m(1,q^4,q^6)\Big ]=\frac{J_4^4}{J_2^2J_{2,4}} -q^{-1}m(1,q^4,q^2),& ({\text{by }} (\ref{equation:Reciprocal}))
\end{align*}
where the last equality follows from (\ref{equation:1.8}) and (\ref{equation:mxqz-fnq-z}).  The result then follows from Theorem \ref{theorem:changing-z-theorem}. To establish (\ref{equation:2nd-mu(q)}), we prove the penultimate equality.  The last equality then follows from Theorem \ref{theorem:changing-z-theorem} and elementary theta function properties.  Here we insert (\ref{equation:8th-U0(q)}) and (\ref{equation:8th-U1(q)}) into \cite[(2)]{M2}.

\noindent  {\bf `3rd order' functions}

Here, the first equality is just the Eulerian form  \cite[p. 62]{W3}.   For all but (\ref{equation:3rd-psi(q)}), the second equality is just Definition \ref{definition:g-def} or identity (\ref{equation:g-alt-def}).  For (\ref{equation:3rd-psi(q)}), the second equality follows from Definition \ref{definition:g-def} and the form found in \cite[p. 65]{W3}.  We proceed with the other equalities on a case-by-case basis.  

We prove the last equality in (\ref{equation:3rd-f(q)}).   The penultimate equality then follows from Theorem {\ref{theorem:changing-z-theorem}}.  Rewriting the Lambert series found in \cite[p. 64]{W3} and using (\ref{equation:Reciprocal})
\begin{align*}
J_1f(q)&=2j(q^2;q^3)m(-q,q^3,q^2)-\frac{2J_3^3}{\overline{J}_{0,3}}+j(q^4;q^3)m(-q^{-1},q^3,q^4)\\
&=4J_1m(-q,q^3,q^2)-J_{3,6}^2,
\end{align*}
where the last equality follows from applying (\ref{equation:1.8}), (\ref{equation:mxqz-fnq-z}) and (\ref{equation:mxqz-flip}) to the last term.    The result follows from Theorem {\ref{theorem:changing-z-theorem}}.  For  (\ref{equation:3rd-chi(q)}) we use the Lambert series in \cite[p. 64]{W3} and argue in a similar fashion.  

For (\ref{equation:3rd-omega(q)}), we use Proposition \ref{proposition:g-to-mxqz} and (\ref{equation:mxqz-fnq-z}) to obtain the next to last equality and Theorem {\ref{theorem:changing-z-theorem}} to obtain the last equality.  For  (\ref{equation:3rd-rho(q)}), the penultimate equality follows from Proposition \ref{proposition:g-to-mxqz}, (\ref{equation:mxqz-flip}), and (\ref{equation:mxqz-fnq-z}).  For the final equality, we use the formula in \cite[p. 66]{W3} and argue as in (\ref{equation:3rd-f(q)}) where we use Theorem {\ref{theorem:changing-z-theorem}} at the very end to remove the theta quotient.

We prove (\ref{equation:3rd-phi(q)}).  The third equality is just an application of Proposition \ref{proposition:g-to-mxqz} and (\ref{equation:mxqz-fnq-x}).  To obtain the last two expressions we combine the results of (\ref{equation:3rd-f(q)}) with an identity from \cite[p. 63]{W3}:
\begin{align*}
2\phi(-q)&=f(q)+\frac{J_{1,2}^2}{J_1}=2m(-q,q^3,q)+2m(-q,q^3,q^2)+\frac{J_{1,2}^2}{J_1}\\
&=2\Big [ m(-q^5,q^{12},q^4)+q^{-2}m(-q^{-1},q^{12},q^4)+0\Big ]\\
&\ \  +2\Big [ m(-q^5,q^{12},q^8)+q^{-2}m(-q^{-1},q^{12},q^8)-\frac{J_{6}J_{12}J_1^2}{\overline{J}_{0,3}J_{4,12}\overline{J}_{1,6}}\Big ]+\frac{J_{1,2}^2}{J_1}.&({\text{by Cor \ref{corollary:msplit-n=2}}})
\end{align*}
The penultimate equality of (\ref{equation:3rd-phi(q)}) then follows from (\ref{equation:mxqz-fnq-z}), (\ref{equation:mxqz-flip}), and elementary theta function properties.  For the final equality, we again use results of (\ref{equation:3rd-f(q)}) with the same identity from \cite[p. 63]{W3}:
\begin{align*}
2\phi(-q)=f(q)+\frac{J_{1,2}^2}{J_1}&=2m(-q,q^3,q)+2m(-q,q^3,q^2)+\frac{J_{1,2}^2}{J_1}\\
&=4m(-q,q^3,-1)-\frac{J_{3,6}^2}{J_1}+\frac{J_{1,2}^2}{J_1},
\end{align*}
where the last equality follows from Theorem \ref{theorem:changing-z-theorem}.  Focusing on the sum of theta quotients,
\begin{align*}
J_{1,2}^2-J_{3,6}^2=\frac{J_1^4}{J_2^2}-\frac{J_3^4}{J_6^2}=&\frac{J_1^2}{J_2^2}\Big [ J_1^2-\frac{J_3^4J_2^2}{J_1^2J_6^2}\Big ]=\frac{J_1^2}{J_2^2}\Big [ J_{1,3}^2-\overline{J}_{1,3}^2\Big ]
=\frac{J_1^2}{J_2^2}\cdot \frac{j(i;q^3)^2\overline{J}_{1,3}\overline{J}_{0,3}}{j(-iq;q^3)j(-iq^{-1};q^3)},
\end{align*}
where the last step follows from Proposition \ref{proposition:CHcorollary} with $q=q^3,\ a=i,\ b=q,\ c=-q,\ d=1.$  The result then follows from elementary theta function properties.

We prove (\ref{equation:3rd-psi(q)}).  The third equality is just an application of Proposition \ref{proposition:g-to-mxqz}.  To obtain the last expression we combine the results of (\ref{equation:3rd-f(q)}) with an identity from \cite[p. 63]{W3}: 
\begin{align*}
4\psi(-q)=-f(q)+\frac{J_{1,2}^2}{J_1}=-4m(-q,q^3,q)-\frac{J_{3,6}^2}{J_1}+\frac{J_{1,2}^2}{J_1}.
\end{align*}
The argument found at the end of the proof of (\ref{equation:3rd-phi(q)}) then gives the desired result.

We prove (\ref{equation:3rd-nu(q)}).  The second equality follows from (\ref{equation:g-alt-def}) and the third follows from Proposition \ref{proposition:g-to-mxqz} and (\ref{equation:mxqz-flip}).  To obtain the last two equalities, we combine the results of (\ref{equation:3rd-omega(q)}) with an identity from \cite[p. 63]{W3}.   Using Theorem \ref{theorem:changing-z-theorem} and simplfying,
\begin{align*}
\nu(q)&=-q\omega(q^2)+\frac{J_4^3}{J_2^2}
=q^{-1}m(q^2,q^{12},q^4)+q^{-1}m(q^2,q^{12},q^8)+\frac{J_4^3}{J_2^2}\\
&=q^{-1} m(q^2,q^{12},-q^3)+q^{-1} m(q^2,q^{12},-q^9)+\frac{J_4^3}{J_2^2}
-\frac{J_{12}^3\overline{J}_{1,12}}
{J_{4,12}\overline{J}_{3,12}}
\Big [ \frac{q^2\overline{J}_{3,12}}{J_{6,12}\overline{J}_{5,12}}+\frac{\overline{J}_{7,12}}{J_{2,12}\overline{J}_{11,12}}\Big ].
\end{align*}
The penultimate equality of (\ref{equation:3rd-nu(q)}) follows from showing that the sum of theta quotients is zero.   Here we rewrite the term in brackets using Proposition {\ref{proposition:CHcorollary}} with $q=q^{12},a=q^6,b=q^4,c=q^2,d=-q$.  The final equality of (\ref{equation:3rd-nu(q)}) then follows from Theorem \ref{theorem:changing-z-theorem}.

\noindent {\bf `5th order' functions}

For these identities, there are four types of proofs.  Functions $\Phi(q)$, $\Psi(q)$ are in the first; $\phi_0(q)$, $\phi_1(q)$, $\psi_0(q)$, $\psi_1(q)$ are in the second; $f_0(q)$, $f_1(q)$, $F_0(q)$, $F_1(q)$ are in the third; and $\chi_0(q)$, $\chi_1(q)$ are in the last.

For the first type,  proving (\ref{equation:5th-AuxPhi(q)}) and (\ref{equation:5th-AuxPsi(q)}) is just an application of Proposition \ref{proposition:g-to-mxqz}.  For (\ref{equation:5th-AuxPsi(q)}), we follow Proposition \ref{proposition:g-to-mxqz} with  (\ref{equation:mxqz-flip}).

For the second type, we prove (\ref{equation:5th-phi0(q)}).  The proofs of the other identities are similar.    The penultimate equality is just the respective mock theta conjecture.  For the last equality,  Proposition \ref{proposition:g-to-mxqz} gives
\begin{align*}
\phi_0(q)&=m(-q^7,-q^{15},q^2)-q^{-1}m(q^2,-q^{15},q^2)+\frac{J_{10}j(-q^2;-q^5)}{J_{2,10}}\\
&=m(-q^7,-q^{15},q^9)-q^{-1}m(q^2,-q^{15},q^9)+\frac{J_{10}j(-q^2;-q^5)}{J_{2,10}}\\
&\ \ \ \ +\frac{qj(-q^{15};-q^{45})^3j(q^7;-q^{15})}{j(q^2;-q^{15})j(q^9;-q^{15})}
\Big \{ \frac{j(q^{13};-q^{15})}{j(q^4,q^{11};-q^{15})}-q^{-1}\frac{j(q^3;-q^{15})}{j(q,-q^9;-q^{15})}\Big \},
\end{align*}
where the last line follows from Theorem \ref{theorem:changing-z-theorem}.  Showing that the sum of the three quotients is zero follows from Proposition \ref{proposition:CHcorollary} with the substitutions $q=-q^{15}, a=q^{10},b=q^7,c=q^6,d=q^4.$  For (\ref{equation:5th-phi1(q)}), (\ref{equation:5th-psi0(q)}), and (\ref{equation:5th-psi1(q)}), we argue in a similar fashion.  

For the third type, we prove (\ref{equation:5th-f0(q)}) as an example.  The proofs of the others are similar.  The second equality is just the respective mock theta conjecture.   Using Proposition \ref{proposition:g-to-mxqz}, we have
\begin{align*}
f_0&(q)=2m(q^{14},q^{30},q^4)+2q^{-2}m(q^4,q^{30},q^4)+\frac{J_{5,10}J_{2,5}}{J_1}\\
&=m(q^{14},q^{30},q^{14})+m(q^{14},q^{30},q^{29})+q^{-2}m(q^{4},q^{30},q^4)+q^{-2}m(q^4,q^{30},q^{19})+\frac{J_{5,10}J_{2,5}}{J_1}\\
&\ \ \ \ +q^2\frac{J_{30}^3J_{10,30}}{J_{4,30}J_{18,30}J_{16,30}}
-\frac{J_{30}^3J_{5,30}}{J_{4,30}J_{18,30}J_{1,30}}
-q^2\frac{J_{30}^3J_{15,30}J_{3,30}}{J_{4,30}J_{8,30}J_{19,30}J_{7,30}},
\end{align*}
where the last line follows from Theorem \ref{theorem:changing-z-theorem}.   It remains to show that the sum of quotients is zero, i.e.
\begin{align}
\frac{J_{5,10}J_{2,5}}{J_{1}}=\frac{J_{30}^3}{J_{4,30}}
\Big [ -q^2\frac{J_{10,30}}{J_{18,30}J_{16,30}}+\frac{J_{5,30}}{J_{18,30}J_{1,30}} +q^2\frac{J_{15,30}J_{3,30}}{J_{8,30}J_{19,30}J_{7,30}}.   \Big ]\label{equation:bunny}
\end{align}
Combining the first and third summands with Proposition \ref{proposition:CHcorollary} and the substitutions $q=q^{30}, a=q^{9},b=q^{17},c=q^6,d=q$, shows that (\ref{equation:bunny}) is equivalent to
\begin{align*}
\frac{J_{5,10}J_{2,5}}{J_1}=\frac{J_{30}^3J_{5,30}}{J_{4,30}J_{18,30}}\Big [ \frac{1}{J_{1,30}}-q^5\frac{J_{4,30}}{J_{16,30}J_{19,30}}\Big ],
\end{align*}
which follows from (\ref{equation:ThmAH1.1}) and the substitutions $n=2, q=q^{15},z=q,x=-q^5$.   For (\ref{equation:5th-f1(q)}), we argue as in (\ref{equation:5th-f0(q)}).     For (\ref{equation:5th-BigF0(q)})  we argue as in (\ref{equation:5th-f0(q)}) to reduce the identities to showing
\begin{align}
\frac{2J_{10}j(-q^5;q^{20})}{J_{4,10}}=\frac{J_{15}^3}{J_{2,15}}\Big [\frac{{\overline{J}}_{0,15}{\overline{J}}_{6,15}}{J_{4,15}{\overline{J}}_{2,15}{\overline{J}}_{4,15}}- \frac{J_5}{J_{6,15}J_{8,15}}\Big ]+\frac{J_{15}^3{\overline{J}}_{5,15}}{J_{2,15}J_{6,15}{\overline{J}}_{8,15}}.\label{equation:bunny2}
\end{align}
We combine the first and second summands using Proposition  \ref{proposition:CHcorollary} with the substitutions $q=q^{15}, a=q^{3},b=-q^{7},c=-q^3,d=-q$ to show that (\ref{equation:bunny2}) is equivalent to
\begin{align*}
\frac{2J_{10}j(-q^5;q^{20})}{J_{4,10}}=\frac{J_{15}^3\overline{J}_{5,15}}{J_{6,15}}\Big [ \frac{1}{\overline{J}_{2,15}J_{8,15}}+\frac{1}{J_{2,15}{\overline{J}}_{8,15}}\Big ].
\end{align*}
This follows from (\ref{equation:H1Thm1.2B}) with $q=q^{15}, x=q^8,y=q^2$.  For (\ref{equation:5th-BigF1(q)}), we argue as for (\ref{equation:5th-BigF0(q)}) and use (\ref{equation:H1Thm1.2A}).

For the fourth type, we prove (\ref{equation:5th-chi0(q)}).  The proof of (\ref{equation:5th-chi1(q)}) is similar and will be omitted.  The third equality is just the respective mock theta conjecture.  We prove the last line of (\ref{equation:5th-chi0(q)}).  We use Proposition  \ref{proposition:g-to-mxqz} and Theorem  \ref{theorem:changing-z-theorem} to obtain
\begin{align*}
&\chi_0(q)=2+3qg(q,q^5)-\frac{J_5^2J_{2,5}}{J_{1,5}^2}=2-3m(q^7,q^{15},q^2)-3q^{-1}m(q^2,q^{15},q^2)-\frac{J_5^2J_{2,5}}{J_{1,5}^2}\\
&=2-3m(q^7,q^{15},q^9)-3q^{-1}m(q^2,q^{15},q^4)+\frac{3J_{15}^3J_{7,15}J_{3,15}}{J_{2,15}J_{9,15}^2J_{1,15}}+\frac{3qJ_{15}^3J_{2,15}J_{8,15}}{J_{2,15}J_{4,15}^2J_{6,15}}-\frac{J_5^2J_{2,5}}{J_{1,5}^2}.
\end{align*}
The result then follows from Proposition \ref{proposition:CHcorollary} with $q=q^{15}, \ a=q^4, \ b=q, \ c=q^2, \ d=1.$  We show that the second to last line of (\ref{equation:5th-chi0(q)}) follows from the last line of (\ref{equation:5th-chi0(q)}).  We begin with
\begin{align*}
\chi_0(q)&=2-3m(q^7,q^{15},q^9)-3q^{-1}m(q^2,q^{15},q^4)+\frac{2J_5^2J_{2,5}}{J_{1,5}^2}\\
&=2-2m(q^7,q^{15},q^{12})-m(q^7,q^{15},q^9)-2q^{-1}m(q^2,q^{15},q^{12})-q^{-1}m(q^2,q^{15},q^9)\\
&\ \ \ \ -\frac{2qJ_{15}^3J_{2,15}}{J_{9,15}J_{1,15}J_{4,15}}-\frac{2J_{15}^3J_{8,15}}{J_{4,15}J_{6,15}J_{1,15}}+\frac{2J_{5}^2J_{2,5}}{J_{1,5}^2},
\end{align*}
by Theorem  \ref{theorem:changing-z-theorem}.  The result then follows from (\ref{equation:H1Thm1.0}) with $q=q^5, \ x=q.$

\noindent {\bf `6th order' functions}

To prove (\ref{equation:6th-phi(q)}),  we first recall identity \cite[(3.25)]{AH} and use Theorem  \ref{theorem:changing-z-theorem}:
\begin{align*}
{\overline{J}}_{1,3}\phi(q)&=2\sum_{r}\frac{q^{r(3r+1)/2}}{1+q^{3r}}=2j(-q^2;q^3)m(q,q^3,-q^2)=2j(-q^2;q^3)m(q,q^3,-1).
\end{align*}
To prove (\ref{equation:6th-psi(q)}) (resp. (\ref{equation:6th-rho(q)}), (\ref{equation:6th-sigma(q)})) we use identity $(3.23)$ (resp. $(4.11)$, $(4.12)$) of \cite{AH}.

We prove the last equality in each of (\ref{equation:6th-lambda(q)}) -- (\ref{equation:psibar}).   The penultimate equalities then follow from a straightforward application of Theorem \ref{theorem:changing-z-theorem} and elementary theta function properties.  For (\ref{equation:6th-lambda(q)}) (resp. (\ref{equation:6th-mu(q)})),
we recall identity $(4.19)$ (resp. $(4.20)$) of \cite{AH} and make the substitution $z=-q^2$ (resp. $z=-1$).   For (\ref{equation:6th-gamma(q)}), we recall identity \cite[(4.27)]{AH} and make the substitution $z=-q$ to obtain
\begin{align*}
{\overline{J}}_{1,3}\gamma(q)&=-\frac{J_1^2{\overline{J}}_{1,3}}{J_3{\overline{J}_{0,1}}}\Big [ \omega j(-\omega ^2q;q)+\omega^2j(-\omega q;q) \Big ]+3{\overline{J}}_{1,3}m(q,q^3,-q)\\
&=\frac{2J_1^2J_{1,2}J_6{\overline{J}}_{1,3}}{J_3^2\overline{J}_{0,1}}+3{\overline{J}}_{1,3}m(q,q^3,-q),
\end{align*}
where the last line follows from the fact that $j(-\omega, q)=(1+\omega){J_{1,2}J_6}/{J_3}$.  The result follows.  For  (\ref{equation:phibar}), we rewrite identity \cite[(2.14)]{BC}.  For (\ref{equation:psibar}), we recall identity \cite[(2.13)]{BC} and argue in a similar fashion.

\noindent {\bf `7th order' functions}

  We prove $(\ref{equation:7th-F0(q)})$.  The proofs for  (\ref{equation:7th-F1(q)}) and  (\ref{equation:7th-F2(q)}) are similar.  From \cite[(0.8)]{H2}, we have
\begin{align*}
{\mathcal{F}}_0(q)&=2+2qg(q,q^7)-\frac{J_{3,7}^2}{J_1}
=2-2m(q^{11},q^{21},q^2)-2q^{-1}m(q^4,q^{21},q^2) -\frac{J_{3,7}^2}{J_1} \\
&=2q^{-10}m(q^{-10},q^{21},q^2)-2q^{-1}m(q^4,q^{21},q^2)-\frac{J_{3,7}^2}{J_1}\\
&=2m(q^{10},q^{21},q^{-2})-2q^{-1}m(q^4,q^{21},q^2)-\frac{J_{3,7}^2}{J_1},
\end{align*}
where the last three equalities follow from Proposition \ref{proposition:g-to-mxqz} and identities (\ref{equation:mxqz-fnq-x}), (\ref{equation:mxqz-flip}).  Using Theorem \ref{theorem:changing-z-theorem} and elementary theta function properties, we obtain
\begin{align*}
{\mathcal{F}}_0(q)&=m(q^{10},q^{21},q^9)+m(q^{10},q^{21},q^{-9})-q^{-1}m(q^{4},q^{21},q^9)-q^{-1}m(q^{4},q^{21},q^{-9})\\
&\ \ \ \ +\frac{J_{21}^3J_{11,21}J_{17,21}}{J_{2,21}J_{8,21}J_{9,21}J_{19,21}}
+q\frac{J_{21}^3J_{11,21}J_{3,21}}{J_{2,21}J_{6,21}J_{9,21}J_{5,21}}
-\frac{J_{3,7}^2}{J_1}\\
&=2m(q^{10},q^{21},q^9)-2q^{-1}m(q^{4},q^{21},q^{-9})\\
&\ \ \ \ \ +2\frac{J_{21}^3J_{11,21}J_{17,21}}{J_{2,21}J_{8,21}J_{9,21}J_{19,21}}
+2q\frac{J_{21}^3J_{11,21}J_{3,21}}{J_{2,21}J_{6,21}J_{9,21}J_{5,21}}
-\frac{J_{3,7}^2}{J_1}.
\end{align*}
To establish the two remaining identities in (\ref{equation:7th-F0(q)}), we use Proposition \ref{proposition:CHcorollary} with the specialization $q=q^{21},a=q^3,b=q^5,c=q,d=q^2.$

\noindent {\bf `8th order' functions}

Using the generalized Lambert series (1.3) -- (1.6) of \cite{GM}, we first prove the expressions in (\ref{equation:8th-U0(q)}) -- (\ref{equation:8th-V1(q)}).  We then combine these expressions in (\ref{equation:8th-U0(q)}) -- (\ref{equation:8th-V1(q)}) with the eighth order identities (1.7) and (1.8) of \cite{GM} in order to prove the expressions in (\ref{equation:8th-S0(q)}) -- (\ref{equation:8th-T1(q)}).

For (\ref{equation:8th-U0(q)}) (resp. (\ref{equation:8th-U1(q)}), (\ref{equation:8th-V1(q)})) we use identities (1.3) (resp. (1.4), (1.6)) of \cite{GM} and argue in a manner similar to that for identity (\ref{equation:2nd-B(q)}).  For (\ref{equation:3rd-chi(q)5}) we prove the penultimate equality, the last equality then follows from Theorem \ref{theorem:changing-z-theorem}.  We begin by recalling (1.5) of \cite{GM} and argue as for (\ref{equation:2nd-B(q)}) to obtain
\begin{align*}
V_0(q)=&-1+2\frac{(-q^2;q^4)_{\infty}}{(q^4;q^4)_{\infty}}\Big [ j(q^6;q^8) m(q^4,q^8,q^6)+qj(q^{10};q^8)m(1,q^8,q^{10})\Big ]\\
=& -1 + 2m(q^4,q^8,q^6)-2q^{-1}m(1,q^8,q^{2}),
\end{align*}
where the last line follows from (\ref{equation:mxqz-fnq-z}).  Using Theorem \ref{theorem:changing-z-theorem} and Corollary \ref{corollary:mxqz-eval}, we have
\begin{align*}
m(q^4,q^8,q^6)&=m(q^4,q^8,-1)+\frac{J_8^3\overline{J}_{2,8}^2}{J_{2,8}^2\overline{J}_{0,8}\overline{J}_{4,8}}
=\frac{1}{2}+\frac{J_8^3\overline{J}_{2,8}^2}{J_{2,8}^2\overline{J}_{0,8}\overline{J}_{4,8}}.
\end{align*}
Applying Theorem \ref{theorem:changing-z-theorem} twice more,
\begin{align*}
V_0(q)=-q^{-1}m(1,q^8,q)-q^{-1}m(1,q^8,q^3)+\frac{qJ_{8}^3J_{1,8}J_{3,8}}{J_{2,8}^2J_{3,8}^2}-
\frac{J_{8}^3J_{1,8}J_{3,8}}{J_{2,8}^2J_{1,8}^2}+2\frac{J_8^3\overline{J}_{2,8}^2}{J_{2,8}^2\overline{J}_{0,8}\overline{J}_{4,8}}.
\end{align*}
It remains to show that the sum of three theta quotients is zero; however, this follows from Proposition \ref{proposition:CHcorollary} with $q=q^8$, $a=q^2$, $b=q^4$, $c=q$, $d=1$ .

With the above information, we now proceed to the expressions for (\ref{equation:8th-S0(q)}) -- (\ref{equation:8th-T1(q)}).   For (\ref{equation:8th-S0(q)}), we begin by recalling \cite[(1.7)]{GM}, which states
\begin{equation*}
U_0(q)=S_0(q^2)+qS_1(q^2),
\end{equation*}
hence,
\begin{equation*}
2S_0(q^2)=U_0(q)+U_0(-q).
\end{equation*}
Recalling (\ref{equation:8th-U0(q)}) and then using Corollary \ref{corollary:mxqz-flip-xz}, Theorem \ref{theorem:msplit-general-n}, and (\ref{equation:jsplit}) in reverse yields
\begin{align*}
U_0&(q)=2m(-q,q^4,-1)=2m(-q,q^4,q^3)\\
&=2m(-q^6,q^{16},-q^4)+2q^{-3}m(-q^{-2},q^{16},-q^4)\\
&\ \ \ \ -2\frac{q^4J_8^3}{j(-q^4;q^4)j(q^{10};q^8)j(-q^4;q^{16})}\Big \{
\frac{j(q^{13};q^8)j(-q^2;q^{16})}{j(q^3;q^8)}+q^4\frac{j(q^{17};q^8)j(-q^{10};q^{16})}{j(q^7;q^8)}\Big \}\\
&=2m(-q^6,q^{16},-q^4)+2q^{-3}m(-q^{-2},q^{16},-q^4)+2\frac{J_8^3J_{1,4}}{\overline{J}_{0,4}J_{2,8}\overline{J}_{4,16}}.
\end{align*}
Similarly,
\begin{equation*}
U_0(-q)=2m(-q^6,q^{16},-q^{12})-2q^{-3}m(-q^{-2},q^{16},-q^{12})-2\frac{J_8^3\overline{J}_{1,4}}{\overline{J}_{0,4}J_{2,8}\overline{J}_{4,16}}.
\end{equation*}
Thus by expanding $J_{1,4}$ and $\overline{J}_{1,4}$ with (\ref{equation:jsplit}) where $m=2$,
{\allowdisplaybreaks \begin{align*}
S_0(q^2)&=\frac12 U_0(q)+\frac12 U_0(-q)=m(-q^6,q^{16},-q^4)+q^{-3}m(-q^{-2},q^{16},-q^4)\\
&\ \ \ \ +m(-q^6,q^{16},-q^{12})-q^{-3}m(-q^{-2},q^{16},-q^{12})
-\frac{2qJ_8^3\overline{J}_{2,16}}{\overline{J}_{0,4}J_{2,8}\overline{J}_{4,16}}\\
&=m(-q^6,q^{16},-q^4)+m(-q^6,q^{16},-q^{12})+\frac{qJ_{16}^3J_{8,16}\overline{J}_{2,16}}{\overline{J}_{4,16}^2J_{2,16}J_{10,16}}-\frac{2qJ_8^3\overline{J}_{2,16}}{\overline{J}_{0,4}J_{2,8}\overline{J}_{4,16}},
\end{align*}}%
where the last line follows from Theorem \ref{theorem:changing-z-theorem}.  The penultimate equality of (\ref{equation:8th-S0(q)}) then follows from elementary theta function properties.  For the last equality of (\ref{equation:8th-S0(q)}), we have
\begin{align*}
S_0(q)&=m(-q^3,q^8,-q^2)+m(-q^3,q^8,-q^6)\\
&=2m(-q^3,q^8,-1)-\frac{J_8^3J_{2,8}}{\overline{J}_{2,8}\overline{J}_{0,8}J_{3,8}J_{1,8}J_{5,8}}\Big [ 
{\overline{J}_{5,8}}{J_{1,8}}-{\overline{J}_{1,8}}{J_{5,8}}\Big ] &({\text{by Thm \ref{theorem:changing-z-theorem}}})\\
&=2m(-q^3,q^8,-1)-\frac{J_8^3J_{2,8}}{\overline{J}_{2,8}\overline{J}_{0,8}J_{3,8}J_{1,8}J_{5,8}}\Big [ 
-2qJ_{4,16}J_{14,16}\Big ], &({\text{by (\ref{equation:H1Thm1.2A})}})
\end{align*}
and the result follows.  The proof for (\ref{equation:8th-S1(q)}) is similar.  For (\ref{equation:8th-T0(q)}), we use \cite[(1.8)]{GM}:
\begin{align*}
2T_0(q^2)&=U_1(q)+U_1(-q)=-m(-q,q^4,-q^2)-m(q,q^4,-q^2)& ({\text{by (\ref{equation:8th-U1(q)})}})\\
&=-m(-q,q^4,q)-m(q,q^4,-q)=-2m(-q^6,q^{16},q^4).& ({\text{by  Cors \ref{corollary:mxqz-flip-xz}, \ref{corollary:msplit-n=2}}})
\end{align*}
The argument for  (\ref{equation:8th-T1(q)}) is analogous.

\noindent {\bf `10th order' functions}

  For each set of identities, we prove the last equality.  The penultimate equalities then follow from Theorem \ref{theorem:changing-z-theorem} and elementary theta function properties.  To prove (\ref{equation:10th-phi(q)}) (resp. (\ref{equation:10th-psi(q)})), we recall information from \cite[pp. 525, 534]{C1} (resp. \cite[pp. 525, 533]{C1}) and use Proposition \ref{proposition:h-to-mxqz}.   To show (\ref{equation:10th-BigX(q)}), we recall information from \cite[pp. 183, 222]{C2} to write
\begin{align*}
X(q)&=2qk(q,q^5)-\frac{J_5J_{10}J_{2,5}}{J_{2,10}J_{1,5}}
= 2m(-q^2,q^5,q^{-2})+\frac{J_{5,10}^2}{J_{2,5}}-\frac{J_{5}J_{10}J_{2,5}}{J_{2,10}J_{1,5}}\\
&=2m(-q^2,q^5,q^4)-\frac{J_{5}J_{10}J_{2,5}}{J_{2,10}J_{1,5}},
\end{align*}
where the second equality follows from Proposition \ref{proposition:CH-k(x,q)}, and the last equality follows from Theorem \ref{theorem:changing-z-theorem}.  The result follows.  Using the information in \cite[pp. 183, 223]{C2} and arguing similarly proves (\ref{equation:10th-chi(q)}).


\section{Proofs of Theorems  \ref{theorem:masterFnp} and \ref{theorem:main-acdivb} }\label{section:maintheorems-proofs}
\subsection{Preliminaries}

We establish elementary properties of Hecke-type sums.   We state and prove functional equations which $f_{a,b,c}$, $g_{a,b,c}$, and $h_{a,b,c}$ satisfy. We then prove results concerning the poles and residues of $g_{a,b,c}$ and $h_{a,b,c}$.

\begin{proposition} \label{proposition:fabc-mod2} For $x,y\in\mathbb{C}^*$
\begin{align}
f_{a,b,c}(x,y,q)&=f_{a,b,c}(-x^2q^a,-y^2q^c,q^4)-xf_{a,b,c}(-x^2q^{3a},-y^2q^{c+2b},q^4)\label{equation:fabc-mod2}\\
&\ \ \ \ -yf_{a,b,c}(-x^2q^{a+2b},-y^2q^{3c},q^4)+xyq^bf_{a,b,c}(-x^2q^{3a+2b},-y^2q^{3c+2b},q^4).\notag
\end{align}
\end{proposition}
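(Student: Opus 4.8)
The plan is to split each of the two sums defining $f_{a,b,c}(x,y,q)$ according to the parity of the summation indices, exactly as one would prove the elementary identity $\sum_n a_n = \sum_n a_{2n} + \sum_n a_{2n+1}$, and then recognize the four resulting double sums as (signed, rescaled) copies of $f_{a,b,c}$ with $q$ replaced by $q^4$. Concretely, recall that by Definition \ref{definition:fabc-def},
\[
f_{a,b,c}(x,y,q)=\sum_{\sg(r)=\sg(s)}\sg(r)(-1)^{r+s}x^ry^sq^{a\binom{r}{2}+brs+c\binom{s}{2}}.
\]
First I would write $r=2r'+\eps_1$ and $s=2s'+\eps_2$ with $\eps_1,\eps_2\in\{0,1\}$, giving four pieces indexed by $(\eps_1,\eps_2)$. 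In each piece, I would substitute these expressions into the quadratic form in the exponent and expand: $\binom{2r'+\eps_1}{2}$, $(2r'+\eps_1)(2s'+\eps_2)$, and $\binom{2s'+\eps_2}{2}$ all expand into a quadratic part in $(r',s')$ with coefficients $4a$, $4b$, $4c$ (matching the base $q^4$), a linear part, and a constant. The linear and constant contributions get absorbed into new values of the "$x$" and "$y$" variables and an overall monomial prefactor, which is where the factors $-x^2q^a$, $-x^2q^{3a}$, $q^{a+2b}$, $q^b$, etc., come from; the overall signs $-x$, $-y$, $+xyq^b$ come from the $(-1)^{r+s}=(-1)^{\eps_1+\eps_2}$ and from $x^{\eps_1}y^{\eps_2}$.

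The one genuine subtlety — and the step I expect to be the main obstacle — is the sign condition $\sg(r)=\sg(s)$ in the definition, which does not translate verbatim into $\sg(r')=\sg(s')$ once one shifts by $\eps_i$. I would handle this by checking that the discrepancy only involves finitely many "boundary" terms (those with $r'$ or $s'$ near $0$), and that these boundary terms either cancel in pairs across the four pieces or contribute zero because one of the monomials forces a vanishing. Alternatively, and more cleanly, I would note that $f_{a,b,c}$ as defined already equals the "naive" unrestricted-looking Hecke sum in the region $ac<b^2$ up to convergence conventions, and that the parity split is an identity of formal power series in which the sign bookkeeping $\sg(r)(-1)^{r+s}=\sg(2r'+\eps_1)(-1)^{\eps_1+\eps_2}$ is tracked term by term; since $\sg(2r'+\eps_1)=\sg(r')$ whenever $r'\neq 0$ or $\eps_1=1$, the only adjustments are at $r'=0$, $\eps_1=0$ (and symmetrically in $s$), and these finitely many correction terms can be verified by hand to match on both sides.

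Having reconciled the sign conditions, the remainder is a routine matching of exponents: for each of the four $(\eps_1,\eps_2)$ I would compute the quadratic form $a\binom{2r'+\eps_1}{2}+b(2r'+\eps_1)(2s'+\eps_2)+c\binom{2s'+\eps_2}{2}$, read off that it equals $4a\binom{r'}{2}+4b\,r's'+4c\binom{s'}{2}$ plus $a$-, $b$-, $c$-linear terms in $r',s'$ plus a constant, and then fold those linear terms into the new first and second arguments of $f_{a,b,c}(\,\cdot\,,\cdot\,,q^4)$. Comparing the four results with the right-hand side of \eqref{equation:fabc-mod2} term by term finishes the proof. I would present the computation for the $(\eps_1,\eps_2)=(1,1)$ case in detail (since it carries the $xyq^b$ cross term and is the most informative) and indicate that the other three are identical in structure.
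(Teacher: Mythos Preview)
Your approach is exactly the paper's: the proof there reads, in full, ``Break up the double sum in Definition \ref{definition:fabc-def} into four parts depending on the parity of $r$ and $s$.'' So you have the right idea and the computation you outline is what is intended.

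One remark: the ``genuine subtlety'' you flag is not actually there. With the convention $\sg(0)=1$ used in Definition \ref{definition:fabc-def}, one has $\sg(2r'+\eps_1)=\sg(r')$ for \emph{every} integer $r'$ and every $\eps_1\in\{0,1\}$: if $\eps_1=0$ this is clear, and if $\eps_1=1$ then $2r'+1\ge 0\iff r'\ge 0$ and $2r'+1<0\iff r'\le -1$. Hence the condition $\sg(r)=\sg(s)$ becomes exactly $\sg(r')=\sg(s')$ in each of the four pieces, with no boundary corrections to chase. The parity split is therefore completely clean, and the rest is the routine exponent bookkeeping you describe.
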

\begin{proof}  Break up the double sum in Definition \ref{definition:fabc-def} into four parts depending on the parity of $r$ and $s$.
\end{proof}
Propositions \ref{proposition:H7eq1.14} and \ref{proposition:f-functionaleqn} follow from identities $(1.14)$ and $(1.15)$ of \cite{H2} respectively, which read
\begin{align}
\sum_{sg(r)=sg(s)}sg(r)c_{r,s}=&-\sum_{sg(r)=sg(s)}sg(r)c_{-1-r,-1-s},\label{equation:1.14H2}\\
\sum_{sg(r)=sg(s)}sg(r)c_{r,s}=&\sum_{sg(r)=sg(s)}sg(r)c_{r+\ell,s+k}+\sum_{r=0}^{\ell -1}\sum_{s}c_{r,s}+\sum_{s=0}^{k -1}\sum_{r}c_{r,s}.\label{equation:1.15H2}
\end{align}

\begin{proposition}  \label{proposition:H7eq1.14}For $x,y\in\mathbb{C}^*$
\begin{equation}
f_{a,b,c}(x,y,q)=-\frac{q^{a+b+c}}{xy}f_{a,b,c}(q^{2a+b}/x,q^{2c+b}/y,q).\label{equation:H7eq1.14}
\end{equation}
\end{proposition}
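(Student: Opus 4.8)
The plan is to apply the reflection identity (\ref{equation:1.14H2}) of \cite{H2} term by term to the summand of $f_{a,b,c}$ and then repackage the result as another copy of $f_{a,b,c}$ with shifted arguments. First I would introduce the coefficients
\[
c_{r,s}:=(-1)^{r+s}x^ry^sq^{a\binom{r}{2}+brs+c\binom{s}{2}},
\]
so that, by Definition \ref{definition:fabc-def}, $f_{a,b,c}(x,y,q)=\sum_{\sg(r)=\sg(s)}\sg(r)c_{r,s}$. Since $r\mapsto -1-r$ interchanges $\{r\ge 0\}$ and $\{r\le -1\}$, and likewise in $s$, the map $(r,s)\mapsto(-1-r,-1-s)$ preserves the region $\sg(r)=\sg(s)$ while flipping the common sign; this is exactly the situation covered by (\ref{equation:1.14H2}), which gives
\[
f_{a,b,c}(x,y,q)=-\sum_{\sg(r)=\sg(s)}\sg(r)\,c_{-1-r,-1-s}.
\]

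Next I would compute $c_{-1-r,-1-s}$ explicitly using the elementary identities
\[
\binom{-1-r}{2}=\binom{r}{2}+2r+1,\qquad (-1-r)(-1-s)=rs+r+s+1,\qquad (-1)^{(-1-r)+(-1-s)}=(-1)^{r+s}.
\]
Collecting the $q$-exponent, the extra contributions beyond $a\binom{r}{2}+brs+c\binom{s}{2}$ amount to $a(2r+1)+b(r+s+1)+c(2s+1)=(2a+b)r+(2c+b)s+(a+b+c)$, and $x^{-1-r}y^{-1-s}=(xy)^{-1}x^{-r}y^{-s}$, so
\[
c_{-1-r,-1-s}=\frac{q^{a+b+c}}{xy}\,(-1)^{r+s}\Big(\frac{q^{2a+b}}{x}\Big)^{r}\Big(\frac{q^{2c+b}}{y}\Big)^{s}q^{a\binom{r}{2}+brs+c\binom{s}{2}}.
\]

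Finally I would observe that the right-hand side is precisely $\dfrac{q^{a+b+c}}{xy}$ times the general $(r,s)$-summand of $f_{a,b,c}\big(q^{2a+b}/x,\,q^{2c+b}/y,\,q\big)$; summing over $\sg(r)=\sg(s)$ with weight $\sg(r)$ and carrying along the overall minus sign yields
\[
f_{a,b,c}(x,y,q)=-\frac{q^{a+b+c}}{xy}\,f_{a,b,c}\Big(\frac{q^{2a+b}}{x},\frac{q^{2c+b}}{y},q\Big),
\]
which is (\ref{equation:H7eq1.14}). The argument is entirely routine; the only points needing any care are the bookkeeping of the shifted binomial coefficients in the $q$-exponent and the verification that the reflection $(r,s)\mapsto(-1-r,-1-s)$ respects the summation region, so that (\ref{equation:1.14H2}) applies without modification.
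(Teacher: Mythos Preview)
Your proof is correct and follows exactly the approach indicated in the paper: the paper simply states that the proposition follows from identity (\ref{equation:1.14H2}) of \cite{H2}, and you have carried out that deduction in full detail, computing $c_{-1-r,-1-s}$ and identifying the resulting sum as the right-hand side of (\ref{equation:H7eq1.14}).
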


\begin{proposition}  \label{proposition:f-functionaleqn}For $x,y\in\mathbb{C}^*$ and $\ell, k \in \mathbb{Z}$
\begin{align}
f_{a,b,c}(x,y,q)&=(-x)^{\ell}(-y)^kq^{a\binom{\ell}{2}+b\ell k+c\binom{k}{2}}f_{a,b,c}(q^{a\ell+bk}x,q^{b\ell+ck}y,q) \notag\\
&\ \ \ \ +\sum_{m=0}^{\ell-1}(-x)^mq^{a\binom{m}{2}}j(q^{mb}y;q^c)+\sum_{m=0}^{k-1}(-y)^mq^{c\binom{m}{2}}j(q^{mb}x;q^a),\label{equation:Gen1}
\end{align}
where when $b<a$, we follow the usual convention:
\begin{equation}
\sum_{r=a}^{b} c_r:=-\sum_{r=b+1}^{a-1} c_r.\label{equation:sumconvention}
\end{equation}
\end{proposition}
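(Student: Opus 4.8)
The plan is to deduce (\ref{equation:Gen1}) directly from the shift identity (\ref{equation:1.15H2}) of \cite{H2} by specializing the abstract summand $c_{r,s}$ there to the one appearing in Definition \ref{definition:fabc-def}. First I would set
\[
c_{r,s}:=(-1)^{r+s}x^ry^sq^{a\binom{r}{2}+brs+c\binom{s}{2}},
\]
so that $f_{a,b,c}(x,y,q)=\sum_{\sg(r)=\sg(s)}\sg(r)c_{r,s}$ is exactly Definition \ref{definition:fabc-def}. Applying (\ref{equation:1.15H2}) to this $c_{r,s}$, with the convention (\ref{equation:sumconvention}) in force when $\ell$ or $k$ is negative, reduces the whole proof to evaluating the three sums on its right-hand side.

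For the shifted double sum, I would expand the exponents using $\binom{r+\ell}{2}=\binom{r}{2}+r\ell+\binom{\ell}{2}$, $\binom{s+k}{2}=\binom{s}{2}+sk+\binom{k}{2}$ (valid for all integers), and $(r+\ell)(s+k)=rs+rk+\ell s+\ell k$, and then separate the exponent into the part constant in $(r,s)$, the part linear in $r$, the part linear in $s$, and the intrinsic part $a\binom{r}{2}+brs+c\binom{s}{2}$. This gives
\[
c_{r+\ell,s+k}=(-x)^{\ell}(-y)^{k}q^{a\binom{\ell}{2}+b\ell k+c\binom{k}{2}}\,(-1)^{r+s}\big(q^{a\ell+bk}x\big)^{r}\big(q^{b\ell+ck}y\big)^{s}q^{a\binom{r}{2}+brs+c\binom{s}{2}},
\]
hence $\sum_{\sg(r)=\sg(s)}\sg(r)c_{r+\ell,s+k}=(-x)^{\ell}(-y)^{k}q^{a\binom{\ell}{2}+b\ell k+c\binom{k}{2}}f_{a,b,c}(q^{a\ell+bk}x,q^{b\ell+ck}y,q)$, which is the leading term of (\ref{equation:Gen1}).

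For the correction terms, I would use that their inner sums run over \emph{all} integers, so they are ordinary Jacobi theta products. Factoring the $r$-dependence out of $\sum_{r=0}^{\ell-1}\sum_{s}c_{r,s}$ yields $\sum_{r=0}^{\ell-1}(-x)^rq^{a\binom{r}{2}}\sum_{s}(-1)^{s}(q^{br}y)^{s}q^{c\binom{s}{2}}$, and the inner sum equals $j(q^{br}y;q^c)$ by Jacobi's triple product; renaming $r$ as $m$ recovers $\sum_{m=0}^{\ell-1}(-x)^mq^{a\binom{m}{2}}j(q^{mb}y;q^c)$. The term $\sum_{s=0}^{k-1}\sum_{r}c_{r,s}$ is treated by the same computation with the roles of the two variables interchanged, producing $\sum_{m=0}^{k-1}(-y)^mq^{c\binom{m}{2}}j(q^{mb}x;q^a)$. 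Summing the three pieces gives (\ref{equation:Gen1}).

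The step I would watch most carefully is the case of negative $\ell$ or $k$: one must be sure that (\ref{equation:1.15H2}) is applied under the same sign convention (\ref{equation:sumconvention}), or, equivalently, first prove (\ref{equation:Gen1}) for $\ell,k\ge0$ and then obtain the remaining cases by applying that version with $(x,y)$ replaced by $(q^{a\ell+bk}x,q^{b\ell+ck}y)$ and solving for $f_{a,b,c}(x,y,q)$. The bookkeeping of the finite sums under the convention is essentially the only place where a sign or index error could creep in; the rearrangements themselves are term-by-term inside absolutely convergent series, so convergence poses no difficulty.
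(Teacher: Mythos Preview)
Your proof is correct and follows exactly the approach indicated in the paper: the paper states that Proposition~\ref{proposition:f-functionaleqn} follows from identity (\ref{equation:1.15H2}) (which is $(1.15)$ of \cite{H2}), and you have simply written out the specialization of $c_{r,s}$ and the resulting computations in full detail. Your handling of the shifted sum via the binomial identity and of the correction terms via Jacobi's triple product is precisely what the paper leaves implicit.
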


\begin{corollary} \label{corollary:fabc-funceqnspecial}We have two simple specializations:
\begin{align}
f_{a,b,c}(x,y,q) =&-yf_{a,b,c}(q^bx,q^cy,q)+j(x;q^a),\label{equation:fabc-fnq-1}\\
f_{a,b,c}(x,y,q) =&-xf_{a,b,c}(q^ax,q^by,q)+j(y;q^c).\label{equation:fabc-fnq-2}
\end{align}
\end{corollary}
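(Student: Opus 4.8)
The plan is to obtain both identities directly from Proposition \ref{proposition:f-functionaleqn} by choosing the integer parameters $\ell,k$ as small as possible. For (\ref{equation:fabc-fnq-1}), I would apply (\ref{equation:Gen1}) with $\ell=0$ and $k=1$. Since $\binom{0}{2}=\binom{1}{2}=0$ and $\ell k=0$, the prefactor $(-x)^{\ell}(-y)^{k}q^{a\binom{\ell}{2}+b\ell k+c\binom{k}{2}}$ collapses to $-y$, the shifted arguments $q^{a\ell+bk}x$ and $q^{b\ell+ck}y$ become $q^{b}x$ and $q^{c}y$, the first sum $\sum_{m=0}^{\ell-1}$ is empty, and the second sum $\sum_{m=0}^{k-1}$ contributes only its $m=0$ term $j(q^{0}x;q^{a})=j(x;q^{a})$. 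This is precisely the identity $f_{a,b,c}(x,y,q)=-yf_{a,b,c}(q^{b}x,q^{c}y,q)+j(x;q^{a})$.

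Symmetrically, for (\ref{equation:fabc-fnq-2}) I would apply (\ref{equation:Gen1}) with $\ell=1$ and $k=0$: now the prefactor collapses to $-x$, the shifted arguments become $q^{a}x$ and $q^{b}y$, the second sum is empty, and the first sum contributes only its $m=0$ term $j(q^{0}y;q^{c})=j(y;q^{c})$, giving $f_{a,b,c}(x,y,q)=-xf_{a,b,c}(q^{a}x,q^{b}y,q)+j(y;q^{c})$.

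There is essentially no obstacle here. The one point worth noting is that, because $\ell$ and $k$ lie in $\{0,1\}$, each sum over $m$ in (\ref{equation:Gen1}) is either empty or consists of a single term, so the summation convention (\ref{equation:sumconvention}) never enters and no sign ambiguity can arise. Alternatively, one could argue straight from Definition \ref{definition:fabc-def}: after reindexing $s\mapsto s-1$ in $-yf_{a,b,c}(q^{b}x,q^{c}y,q)$ (and checking that the $q^{b},q^{c}$ shifts are exactly absorbed by the change in the exponent $a\binom{r}{2}+brs+c\binom{s}{2}$), the two bilateral double sums are seen to differ by precisely the $s=0$ contribution, which assembles into $\sum_{r}(-1)^{r}x^{r}q^{a\binom{r}{2}}=j(x;q^{a})$; and similarly for (\ref{equation:fabc-fnq-2}). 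Deriving the corollary from Proposition \ref{proposition:f-functionaleqn}, however, is by far the shortest path.
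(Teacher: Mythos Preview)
Your proposal is correct and matches the paper's approach: the paper states this as an immediate corollary of Proposition~\ref{proposition:f-functionaleqn} (``two simple specializations'') without further proof, and your choices $(\ell,k)=(0,1)$ and $(\ell,k)=(1,0)$ are exactly the intended specializations.
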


The functions $f_{a,b,c}(x,y,q)$ and $g_{a,b,c}(x,y,q,-1,-1)$ satisfy the same functional equation.

\begin{proposition}  \label{proposition:Genfg-functional}The functions $f_{a,b,c}(x,y,q)$ and $g_{a,b,c}(x,y,q,-1,-1)$ satisfy
\begin{align*}
G(q^{b^2-ac}x,y,q)&=q^{c\binom{b+1}{2}-a\binom{c+1}{2}}\frac{(-x)^c}{(-y)^b}G(x,y,q)
+\sum_{r=0}^{c-1}(-x)^rq^{a\binom{m}{2}}q^{r(b^2-ac)}j(q^{rb}y;q^c)\\
&\ \ \ -q^{c\binom{b+1}{2}-a\binom{c+1}{2}}\frac{(-x)^c}{(-y)^b}\sum_{r=0}^{b-1}
(-y)^rq^{c\binom{r}{2}}j(q^{rb}x;q^a).
\end{align*} 
The functional equation with respect to $y$ is obtained by interchanging $x$ with $y$ and then $a$ with $c$.
\end{proposition}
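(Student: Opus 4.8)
The plan is to verify the displayed functional equation (the one ``with respect to $x$'') separately for $f_{a,b,c}$ and for $g_{a,b,c}(x,y,q,-1,-1)$, and then to deduce the $y$-version from the obvious symmetries $f_{a,b,c}(x,y,q)=f_{c,b,a}(y,x,q)$ and $g_{a,b,c}(x,y,q,-1,-1)=g_{c,b,a}(y,x,q,-1,-1)$ --- the first coming from swapping $r\leftrightarrow s$ in Definition \ref{definition:fabc-def}, the second from interchanging the two sums in (\ref{equation:mdef-2}). Applying the $x$-equation to $f_{c,b,a}$ and $g_{c,b,a}(\cdot,\cdot,\cdot,-1,-1)$ and translating back then produces precisely the stated $y$-equation, i.e.\ the equation obtained by interchanging $x\leftrightarrow y$ and $a\leftrightarrow c$.

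For $f_{a,b,c}$ the identity is an immediate consequence of Proposition \ref{proposition:f-functionaleqn}: I would apply (\ref{equation:Gen1}) with $\ell=-c$, $k=b$, so that the shifted arguments become $q^{a\ell+bk}x=q^{b^2-ac}x$ and $q^{b\ell+ck}y=y$, and then solve for $f_{a,b,c}(q^{b^2-ac}x,y,q)$. Using the summation convention (\ref{equation:sumconvention}) to rewrite $\sum_{m=0}^{-c-1}$ as $-\sum_{m=-c}^{-1}$, together with the elementary identities $\binom{-c}{2}=\binom{c+1}{2}$ and $b^2c-c\binom{b}{2}=c\binom{b+1}{2}$ to simplify the prefactor to $q^{c\binom{b+1}{2}-a\binom{c+1}{2}}(-x)^c/(-y)^b$, one is left with an inner sum over $m\in\{-c,\dots,-1\}$ and one over $m\in\{0,\dots,b-1\}$. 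Reindexing $m=r-c$ in the former and pulling the factor $q^{-cb}$ out of $j(q^{(r-c)b}y;q^c)$ by (\ref{equation:1.8}) (with base $q^c$) converts it --- after checking the exponent identity $\binom{r-c}{2}-\binom{c+1}{2}=\binom{r}{2}-rc$ --- into exactly $\sum_{r=0}^{c-1}(-x)^rq^{a\binom{r}{2}}q^{r(b^2-ac)}j(q^{rb}y;q^c)$, which is the middle term on the right; the remaining inner sum already matches the third term. This settles the $f$-case.

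For $g_{a,b,c}(x,y,q,-1,-1)$ I would substitute $x\mapsto q^{b^2-ac}x$ into (\ref{equation:mdef-2}) and treat the two sums separately. In the second sum (index $t\in\{0,\dots,c-1\}$, Appell--Lerch modulus $q^{c(b^2-ac)}$) only the prefactor $(-x)^t$ and the factor $(-x)^c$ inside the $m$-argument feel the substitution; the $m$-argument is multiplied by exactly one period $q^{c(b^2-ac)}$ of its modulus, so (\ref{equation:mxqz-fnq-x}) (base $q^{c(b^2-ac)}$) turns each $m(q^{c(b^2-ac)}w_t,q^{c(b^2-ac)},-1)$ into $1-w_tm(w_t,q^{c(b^2-ac)},-1)$. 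The $q^{t(b^2-ac)}$ factors cancel between prefactor and $w_t$, and one reads off that the transformed second sum equals $\sum_{t=0}^{c-1}(-x)^tq^{t(b^2-ac)}q^{a\binom{t}{2}}j(q^{bt}y;q^c)$ plus $q^{c\binom{b+1}{2}-a\binom{c+1}{2}}(-x)^c/(-y)^b$ times the second sum of $g_{a,b,c}(x,y,q,-1,-1)$.

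The first sum is the crux, and the step I expect to be the main obstacle: here the substitution multiplies the $m$-argument by $q^{-b(b^2-ac)}$, which is \emph{not} an integer multiple of the modulus $q^{a(b^2-ac)}$ unless $a\mid b$, so (\ref{equation:mxqz-fnq-x}) does not apply directly. The way around this is to first rewrite $j(q^{bt+b^2-ac}x;q^a)=j\big(q^{-ac}\cdot q^{b(t+b)}x;q^a\big)$ and extract a factor $(-1)^cq^{-a\binom{c+1}{2}}q^{bc(t+b)}x^c$ via (\ref{equation:1.8}) (base $q^a$, $n=-c$), so that every ingredient of the transformed first sum is now expressed through the index $t'=t+b$; using $c\binom{t'-b}{2}-c\binom{t'}{2}+bct'=c\binom{b+1}{2}$ one checks the $t'$-th summand equals $q^{c\binom{b+1}{2}-a\binom{c+1}{2}}(-x)^c/(-y)^b$ times the original $t'$-th summand of the first sum of $g_{a,b,c}(x,y,q,-1,-1)$. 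Thus the transformed first sum is that prefactor times $\sum_{t'=b}^{a+b-1}$ of the original summand, and it only remains to trade the range $\{b,\dots,a+b-1\}$ for $\{0,\dots,a-1\}$: the correction is $\sum_{r=0}^{b-1}$ of the difference between the $(r+a)$-th and $r$-th summands, and now the shift $t'\mapsto t'+a$ multiplies the $m$-argument by the genuine full period $q^{-a(b^2-ac)}$, so (\ref{equation:mxqz-fnq-x}) finally applies. After simplification --- the $q$-exponents all collapse to $0$ using $\binom{a}{2}+\binom{a+1}{2}=a^2$, $\binom{b}{2}+\binom{b+1}{2}=b^2$, and the value $\beta=a\binom{b+1}{2}-c\binom{a+1}{2}$ of the relevant exponent --- one obtains the clean telescoping identity that the $(r+a)$-th summand minus the $r$-th summand is $-(-y)^rq^{c\binom{r}{2}}j(q^{rb}x;q^a)$. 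Adding the two sums then yields exactly the displayed equation for $g_{a,b,c}(x,y,q,-1,-1)$, which, combined with the $f$-case and the symmetry argument of the first paragraph, completes the proof of Proposition \ref{proposition:Genfg-functional}.
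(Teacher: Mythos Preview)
Your proposal is correct and follows essentially the same route as the paper: for $f_{a,b,c}$ you specialize Proposition~\ref{proposition:f-functionaleqn} with $\ell=-c$, $k=b$ and reindex; for $g_{a,b,c}(x,y,q,-1,-1)$ you handle the second sum via (\ref{equation:mxqz-fnq-x}) and the first sum by pulling $q^{-ac}$ through $j(\cdot;q^a)$ via (\ref{equation:1.8}), shifting the index to $t'=t+b$, and then correcting the range $\{b,\dots,a+b-1\}\to\{0,\dots,a-1\}$ using the $m$-functional equation under a shift by $a$. The only cosmetic difference is that the paper phrases the range correction as the split $\sum_{t=b}^{a+b-1}=\sum_{t=b}^{a-1}+\sum_{t=a}^{a+b-1}$ and cites (\ref{equation:mxqz-altdef1}) rather than (\ref{equation:mxqz-fnq-x}) for the final step, which is the same identity rearranged.
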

\begin{proof}[Proof of Proposition \ref{proposition:Genfg-functional}]  For $f_{a,b,c}(x,y,q)$, we specialize $\ell=-c, k=b$ in Proposition \ref{proposition:f-functionaleqn}, rewrite the first sum of (\ref{equation:Gen1}) using the summation convention (\ref{equation:sumconvention}) of Proposition \ref{proposition:f-functionaleqn}, and then rearrange terms
\begin{align}
f_{a,b,c}(q^{b^2-ac}x,y,q)&=q^{c\binom{b+1}{2}-a\binom{c+1}{2}}\frac{(-x)^c}{(-y)^b}\Big [ f_{a,b,c}(x,y,q)\notag\\
&\ \ \ \ +\sum_{r=-c}^{-1}(-x)^rq^{a\binom{r}{2}}j(q^{rb}y;q^c)-\sum_{r=0}^{b-1}(-y)^rq^{c\binom{r}{2}}j(q^{rb}x;q^a)\Big ].\label{equation:squirrel}
\end{align}
We rewrite the first sum of the right-hand side of (\ref{equation:squirrel}). We replace $r$ with $r-c$ and simplify to obtain
\begin{align*}
\sum_{r=-c}^{-1}(-x)^rq^{a\binom{r}{2}}j(q^{rb}y;q^c)
=q^{a\binom{c+1}{2}-c\binom{b+1}{2}}\frac{(-y)^b}{(-x)^c}\sum_{r=0}^{c-1}(-x)^rq^{a\binom{r}{2}}q^{r(b^2-ac)}j(q^{rb}y;q^c),
\end{align*}
and the result follows.  We recall the definition of $g_{a,b,c}(x,y,q,-1,-1)$ from line (\ref{equation:mdef-2}).  We consider each of the two sums separately.  Applying (\ref{equation:mxqz-fnq-x}) to the second sum in (\ref{equation:mdef-2}) yields
\begin{align}
\sum_{t=0}^{c-1}&q^{t(b^2-ac)}(-x)^tq^{a\binom{t}{2}}j(q^{bt}y;q^c)+q^{c\binom{b+1}{2}-a\binom{c+1}{2}}\frac{(-x)^c}{(-y)^b}\cdot\Big [ \notag \\ 
& \sum_{t=0}^{c-1}(-x)^tq^{a\binom{t}{2}}j(q^{bt}y;q^c)m\Big (-q^{c\binom{b+1}{2}-a\binom{c+1}{2}-t(b^2-ac)}\frac{(-x)^c}{(-y)^b},q^{c(b^2-ac)},-1\Big )\Big ].\label{equation:gf-sumIA}
\end{align}
Applying (\ref{equation:1.8}) to the first sum in (\ref{equation:mdef-2}) and then replacing $t$ with $t-b$ produces
\begin{align*}
q^{c\binom{b+1}{2}-a\binom{c+1}{2}}&\frac{(-x)^c}{(-y)^b}\\
&\cdot \sum_{t=b}^{a+b-1}(-y)^tq^{c\binom{t}{2}}j(q^{bt}x;q^a)m\Big (-q^{a\binom{b+1}{2}-c\binom{a+1}{2}-t(b^2-ac)}\frac{(-y)^a}{(-x)^b},q^{a(b^2-ac)},-1\Big ).
\end{align*}
With the convention (\ref{equation:sumconvention}) in mind, we write $\sum_{t=b}^{a+b-1}=\sum_{t=b}^{a-1}+\sum_{t=a}^{a+b-1}$.  In the second sum, we replace $t$ with $t+a$ and then use (\ref{equation:1.8}) and (\ref{equation:mxqz-altdef1}).  Simplifying and adding to (\ref{equation:gf-sumIA}) produces the result.
\end{proof}

\begin{proposition}  \label{proposition:Genfh-functional}If $a$ and $c$ divide $b$, then both $f_{a,b,c}(x,y,q)$ and $h_{a,b,c}(x,y,q,-1,-1)$ satisfy
\begin{align*}
G(q^{b^2/c-a}x,y,q)=&q^{c\binom{b/c+1}{2}-a}\frac{(-x)}{(-y)^{b/c}}\Big [ G(x,y,q)
-\sum_{r=0}^{b/c-1}
(-y)^rq^{c\binom{r}{2}}j(q^{rb}x;q^a)\Big ]
+j(y;q^c).
\end{align*} 
The functional equation with respect to $y$ is obtained by interchanging $x$ with $y$ and then $a$ with $c$.
\end{proposition}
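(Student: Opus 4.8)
The plan is to check the displayed functional equation separately for $G=f_{a,b,c}$ and for $G=h_{a,b,c}(x,y,q,-1,-1)$, mirroring the structure of the proof of Proposition \ref{proposition:Genfg-functional}. The ``functional equation with respect to $y$'' then requires no extra argument: both sides are invariant under the simultaneous swap $x\leftrightarrow y$, $a\leftrightarrow c$ (indeed $f_{a,b,c}(x,y,q)=f_{c,b,a}(y,x,q)$ and, directly from the definition in Theorem \ref{theorem:main-acdivb}, $h_{a,b,c}(x,y,q,-1,-1)=h_{c,b,a}(y,x,q,-1,-1)$), so the $y$-version is obtained from the $x$-version by that substitution. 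Throughout write $\alpha:=b/a$ and $\beta:=b/c$, which are positive integers since $a\mid b$ and $c\mid b$.

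For $G=f_{a,b,c}$ I would apply Proposition \ref{proposition:f-functionaleqn} with $\ell=-1$ and $k=\beta$. With this choice $q^{a\ell+bk}x=q^{b^2/c-a}x$ and $q^{b\ell+ck}y=y$, so the shifted $f$ on the right of (\ref{equation:Gen1}) is $f_{a,b,c}(q^{b^2/c-a}x,y,q)$; the sum $\sum_{m=0}^{k-1}$ is already $\sum_{r=0}^{\beta-1}(-y)^rq^{c\binom{r}{2}}j(q^{rb}x;q^a)$; and the sum $\sum_{m=0}^{\ell-1}=\sum_{m=0}^{-2}$ collapses, by the convention (\ref{equation:sumconvention}), to the single term $-(-x)^{-1}q^{a}j(q^{-b}y;q^c)$. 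Since $c\mid b$, identity (\ref{equation:1.8}) with base $q^c$ rewrites $j(q^{-b}y;q^c)$ as a monomial times $j(y;q^c)$; solving for $f_{a,b,c}(q^{b^2/c-a}x,y,q)$ then yields the asserted identity, the only checks being that the accumulated $q$-exponent on the bracket equals $c\binom{\beta+1}{2}-a$ and that the leftover term simplifies to exactly $j(y;q^c)$ -- both short computations that use $b=c\beta$ repeatedly.

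For $G=h_{a,b,c}(x,y,q,-1,-1)$ I would substitute $x\mapsto q^{b^2/c-a}x$ into the definition of $h_{a,b,c}$ from Theorem \ref{theorem:main-acdivb} and treat the two Appell--Lerch terms in turn. Since $b^2/c-a=a(\alpha\beta-1)$ is a multiple of $a$, (\ref{equation:1.8}) turns $j(q^{b^2/c-a}x;q^a)$ into an explicit monomial times $j(x;q^a)$. In the second $m$-term the inner argument $-q^{c\binom{\beta+1}{2}-a}(-x)(-y)^{-\beta}$ acquires a factor $q^{b^2/c-a}$, which is exactly the modulus of that $m$, so (\ref{equation:mxqz-fnq-x}) replaces it by $1-A_0\,m(A_0,q^{b^2/c-a},-1)$ with $A_0=-q^{c\binom{\beta+1}{2}-a}(-x)(-y)^{-\beta}$; as $-A_0=q^{c\binom{\beta+1}{2}-a}\tfrac{(-x)}{(-y)^{\beta}}$, this contributes the free term $j(y;q^c)$ together with $q^{c\binom{\beta+1}{2}-a}\tfrac{(-x)}{(-y)^{\beta}}$ times the second term of $h_{a,b,c}(x,y,q,-1,-1)$. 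In the first $m$-term the inner argument $A_1$ picks up a factor $q^{-\alpha(b^2/c-a)}$; using $a\alpha=b=c\beta$ one checks $\alpha(b^2/c-a)=\beta(b^2/a-c)$, and then iterating (\ref{equation:mxqz-altdef0}) $\beta$ times expresses $m(q^{-\beta(b^2/a-c)}A_1,q^{b^2/a-c},-1)$ as a finite sum of monomials plus $(-1)^{\beta}q^{(b^2/a-c)\binom{\beta+1}{2}}A_1^{-\beta}\,m(A_1,q^{b^2/a-c},-1)$.

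Assembling the pieces, the surviving $m(A_1,\cdot,-1)$ contribution, once multiplied by the monomial from $j(q^{b^2/c-a}x;q^a)$, should match $q^{c\binom{\beta+1}{2}-a}\tfrac{(-x)}{(-y)^{\beta}}$ times the first term of $h_{a,b,c}(x,y,q,-1,-1)$; this reduces to a single $q$-exponent identity which, after inserting $b^2/a-c=a\alpha^2-c$ and $b=a\alpha=c\beta$, collapses to the elementary relation $a\alpha\beta=c\beta^2$. Everything else is of the form $(\text{monomial})\cdot j(x;q^a)$, and the right-hand side sum $\sum_{r=0}^{\beta-1}(-y)^rq^{c\binom{r}{2}}j(q^{rb}x;q^a)$ can be put in that form too by applying (\ref{equation:1.8}) to each $j(q^{rb}x;q^a)$ (using $a\mid b$); so what remains is a finite comparison of coefficients. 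I expect this last bookkeeping to be the main obstacle, as it was in Proposition \ref{proposition:Genfg-functional}: one must keep the $q$-exponents, the sign factors $(-1)^{\alpha\beta}$, and the two distinct index shifts aligned precisely so that the partial-theta tail of the iterated Appell--Lerch recursion reassembles exactly into the stated finite sum of $j(q^{rb}x;q^a)$'s.
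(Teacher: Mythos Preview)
Your plan is correct and matches the paper's proof essentially step for step: for $f_{a,b,c}$ the paper also specializes Proposition~\ref{proposition:f-functionaleqn} to $\ell=-1$, $k=b/c$ and uses the summation convention plus (\ref{equation:1.8}); for $h_{a,b,c}$ it likewise applies (\ref{equation:mxqz-fnq-x}) to the second $m$-term, (\ref{equation:1.8}) to $j(q^{b^2/c-a}x;q^a)$, and iterates the Appell--Lerch recursion $b/c$ times on the first $m$-term (the paper uses (\ref{equation:mxqz-altdef1}) rather than (\ref{equation:mxqz-altdef0}), but the two recursions are equivalent rewrites of one another). The final bookkeeping you flag---matching the partial-theta tail against $\sum_{r=0}^{b/c-1}(-y)^rq^{c\binom{r}{2}}j(q^{rb}x;q^a)$ after rewriting each summand via (\ref{equation:1.8})---is exactly what the paper does, and it goes through cleanly.
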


\begin{proof}[Proof of Proposition \ref{proposition:Genfh-functional}]
Showing this for $f_{a,b,c}(x,y,q)$ follows from Proposition \ref{proposition:f-functionaleqn} with the specializations $\ell=-1, k=b/c$, where we use the summation convention (\ref{equation:sumconvention}) to rewrite the sum from $0$ to $-2$.  Recalling the definition of $h_{a,b,c}(x,y,q,-1,-1)$ from Theorem \ref{theorem:main-acdivb}, we have
\begin{align}
h_{a,b,c}&(q^{b^2/c-a}x,y,q,-1,-1)=j(q^{b^2/c-a}x;q^a)m\Big ( -\frac{q^{a\binom{b/a+1}{2}-c}}{q^{b/c(b^2/a-c)}}\cdot \frac{(-y)}{(-x)^{b/a}},q^{b^2/a-c},-1\Big )\notag\\
&\ \ \ \ +j(y;q^c)m\Big ( -{q^{c\binom{b/c+1}{2}-a}}{q^{(b^2/c-a)}}\frac{(-x)}{(-y)^{b/c}},q^{b^2/c-a},-1\Big ).\label{equation:h-sum}
\end{align}
Using (\ref{equation:mxqz-fnq-x}), the second term of the right-hand side of (\ref{equation:h-sum}) becomes
\begin{align}
j(y;q^c)+{q^{c\binom{b/c+1}{2}-a}}\frac{(-x)}{(-y)^{b/c}}j(y;q^c)m\Big ( -{q^{c\binom{b/c+1}{2}-a}}\frac{(-x)}{(-y)^{b/c}},q^{b^2/c-a},-1\Big ).\label{equation:h-sumII}
\end{align}
We now focus on the first term of the right-hand side of (\ref{equation:h-sum}).  Using (\ref{equation:1.8}), we have
\begin{align*}
j(q^{b^2/c-a}x;q^a)=j(q^{a(b^2/ac-1)}x;q^a)=(-x)^{-(b^2/ac-1)}q^{-a\binom{b^2/ac-1}{2}}j(x;q^a).
\end{align*}
Iterating equation (\ref{equation:mxqz-altdef1}) $b/c$ times yields
\begin{align*}
m&\Big ( -\frac{q^{a\binom{b/a+1}{2}-c}}{q^{b/c(b^2/a-c)}} \frac{(-y)}{(-x)^{b/a}},q^{b^2/a-c},-1\Big )=-\sum_{k=0}^{b/c-1}\Big (\frac{q^{a\binom{b/a+1}{2}-c}}{q^{b/c(b^2/a-c)}}\frac{(-y)}{(-x)^{b/a}}\Big )^{-(k+1)}q^{-(b^2/a-c)\binom{k+1}{2}}\\
&+\Big (\frac{q^{a\binom{b/a+1}{2}-c}}{q^{b/c(b^2/a-c)}}\frac{(-y)}{(-x)^{b/a}}\Big )^{-b/c}q^{-(b^2/a-c)\binom{b/c}{2}}m\Big ( -{q^{a\binom{b/a+1}{2}-c}}\frac{(-y)}{(-x)^{b/a}},q^{b^2/a-c},-1\Big ).
\end{align*}
Simplifying shows that
\begin{align*}
j&(q^{b^2/c-a}x;q^a)m\Big ( -\frac{q^{a\binom{b/a+1}{2}-c}}{q^{b/c(b^2/a-c)}}\cdot \frac{(-y)}{(-x)^{b/a}},q^{b^2/a-c},-1\Big )\\
&={q^{c\binom{b/c+1}{2}-a}}\frac{(-x)}{(-y)^{b/c}}j(x;q^a)m\Big ( -{q^{a\binom{b/a+1}{2}-c}}\frac{(-y)}{(-x)^{b/a}},q^{b^2/a-c},-1\Big )\\
&\ \ \ \ -(-x)^{-(b^2/ac-1)}q^{-a\binom{b^2/ac-1}{2}}\sum_{k=0}^{b/c-1}\Big (\frac{q^{a\binom{b/a+1}{2}-c}}{q^{b/c(b^2/a-c)}}\cdot \frac{(-y)}{(-x)^{b/a}}\Big )^{-(k+1)}q^{-(b^2/a-c)\binom{k+1}{2}}j(x;q^a).
\end{align*}
In the second line, use (\ref{equation:1.8}) to write
\begin{equation*}
j(x;q^a)=(-x)^{(b/c-k-1)b/a}q^{a\binom{(b/c-k-1)b/a}{2}}j(q^{(b/c-k-1)b}x;q^a).
\end{equation*}
Replace $k$ with $b/c-1-k$, simplify, and the result follows.
\end{proof}
We collect results on the poles and residues of $g_{a,b,c}$ and $h_{a,b,c}$.
\begin{proposition}\label{proposition:g-residues}  Fix a generic $y\in \mathbb{C}^*$ and let  $a,$ $b,$ and $c$ be positive integers with $b^2>ac$.  The function 
$g_{a,b,c}(x,y,q,-1,-1)$ is meromorphic for $x\ne0$ and has poles at points $x_0$, where $x_0$ satisfies at least one of the following two conditions:
\begin{align*}
\textup{I}.& \ \ q^{a\binom{b+1}{2}-c\binom{a+1}{2}-t_1(b^2-ac)}(-y)^a(-x_0)^{-b}=q^{ka(b^2-ac)},\\
\textup{II}.& \ \ q^{c\binom{b+1}{2}-a\binom{c+1}{2}-t_2(b^2-ac)}(-x_0)^c(-y)^{-b}=q^{kc(b^2-ac)},
\end{align*}
where $t_1,t_2,k\in \mathbb{Z}$, $0\le t_1<a-1$ and $0\le t_2<c-1$.  If $x_0$ satisfies \textup{I} or \textup{II} exclusively, then it is a simple pole with respective residue
\begin{align*}
\textup{I}.&\ \ x_0(-y)^{t_1}q^{c\binom{t_1}{2}+a(b^2-ac)\binom{k}{2}}j(q^{bt_1}x_0,q^a)/\big ({b\cdot \overline{J}_{0,a(b^2-ac)}}\big ),\\
\textup{II}.&\ \ {(-x_0)^{t_2+1}q^{a\binom{t_2}{2}}j(q^{bt_2}y,q^c)}/{\big (c\cdot \overline{J}_{0,c(b^2-ac)}}\big ),
\end{align*}
where the residues for type \textup{II} have only been computed at $k=0.$  Given the residue at $k=0$ for poles of type \textup{II}, one can use the functional equation of Proposition \ref{proposition:Genfg-functional} to compute the residue for general $k\in\mathbb{Z}.$
\end{proposition}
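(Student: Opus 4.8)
The plan is to read the poles of $g_{a,b,c}(x,y,q,-1,-1)$ straight off its definition (\ref{equation:mdef-2}), which, for $z_1=z_0=-1$, is the sum $S_{\mathrm I}+S_{\mathrm{II}}$ of the $t$-sum over $0\le t\le a-1$ and the $t$-sum over $0\le t\le c-1$. In $S_{\mathrm I}$ the coefficient of each Appell-Lerch factor depends on $x$ only through the theta function $j(q^{bt}x;q^a)$, and in $S_{\mathrm{II}}$ only through the monomial $(-x)^t$; both are holomorphic for $x\ne 0$, so they contribute no poles. Hence every pole of $g_{a,b,c}$ is a pole of one of the factors $m(\,\cdot\,,q^{a(b^2-ac)},-1)$ or $m(\,\cdot\,,q^{c(b^2-ac)},-1)$. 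Expanding $m$ via Definition \ref{definition:mdef}, the $t$-th factor of $S_{\mathrm I}$ is singular precisely where its argument equals $-q^{m\cdot a(b^2-ac)}$ for some $m\in\Z$, which is condition I with $t_1=t$ and $k=m$; similarly $S_{\mathrm{II}}$ yields condition II. Since the integer $ka+t$ determines the pair $(t,k)$ uniquely for $0\le t\le a-1$, at most one summand of $S_{\mathrm I}$ is singular at any given point, and likewise for $S_{\mathrm{II}}$; genericity of $y$ makes the loci I and II disjoint, so at a point $x_0$ satisfying I or II exclusively exactly one summand of $g_{a,b,c}$ is singular, and it produces a simple pole.

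For a type-I pole $x_0$ I would compute the residue by applying Proposition \ref{proposition:mnp2Lerch-residue} to the Lambert-type series defining the offending factor $m(\,\cdot\,,q^{a(b^2-ac)},-1)$: choosing $z=-1$, $p=b$, and $\beta$ so that $\beta x^{-p}$ matches its argument, the proposition gives the residue of that series as $(-1)^{\kappa+1}q^{a(b^2-ac)\binom{\kappa+1}2}z^{\kappa+1}x_0/b$, where matching the pole index forces $\kappa=-k$. With $z=-1$ the sign $(-1)^{\kappa+1}z^{\kappa+1}$ collapses to $1$ and $\binom{1-k}2=\binom k2$, so this is $q^{a(b^2-ac)\binom k2}x_0/b$; dividing by the prefactor $j(-1;q^{a(b^2-ac)})=\overline J_{0,a(b^2-ac)}$ of $m$ and multiplying by the holomorphic coefficient $(-y)^{t_1}q^{c\binom{t_1}2}j(q^{bt_1}x_0;q^a)$ evaluated at $x_0$ yields exactly the stated type-I residue.

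For a type-II pole the argument of the relevant $m$-factor carries a \emph{positive} power of $x$, so I would first use (\ref{equation:mxqz-flip}) to write $m(u,q^{c(b^2-ac)},-1)=u^{-1}m(u^{-1},q^{c(b^2-ac)},-1)$, after which $u^{-1}$ has the shape $\beta x^{-c}$ and Proposition \ref{proposition:mnp2Lerch-residue} applies with $p=c$. Taking $k=0$, so that condition II reads $q^{c\binom{b+1}2-a\binom{c+1}2-t_2(b^2-ac)}(-x_0)^c(-y)^{-b}=1$, one checks at once that $u^{-1}(x_0)=-1$; hence the residue of $m(u,q^{c(b^2-ac)},-1)$ at $x_0$ equals $-x_0/\big(c\,\overline J_{0,c(b^2-ac)}\big)$, and multiplying by the coefficient $(-x_0)^{t_2}q^{a\binom{t_2}2}j(q^{bt_2}y;q^c)$ and using $-(-x_0)^{t_2}x_0=(-x_0)^{t_2+1}$ produces the stated type-II residue at $k=0$. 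The residue at a type-II pole with general $k$ then follows by induction from the $x$-functional equation of Proposition \ref{proposition:Genfg-functional}: its inhomogeneous terms are theta functions, holomorphic for $x\ne0$, so they do not affect residues, and the equation relates the residue at a type-II pole $x_0$ of index $k$ to the residue at $q^{-(b^2-ac)}x_0$, a type-II pole of index $k-1$ with the same $t_2$.

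The only genuine obstacle is the index bookkeeping: matching the pole index $\kappa$ in Proposition \ref{proposition:mnp2Lerch-residue} to the index $k$ in conditions I and II, and keeping the accumulated signs straight through the $z=-1$ specialization and the flip (\ref{equation:mxqz-flip}). Once that is pinned down, everything reduces to the routine simplifications above, together with the elementary fact that distinct admissible $(t,k)$ give distinct pole locations, which underpins the simplicity and ``exclusive'' clauses.
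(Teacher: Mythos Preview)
Your proposal is correct and follows essentially the same route as the paper's proof: read the pole locations from the definition of $g_{a,b,c}$, apply Proposition~\ref{proposition:mnp2Lerch-residue} directly for type~I, and for type~II first invoke the flip (\ref{equation:mxqz-flip}) before applying Proposition~\ref{proposition:mnp2Lerch-residue} and using that condition~II at $k=0$ forces the argument to equal $1$. Your write-up is in fact more detailed than the paper's (which dispatches the argument in three sentences), but the ideas and the order in which they are used are the same.
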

\begin{proposition}\label{proposition:h-residues} Fix a generic $y\in \mathbb{C}^*$ and let $a,$ $b,$ and $c$ be positive integers with $b^2>ac$ and $b$ divisible by $a,c$.  The function $h_{a,b,c}(x,y,q,-1,-1)$ is meromorphic for $x\ne0$ and has poles at points $x_0$, where $x_0$ satisfies at least one of the following two conditions:
\begin{align*}
\textup{I}.& \ \ q^{c\binom{b/c+1}{2}-a}(-x_0)(-y)^{-b/c}=q^{k(b^2/c-a)},\\
\textup{II}.& \ \ q^{a\binom{b/a+1}{2}-c}(-y)(-x_0)^{-b/a}=q^{k(b^2/a-c)},
\end{align*}
where $k\in \mathbb{Z}$.  If $x_0$ satisfies \textup{I} or \textup{II} exclusively, then it is a simple pole with respective residue
\begin{align*}
\textup{I}.&\ \ {-x_0j(y,q^c)}/\overline{J}_{0,b^2/c-a},\\
\textup{II}.&\ \ x_0q^{(b^2/a-c)\binom{k}{2}}j(x_0,q^a)/\big (b/a\cdot \overline{J}_{0,b^2/a-c}\big ),
\end{align*}
where the residues for type \textup{I} have only been computed at $k=0.$ Given the residue at $k=0$ for poles of type \textup{I}, one can use the functional equation of Proposition \ref{proposition:Genfg-functional} to compute the residue for general $k\in\mathbb{Z}.$
\end{proposition}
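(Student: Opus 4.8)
The plan is to proceed exactly as for Proposition~\ref{proposition:g-residues}, using that $h_{a,b,c}(x,y,q,-1,-1)$ is a sum of only two terms. Recall from Theorem~\ref{theorem:main-acdivb} that
\[
h_{a,b,c}(x,y,q,-1,-1)=j(x;q^a)\,m\big(w_1,q^{b^2/a-c},-1\big)+j(y;q^c)\,m\big(w_2,q^{b^2/c-a},-1\big),
\]
where $w_1:=-q^{a\binom{b/a+1}{2}-c}(-y)(-x)^{-b/a}$ and $w_2:=-q^{c\binom{b/c+1}{2}-a}(-x)(-y)^{-b/c}$. Since $a$ and $c$ divide $b$, the exponents $b/a$ and $b/c$ are positive integers, so $w_1$, $w_2$ and $j(x;q^a)$ are holomorphic on $\mathbb{C}^*$; and $b^2>ac$ forces $b^2/a-c>0$ and $b^2/c-a>0$, so both Appell-Lerch sums converge. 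Because $j(x;q^a)$ is entire and $j(y;q^c)$ is constant in $x$, every pole of $h_{a,b,c}$ in $x\neq 0$ comes from one of the two $m$-factors. Writing $m(w,Q,-1)=j(-1;Q)^{-1}\sum_r Q^{\binom{r}{2}}/(1+Q^{r-1}w)$ via Definition~\ref{definition:mdef}, the $r$-th summand of $m(w_1,q^{b^2/a-c},-1)$ has a simple pole in $x$ precisely where $q^{(b^2/a-c)(r-1)}q^{a\binom{b/a+1}{2}-c}(-y)(-x)^{-b/a}=1$, which upon setting $k:=1-r$ is condition~\textup{II}; likewise the $r$-th summand of $m(w_2,q^{b^2/c-a},-1)$ has a simple pole where the analogous relation holds, and $k:=1-r$ turns it into condition~\textup{I}. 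Since $0<|q|<1$, distinct summands of a given $m$-factor yield distinct pole locations, so the pole set of $h_{a,b,c}$ is exactly the union of loci~\textup{I} and~\textup{II}.

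Next I would compute residues at poles lying on exactly one of the two loci, so that only one $m$-factor is singular there and, that factor being singular, only one value of $r$ contributes; at such an $x_0$ the residue of $h_{a,b,c}$ equals the relevant $j$-prefactor times the residue of that single summand. For a type~\textup{II} point, the denominator $1-q^{(b^2/a-c)(r-1)}q^{a\binom{b/a+1}{2}-c}(-y)(-x)^{-b/a}$ contains $x$ only through $x^{-b/a}$, so logarithmic differentiation together with the pole relation gives derivative $(b/a)/x_0$ at $x_0$; hence the summand has residue $x_0/(b/a)$ times $q^{(b^2/a-c)\binom{r}{2}}/\overline{J}_{0,b^2/a-c}$, and since $\binom{r}{2}=\binom{1-k}{2}=\binom{k}{2}$, multiplying by $j(x_0;q^a)$ yields the asserted residue for all $k$ (this also drops out of Proposition~\ref{proposition:mnp2Lerch-residue} with base $q^{b^2/a-c}$ and $p=b/a$). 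For a type~\textup{I} point, the offending summand of $m(w_2,q^{b^2/c-a},-1)$ has denominator linear in $x$, so its residue is $-x_0$ times $q^{(b^2/c-a)\binom{r}{2}}/\overline{J}_{0,b^2/c-a}$; with $\binom{r}{2}=\binom{k}{2}$ and $k=0$ this is $-x_0/\overline{J}_{0,b^2/c-a}$, which multiplied by $j(y;q^c)$ is the stated residue. The residue for general $k$ in the type~\textup{I} case then follows from the functional equation for $h_{a,b,c}$ (Proposition~\ref{proposition:Genfh-functional}), exactly as in Proposition~\ref{proposition:g-residues}.

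Two points need care but no real work. First, for generic $y$ the loci~\textup{I} and~\textup{II} avoid the zeros $x\in q^{a\mathbb{Z}}$ of $j(x;q^a)$ (a coincidence would pin $-y$ to a fixed power of $q$), so the putative poles are genuine and simple. Second, the signs hidden in $(-x)^{-b/a}$ and $(-x)$, and the relabelling $r\mapsto 1-k$, must be tracked carefully when matching the pole relations to conditions~\textup{I} and~\textup{II} and to the factors $q^{(b^2/a-c)\binom{k}{2}}$ and $q^{(b^2/c-a)\binom{k}{2}}$ in the residues. This sign and index bookkeeping is the only (mild) obstacle; there is no analytic difficulty, the whole argument being the residue of a geometric-type term, and the hypothesis that $x_0$ lies on a single locus is precisely what makes a single term of $h_{a,b,c}$ singular there. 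The structure is identical to the proof of Proposition~\ref{proposition:g-residues}.
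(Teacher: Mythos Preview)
Your argument is correct and follows the same approach as the paper, which simply says the proof is ``similar'' to that of Proposition~\ref{proposition:g-residues} and proceeds by reading off the poles from the definition of $h_{a,b,c}$ and computing residues via Proposition~\ref{proposition:mnp2Lerch-residue} (together with (\ref{equation:mxqz-flip}) for the case where $x$ sits in the numerator of the Appell--Lerch argument). Your direct residue computation of the linear-in-$x$ denominator for type~\textup{I} is exactly the $p=1$ instance of that proposition, and your handling of type~\textup{II} via $\binom{1-k}{2}=\binom{k}{2}$ matches the paper's bookkeeping.
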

\begin{proof}[Proofs of Propositions \ref{proposition:g-residues} and \ref{proposition:h-residues}]  Both proofs are similar, so we will only prove Proposition \ref{proposition:g-residues}.  The poles follow from the definition of $g_{a,b,c}(x,y,q,-1,-1)$.  For poles of type \textup{I}, the residue follows from using Proposition \ref{proposition:mnp2Lerch-residue}.   For poles of type \textup{II}, we use (\ref{equation:mxqz-flip}) and Proposition \ref{proposition:mnp2Lerch-residue}, and then use the fact that $q^{c\binom{b+1}{2}-a\binom{c+1}{2}-t_2(b^2-ac)}(-x_0)^c(-y)^{-b}=1$ when $k=0.$
\end{proof}


\subsection{Proof of Theorem \ref{theorem:masterFnp}}\label{subsection:thm-main}
We first prove technical results.  The first lemma is straightforward.
\begin{lemma} \label{lemma:Fnp-thetafunctional}For generic $x,y\in\mathbb{C}^*$
\begin{equation*}
\theta_{n,p}(q^{p(2n+p)}x,y,q)=q^{n(np+\binom{p+1}{2})}{(-x)^n}{(-y)^{-(n+p)}}\theta_{n,p}(x,y,q).
\end{equation*}
The functional equation with respect to $y$ is obtained by interchanging $x$ with $y$.
\end{lemma}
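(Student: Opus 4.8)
The plan is to substitute $q^{p(2n+p)}x$ for $x$ directly in the definition of $\theta_{n,p}$, normalize every theta factor back to an argument occurring in the original sum by means of the quasi-periodicity identity \ref{equation:1.8} (used in the form $j(q^{km}w;q^m)=(-1)^kq^{-m\binom{k}{2}}w^{-k}j(w;q^m)$), and then collect the accumulated powers of $q$, $(-x)$, $(-y)$ and the signs, checking that they reduce to the single scalar $q^{n(np+\binom{p+1}{2})}(-x)^n(-y)^{-(n+p)}$. Writing $N:=2n+p$ and tracking $x\mapsto q^{pN}x$ on the summand indexed by $(r^*,s^*)$: the monomial $(-x)^{r-(n-1)/2}$ acquires a factor $q^{pN(r-(n-1)/2)}$; in $j(-q^{np(s-r)}x^n/y^n;q^{np^2})$ the argument is multiplied by $q^{npN}$; in $j(q^{pN(r+s)+p(n+p)}x^py^p;q^{p^2N})$ it is multiplied by exactly one period $q^{p^2N}$; and in the denominator the factor $j(q^{pNr+p(n+p)/2}(-y)^{n+p}/(-x)^n;q^{p^2N})$ turns into the same theta with $r$ replaced by $r-n$, while $j(q^{pNs+p(n+p)/2}(-x)^{n+p}/(-y)^n;q^{p^2N})$ turns into the same theta with $s$ replaced by $s+n+p$.

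Since shifting a residue modulo $p$ by a fixed integer is a bijection, I would then reindex the outer double sum by $r^*\mapsto(r^*-n)\bmod p$ and $s^*\mapsto(s^*+n)\bmod p$. After this reindexing, each of the four theta factors of the transformed summand agrees with the corresponding theta factor of the original summand up to an integral power of its modulus: the $\pm n$-shifts in $r$ and $s$ cancel in the combination $r+s$ governing the $x^py^p$-theta, while in the $x^n/y^n$-theta the index $s-r$ changes by $2n$, which together with the $q^{npN}=q^{2n^2p+np^2}$ already picked up is again an integral multiple of the modulus $np^2$. Identity \ref{equation:1.8} then converts each of these period shifts into explicit prefactors. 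The remaining bookkeeping is to add up (i) the four prefactors produced by \ref{equation:1.8}, (ii) the factor $q^{pN(r-(n-1)/2)}$ from the monomial, and (iii) the change in the quadratic exponent $n\binom{r-(n-1)/2}{2}+(n+p)(r-(n-1)/2)(s+(n+1)/2)+n\binom{s+(n+1)/2}{2}$ under $r\mapsto r-n$, $s\mapsto s+n$, and to verify that all $(r^*,s^*)$-dependence cancels, leaving precisely $q^{n(np+\binom{p+1}{2})}(-x)^n(-y)^{-(n+p)}$; as a consistency check one notes that $n\binom{n+p+1}{2}-n\binom{n+1}{2}=n(np+\binom{p+1}{2})$, exactly the exponent appearing in the functional equation of Proposition \ref{proposition:Genfg-functional}. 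Finally, the functional equation in $y$ follows from the same computation with the roles of $x$ and $y$ (equivalently $r^*$ and $s^*$) interchanged, since the definition of $\theta_{n,p}$ is symmetric under that exchange after applications of \ref{equation:1.8}.

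The content is entirely computational; there is no conceptual difficulty. The one point requiring genuine care is that the prefactors produced in steps (i)--(iii) individually depend on $(r^*,s^*)$ — through the number of periods stripped off when bringing a reindexed theta argument into standard form, and through the cross term and the linear terms that appear when $r\mapsto r-n$, $s\mapsto s+n$ is substituted into the quadratic exponent — and the main work of the proof is to see that these index-dependent contributions exactly cancel among the various factors, so that what is extracted from the sum really is independent of the summation indices. Organizing the exponent arithmetic by separating the part linear in $r,s$, the part quadratic in $r,s$, and the constant part is what makes this manageable.
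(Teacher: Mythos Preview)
Your plan is correct and matches what the paper intends: the paper records no proof beyond the single word ``straightforward,'' and the intended argument is precisely the repeated use of the quasi-periodicity identity~(\ref{equation:1.8}) together with a reindexing of the double sum, which is what you outline.

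A couple of clarifying remarks that may make your write-up cleaner. First, the cleanest way to organise the computation is to observe that the effect of $x\mapsto q^{pN}x$ on the four theta factors is \emph{identical} to the effect of the integer shift $(r,s)\mapsto(r-n,\,s+n+p)$ on them (you noted this for the two denominator factors; it holds for the two numerator factors as well, since $s-r\mapsto s-r+2n+p=s-r+N$ and $r+s\mapsto r+s+p$ account exactly for the argument shifts $q^{npN}$ and $q^{p^2N}$). Hence the entire summand satisfies
\[
A(r,s,q^{pN}x)=q^{\,pN(r-(n-1)/2)+Q(r,s)-Q(r-n,\,s+n+p)}(-x)^n(-y)^{-(n+p)}A(r-n,s+n+p,x),
\]
and a short quadratic-form computation shows the $q$-exponent collapses to the index-free constant $n\bigl(np+\binom{p+1}{2}\bigr)$. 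Second, the reindexing step is then justified by the separate fact that $A(r,s,x)$ is genuinely $p$-periodic in each of $r$ and $s$; this periodicity is itself a consequence of~(\ref{equation:1.8}) and is what absorbs the wrap-around contributions you flagged. With these two observations in hand there is nothing left to check, and the slight imprecision in your sketch (writing the $s$-shift as $+n$ rather than $+n+p$, though the two agree modulo $p$) is immaterial.
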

\begin{lemma}  \label{lemma:thetamasterFnp-residue}Fix a generic $y\in \mathbb{C}^*$.  The function $\theta_{n,p}(x,y,q)$ is meromorphic for $x\ne0$ and has poles at points $x_0$, where $x_0$ satisfies at least one of the following conditions:
\begin{align}
\textup{I}.& \ \ \ q^{sp(2n+p)+p(n+p)/2}{(-x_0)^{n+p}}{(-y)^{-n}}=q^{\ell p^2(2n+p)},\label{equation:Fnp-poletypeI}\\
\textup{II}.&  \ \ \ q^{rp(2n+p)+p(n+p)/2}{(-y)^{n+p}}{(-x_0)^{-n}}=q^{\ell p^2(2n+p)}.\label{equation:Fnp-poletypeII}
\end{align}
If $x_0$ satisfies either \textup{I} or \textup{II} exclusively, then it is a simple pole with respective residue
\begin{align*}
\textup{I}.&\ \ \ \frac{-x_0\cdot y^{s-\ell p +(n+1)/2}}{ (n+p)\cdot x_0^{s-\ell p +(n+1)/2}\cdot q^{p(s-\ell p +(n+1)/2)^2}}\cdot j(q^{p(s-\ell p +(n+1)/2)}x_0;q^n),\\
\textup{II}.&\ \ \ \frac{x_0^{r-\ell p+(n+1)/2+1}}{n\cdot y^{r-\ell p+(n+1)/2}\cdot q^{p(r-\ell p+(n+1)/2)^2}}
\cdot j(q^{p(r-\ell p+(n+1)/2)}y;q^{n}).
\end{align*}
\end{lemma}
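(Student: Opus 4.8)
\emph{Proof strategy.} For fixed generic $y$ and $q$ each of the $p^{2}$ summands of $\theta_{n,p}(x,y,q)$ is a single-valued ratio of finite products of theta functions $j(\,\cdot\,;q^{m})$ in the variable $x$ (note that $r-(n-1)/2$, $n$ and $n+p$ are integers, so the monomials $(-x)^{r-(n-1)/2}$, $(-x)^{n}$, $(-x)^{n+p}$ cause no branching); since each such $j$ is analytic for $x\neq 0$, each summand, and hence $\theta_{n,p}$ itself, is meromorphic for $x\neq 0$, and a pole can occur only where one of the two denominator theta factors of some summand vanishes. As $j(w;q^{m})=0$ exactly when $w$ is an integral power of $q^{m}$, the vanishing of $j(q^{p(2n+p)r+p(n+p)/2}(-y)^{n+p}/(-x)^{n};q^{p^{2}(2n+p)})$ is precisely condition \textup{II} of (\ref{equation:Fnp-poletypeII}), and the vanishing of $j(q^{p(2n+p)s+p(n+p)/2}(-x)^{n+p}/(-y)^{n};q^{p^{2}(2n+p)})$ is precisely condition \textup{I} of (\ref{equation:Fnp-poletypeI}). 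This proves the first assertion.

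Now suppose $x_{0}$ satisfies \textup{II} exclusively, with associated index $r$ (equivalently $r^{*}$) and integer $\ell$. The first denominator factor depends on $r$ but not on $s$, and since $0<|q|<1$ and $y$ is generic the data $(r^{*},\ell)$ are uniquely determined by $x_{0}$; hence the only denominator factors vanishing at $x_{0}$ are the $p$ occurrences of that factor in the summands with $s^{*}=0,\dots,p-1$ (no occurrence of the second factor vanishes, as $x_{0}$ is not of type \textup{I}). Since $x\mapsto x^{n}$ is locally biholomorphic near $x_{0}\neq 0$, each of these is a simple zero, so $x_{0}$ is a simple pole and $\operatorname{Res}_{x=x_{0}}\theta_{n,p}$ is the sum over $s^{*}=0,\dots,p-1$ of the residues of those $p$ summands. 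For each of them I would obtain the residue of the reciprocal of the vanishing factor by rewriting it as $j(\beta x^{n};q^{p^{2}(2n+p)})$ via $j(w;q^{m})=j(q^{m}/w;q^{m})$ (identity (\ref{equation:1.7})) and invoking Proposition \ref{proposition:H1Thm1.3}; the factor $J_{p^{2}(2n+p)}^{3}$ it produces cancels the one in the numerator of $\theta_{n,p}$, which explains why no theta constants survive in the final formula. The remaining numerator divided by the nonvanishing second denominator factor is then evaluated at $x=x_{0}$.

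The decisive step is to feed the constraint (\ref{equation:Fnp-poletypeII}) back into the two numerator theta factors: it expresses $x_{0}^{n}$, hence $x_{0}^{n}/y^{n}$ and $(-x_{0})^{n+p}/(-y)^{n}$, as an explicit power of $q$ times a power of $y$ (respectively times $x_{0}^{p}y^{p}$). After this substitution the quotient $j(q^{p(2n+p)(r+s)+p(n+p)}x_{0}^{p}y^{p};q^{p^{2}(2n+p)})\big/ j(q^{p(2n+p)s+p(n+p)/2}(-x_{0})^{n+p}/(-y)^{n};q^{p^{2}(2n+p)})$ takes the form $j(q^{\ell p^{2}(2n+p)}w;q^{p^{2}(2n+p)})/j(w;q^{p^{2}(2n+p)})$ for a monomial $w$, which collapses to a single monomial in $q,x_{0},y$ by (\ref{equation:1.8}). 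What remains is $\sum_{s^{*}=0}^{p-1}$ of a term of the exact shape $(-1)^{s^{*}}z^{s^{*}}q^{n\binom{s^{*}}{2}}\,j\big((-1)^{p+1}q^{n(\binom{p}{2}+ps^{*})}z^{p};q^{np^{2}}\big)$, namely the right-hand side of the splitting identity (\ref{equation:jsplit}) with $m$ replaced by $p$ and $q$ by $q^{n}$; it therefore equals $j(z;q^{n})$ with $z=q^{p(r-\ell p+(n+1)/2)}y$. Recollecting the $s^{*}$-independent factors and simplifying the accumulated powers of $q$ gives the stated residue in case \textup{II}.

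Case \textup{I} is handled identically after interchanging the two denominator factors and the two summation indices, which amounts to the substitution $x\leftrightarrow y$; alternatively, one may compute one representative residue in each $\ell$-orbit and propagate it using the functional equation of Lemma \ref{lemma:Fnp-thetafunctional}. The only real obstacle is computational: carrying the various $q$-exponents---all of them linear or quadratic in $s^{*}$---through the substitution of the pole constraint, matching them exactly against the normalization in (\ref{equation:jsplit}), and reconciling the power-of-$q$ bookkeeping in the prefactors so that the final answer assumes the compact closed form claimed.
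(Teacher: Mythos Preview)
Your proposal is correct and follows essentially the same route as the paper's proof: apply Proposition~\ref{proposition:H1Thm1.3} (after using (\ref{equation:1.7}) in case~\textup{II}) to extract the residue of the reciprocal of the vanishing denominator factor, substitute the pole constraint (\ref{equation:Fnp-poletypeI}) or (\ref{equation:Fnp-poletypeII}) into the remaining numerator theta functions so that one of them matches the surviving denominator up to a $q$-shift handled by (\ref{equation:1.8}), and then recognize the remaining sum over the free index as an instance of (\ref{equation:jsplit}) with $m=p$ and base $q^{n}$. The paper carries out both cases explicitly rather than invoking the $x\leftrightarrow y$ symmetry or the functional equation of Lemma~\ref{lemma:Fnp-thetafunctional}, but the substance is identical.
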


\begin{proof}[Proof of Lemma \ref{lemma:thetamasterFnp-residue}]

We prove the residue for poles of type \textup{I}.  Using Proposition \ref{proposition:H1Thm1.3},
\begin{align}
&\lim_{x\rightarrow x_0}(x-x_0)\theta_{n,p}(x,y,q)\label{equation:thetaresidue-FnpII}\\
&=\sum_{r^*=0}^{p-1}q^{n\binom{r-(n-1)/2}{2}+(n+p)\big (r-(n-1)/2\big )\big (s+(n+1)/2\big )+n\binom{s+(n+1)/2}{2}}{(-x_0)^{r-(n-1)/2}}{ (-y)^{s+(n+1)/2}} \notag\\
&\ \ \ \ \cdot \frac{j\big (-q^{np(s-r)}x_0^n/y^n;q^{np^2}\big )j\big (q^{p(2n+p)(r+s)+p(n+p)}x_0^py^p;q^{p^2(2n+p)}\big )}{j\big ((-1)^pq^{p(2n+p)r+p(n+p)/2}y^{n+p}/x_0^n;q^{p^2(2n+p)}\big )}\cdot \frac{(-1)^{\ell+1}q^{p^2(2n+p)\binom{\ell}{2}}x_0}{ (n+p)}.\notag
\end{align}
We rewrite two theta functions of (\ref{equation:thetaresidue-FnpII}).  Using (\ref{equation:Fnp-poletypeI}) to substitute for $x_0^n/y^n$ and then using (\ref{equation:1.8}),
\begin{align*}
j(-q^{np(s-r)}x_0^n/y^n;q^{np^2})&=j(-q^{np^2-np(s-r)}y^n/x_0^n;q^{np^2})\\
&=j(-q^{np^2(1-2\ell)+np(r+s)+sp^2+p(n+p)/2-\ell p^3}(-x_0)^p;q^{np^2})\\
&=-q^{-np^2\binom{1-2\ell}{2}}\big ( (-1)^{p+1}q^{np(r+s)+sp^2+p(n+p)/2-\ell p^3}x_0^p\big )^{2\ell-1} \\
&\ \ \ \ \ \cdot j(-q^{np(r+s)+sp^2+p(n+p)/2-\ell p^3}(-x_0)^p;q^{np^2})\\
&=-q^{-np^2\binom{1-2\ell}{2}}\big ( (-1)^{p+1}q^{np(r+s)+sp^2+p(n+p)/2-\ell p^3}x_0^p\big )^{2\ell-1}\\
& \ \ \ \ \cdot  j(-q^{n\binom{p}{2}+np(r+s+1-p)}(-q^{p(s-\ell p+(n+1)/2)}x_0)^p;q^{np^2}).
\end{align*}
Using (\ref{equation:Fnp-poletypeI}) to substitute for $x_0^p$ and then using (\ref{equation:1.8}) yields
\begin{align*}
j(q^{p(2n+p)(r+s)+p(n+p)}x_0^py^p;q^{p^2(2n+p)})&=j((-1)^p q^{p(2n+p)r+p(n+p)/2+\ell p^2(2n+p)}y^{n+p}/x_0^n;q^{p^2(2n+p)})\\
&=(-1)^{\ell}q^{-p^2(2n+p)\binom{\ell}{2}}\big ( (-1)^pq^{p(2n+p)r+p(n+p)/2}y^{n+p}/x_0^n\big )^{-\ell}\\
&\ \ \ \ \ \ \ \ \ \ \cdot j((-1)^p q^{p(2n+p)r+p(n+p)/2}y^{n+p}/x_0^n;q^{p^2(2n+p)}).
\end{align*}
In the above line, there is a $y^n/x_0^n$ outside of the theta function.  We use (\ref{equation:Fnp-poletypeI}) to substitute for this $y^n/x_0^n$.  Assembling the pieces and collecting terms, we obtain
\begin{align*}
\frac{-x_0\cdot y^{s-\ell p +(n+1)/2}}{ (n+p)x_0^{s-\ell p +(n+1)/2}q^{p(s-\ell p +(n+1)/2)^2}}&\sum_{r^*=0}^{p-1}\Big \{ q^{n\binom{s+r+1-p}{2}}\\
\cdot (-q^{p(s-\ell p +(n+1)/2)}x_0)^{s+r+1-p}& 
j\big (-q^{n\binom{p}{2}+np(r+s+1-p)}(-q^{p(s-\ell p+(n+1)/2)}x_0)^p;q^{np^2}\big )\Big \}.
\end{align*}
The result follows from (\ref{equation:jsplit}).

We prove the residue for poles of type \textup{II}.  Using identity (\ref{equation:1.7}) and Proposition \ref{proposition:H1Thm1.3},
\begin{align}
&\lim_{x\rightarrow x_0}(x-x_0)\theta_{n,p}(x,y,q)
=\sum_{s^*=0}^{p-1}q^{n\binom{r-(n-1)/2}{2}+(n+p)\big (r-(n-1)/2\big )\big (s+(n+1)/2\big )+n\binom{s+(n+1)/2}{2}} \label{equation:thetaresidue-FnpI}\\
& \ \ \
 \cdot \frac{(-x_0)^{r-(n-1)/2}(-y)^{s+(n+1)/2}J_{p^2(2n+p)}^3
 j(q^{p(2n+p)(r+s)+p(n+p)}x_0^py^p;q^{p^2(2n+p)})}{j((-1)^pq^{p(2n+p)s+p(n+p)/2}x_0^{n+p}/y^n;q^{p^2(2n+p)})}\notag\\
 & \ \ \ \cdot j(-q^{np(s-r)}x_0^n/y^n;q^{np^2})\frac{(-1)^{p+1}q^{-p(2n+p)r-p(n+p)/2}x_0^n} {y^{n+p}}\cdot \frac{(-1)^{\ell+1}q^{p^2(2n+p)\binom{-\ell}{2}}x_0}{nJ_{p^2(2n+p)}^3}.\notag
\end{align}
We rewrite two of the theta functions of (\ref{equation:thetaresidue-FnpI}).  Using (\ref{equation:Fnp-poletypeII}) to substitute for $x_0^n/y^n$ and using (\ref{equation:1.8}),
\begin{align*}
j(-q^{np(s-r)}x_0^n/y^n;q^{np^2})&=j(-q^{np(s+r)+rp^2+p(n+p)/2-2\ell np^2-\ell p^3}(-y)^p;q^{np^2})\\
&=(-1)^{2\ell}q^{-np^2\binom{-2\ell}{2}}(-q^{np(s+r)+rp^2+p(n+p)/2-\ell p^3}(-y)^p)^{2\ell}\\
&\ \ \ \ \ \ \ \ \ \  \cdot j(-q^{np(s+r)+rp^2+p(n+p)/2-\ell p^3}(-y)^p;q^{np^2})\\
&=(-1)^{2\ell}q^{-np^2\binom{-2\ell}{2}}(-q^{np(s+r)+rp^2+p(n+p)/2-\ell p^3}(-y)^p)^{2\ell}\\
&\ \ \ \ \ \ \ \ \ \  \cdot j(-q^{n\binom{p}{2}+np(s+r+1-p)}(-q^{rp-\ell p^2+p(n+1)/2}y)^p;q^{np^2}).
\end{align*}
Using (\ref{equation:Fnp-poletypeII}) to substitute for $y^p$ and then using (\ref{equation:1.8}),
\begin{align*}
j&(q^{p(2n+p)(r+s)+p(n+p)}x_0^py^p;q^{p^2(2n+p)})\\
&=j(q^{p(2n+p)(r+s)+p(n+p)}x_0^p(-1)^pq^{-rp(2n+p)-p(n+p)/2+\ell p^2(2n+p)}x_0^{n}/y^n;q^{p^2(2n+p)})\\
&=j((-1)^pq^{p(2n+p)s+p(n+p)/2+\ell p^2(2n+p)}x_0^{n+p}/y^n;q^{p^2(2n+p)})\\
&=(-1)^{\ell}q^{-p^2(2n+p)\binom{\ell}{2}}((-1)^pq^{p(2n+p)s+p(n+p)/2}x_0^{n+p}/y^n)^{-\ell}\\
&\ \ \ \ \cdot j((-1)^pq^{p(2n+p)s+p(n+p)/2}x_0^{n+p}/y^n;q^{p^2(2n+p)}).
\end{align*}
In the above line, there is an $x_0^n/y^n$ outside of the theta function.  We use (\ref{equation:Fnp-poletypeII}) to substitute for this $x_0^n/y^n$.  Assembling the pieces, collecting terms, and using $t=-\ell p+r+(n+1)/2$,  yields for the residue 
\begin{align*}
\frac{x_0^{-\ell p+r+(n+1)/2+1}}{n\cdot y^{-\ell p+r+(n+1)/2}\cdot q^{p(-\ell p+r+(n+1)/2)^2}}&
\sum_{s^*=0}^{p-1} \Big \{q^{n\binom{s+r+1-p}{2}}\cdot (-q^{p(-\ell p+r+(n+1)/2)}y)^{s+r+1-p}\\
 \cdot j(-q^{n\binom{p}{2}+np(s+r+1-p)}&(-q^{p(-\ell p+r+(n+1)/2)}y)^p;q^{np^2})\Big \}.
\end{align*}
The result follows from (\ref{equation:jsplit}).
\end{proof}

\begin{proposition}  \label{proposition:MT-gen} For $x,y\in\mathbb{C}^*$, $g_{n,n+p,n}(x,y,q,-1,-1)+\frac{1}{\overline{J}_{0,np(2n+p)}}\cdot \theta_{n,p}(x,y,q)$ is analytic.
\end{proposition}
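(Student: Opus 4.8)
**Proof proposal for Proposition 6.13 (that $g_{n,n+p,n}(x,y,q,-1,-1)+\frac{1}{\overline{J}_{0,np(2n+p)}}\theta_{n,p}(x,y,q)$ is analytic in $x$):**

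The plan is to show that each candidate pole of $g_{n,n+p,n}$ is exactly cancelled by a pole of $\frac{1}{\overline{J}_{0,np(2n+p)}}\theta_{n,p}$, by matching residues. First I would fix a generic $y$ and treat the expression as a meromorphic function of $x$. By Proposition 6.11 (the residues of $g_{a,b,c}$, specialized to $a=c=n$, $b=n+p$, so that $b^2-ac=p(2n+p)$), the poles of $g_{n,n+p,n}(x,y,q,-1,-1)$ occur at the points $x_0$ of type I and type II described there, with the stated residues; note that in this specialization the type-I residue involves $\overline{J}_{0,n\cdot p(2n+p)}$ in the denominator and similarly for type II. On the other side, by Lemma 6.12, the poles of $\theta_{n,p}(x,y,q)$ are the points $x_0$ of type I and type II in \eqref{equation:Fnp-poletypeI}--\eqref{equation:Fnp-poletypeII}, again with explicitly computed residues. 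So the first key step is a bookkeeping step: verify that the pole set of $g$ coincides with the pole set of $\theta_{n,p}$ (after accounting for the ranges of the auxiliary indices $t_1,t_2$ versus $s^*,r^*$, and the integer parameters $k$ versus $\ell$), so that no pole of one survives unmatched by the other.

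The second and main step is the residue comparison. At a pole $x_0$ of type II of $g$ (governed by condition II of Proposition 6.11 with $k=0$), the residue is $(-x_0)^{t_2+1}q^{n\binom{t_2}{2}}j(q^{(n+p)t_2}y;q^n)/(n\cdot \overline{J}_{0,n p(2n+p)})$; I would match this against the type-II residue of $\frac{1}{\overline{J}_{0,np(2n+p)}}\theta_{n,p}$ from Lemma 6.12, which (with the identification $t=r-\ell p+(n+1)/2$) is $\frac{1}{\overline{J}_{0,np(2n+p)}}\cdot\frac{x_0^{t+1}}{n\, y^{t}\, q^{p t^2}}j(q^{pt}y;q^n)$. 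Using the defining relation \eqref{equation:Fnp-poletypeII} to rewrite powers of $y/x_0$, and \eqref{equation:1.8} to shift the argument of the theta function, one should find these two residues are negatives of each other, so they cancel in the sum. The symmetric computation handles type-I poles. For poles where $k\neq 0$ (resp. $\ell\neq 0$), rather than recomputing residues directly I would invoke the functional equations: $g_{n,n+p,n}(x,y,q,-1,-1)$ and $\theta_{n,p}(x,y,q)$ both transform under $x\mapsto q^{p(2n+p)}x$ by explicit multipliers (Proposition 6.9 for $g$, specialized; Lemma 6.10 for $\theta_{n,p}$), and one checks these multipliers agree up to the elementary factors coming from $\overline{J}_{0,np(2n+p)}$, so that cancellation at $k=0,\ell=0$ propagates to all integer shifts. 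Finally, there is the possibility of a pole $x_0$ lying on both a type-I and a type-II locus simultaneously (a ``double'' point); here I would argue either that genericity of $y$ excludes it, or that one can handle it as a limiting case of the generic argument, invoking Hartog's theorem (quoted at the end of Section 1) to remove the apparent singularity since the function is already known analytic on a punctured neighborhood.

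The hard part will be the residue bookkeeping in the second step: carefully tracking the many $q$-power exponents and the $(-1)$-signs through the substitutions \eqref{equation:Fnp-poletypeI}--\eqref{equation:Fnp-poletypeII} and the theta-shift identity \eqref{equation:1.8}, and in particular making sure the auxiliary summation index $t$ (or $t_1,t_2$) on the two sides is matched by the correct affine shift so that the theta arguments literally coincide. The functional-equation reduction to the $k=\ell=0$ case is what keeps this finite; without it one would face an infinite family of residue identities. Once all residues cancel, the sum $g_{n,n+p,n}(x,y,q,-1,-1)+\frac{1}{\overline{J}_{0,np(2n+p)}}\theta_{n,p}(x,y,q)$ has no poles in $x\neq 0$, hence is analytic there, which is the claim.
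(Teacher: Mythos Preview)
Your proposal is correct and follows essentially the same route as the paper: match the pole loci of $g_{n,n+p,n}$ from Proposition~\ref{proposition:g-residues} against those of $\theta_{n,p}$ from Lemma~\ref{lemma:thetamasterFnp-residue}, verify the residues cancel (with exactly the identification $t_2=r-\ell p+(n+1)/2$ you wrote for type~II, and the analogous $t_1+kn=s-\ell p+(n+1)/2$ for type~I), reduce to the base case via the functional equations of Proposition~\ref{proposition:Genfg-functional} and Lemma~\ref{lemma:Fnp-thetafunctional}, and then invoke Hartog's Theorem for the remaining intersection points. The only framing difference is that the paper works directly in $\mathbb{C}^*\times\mathbb{C}^*$ (viewing the singular loci as curves whose intersections are isolated points, so Hartog applies cleanly), whereas you first fix a generic $y$; just be sure your Hartog step is genuinely two-variable, since the one-variable analogue is false.
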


\begin{proof}[Proof of Proposition \ref{proposition:MT-gen}]
We want to show that
\begin{equation*}
w(x,y):=g_{n,n+p,n}(x,y,q,-1,-1)+\frac{1}{\overline{J}_{0,np(2n+p)}}\cdot \theta_{n,p}(x,y,q)
\end{equation*}
extends to a function which is analytic for $x,y\in \mathbb{C}^*$.  By the definition of $\theta_{n,p}(x,y,q)$ in Theorem \ref{theorem:masterFnp} and the defining equation (\ref{equation:mdef-2}) of $g_{n,n+p,n}(x,y,q,-1,-1)$, we know that $w(x,y)$ is meromorphic for $x,y\in \mathbb{C}^*$ with the two sources of singularities
\begin{align*}
\textup{I}'.& \ \ q^{n\binom{n+p+1}{2}-n\binom{n+1}{2}-t_1p(2n+p)}(-y)^n(-x)^{-n-p}=q^{knp(2n+p)},\\
\textup{II}'.& \ \ q^{n\binom{n+p+1}{2}-n\binom{n+1}{2}-t_2p(2n+p)}(-x)^n(-y)^{-n-p}=q^{knp(2n+p)},
\end{align*}
where $0\le t_1 \le n-1$, $0\le t_2 \le n-1$ and $k \in \mathbb{Z}$.  The families of singularities \textup{I}$'$ and \textup{II}$'$ are curves in $\mathbb{C}^2$, whose intersections with each other are points.  By considering residues, we will show analyticity of $w(x,y)$ off the points of intersection.  Given the functional equations of Proposition \ref{proposition:Genfg-functional} and Lemma \ref{lemma:Fnp-thetafunctional}, this collection of intersection points reduces to a finite set.  Using Hartog's Theorem, it follows that $w(x,y)$ is analytic at this finite set of intersection points.  Thus $w(x,y)$ is analytic for $x,y\in \mathbb{C}^*.$  

It remains to show that $w(x,y)$ is analytic off the points of intersection, which is equivalent to showing that $w$ has local power series expansions in the variables $x$ and $y$.  The arguments for the variables $x$ and $y$ are the same, so without loss of generality we only demonstrate the local power series expansions in $x$.  Because we are off the points of intersection, if we take an $x_0$ which is a singularity, then $x_0$ satisfies \textup{I}$'$ or \textup{II}$'$ exclusively and is a simple pole. We consider poles of type \textup{I}$'$ and show that the residues sum to zero.  One sees that a pole of type \textup{I} in Proposition \ref{proposition:g-residues} is a pole of type \textup{I} in Lemma \ref{lemma:thetamasterFnp-residue} precisely when $t_1+kn=s-\ell p+(n+1)/2$.  If we take the appropriate residue from Lemma \ref{lemma:thetamasterFnp-residue}, replace $s-\ell p+(n+1)/2$ with $t_1+kn$ and then use (\ref{equation:1.8}), we have 
\begin{align*}
-&x_0\cdot (y/x_0)^{t_1+kn} q^{-p(t_1+kn)^2}\cdot j(q^{p(t_1+kn)}x_0;q^n)/(n+p)\\
&=-x_0\cdot (y/x_0)^{t_1+kn} q^{-p(t_1+kn)^2}\cdot q^{-n\binom{pk}{2}}(-q^{pt_1}x_0)^{-pk}q^{n\binom{t_1}{2}}(-q^{pt_1}x_0)^{t_1}\frac{j(q^{(n+p)t_1}x_0;q^n)}{n+p}\\
&=-x_0\cdot (-y)^{t_1} q^{n\binom{t_1}{2}}q^{np(2n+p)\binom{k}{2}}j(q^{(n+p)t_1}x_0;q^n)/(n+p),
\end{align*}
where the last equality follows from using (\ref{equation:Fnp-poletypeI}) to substitute for $(-y)^n(-x_0)^{-n-p}$ and then simplifying with  $t_1=-kn-\ell p+(n+1)/2$.  We consider poles of type \textup{II}$'$ and show that the residues sum to zero.  Given the functional equations of Proposition \ref{proposition:Genfg-functional} and Lemma \ref{lemma:Fnp-thetafunctional}, it suffices to consider the case $k=0$ for poles of type \textup{II} in Proposition \ref{proposition:g-residues}. One sees that a pole of type \textup{II} in Proposition \ref{proposition:g-residues} is a pole of type \textup{II} in Lemma \ref{lemma:thetamasterFnp-residue} precisely when $t_2=r-\ell p+(n+1)/2$.  We take the appropriate residue from Lemma \ref{lemma:thetamasterFnp-residue}, replace $r-\ell p+(n+1)/2$ with $t_2$, and argue as above.
\end{proof}

\begin{proof}[Proof of Theorem \ref{theorem:masterFnp}] We make the following definition
\begin{equation}
D_{n,p}(x,y,q):=f_{n,n+p,n}(x,y,q)-\Big (g_{n,n+p,n}(x,y,q,-1,-1)+\frac{1}{\overline{J}_{0,np(2n+p)}}\cdot\theta_{n,p}(x,y,q)\Big ).
\end{equation}
From Proposition \ref{proposition:Genfg-functional} and Lemma \ref{lemma:Fnp-thetafunctional}, we have that
\begin{align}
D_{n,p}(q^{p(2n+p)}x,y,q)&=q^{n\binom{n+p+1}{2}-n\binom{n+1}{2}}{(-x)}^n{(-y)^{-(n+p)}}D_{n,p}(x,y,q).\label{equation:Dabc-functional}
\end{align}
We fix $y\ne 0$ and show that $D_{n,p}(x,y,q)=0.$  We have that $f_{n,n+p,n}(x,y,q)$ is analytic for nonzero $x$.  This implies that $D_{n,p}(x,y,q)$ is also, so we can write it as a Laurent series in $x$ valid for all $x\ne 0$
\begin{equation}
D_{n,p}(x,y,q)=\sum_{m}C_mx^m,\label{equation:block}
\end{equation}
where $C_m$ depends on $q,y,n$ and $p$.  Inserting (\ref{equation:block}) into (\ref{equation:Dabc-functional}) yields
\begin{equation}
C_m=(-1)^{n}q^{-p(2n+p)m+n\binom{n+p+1}{2}-n\binom{n+1}{2}}(-y)^{-(n+p)}C_{m-n}.
\end{equation}
We write $m=kn+r$ where $k\in\mathbb{Z}$ and $0\le r \le n-1$.  Induction on $k$ yields
\begin{equation}
C_{kn+r}=(-1)^{kn}q^{-p(2n+p)(kr+n\binom{k+1}{2})+k\big (n\binom{n+p+1}{2}-n\binom{n+1}{2}\big )}(-y)^{-k(n+p)}C_{r}.
\end{equation}
Hence
\begin{equation}
D_{n,p}(x,y,q)=\sum_{r=0}^{n-1}C_rx^r\sum_{k}(-1)^{kn}q^{-p(2n+p)(kr+n\binom{k+1}{2})+k\big (n\binom{n+p+1}{2}-n\binom{n+1}{2}\big )}(-y)^{-k(n+p)}x^{kn}.
\end{equation}
Because $D$ is analytic and because of the $-np(2n+p)k^2/2$ term in the exponent of $q$, we must have that $C_0=C_1=\dots =C_{n-1}=0.$
\end{proof}

\subsection{Proof of Theorem \ref{theorem:main-acdivb}}
We prove technical results analogous to those needed for the proof of Theorem \ref{theorem:masterFnp}.  The following is straightforward.
\begin{lemma} For generic $x,y\in\mathbb{C}^*$
\begin{equation*}
\theta_{a,b,c}(q^{b^2/c-a}x,y,q)=q^{c\binom{b/c+1}{2}-a}{(-x)}{(-y)^{-b/c}}\theta_{a,b,c}(x,y,q).
\end{equation*}
The functional equation with respect to $y$ is obtained by interchanging $x$ with $y$ and then $a$ with $c$.
\end{lemma}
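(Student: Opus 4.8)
The plan is to verify the functional equation directly from the explicit triple-sum definition of $\theta_{a,b,c}(x,y,q)$ in Theorem \ref{theorem:main-acdivb}: substitute $x\mapsto q^{b^2/c-a}x$, push the extra powers of $q$ through each theta factor containing $x$ by means of the quasi-periodicity identity \ref{equation:1.8}, and re-index one of the three summation variables so that the transformed sum is recognized as $q^{c\binom{b/c+1}{2}-a}(-x)(-y)^{-b/c}$ times the original sum. The arithmetic that makes this work is that, writing $N:=b^2/c-a$, one has $N(b/a)=b\big(b^2/(ac)-1\big)$, which is exactly the modulus of the three "inner" theta factors whose arguments contain $x$, while $N(b/a)^2=(b^2/a)\big(b^2/(ac)-1\big)$ is the modulus of the fourth such factor; thus $N(\pm b/a)$ is $\pm$ one full period for the relevant thetas.

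Concretely I would proceed as follows. Replacing $x$ by $q^Nx$ scales the arguments of the theta factors of $x$-degree $b/a$, $1-b/a$, $1$ and $-b/a$ by $q^{Nb/a}$, $q^{N(1-b/a)}$, $q^{N}$ and $q^{-Nb/a}$ respectively, and multiplies the explicit $(-x)^f$ by $q^{Nf}$. Since the summation index $e$ enters the arguments of the $x$-degree-$b/a$, $x$-degree-$(1-b/a)$ and $x$-degree-$1$ factors with coefficient $b(b^2/(ac)-1)$, $N$ and $N$ respectively, shifting $e\mapsto e-1$ cancels the scalings coming from the $x$-degree-$b/a$ and $x$-degree-$1$ factors outright, and turns the $x$-degree-$(1-b/a)$ scaling into the single-period factor $q^{-b(b^2/(ac)-1)}$; this factor and the single-period factor $q^{-Nb/a}$ on the $x$-degree-$(-b/a)$ factor are then removed by two applications of \ref{equation:1.8}. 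Collecting the resulting monomial prefactors — the two from \ref{equation:1.8}, the $q^{Nf}$ from $(-x)^f$, and the shift $q^{-N(e+f)}$ in the exponent $(b^2/c-a)\binom{e+f+1}{2}$ — the $d$-, $e$- and $f$-dependence cancels, and the surviving power of $q$ simplifies via $b^2/c-c\binom{b/c}{2}=c\binom{b/c+1}{2}$ to exactly $q^{c\binom{b/c+1}{2}-a}$, while the surviving powers of $x$ and $y$ combine to $(-x)(-y)^{-b/c}$.

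The step I expect to require the most care is the boundary of the re-indexed $e$-sum: shifting $e\mapsto e-1$ moves the range $0\le e\le b/a-1$ to $-1\le e\le b/a-2$, so I must check that, once each summand is extended to all integers $e$ through \ref{equation:1.8}, the extended summand is periodic in $e$ with period $b/a$ (the monomials picked up by the four $x$-free and $x$-containing theta factors under $e\mapsto e+b/a$, together with the change in $(b^2/c-a)\binom{e+f+1}{2}$, must conspire to a trivial multiplier). Granting this, the endpoint terms agree and the sum over any $b/a$ consecutive values of $e$ is unchanged, which closes the argument. Finally, the functional equation in $y$ follows from the manifest symmetry of the definition of $\theta_{a,b,c}$ under interchanging $x\leftrightarrow y$ together with $a\leftrightarrow c$.
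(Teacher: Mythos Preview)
Your approach is correct and is exactly what the paper has in mind: the paper simply declares this lemma ``straightforward'' without writing out any proof, and the intended argument is precisely the one you describe --- substitute $x\mapsto q^{b^2/c-a}x$, re-index $e\mapsto e-1$, and apply the quasi-periodicity identity \eqref{equation:1.8} to the two theta factors whose arguments end up shifted by one full period $q^{\pm b(b^2/(ac)-1)}$. Your identification of which factors carry $e$ with which coefficient, and hence which shifts cancel outright versus which leave a single-period discrepancy to be absorbed by \eqref{equation:1.8}, is accurate; the periodicity-in-$e$ check you flag for the boundary terms is indeed routine (it is another application of \eqref{equation:1.8} to each $e$-dependent factor, and the monomials do cancel).
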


\begin{lemma}  \label{lemma:thetaacdivb-residue}Fix a generic $y\in \mathbb{C}^*$.  The function $\theta_{a,b,c}(x,y,q)$ is meromorphic for $x\ne0$ and has poles at points $x_0$, where $x_0$ satisfies at least one of the following conditions:
\begin{align}
\textup{I}.& \ \ \ q^{(b^2/c-a)(e+1)-c\binom{b/c}{2}}(-x_0)(-y)^{-b/c}=q^{\ell b(b^2/(ac)-1)},\label{equation:acdivb-poletypeI}\\
\textup{II}.&  \ \ \  q^{(b^2/a-c)(d+1)-a\binom{b/a}{2}}(-x_0)^{-b/a}(-y)=q^{\ell b(b^2/(ac)-1)}.\label{equation:acdivb-poletypeII}
\end{align}
If $x_0$ satisfies either \textup{I} or \textup{II} exclusively, then it is a simple pole with respective residue
\begin{align*}
\textup{I}.& \ \ \ -x_0j(y,q^c)\cdot \overline{J}_{0,b^2/a-c},\\
\textup{II}.&\ \ \ {x_0q^{(b^2/a-c)\binom{\ell b/c-d}{2}}}\cdot j(x_0;q^a)\cdot \overline{J}_{0,b^2/c-a}/\big ({b/a}\big ),
\end{align*}
where in \textup{I} we have only computed the residue at $\ell=e=0$.
\end{lemma}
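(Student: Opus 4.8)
The plan is to follow, mutatis mutandis, the proof of Lemma \ref{lemma:thetamasterFnp-residue}. For fixed generic $y$ the expression $\theta_{a,b,c}(x,y,q)$ is a finite sum of products and quotients of theta functions in $x$; each theta function occurring in a numerator is entire with only a discrete zero set, so the poles in $x\ne 0$ come solely from the two theta functions in the denominator. The factor $j\big(q^{(b^2/c-a)(e+1)-c\binom{b/c}{2}}(-x)(-y)^{-b/c};q^{b(b^2/(ac)-1)}\big)$ vanishes exactly on the locus \eqref{equation:acdivb-poletypeI}, the type I poles, while $j\big(q^{(b^2/a-c)(d+1)-a\binom{b/a}{2}}(-x)^{-b/a}(-y);q^{b(b^2/(ac)-1)}\big)$ vanishes exactly on \eqref{equation:acdivb-poletypeII}, the type II poles; here one repeatedly uses the identity $b(b^2/(ac)-1)=(b/a)(b^2/c-a)=(b/c)(b^2/a-c)$. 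If $x_0$ lies on the type I locus but not the type II locus, then among the terms of the triple sum only those with the single value of $e$ in $0\le e\le b/a-1$ determined by $x_0$ are singular there, and the pole is simple; the type II case is symmetric with $d$ in place of $e$.

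Next I would compute the type I residue at $\ell=e=0$. Viewed as a function of $x$, the offending denominator factor is $j(\beta x;q^m)$ with $m=b(b^2/(ac)-1)$ and $\beta=-q^{(b^2/c-a)-c\binom{b/c}{2}}(-y)^{-b/c}$, so Proposition \ref{proposition:H1Thm1.3} (exponent $1$ on $x$, $k=0$) gives that $1/j(\beta x;q^m)$ contributes a residue $-x_0/J_m^3$. Since $J_m^3=J_{b(b^2/(ac)-1)}^3$ cancels the cube in the numerator of $\theta_{a,b,c}$, the residue of $\theta_{a,b,c}$ at $x_0$ equals $-x_0$ times the remaining part of the $e=0$ summand, evaluated at $x=x_0$ and summed over $0\le d\le b/c-1$ and $0\le f\le b/a-1$. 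I would then substitute the pole relation $(-x_0)=q^{c\binom{b/c}{2}-(b^2/c-a)}(-y)^{b/c}$ into every surviving theta argument and clean up the resulting powers of $q$ and $y$ with \eqref{equation:1.8}; the dependence on the summation indices should reorganize so that the $f$- and $d$-sums collapse, via \eqref{equation:jsplit} and the elementary theta identities of Section \ref{section:prelim}, into $j(y;q^c)\overline{J}_{0,b^2/a-c}$. This yields the stated type I residue. As a partial consistency check, the outcome is precisely $\overline{J}_{0,b^2/a-c}\overline{J}_{0,b^2/c-a}$ times the ($k=0$) type I residue of $h_{a,b,c}(x,y,q,-1,-1)$ recorded in Proposition \ref{proposition:h-residues}, which is exactly the cancellation needed for Theorem \ref{theorem:main-acdivb}.

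For the type II residue the argument is the mirror image, with two extra wrinkles. The singular denominator factor is now $j(\beta'x^{-b/a};q^m)$; I would first use \eqref{equation:1.7} to write $1/j(\beta'x^{-b/a};q^m)=-x^{b/a}/\big(\beta'\,j(x^{b/a}/\beta';q^m)\big)$ and then invoke Proposition \ref{proposition:H1Thm1.3} with exponent $b/a$ on $x$ — this is the origin of the factor $1/(b/a)$ in the claimed residue. Substituting the pole relation, rewritten as $(-x_0)^{b/a}=q^{(b^2/a-c)(d+1-\ell b/c)-a\binom{b/a}{2}}(-y)$, into the remaining theta arguments and simplifying with \eqref{equation:1.8} throws off the factor $q^{(b^2/a-c)\binom{\ell b/c-d}{2}}$, and collapsing the $e$- and $f$-sums via \eqref{equation:jsplit} leaves $j(x_0;q^a)\overline{J}_{0,b^2/c-a}$, giving the asserted residue. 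For type I one could likewise push to general $\ell,e$ from the case $\ell=e=0$ using the functional equation analogous to that of Proposition \ref{proposition:Genfh-functional}; the statement only records $\ell=e=0$.

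The genuinely laborious step, as in the proof of Lemma \ref{lemma:thetamasterFnp-residue}, is the simplification inside the second and third paragraphs above: propagating all the exponents of $q$, $x$, $y$ through the substitution of the pole relation and the repeated applications of \eqref{equation:1.8}, and then recognizing the collapsed single-index sum as a clean theta function. I expect this to be routine but lengthy, and I would organize the computation line by line along the model of \eqref{equation:thetaresidue-FnpII} and \eqref{equation:thetaresidue-FnpI}.
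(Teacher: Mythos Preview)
Your plan is essentially the same as the paper's proof and is correct in outline. One sharpening: for the type I residue, after substituting the pole relation and simplifying with \eqref{equation:1.8}, the surviving double sum over $d$ and $f$ carries a \emph{product} of two theta functions in each summand, so \eqref{equation:jsplit} alone does not collapse it; the paper reindexes by $k=b/c\cdot f-d$ (which hits $b^2/(ac)$ consecutive integers exactly once) and then invokes the two-variable identity \eqref{equation:Thm1.3AH6} to obtain $j(y;q^c)\,\overline{J}_{0,b^2/a-c}$. For type II, your description is accurate: after the analogous substitutions the $e$-sum and the $f$-sum each carry a single theta factor and collapse separately via \eqref{equation:jsplit}, producing $\overline{J}_{0,b^2/c-a}$ and $j(x_0;q^a)$ respectively, with the extra $q^{(b^2/a-c)\binom{\ell b/c-d}{2}}$ emerging from the \eqref{equation:1.8} bookkeeping.
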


\begin{proof}[Proof of Lemma \ref{lemma:thetaacdivb-residue}] We prove the residue for poles of type \textup{I}.  
With $\ell=e=0$ and Proposition \ref{proposition:H1Thm1.3},
\begin{align}
\lim_{x\rightarrow x_0}&(x-x_0)\theta_{a,b,c}(x,y,q)\label{equation:thetaresidue-acdivbI}\\
&=-x_0\sum_{d=0}^{b/c-1}\sum_{f=0}^{b/a-1}
q^{(b^2/a-c)\binom{d+1}{2}+(b^2/c-a)\binom{f+1}{2}+a\binom{f}{2}}(-x_0)^{f}
 j\big (q^{(b^2/a-c)(d+1)+bf}y;q^{b^2/a}\big ) \notag \\
 &\ \ \ \ \ \cdot j\big (q^{b(b^2/(ac)-1)(f+1)-(b^2/a-c)(d+1)+b^2/c\binom{b/a}{2}}(-x_0)^{b/a}y^{-1};q^{(b^2/a)(b^2/(ac)-1)}\big ) \notag\\
& \ \ \ \ \ \cdot \frac{j\big (q^{ (b^2/c-a)+(b^2/a-c)(d+1)-c\binom{b/c}{2}-a\binom{b/a}{2}}(-x_0)^{1-b/a}(-y)^{1-b/c};q^{b(b^2/(ac)-1)}\big )}
{j\big (q^{(b^2/a-c)(d+1)-a\binom{b/a}{2}}(-x_0)^{-b/a}(-y);q^{b(b^2/(ac)-1)}\big )}.\notag
\end{align}
We rewrite three of the theta functions from (\ref{equation:thetaresidue-acdivbI}).  
Collecting terms and using (\ref{equation:1.8}),
\begin{align*}
j\big (&q^{(b^2/a-c)(d+1)+bf}y;q^{b^2/a}\big )=j\big ((q^c)^{(b^2/ac)d+(b^2/ac-1)+b/c\cdot f-d}y;(q^c)^{b^2/ac}\big )\\
&=(-1)^dq^{-b^2/a\binom{d}{2}} (q^{(b^2/a-c)+b\cdot f-dc}y)^{-d}j\big ((q^c)^{(b^2/ac-1)+b/c\cdot f-d}y;(q^c)^{b^2/ac}\big )
\end{align*}
Using (\ref{equation:acdivb-poletypeI}) with $\ell=e=0$ to substitute for $(-x_0)$ yields
\begin{align*}
j\big (&q^{ (b^2/c-a)+(b^2/a-c)(d+1)-c\binom{b/c}{2}-a\binom{b/a}{2}}(-x_0)^{1-b/a}(-y)^{1-b/c};q^{b(b^2/(ac)-1)}\big )\\
&=j\big (q^{(b^2/a-c)(d+1)-a\binom{b/a}{2}}(-x_0)^{-b/a}(-y);q^{b(b^2/(ac)-1)}\big ),
\end{align*}
and
\begin{align*}
&j\big (q^{b(b^2/(ac)-1)(f+1)-(b^2/a-c)(d+1)+b^2/c\binom{b/a}{2}}(-x_0)^{b/a}y^{-1};q^{(b^2/a)(b^2/(ac)-1)}\big )\\
& =j\big (-q^{b(b^2/(ac)-1)f-(b^2/a-c)(d+1)+b^2/c\binom{b/a}{2}+bc/a\binom{b/c}{2}}(-y)^{b^2/ac-1};q^{(b^2/a)(b^2/ac-1)}\big ).
\end{align*}
We again use (\ref{equation:acdivb-poletypeI}) with $\ell=e=0$ to substitute for $(-x_0)$ in the first line of (\ref{equation:thetaresidue-acdivbI}).  Assembling the pieces and then collecting terms shows that the residue (\ref{equation:thetaresidue-acdivbI}) is equal to
\begin{align*}
-x_0\sum_{d=0}^{b/c-1}\sum_{f=0}^{b/a-1}&(q^c)^{\binom{b/c \cdot f-d}{2}}(-y)^{b/c \cdot f-d} j\big ((q^c)^{(b^2/ac-1)+b/c\cdot f-d}y;(q^c)^{b^2/ac}\big ) \\
& \ \ \ \ \cdot j\big (-(q^c)^{\binom{b^2/ac-1}{2}+(b^2/ac-1)(b/c \cdot f-d)}(-y)^{b^2/ac-1};(q^c)^{(b^2/ac)(b^2/ac-1)}\big )\\
&=-x_0j(y,q^c)\cdot \overline{J}_{0,b^2/a-c},
\end{align*}
where the equality follows from (\ref{equation:Thm1.3AH6}).  

We prove the residue for poles of type \textup{II}.   Using (\ref{equation:1.7}), condition (\ref{equation:acdivb-poletypeII}), and then Proposition \ref{proposition:H1Thm1.3},
{\allowdisplaybreaks
\begin{align}
\lim_{x\rightarrow x_0}&(x-x_0)\theta_{a,b,c}(x,y,q)
=\sum_{e=0}^{b/a-1}\sum_{f=0}^{b/a-1}
q^{(b^2/a-c)\binom{d+1}{2}+(b^2/c-a)\binom{e+f+1}{2}+a\binom{f}{2}}(-x_0)^{f}\label{equation:thetaresidue-acdivbII}\\
&\cdot
  j\big (q^{b(b^2/(ac)-1)(e+f+1)-(b^2/a-c)(d+1)+b^2/c\binom{b/a}{2}}(-x_0)^{b/a}y^{-1};q^{(b^2/a)(b^2/(ac)-1)}\big ) \notag\\
& \cdot j\big (q^{(b^2/a-c)(d+1)+bf}y;q^{b^2/a}\big ) {(-1)\cdot q^{-\ell b(b^2/ac-1)}\cdot (-1)^{\ell+1}q^{b(b^2/ac-1)\binom{-\ell }{2}}x_0}\cdot{(b/a)^{-1}}\notag\\
&\cdot\frac{j\big (q^{ (b^2/c-a)(e+1)+(b^2/a-c)(d+1)-c\binom{b/c}{2}-a\binom{b/a}{2}}(-x_0)^{1-b/a}(-y)^{1-b/c};q^{b(b^2/(ac)-1)}\big )}
{j\big (q^{(b^2/c-a)(e+1)-c\binom{b/c}{2}}(-x_0)(-y)^{-b/c};q^{b(b^2/(ac)-1)}\big )}.\notag
\end{align}}
We rewrite three of the theta functions from (\ref{equation:thetaresidue-acdivbII}).  Using (\ref{equation:acdivb-poletypeII}) to substitute for $(-x_0)^{-b/a}(-y)$,
{\allowdisplaybreaks
\begin{align}
j&\big (q^{ (b^2/c-a)(e+1)+(b^2/a-c)(d+1)-c\binom{b/c}{2}-a\binom{b/a}{2}}(-x_0)^{1-b/a}(-y)^{1-b/c};q^{b(b^2/(ac)-1)}\big )\notag\\
&=j\big (q^{ (b^2/c-a)(e+1)-c\binom{b/c}{2}+\ell b(b^2/ac-1)}(-x_0)(-y)^{-b/c};q^{b(b^2/(ac)-1)}\big )\notag\\
&=(-1)^{\ell}q^{-b(b^2/ac-1)\binom{\ell}{2}}(q^{ (b^2/c-a)(e+1)-c\binom{b/c}{2}}(-x_0)(-y)^{-b/c})^{-\ell}\label{equation:cat}\\
& \ \ \ \ \cdot j\big (q^{ (b^2/c-a)(e+1)-c\binom{b/c}{2}}(-x_0)(-y)^{-b/c};q^{b(b^2/(ac)-1)}\big ),\notag
\end{align}}
where the last equality follows from (\ref{equation:1.8}).  Similarly,
\begin{align*}
& j\big (q^{b(b^2/(ac)-1)(e+f+1)-(b^2/a-c)(d+1)+b^2/c\binom{b/a}{2}}(-x_0)^{b/a}y^{-1};q^{(b^2/a)(b^2/(ac)-1)}\big )\\
&=j\big (-q^{b(b^2/(ac)-1)(e+f+1)+(b^2/c-a)\binom{b/a}{2}-\ell b(b^2/ac-1)};q^{(b^2/a)(b^2/(ac)-1)}\big ).
\end{align*}
Using (\ref{equation:acdivb-poletypeII}) to substitute for $y$ yields
\begin{align*}
j\big (q^{(b^2/a-c)(d+1)+bf}y;q^{b^2/a}\big )=j\big (-q^{bf+\ell b(b^2/ac-1)+a\binom{b/a}{2}}(-x_0)^{b/a};q^{b^2/a}\big ).
\end{align*}
In extreme right of (\ref{equation:cat}), there is a $(-y)$ outside of the theta function.  We replace this $(-y)$ with the value given by (\ref{equation:acdivb-poletypeII}).  Assembling the pieces and collecting terms shows that the residue (\ref{equation:thetaresidue-acdivbII}) is
\begin{align*}
\frac{x_0q^{(b^2/a-c)\binom{\ell b/c-d}{2}}}{b/a} &\cdot \sum_{f=0}^{b/a-1}
(-x_0)^{f+\ell (b^2/ac-1)}q^{a\binom{f+\ell (b^2/ac-1)}{2}}\\
&\cdot  j\big (-(q^a)^{\binom{b/a}{2}+b/a\cdot \big (f+\ell (b^2/ac-1)\big )}(-x_0)^{b/a};(q^a)^{b^2/a^2}\big ) \\
& \cdot \sum_{e=0}^{b/a-1}  q^{(b^2/c-a)\binom{e+f+1-\ell}{2}}j\big (-(q^{(b^2/c-a)})^{\binom{b/a}{2}+b/a\cdot \big (e+f+1-\ell )\big )};(q^{(b^2/c-a)})^{(b^2/a^2)}\big )\\
&={x_0q^{(b^2/a-c)\binom{\ell b/c-d}{2}}}\cdot j(x_0;q^a)\cdot \overline{J}_{0,b^2/c-a}/\big ({b/a}\big ),
\end{align*}
where the last line follows from (\ref{equation:jsplit}).
\end{proof}

Arguing as in the proof of Theorem \ref{theorem:masterFnp}, one sees that 
Theorem \ref{theorem:main-acdivb} follows from the proposition:
\begin{proposition}  \label{proposition:MTacdivb-gen} For $x,y\in\mathbb{C}^*$, $h_{a,b,c}(x,y,q)-\frac{1}{\overline{J}_{0,b^2/a-c}\overline{J}_{0,b^2/c-a}}\cdot\theta_{a,b,c}(x,y,q)$ is analytic.
\end{proposition}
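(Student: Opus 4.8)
The plan is to follow, essentially verbatim in structure, the proof of Proposition~\ref{proposition:MT-gen}, with $g_{n,n+p,n}$ replaced by $h_{a,b,c}$, $\theta_{n,p}$ by $\theta_{a,b,c}$, and the auxiliary inputs (Propositions~\ref{proposition:g-residues} and~\ref{proposition:Genfg-functional}, Lemmas~\ref{lemma:Fnp-thetafunctional} and~\ref{lemma:thetamasterFnp-residue}) replaced by their $(a,b,c)$-analogues, namely Proposition~\ref{proposition:h-residues}, Proposition~\ref{proposition:Genfh-functional}, Lemma~\ref{lemma:thetaacdivb-residue}, and the functional equation $\theta_{a,b,c}(q^{b^2/c-a}x,y,q)=q^{c\binom{b/c+1}{2}-a}(-x)(-y)^{-b/c}\theta_{a,b,c}(x,y,q)$ recorded above. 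Write $w(x,y):=h_{a,b,c}(x,y,q,-1,-1)-\frac{1}{\overline{J}_{0,b^2/a-c}\overline{J}_{0,b^2/c-a}}\,\theta_{a,b,c}(x,y,q)$, the quantity whose analyticity is claimed. From the defining formulas for $h_{a,b,c}(x,y,q,-1,-1)$ and $\theta_{a,b,c}(x,y,q)$ in Theorem~\ref{theorem:main-acdivb}, $w$ is meromorphic on $(\mathbb{C}^*)^2$, and its polar locus is contained in the union of the two families of curves given by conditions~I and~II of Proposition~\ref{proposition:h-residues} (equivalently, by conditions~I and~II of Lemma~\ref{lemma:thetaacdivb-residue}). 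These two families meet each other only in isolated points. Fixing $y$ (the case of fixed $x$ being symmetric under $x\leftrightarrow y$, $a\leftrightarrow c$), I would show that off those intersection points $w$ has a local power series expansion in $x$: a prospective pole $x_0$ then lies on exactly one of the curves and is a simple pole, so it suffices to check that the residues of the two terms of $w$ at $x_0$ cancel.

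For the cancellation itself, at a pole $x_0$ of type~I one matches condition~I of Proposition~\ref{proposition:h-residues} (computed at $k=0$) with condition~I of Lemma~\ref{lemma:thetaacdivb-residue} (computed at $\ell=e=0$); there the residue of $h_{a,b,c}(\cdot,y,q,-1,-1)$ is $-x_0 j(y,q^c)/\overline{J}_{0,b^2/c-a}$ and that of $\theta_{a,b,c}(\cdot,y,q)$ is $-x_0 j(y,q^c)\,\overline{J}_{0,b^2/a-c}$, so dividing the latter by $\overline{J}_{0,b^2/a-c}\overline{J}_{0,b^2/c-a}$ reproduces the former and $\mathrm{Res}_{x_0}w=0$. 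At a pole of type~II one matches condition~II of Proposition~\ref{proposition:h-residues} with condition~II of Lemma~\ref{lemma:thetaacdivb-residue}; the pole relation forces the identification $k=\ell b/c-d$, so the $q$-powers $q^{(b^2/a-c)\binom{k}{2}}$ and $q^{(b^2/a-c)\binom{\ell b/c-d}{2}}$ agree, the factor $j(x_0;q^a)$ and the $1/(b/a)$ are common, and again $\overline{J}_{0,b^2/a-c}\overline{J}_{0,b^2/c-a}$ divides out, giving $\mathrm{Res}_{x_0}w=0$. Proposition~\ref{proposition:Genfh-functional} and the displayed functional equation for $\theta_{a,b,c}$ transform $h_{a,b,c}$ and $\overline{J}_{0,b^2/a-c}^{-1}\overline{J}_{0,b^2/c-a}^{-1}\theta_{a,b,c}$ by the \emph{same} automorphy factor, hence propagate this cancellation from the explicitly computed cases ($k=0$ on type~I, $\ell=e=0$ in Lemma~\ref{lemma:thetaacdivb-residue}) to all $k,\ell\in\mathbb{Z}$; only those computed residues are therefore needed. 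The divisibility hypotheses $a\mid b$ and $c\mid b$ enter exactly here, as in Propositions~\ref{proposition:h-residues} and~\ref{proposition:Genfh-functional}.

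The remaining issue is the finitely many points where the two curve families intersect. The same functional equations show that, modulo the shifts $x\mapsto q^{b^2/c-a}x$ and $y\mapsto q^{b^2/a-c}y$, there are only finitely many such intersection points, and at each of them $w$ is holomorphic on a punctured neighbourhood in $\mathbb{C}^2$; Hartog's Theorem (quoted above) then extends $w$ holomorphically across each of them. Hence $w$ is analytic on all of $(\mathbb{C}^*)^2$, which is the assertion of the proposition; combined with the resulting functional equation $D_{a,b,c}(q^{b^2/c-a}x,y,q)=q^{c\binom{b/c+1}{2}-a}(-x)(-y)^{-b/c}D_{a,b,c}(x,y,q)$ for $D_{a,b,c}:=f_{a,b,c}-w$ and the Laurent-coefficient argument used to conclude Theorem~\ref{theorem:masterFnp}, it yields Theorem~\ref{theorem:main-acdivb}. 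I expect the only real obstacle to be the parameter bookkeeping in the second paragraph: verifying that conditions~I and~II of Proposition~\ref{proposition:h-residues} line up precisely with conditions~I and~II of Lemma~\ref{lemma:thetaacdivb-residue}, and tracking the index shifts (e.g.\ $k=\ell b/c-d$) so that the signs and $q$-powers in the two residues match after the common theta factor cancels; everything else is transcription of the proof of Proposition~\ref{proposition:MT-gen}.
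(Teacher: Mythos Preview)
Your proposal is correct and follows essentially the same route as the paper's own proof: reduce to the template of Proposition~\ref{proposition:MT-gen}, match poles of type~I and~II between Proposition~\ref{proposition:h-residues} and Lemma~\ref{lemma:thetaacdivb-residue}, verify residue cancellation at the base cases, propagate via the functional equations of Proposition~\ref{proposition:Genfh-functional} and the $\theta_{a,b,c}$ functional equation, and invoke Hartog's Theorem at the intersection points. The paper records the type~I matching condition as $k=\ell b/a-e$ (reducing to $k=e=\ell=0$) and the type~II matching as $k=\ell b/c-d$, exactly as you have it; your residue computations and the division by $\overline{J}_{0,b^2/a-c}\overline{J}_{0,b^2/c-a}$ are correct.
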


\begin{proof}[Proof of Proposition \ref{proposition:MTacdivb-gen}]One argues as in the proof of Proposition \ref{proposition:MT-gen}.  Here, a pole of type \textup{I} in Proposition \ref{proposition:h-residues} is a pole of type \textup{I} in Lemma \ref{lemma:thetaacdivb-residue} precisely when $k=\ell b/a-e$.  Because of the functional equations of Proposition \ref{proposition:Genfh-functional} and Lemma \ref{lemma:thetaacdivb-residue}, it suffices to consider the case $k=e=\ell=0$.  Here we see that the residues cancel.  We also see that a pole of type \textup{II} in Proposition \ref{proposition:h-residues} is a pole of type \textup{II} in Lemma \ref{lemma:thetaacdivb-residue} precisely when $k=\ell b/c-d$.  We see again that the residues cancel.
\end{proof}


\section{Proofs of the four subtheorems}\label{section:thm-proofs}

The four subtheorems can be obtained from Theorem \ref{theorem:masterFnp} by using Appell-Lerch sum properties such as Theorems \ref{theorem:changing-z-theorem} and \ref{theorem:msplit-general-n} as well as  theta function properties; however, we will prove these four subtheorems directly.  To abbreviate notation, we define
\begin{equation}
M_{n,p}(x,y,q):=g_{n,n+p,n}(x,y,q,y^n/x^n,x^n/y^n).
\end{equation}
Using identity (\ref{equation:mxqz-fnq-z}), it is easy to show that $M_{n,p}(x,y,q)$ satisfies the functional equation of Proposition \ref{proposition:Genfg-functional}.  The following proposition's proof is a straightforward use of (\ref{equation:1.8}).  Here we define $\Theta_{n,1}(x,y,q):=0$.
\begin{lemma} \label{lemma:GenTheta-functional} For generic $x,y\in \mathbb{C}^*$ and $p\in\{1,2,3,4\}$, 
\begin{align*}
\Theta_{n,p}(q^{p(2n+p)}x,y,q)=&q^{n(np+\binom{p+1}{2})}{(-x)^n}{(-y)^{-(n+p)}}\Theta_{n,p}(x,y,q).
\end{align*}
The functional equation with respect to $y$ is obtained by interchanging $x$ with $y$.
\end{lemma}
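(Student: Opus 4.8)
The plan is to verify the identity by direct computation, treating $p=1,2,3,4$ in turn. The case $p=1$ is immediate, since $\Theta_{n,1}(x,y,q):=0$ and both sides vanish. For $p\in\{2,3,4\}$ I would work from the explicit closed forms of $\Theta_{n,p}$ given in Theorems \ref{theorem:genfn2}, \ref{theorem:genfn3}, \ref{theorem:genfn4}. Each such $\Theta_{n,p}$ is, up to a monomial prefactor in $x$, $y$, $q$, a quotient of finite products (and, for $p=4$, sums of products) of theta functions $j(q^{\alpha}x^{u}y^{v};q^{N})$. The key structural observation is that in every one of these theta factors the quantity $p(2n+p)\,u$ is an integer multiple of the period $N$; consequently, replacing $x$ by $q^{p(2n+p)}x$ shifts the argument of $j(q^{\alpha}x^{u}y^{v};q^{N})$ by an integral power of $q^{N}$, and identity (\ref{equation:1.8}) (iterated) pulls out a fully explicit factor $(-1)^{N'}q^{-N\binom{N'}{2}}(q^{\alpha}x^{u}y^{v})^{-N'}$ with $N'=p(2n+p)u/N$, while the monomial prefactor simply contributes the corresponding power of $q^{p(2n+p)}$.

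Having extracted these factors, the remaining step is the bookkeeping: collect all the powers of $q$, of $x$, and of $y$ produced, together with all the signs. One should find that the total exponent of $x$ is $n$, that of $(-y)$ is $-(n+p)$, that of $q$ is $n\binom{n+p+1}{2}-n\binom{n+1}{2}=n\big(np+\binom{p+1}{2}\big)$, and that the accumulated signs $(-1)^{N'}$ multiply to $+1$ --- which is precisely the asserted multiplier. (For $p=2$, for instance, the factor $j(y^{n}/x^{n};q^{4n(n+1)})$ in the denominator already supplies $-q^{4n(n+1)}x^{n}y^{-n}$, and the remaining theta factors together with the prefactor $q^{-(n^{2}-3)/2}x^{-(n-3)/2}y^{(n+1)/2}$ account for the rest.) I would note in passing that this multiplier is the same one appearing in Lemma \ref{lemma:Fnp-thetafunctional} and in Proposition \ref{proposition:Genfg-functional} specialized to $a=c=n$, $b=n+p$ --- since $(n+p)^{2}-n^{2}=p(2n+p)$ --- which is why it has this shape; but for the proof itself a direct check is cleanest, and the functional equation in $y$ then follows from the manifest $x\leftrightarrow y$ symmetry of the defining formulas.

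The main obstacle is sheer volume of computation rather than anything conceptual, and it is concentrated in the case $p=4$: there $\Theta_{n,4}$ is built from the auxiliary expressions $S_{1}$ and $S_{2}$, each of which is a sum of products of several theta functions (with $q$-exponents such as $n+4$, $3n+8$, $6n+16$, $2n+8$), so one must apply (\ref{equation:1.8}) inside each summand and then check that both summands of $S_{1}$, and both of $S_{2}$, as well as the $S_{1}$- and $S_{2}$-parts of $\Theta_{n,4}$, all transform with the common factor $q^{n(np+\binom{p+1}{2})}(-x)^{n}(-y)^{-(n+p)}$. I would also be careful to use that $n$ is odd whenever $p$ is even (so that exponents like $(n-3)/2$ and $(n+1)/2$ occurring in $\Theta_{n,2}$ and $\Theta_{n,4}$ are genuine integers), and that for $p=3$ the exponent $n\binom{n+1}{2}$ and its companions are integers regardless of the residue of $n$ modulo $3$.
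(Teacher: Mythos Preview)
Your proposal is correct and takes essentially the same approach as the paper, which simply states that the lemma ``is a straightforward use of (\ref{equation:1.8})'' and defines $\Theta_{n,1}(x,y,q):=0$. Your write-up is in fact a faithful elaboration of what that one-line remark entails, including the key structural observation that each theta factor $j(q^{\alpha}x^{u}y^{v};q^{N})$ has $p(2n+p)u$ divisible by $N$.
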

\begin{proposition}  \label{proposition:MTsub-gen}For $x,y\in \mathbb{C}^*$, the function $M_{n,p}(x,y,q)-\Theta_{n,p}(x,y,q)$ is analytic.
\end{proposition}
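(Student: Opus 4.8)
The plan is to mirror the proof of Proposition \ref{proposition:MT-gen} almost verbatim, with $g_{n,n+p,n}(x,y,q,-1,-1)$ replaced by $M_{n,p}(x,y,q)=g_{n,n+p,n}(x,y,q,y^n/x^n,x^n/y^n)$ and $\frac{1}{\overline{J}_{0,np(2n+p)}}\theta_{n,p}(x,y,q)$ replaced by $-\Theta_{n,p}(x,y,q)$. First I would observe that by the defining equation (\ref{equation:mdef-2}) and the formulas for $\Theta_{n,p}$ in Theorems \ref{theorem:genfn1}--\ref{theorem:genfn4}, the function $M_{n,p}(x,y,q)-\Theta_{n,p}(x,y,q)$ is meromorphic in $x,y\in\mathbb{C}^*$, with singular loci of exactly the same two types \textup{I}$'$ and \textup{II}$'$ appearing in the proof of Proposition \ref{proposition:MT-gen} (the extra specialization of $z_1,z_0$ changes the Appell-Lerch sums but not the locations of their poles, since the $j(z;q)$ denominator in Definition \ref{definition:mdef} is cancelled by a corresponding $j$ in $m$). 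As there, these loci are curves in $\mathbb{C}^2$ whose mutual intersections are isolated points, and the functional equations (Proposition \ref{proposition:Genfg-functional} for $M_{n,p}$, which holds by (\ref{equation:mxqz-fnq-z}) as noted, together with Lemma \ref{lemma:GenTheta-functional} for $\Theta_{n,p}$) reduce the intersection points to a finite set; Hartog's Theorem then handles that finite set once analyticity off it is established.

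Next I would reduce to showing that the residues cancel along each of the two families of simple poles, separately in the variable $x$ (the argument in $y$ being symmetric). For poles of type \textup{I}$'$, I would combine the residue of $M_{n,p}$ — which, using identity (\ref{equation:mxqz-fnq-z}) to reduce $m(\cdots,z_0=x^n/y^n)$ to a standard form and then Proposition \ref{proposition:mnp2Lerch-residue}, differs from the $z_0=-1$ case only by elementary theta-function factors — with the residue of $\Theta_{n,p}$. For $p=1$ one has $\Theta_{n,1}=0$, and one must check that the type-\textup{I}$'$ residues of $M_{n,1}$ already vanish (this is precisely the content of the $z_0=-1$ computation in Proposition \ref{proposition:MT-gen} after translating $z_0=-1\mapsto z_0=x^n/y^n$ via Theorem \ref{theorem:changing-z-theorem}, whose correction term is a theta function regular there). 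For $p=2,3,4$ one computes the residue of $\Theta_{n,p}$ directly from its explicit product/quotient form using Proposition \ref{proposition:H1Thm1.3} and (\ref{equation:1.8})--(\ref{equation:1.7}), and matches it against the residue of $M_{n,p}$; the matching condition will again be an arithmetic one relating the summation indices $t,k$ of $g_{n,n+p,n}$ to the residue parameters, exactly as the condition $t_1+kn=s-\ell p+(n+1)/2$ in Proposition \ref{proposition:MT-gen}. The same is done for type \textup{II}$'$ poles, where the functional equations let one restrict to $k=0$.

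Finally, having shown $M_{n,p}-\Theta_{n,p}$ has vanishing residue at every type-\textup{I}$'$ and type-\textup{II}$'$ pole, it extends to a holomorphic function on the complement of the finite intersection set, and Hartog's Theorem gives analyticity on all of $(\mathbb{C}^*)^2$, completing the proof. I expect the main obstacle to be the type-\textup{I}$'$ (and type-\textup{II}$'$) residue computations for $p=3$ and $p=4$: the $\Theta_{n,p}$ there is a sum of two products of several theta functions with delicate $q$-exponents and $x,y$-powers, so evaluating its residue via Proposition \ref{proposition:H1Thm1.3} and then simplifying — repeatedly invoking (\ref{equation:1.8}), the quintuple product identity (\ref{equation:H1Thm1.0}), and the Riemann relation (Proposition \ref{proposition:CHcorollary}) — to bring it into the form produced by Proposition \ref{proposition:mnp2Lerch-residue} applied to the appropriate term of $M_{n,p}$ is where the real bookkeeping lies. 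Everything else is a routine transcription of the Proposition \ref{proposition:MT-gen} argument.
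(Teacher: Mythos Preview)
Your proposal contains a genuine gap: you have misidentified the singular loci of $M_{n,p}(x,y,q)$. The claim that ``the $j(z;q)$ denominator in Definition \ref{definition:mdef} is cancelled by a corresponding $j$ in $m$'' is false. When $z_0=x^n/y^n$ and $z_1=y^n/x^n$, the factors $j(y^n/x^n;q^{np(2n+p)})$ and $j(x^n/y^n;q^{np(2n+p)})$ in the denominators of the Appell--Lerch sums contribute an entirely new family of poles along $y^n/x^n=q^{knp(2n+p)}$ (equivalently $x=\zeta_n y\, q^{kp(2n+p)}$ for $\zeta_n^n=1$). These are the poles labeled \textup{I}$'$ in (\ref{equation:polesoftypeI}) of the paper's proof, and they simply do not occur for $g_{n,n+p,n}(x,y,q,-1,-1)$; the paper's \textup{II}$'$ and \textup{III}$'$ are what correspond to the two families you inherited from Proposition \ref{proposition:MT-gen}. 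So there are \emph{three} families here, not two.

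Handling this extra family is where most of the work lies, and it is not a transcription of Proposition \ref{proposition:MT-gen}. For the poles $x\to\zeta_n y$ the paper computes the residues $R_{n,p}^1(y,q)$ of $M_{n,p}$ (Proposition \ref{proposition:Mnp-residue1}) and $T_{n,p}^1(y,q)$ of $\Theta_{n,p}$ (Lemma \ref{lemma:Tnp1-def}); for $p=1$ one shows $R_{n,1}^1$ itself vanishes by a functional equation plus zero-counting via Proposition \ref{proposition:H1Thm1.7}, while for $p=2,3,4$ one must show the difference $g(y)=R_{n,p}^1-T_{n,p}^1$ is first analytic (by matching residues at \emph{its} poles, Corollaries \ref{corollary:Rn21-residue}--\ref{corollary:Rn41-residue} against Lemmas \ref{lemma:Tn21-residue}--\ref{lemma:Tn41-residue}) and then identically zero (again by functional equation and exhibiting $4p$ zeros where only $2p$ would be allowed). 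The type-\textup{II}$'$ poles likewise require their own residue functions $R_{n,p}^2$, $T_{n,p}^2$ and a secondary Hartog argument, because $T_{n,p}^2(y,q)$ has two overlapping pole families. None of this is ``routine transcription''; it is the content of the four subsections following the proposition.
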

With this proposition in hand, one argues as in the proof of Theorem \ref{theorem:masterFnp} to prove the four subtheorems.  It remains to prove Proposition \ref{proposition:MTsub-gen}.  The proof of Proposition \ref{proposition:MTsub-gen} serves as a guide to the remaining subsections, which contain the referenced corollaries and lemmas as well as their proofs.
\begin{proof}[Proof of Proposition \ref{proposition:MTsub-gen}] We want to show that
\begin{equation*}
w(x,y):=M_{n,p}(x,y,q)-\Theta_{n,p}(x,y,q)
\end{equation*}
extends to a function which is analytic for $x,y\in \mathbb{C}^*$.  By the defining equation (\ref{equation:mdef-2}) and the definition of $\Theta_{n,p}(x,y,q)$ in the four subtheorems, we know that $w(x,y)$ is meromorphic for $x,y\in \mathbb{C}^*$ with the three sources of singularities
\begin{align}
\textup{I}'.&\ \  y^n/x^n=q^{knp(2n+p)}\label{equation:polesoftypeI},\\
\textup{II}'.&\ \ x^p=(-1)^{p+1}q^{knp(2n+p)+n(np+\binom{p+1}{2})-rp(2n+p)}\label{equation:polesoftypeII},\\
\textup{III}'.&\ \  y^p=(-1)^{p+1}q^{knp(2n+p)+n(np+\binom{p+1}{2})-rp(2n+p)}\label{equation:polesoftypeIII},
\end{align}
where  $r, k \in \mathbb{Z}$ and $0\le r \le n-1$.  The families of singularities \textup{I}$'$, \textup{II}$'$ and \textup{III}$'$ are lines in $\mathbb{C}^2$, whose intersections with each other are points.  By considering residues, we will show analyticity of $w(x,y)$ off the points of intersection.  Given the functional equations of Proposition \ref{proposition:Genfg-functional} and Lemma \ref{lemma:GenTheta-functional} satisfied by $M_{n,p}(x,y,q)$ and $\Theta_{n,p}(x,y,q)$, this collection of interesection points reduces to a finite set.  From Hartog's theorem, it follows that $w(x,y)$ is analytic at this finite set of intersection points.   Thus $w(x,y)$ is analytic for $x,y \in \mathbb{C}^*$.  To prove that $w(x,y)$ is analytic off the points of intersection is equivalent to showing that $w$ has local power series expansions in the variables $x$ and $y$.  Because we are off the points of intersection, any singularity will satisfy \textup{I}$'$, \textup{II}$'$, or \textup{III}$'$ exclusively thus making it a simple pole.  It suffices to show that the residues at each simple pole sum to zero.  The arguments for the variables $x$ and $y$ are the same, so without loss of generality we only demonstrate the local power series expansions in $x$.  For this, we will consider the residues for poles of type \textup{I}$'$
\begin{align*}
\lim_{x\rightarrow \zeta_n y}(x-\zeta_n y)M_{n,p}(x,y,q)=&R_{n,p}^1(y,q) {\text{ and }}
\lim_{x\rightarrow \zeta_n y}(x-\zeta_n y)\Theta_{n,p}(x,y,q)=T_{n,p}^1(y,q),
\end{align*}
which are Proposition \ref{proposition:Mnp-residue1} and Lemma \ref{lemma:Tnp1-def} respectively, and the residues for poles of type \textup{II}$'$
\begin{align*}
\lim_{x\rightarrow x_0}(x-x_0)M_{n,p}(x,y,q)=&R_{n,p}^2(y,q) {\text{ and }}\lim_{x\rightarrow x_0}(x-x_0)\Theta_{n,p}(x,y,q)=T_{n,p}^2(y,q),
\end{align*}
which are Proposition \ref{proposition:Mnp-residue2} and Lemma \ref{lemma:Tnp2-def} respectively.  Here $x_0$ satisfies (\ref{equation:polesoftypeII}).

We prove the $p=1$ case. By analysis of residues, we show $M_{n,1}(x,y,q)$ extends to a function analytic for all $x\ne 0.$  We consider poles of type \textup{I}$'$.   Because of (\ref{equation:1.10}), the poles of $R_{n,1}^1(y,q)$ (see Definition \ref{definition:defRnp1}) are seen to be removable;  hence, $R_{n,1}^1(y,q)$ extends to a function $f(y)$ analytic for all $y\ne 0$.  By Proposition \ref{proposition:Rnp-functionaleqn}, we have that
$f(q^{2n+1}y)=y^{-2}\zeta_{n}^{-1}q^{n}f(y)$.  By Proposition \ref{proposition:H1Thm1.7}, either $f(y)$ has exactly $2$ zeros in the annulus $|q^{2n+1}|<|y|\le 1$ or $f(y)=0$ for all $y\ne 0.$  But by Proposition \ref{proposition:Rnp-zeros}, there are at least four such zeros.  We consider poles of type \textup{II}$'$.  Because of the functional equation of Proposition \ref{proposition:Genfg-functional} satisfied by $M_{n,1}(x,y,q)$, we can assume $k=0$ in (\ref{equation:polesoftypeII}).  Using  Proposition \ref{proposition:mnp2Lerch-residue}, this pole is removable:
\begin{align*}
\lim_{x\rightarrow q^{n^2+n}}(x-q^{n^2+n})&j(x;q^n)m\Big (q^{n^2+n}\frac{y^n}{x^{n+1}},q^{n(2n+1)},\frac{x^n}{y^n}\Big )=\lim_{x\rightarrow q^{n^2+n}}\frac{j(x;q^n)x^{n+1}y^{-n}}{j(x^n/y^n;q^{n(2n+1)})}=0.
\end{align*}

We prove the $p=2$ (resp. $p=3,4$) case.   Throughout, we let $0\le \{\alpha\} <1$ denote the fractional part of $\alpha,$ and note that $(n,p)=1.$  We consider poles of type \textup{I}$'$.  Given the functional equations of Proposition \ref{proposition:Genfg-functional} and Lemma \ref{lemma:GenTheta-functional} satisfied by $M_{n,p}(x,y,q)$ and $\Theta_{n,p}(x,y,q)$ respectively, we only need to be concerned with $x=\zeta_n y$ in (\ref{equation:polesoftypeI}), where $\zeta_n$ is an $n$-th root of unity.  Poles where $\zeta_n=1$ are removable.  We define 
\begin{equation*}
g(y):=R_{n,p}^1(y,q)-T_{n,p}^1(y,q).
\end{equation*}
We first show $g(y)$ is analytic for all $y\ne 0$.  We see that $g(y)$ is meromorphic for $y\ne 0$ with at most simple poles at the points found in Corollary \ref{corollary:Rn21-residue} (resp. \ref{corollary:Rn31-residue}, \ref{corollary:Rn41-residue}) and Lemma \ref{lemma:Tn21-residue} (resp. \ref{lemma:Tn31-residue}, \ref{lemma:Tn41-residue}).  There is an easily derived correspondance between the poles.  We note that $k,\ell\in \mathbb{Z}$ and define $\ell^*:=\ell-\{(n-1)/2\}$.  If $y_0$ is a simple pole of $R_{n,p}^1(y,q)$, i.e.
\begin{equation*}
y_0^p=(-1)^{p+1}\zeta_n^{-p}q^{knp(2n+p)-n(np+\binom{p}{2})-rp(2n+p)} \ ({\text{or }}y_0^p=(-1)^{p+1}q^{knp(2n+p)-n(np+\binom{p}{2})-rp(2n+p)}),
\end{equation*}
then $y_0$ is a simple pole of $T_{n,p}^1(y,q)$, i.e.
\begin{equation*}
y_0^p=(-1)^{p+1}\zeta_n^{-p}q^{\ell^* p(2n+p)-p(n+p)/2} \ ({\text{or }}y_0^p=(-1)^{p+1}q^{\ell^* p(2n+p)-p(n+p)/2}),
\end{equation*}
precisely when $\ell=kn-r-(n-1)/2+\{(n-1)/2\}.$  Because $(n,p)=1$, the two families which do and do not involve $\zeta_n$ have no overlap.  If we take the residue of such a $y_0$ from Corollary \ref{corollary:Rn21-residue} (resp. \ref{corollary:Rn31-residue}, \ref{corollary:Rn41-residue}), and substitute this value for $\ell$ into the corresponding residue from Lemma \ref{lemma:Tn21-residue} (resp. \ref{lemma:Tn31-residue}, \ref{lemma:Tn41-residue}), we find that the sum of the two residues is zero.  Hence $g(y)$ is analytic for all $y\ne 0$.  From Lemma \ref{lemma:Tnp1-functional} and Proposition \ref{proposition:Rnp-functionaleqn}, we have
\begin{equation*}
g(q^{p(2n+p)}y)=y^{-2p}\zeta_n^{-p}q^{np-(p-1)p(2n+p)}g(y).
\end{equation*}
By Proposition \ref{proposition:H1Thm1.7}, either $g(y)$ has exactly $2p$ zeros in the annulus $|q^{p(2n+p)}|<|y|\le 1$ or $g(y)=0$ for all $y\ne 0.$  However, by Proposition \ref{proposition:Rnp-zeros} and Lemma \ref{lemma:Tnp1-zeros}, there are least $4p$ such zeros, e.g. $g(\pm \zeta_n ^{-1/2} q^{((2k+1)n+pk)/2)})=0$ for all $k\in \mathbb{Z}$, $0\le k\le 2p-1$.  

We consider poles of type \textup{II}$'$.  We define
\begin{equation*}
g(y):=R_{n,p}^2(y,q)-T_{n,p}^2(y,q).
\end{equation*}
Our goal is to show that $g(y)$ is analytic for $y\ne 0$.  Although $R_{n,p}^2(y,q)$ only has a single family of poles (see Proposition \ref{proposition:Rnp2-residue}), all of which are simple, it turns out that $T_{n,p}^2(y,q)$ has two families of poles (see Lemma \ref{lemma:Tn22-residue} (resp. \ref{lemma:Tn32-residue}, \ref{lemma:Tn42-residue})) which may overlap and lead to poles which are not simple.  To get around these intersection points from the two families of singularities of $T_{n,p}^2(y,q)$, we replace the fixed $x_0$ with the variable $x$ and define $f(x,y):=R_{n,p}^2(x,y,q)-T_{n,p}^2(x,y,q)$.  We see $f(x,y)$ is a meromorphic function for $x,y\in \mathbb{C}^*$ with the two sources of singularities 
\begin{align}
\textup{I}.&\ \  y^n=x^nq^{tnp(2n+p)},\label{equation:Rnp2-poleI}\\
\textup{II}.&\ \ y^p=(-1)^{p+1}q^{t^*p(2n+p)-p(n+p)/2},\label{equation:Rnp2-poleII}
\end{align}
where $t \in \mathbb{Z}$ and $t^*:=t-\{(n-1)/2\}$.  The singularities \textup{I} and \textup{II} are lines in $\mathbb{C}^2$, whose intersections are points.  If a singularity satisfies \textup{I} or \textup{II} exclusively, then it is a simple pole.  By considering residues, we will show analyticity of $f(x,y)$ off these intersection points.  Using the functional equations from Proposition \ref{proposition:Rnp2-functionaleqn} and Lemma \ref{lemma:Tnp2-functional} (and easily obtained analogous functional equations for $x$),  this collection of intersection points reduces to a finite set.   From Hartog's theorem, it follows that $f(x,y)$ is analytic at this finite set of points.  We focus on poles of type \textup{I}.  Here the poles are at the points $y_0$ found in Proposition \ref{proposition:Rnp2-residue} and Lemma \ref{lemma:Tn22-residue} (resp. \ref{lemma:Tn32-residue}, \ref{lemma:Tn42-residue}).  We again note that $k,\ell\in \mathbb{Z}$ and define $\ell^*:=\ell-\{(n-1)/2\}$.  We note that $x_0$ is a simple pole of $M_{n,p}(x,y,q)$, i.e.
\begin{equation*}
x_0^p=(-1)^{p+1}q^{knp(2n+p)+n(np+\binom{p+1}{2})-rp(2n+p)},
\end{equation*}
if and only if $x_0$ is a simple pole of $\Theta_{n,p}(x,y,q)$, i.e.
\begin{equation*}
x_0^p=(-1)^{p+1}q^{\ell^* p(2n+p)-p(n+p)/2},
\end{equation*}
where $\ell=kn-r+(n+1)/2+\{(n-1)/2\}.$  If we take the residue of such a $y_0$ from Corollary \ref{corollary:Rnp2-residue} and substitute this value for $\ell$ into the corresponding residue from Lemma \ref{lemma:Tn22-residue} (resp. \ref{lemma:Tn32-residue}, \ref{lemma:Tn42-residue}), we find that the sum of the two residues is zero.  Given the functional equations of Lemma \ref{lemma:Tnp2-functional} and Proposition \ref{proposition:Rnp2-functionaleqn}, we only need to do this for $t=0$ in (\ref{equation:Rnp2-poleI}).  From Lemma \ref{lemma:Tn22-residue} (resp. \ref{lemma:Tn32-residue}, \ref{lemma:Tn42-residue}), poles of type \textup{II} are removable.  Thus $f(x,y)$ is analytic for all $x,y\ne0$; as a consequence, $g(y)=f(x_0,y)$ is analytic for all $y\ne 0.$  From Lemma \ref{lemma:Tnp2-functional} and Proposition \ref{proposition:Rnp2-functionaleqn}, we have
\begin{equation*}
g(q^{p(2n+p)}y)=q^{n(np+\binom{p+1}{2})}{(-y)^n}{(-x_0)^{-(n+p)}}g(y).
\end{equation*}
Arguing as in the proof of Theorem {\ref{theorem:masterFnp}} shows that $g(y)=0$ for all $y\ne 0.$
\end{proof}

\subsection{Functional equations, zeros, and poles for $R_{n,p}^1(y,q)$}

\begin{definition} \label{definition:defRnp1} Let $n$ and $p$ be positive integers with $(n,p)=1$, $y\in \mathbb{C}^*$ generic, and $\zeta_n$ an $n$-th root of unity.  Then
{\allowdisplaybreaks \begin{align*}
R_{n,p}^1(y,q):=\frac{\zeta_n y}{n}\Big \{\sum_{r=0}^{n-1} &\frac{\zeta_n^{-r}j(q^{pr}\zeta_n y;q^n)}{q^{pr^2}j(-q^{n(np+\binom{p}{2})+rp(2n+p)}(-\zeta_ny)^p;q^{np(2n+p)})}\\
&-\sum_{r=0}^{n-1}\frac{\zeta_n^rj(q^{pr}y;q^n)}{q^{pr^2}j(-q^{n(np+\binom{p}{2})+rp(2n+p)}(-y)^p;q^{np(2n+p)})}\Big \}.
\end{align*}}
\end{definition}

\begin{proposition} \label{proposition:Mnp-residue1} Let $y$ and $\zeta_n$ be as in Definition \ref{definition:defRnp1}. Then
\begin{align*}
\lim_{x\rightarrow \zeta_n y} &(x-\zeta_n y)M_{n,p}(x,y,q)=R_{n,p}^1(y,q).
\end{align*}
\end{proposition}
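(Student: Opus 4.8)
The plan is to read the residue off directly from the defining expression $(\ref{equation:mdef-2})$ for $g_{n,n+p,n}$. With $a=c=n$ and $b=n+p$ one has $b^{2}-ac=p(2n+p)$ and $n\binom{b+1}{2}-n\binom{c+1}{2}=n(np+\binom{p+1}{2})$, so $M_{n,p}(x,y,q)=g_{n,n+p,n}(x,y,q,y^{n}/x^{n},x^{n}/y^{n})$ is the sum over $t=0,\dots,n-1$ of
\[
(-y)^{t}q^{n\binom{t}{2}}j(q^{(n+p)t}x;q^{n})\,m\Big(-q^{n(np+\binom{p+1}{2})-tp(2n+p)}\tfrac{(-y)^{n}}{(-x)^{n+p}},\,q^{np(2n+p)},\,\tfrac{x^{n}}{y^{n}}\Big)
\]
plus the analogous sum with $x$ and $y$ interchanged and third argument $y^{n}/x^{n}$. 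As $x\to\zeta_{n}y$ both $x^{n}/y^{n}$ and $y^{n}/x^{n}$ tend to $\zeta_{n}^{n}=1$, and by Definition $\ref{definition:mdef}$ the sum $m(w,Q,z)=j(z;Q)^{-1}\sum_{r}(-1)^{r}Q^{\binom{r}{2}}z^{r}/(1-Q^{r-1}wz)$ is singular at $z=1$ only through the factor $j(z;Q)$; for generic $y$ the inner summands stay finite there, so each term of $M_{n,p}$ has a simple pole at $x=\zeta_{n}y$ coming solely from $1/j(z;q^{np(2n+p)})$.

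The first step is to record two elementary limits; write $Q=q^{np(2n+p)}$. Letting $z\to1$ in $j(z;Q)\,m(w,Q,z)$ and shifting the summation index, identity $(\ref{equation:Reciprocal})$ with $q$ replaced by $Q$ gives
\[
\lim_{z\to1}j(z;Q)\,m(w,Q,z)=-\frac{J_{np(2n+p)}^{3}}{j(w;Q)}.
\]
Next, the factorization $x^{n}-y^{n}=\prod_{\omega^{n}=1}(x-\omega y)$ gives $\lim_{x\to\zeta_{n}y}(x^{n}/y^{n}-1)/(x-\zeta_{n}y)=n\zeta_{n}^{-1}/y$ and minus this for $y^{n}/x^{n}$; since $(\ref{equation:Reciprocal})$ also shows $j(z;Q)\sim(1-z)J_{np(2n+p)}^{3}$ near $z=1$, we obtain
\[
\lim_{x\to\zeta_{n}y}\frac{x-\zeta_{n}y}{j(x^{n}/y^{n};Q)}=\frac{-\zeta_{n}y}{n\,J_{np(2n+p)}^{3}},\qquad \lim_{x\to\zeta_{n}y}\frac{x-\zeta_{n}y}{j(y^{n}/x^{n};Q)}=\frac{\zeta_{n}y}{n\,J_{np(2n+p)}^{3}}.
\]

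Multiplying these limits into each summand, the powers of $J_{np(2n+p)}^{3}$ cancel; evaluating the $m$-arguments at $x=\zeta_{n}y$ via $(-y)^{n}/(-\zeta_{n}y)^{n+p}=(-\zeta_{n}y)^{-p}$ (and its $\zeta_{n}=1$ analogue in the interchanged sum), one finds that $\lim_{x\to\zeta_{n}y}(x-\zeta_{n}y)M_{n,p}(x,y,q)$ equals $\tfrac{\zeta_{n}y}{n}$ times a difference of two explicit finite sums over $t$. The final step is to bring these into the shape of Definition $\ref{definition:defRnp1}$: applying $(\ref{equation:1.8})$ with base $q^{n}$ turns the prefactor $(-y)^{t}q^{n\binom{t}{2}}j(q^{(n+p)t}\zeta_{n}y;q^{n})$ (resp.\ $(-\zeta_{n}y)^{t}q^{n\binom{t}{2}}j(q^{(n+p)t}y;q^{n})$) into $\zeta_{n}^{-t}q^{-pt^{2}}j(q^{pt}\zeta_{n}y;q^{n})$ (resp.\ $\zeta_{n}^{t}q^{-pt^{2}}j(q^{pt}y;q^{n})$), while $(\ref{equation:1.7})$ followed by $(\ref{equation:1.8})$ with base $Q$ rewrites the denominator $j\big(-q^{n(np+\binom{p+1}{2})-tp(2n+p)}(-\zeta_{n}y)^{-p};Q\big)$ as $j\big(-q^{n(np+\binom{p}{2})+tp(2n+p)}(-\zeta_{n}y)^{p};Q\big)$, and likewise with $\zeta_{n}y$ replaced by $y$. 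The arithmetic fact making this rewrite work is that the two $q$-exponents $n(np+\binom{p+1}{2})-tp(2n+p)$ and $n(np+\binom{p}{2})+tp(2n+p)$ sum to exactly $np(2n+p)$, i.e.\ differ by a single power of $Q$. After these substitutions the two sums are precisely the ones in Definition $\ref{definition:defRnp1}$, so the residue equals $R_{n,p}^{1}(y,q)$. The analytic input (simple poles plus the two limits above) is routine; the step I expect to require real care is this last piece of bookkeeping — matching every power of $q$, each factor $\zeta_{n}$, the monomials in $y$, and all signs so that the limit lands exactly on $R_{n,p}^{1}(y,q)$.
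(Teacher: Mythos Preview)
Your proof is correct and follows essentially the same route as the paper. The paper's proof is a one-line citation of Proposition~\ref{proposition:H1Thm1.3} (the residue of $1/j(\beta z^{b};q^{m})$), identity~(\ref{equation:Reciprocal}), and identity~(\ref{equation:1.7}); you unpack exactly these ingredients, computing the residue of $1/j(z;Q)$ at $z=1$ by hand via $j(z;Q)\sim(1-z)J_{Q}^{3}$ rather than quoting Proposition~\ref{proposition:H1Thm1.3}, and you make the use of~(\ref{equation:1.8}) (needed to convert $j(q^{(n+p)t}y;q^{n})$ to $j(q^{pt}y;q^{n})$) explicit where the paper leaves it implicit. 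Your key observation that $n\big(np+\binom{p+1}{2}\big)+n\big(np+\binom{p}{2}\big)=np(2n+p)$, so that a single application of $j(w;Q)=j(Q/w;Q)$ from~(\ref{equation:1.7}) lands the denominator directly on the form in Definition~\ref{definition:defRnp1}, is precisely the content of the paper's reference to~(\ref{equation:1.7}).
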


\begin{proof}  This is a straightforward application of Proposition \ref{proposition:H1Thm1.3} followed by identities (\ref{equation:Reciprocal}) and (\ref{equation:1.7}).
\end{proof}

\begin{proposition}  \label{proposition:Rnp-functionaleqn}Let $y$ and $\zeta_n$ be as in Definition \ref{definition:defRnp1}. Then
\begin{align*}
R_{n,p}^1(q^{p(2n+p)}y,q)=y^{-2p}\zeta_n^{-p}q^{np-(p-1)p(2n+p)}R_{n,p}^1(y,q).
\end{align*}
\end{proposition}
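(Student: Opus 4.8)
The plan is to substitute $y\mapsto q^{p(2n+p)}y$ directly into Definition~\ref{definition:defRnp1} and transform each term using identity~(\ref{equation:1.8}) alone. Write $R_{n,p}^1(y,q)=\tfrac{\zeta_n y}{n}\big(\Sigma(y)-\widetilde\Sigma(y)\big)$, where $\Sigma(y)=\sum_{r=0}^{n-1}s_r(y)$ and $\widetilde\Sigma(y)=\sum_{r=0}^{n-1}\widetilde s_r(y)$ are the two sums in the definition; here
\[
s_r(y)=\frac{\zeta_n^{-r}\,j(q^{pr}\zeta_n y;q^n)}{q^{pr^2}\,j\big(-q^{\,n(np+\binom p2)+rp(2n+p)}(-\zeta_n y)^p;q^{np(2n+p)}\big)},
\]
and $\widetilde s_r(y)$ is obtained from $s_r(y)$ by replacing $\zeta_n^{-r}$ by $\zeta_n^{r}$ and $\zeta_n y$ by $y$ throughout.

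First I would record that the summands are $n$-periodic in $r$: replacing $r$ by $r+n$ and applying (\ref{equation:1.8}) with base $q^n$ to the numerator theta (shift by $p$) and with base $q^{np(2n+p)}$ to the denominator theta (shift by $1$), a short calculation shows that the stray powers of $-1$, $\zeta_n$, $y$, and $q$ cancel, so $s_{r+n}(y)=s_r(y)$ and $\widetilde s_{r+n}(y)=\widetilde s_r(y)$. Next, under $y\mapsto q^{p(2n+p)}y$ the denominator theta of $s_r$ becomes exactly that of $s_{r+p}$, since $(-q^{p(2n+p)}\zeta_n y)^p$ turns the exponent term $rp(2n+p)$ into $(r+p)p(2n+p)$; in the numerator the argument $q^{pr+p(2n+p)}\zeta_n y$ differs from $q^{p(r+p)}\zeta_n y$ by $q^{p(2n+p)-p^2}=(q^n)^{2p}$, so (\ref{equation:1.8}) with base $q^n$ rewrites it as $q^{-n\binom{2p}2-2p(pr+p^2)}(\zeta_n y)^{-2p}j(q^{p(r+p)}\zeta_n y;q^n)$. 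Collecting the prefactors $q^{pr^2}$ versus $q^{p(r+p)^2}$ and $\zeta_n^{-r}$ versus $\zeta_n^{-(r+p)}$, the $r$-dependence cancels and one gets
\[
s_r\big(q^{p(2n+p)}y\big)=\zeta_n^{-p}\,q^{-n\binom{2p}2-p^3}\,y^{-2p}\,s_{r+p}(y),
\]
and identically for $\widetilde s_r$.

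Then, because $\{p,p+1,\dots,p+n-1\}$ is a complete residue system mod $n$, the $n$-periodicity gives $\sum_{r=0}^{n-1}s_{r+p}(y)=\Sigma(y)$ and likewise for $\widetilde\Sigma$; hence $\Sigma\big(q^{p(2n+p)}y\big)-\widetilde\Sigma\big(q^{p(2n+p)}y\big)=\zeta_n^{-p}q^{-n\binom{2p}2-p^3}y^{-2p}\big(\Sigma(y)-\widetilde\Sigma(y)\big)$. Multiplying by the prefactor $\tfrac{\zeta_n q^{p(2n+p)}y}{n}=q^{p(2n+p)}\cdot\tfrac{\zeta_n y}{n}$ yields $R_{n,p}^1\big(q^{p(2n+p)}y,q\big)=\zeta_n^{-p}y^{-2p}q^{\,p(2n+p)-n\binom{2p}2-p^3}R_{n,p}^1(y,q)$, and since $\binom{2p}2=p(2p-1)$ the exponent simplifies to $p(2n+p)-np(2p-1)-p^3=2np+p^2-2np^2+np-p^3=np-(p-1)p(2n+p)$, as claimed.

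I expect the only real obstacle to be the bookkeeping in the two uses of (\ref{equation:1.8}): one must check that the auxiliary factors of $(-1)^p$, $\zeta_n^p$, and $y^{\pm p}$ produced in the numerator and denominator really cancel (they do, since $(-\zeta_n y)^p=(-1)^p\zeta_n^p y^p$ appears on both sides with matching signs and $\zeta_n^n=1$). An alternative route is via residues: by Proposition~\ref{proposition:Mnp-residue1}, $R_{n,p}^1(y,q)=\lim_{x\to\zeta_n y}(x-\zeta_n y)M_{n,p}(x,y,q)$, so applying the functional equation of Proposition~\ref{proposition:Genfg-functional} (which $M_{n,p}$ satisfies, in both $x$ and $y$) and discarding the entire correction terms, which contribute nothing to these residues, relates $R_{n,p}^1(q^{p(2n+p)}y,q)$ to $R_{n,p}^1(y,q)$ after the same elementary simplification.
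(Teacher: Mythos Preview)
Your proof is correct and follows essentially the same approach as the paper: direct substitution into Definition~\ref{definition:defRnp1}, two applications of identity~(\ref{equation:1.8}) to numerator and denominator thetas, and a reindexing argument to handle the shift $r\mapsto r+p$. The only organizational difference is that you establish the $n$-periodicity $s_{r+n}=s_r$ as a standalone lemma first and then invoke it, whereas the paper substitutes $r\to r-p$ up front and handles the range adjustment $\sum_{r=n}^{n+p-1}=\sum_{r=0}^{p-1}$ explicitly at the end; the computations are the same.
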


\begin{proof}  We have
{\allowdisplaybreaks
\begin{align*}
R_{n,p}^1&(q^{p(2n+p)}y,q)=
\frac{\zeta_nq^{p(2n+p)}y}{n}\Big \{\sum_{r=0}^{n-1} \frac{\zeta_n^{-r}j(q^{n2p+p(r+p)}\zeta_n y;q^n)}{q^{pr^2}j(-q^{n(np+\binom{p}{2})+(r+p)p(2n+p)}(-\zeta_n y)^p;q^{np(2n+p)})} \\
&\ \  -\sum_{r=0}^{n-1}\frac{\zeta_n^{r}j(q^{n2p+p(r+p)}y;q^n)}{q^{pr^2}j(-q^{n(np+\binom{p}{2})+(r+p)p(2n+p)}(-y)^p;q^{np(2n+p)})}\Big \}\\
&=\frac{\zeta_nq^{p(2n+p)}y}{n}\Big \{\sum_{r=p}^{n+p-1} \frac{\zeta_n^{-r+p}j(q^{n2p+pr}\zeta_n y;q^n)}{q^{p(r-p)^2}j(-q^{n(np+\binom{p}{2})+rp(2n+p)}(-\zeta_n y)^p;q^{np(2n+p)})} \\
&\ \ -\sum_{r=p}^{n+p-1}\frac{\zeta_n^{r-p}j(q^{n2p+pr}y;q^n)}{q^{p(r-p)^2}j(-q^{n(np+\binom{p}{2})+rp(2n+p)}(-y)^p;q^{np(2n+p)})}\Big \}\\
&=y^{-2p}\zeta_n^{-p}q^{np-(p-1)p(2n+p)}\frac{\zeta_n y}{n}\Big \{\sum_{r=p}^{n+p-1} \frac{\zeta_n^{-r}j(q^{pr}\zeta_n y;q^n)}{q^{pr^2}j(-q^{n(np+\binom{p}{2})+rp(2n+p)}(-\zeta_n y)^p;q^{np(2n+p)})} \\
&\ \ -\sum_{r=p}^{n+p-1}\frac{\zeta_n^{r}j(q^{pr}y;q^n)}{q^{pr^2}j(-q^{n(np+\binom{p}{2})+rp(2n+p)}(-y)^p;q^{np(2n+p)})}\Big \},
\end{align*}}%
where in the second equality we substituted $r\rightarrow r-p$ in both sums, and in the third equality we applied (\ref{equation:1.8}) and simplified.  With the convention of Proposition \ref{proposition:f-functionaleqn} in mind, we have for generic a function $f$ 
\begin{equation*}
\sum_{r=p}^{n+p-1}f(r)=\sum_{r=p}^{n-1}f(r)+\sum_{r=n}^{n+p-1}f(r).
\end{equation*}
Hence to finish the proof, it suffices to show that for each of the sums within the braces that
\begin{equation*}
\sum_{r=n}^{n+p-1}f(r)=\sum_{r=0}^{p-1}f(r+n)=\sum_{r=0}^{p-1}f(r).
\end{equation*}
Without loss of generality, we consider the first sum within the braces.  After we have shifted $r$,
\begin{align*}
\sum_{r=0}^{p-1} &\frac{\zeta_n^{-r}q^{-p(r+n)^2}j(q^{np+pr}\zeta_n y;q^n)}{j(-q^{np(2n+p)+n(np+\binom{p}{2})+rp(2n+p)}(-\zeta_n y)^p;q^{np(2n+p)})}\\
&=\sum_{r=0}^{p-1} \frac{\zeta_n^{-r}q^{-pr^2}j(q^{pr}\zeta_n y;q^n)}{j(-q^{n(np+\binom{p}{2})+rp(2n+p)}(-\zeta_n y)^p;q^{np(2n+p)})}
\end{align*}
where the equality follows from (\ref{equation:1.8}) and simplifying. 
\end{proof}

\begin{proposition} \label{proposition:Rnp-zeros} Let $k$ be an integer and $\zeta_n$ as in Definition \ref{definition:defRnp1}, then
\begin{equation*}
R_{n,p}^1(\pm \zeta_n^{-1/2}q^{((2k+1)n+kp)/2},q)=0.
\end{equation*}
\end{proposition}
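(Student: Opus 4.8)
The plan is to fix $k\in\mathbb{Z}$, set $y_0:=\pm\zeta_n^{-1/2}q^{((2k+1)n+kp)/2}$, and put $m:=(2k+1)n+kp=n+k(2n+p)$, so that by construction $\zeta_n y_0^2=q^m$, that is $\zeta_n y_0=q^m/y_0$. Since $R_{n,p}^1(y,q)$ is, up to the nonzero scalar $\zeta_n y/n$, the difference of the two sums appearing in Definition \ref{definition:defRnp1}, it suffices to show those two sums agree at $y=y_0$. I would prove this by matching them term by term under a bijection of the index set $\{0,1,\dots,n-1\}$.

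The bijection is $r\mapsto r'$, where $r'$ is the unique representative of $-(r+k)$ modulo $n$ in $\{0,\dots,n-1\}$; write $r+r'+k=nj_r$ with $j_r\in\mathbb{Z}$, and abbreviate $c:=n(np+\binom{p}{2})$ and $Q:=np(2n+p)$. In the $r$-th summand of the first sum I would substitute $\zeta_n y_0=q^m/y_0$ throughout, apply $(\ref{equation:1.7})$ (with base $q^n$, resp.\ $q^Q$) to replace the numerator theta function $j(q^{pr}\zeta_n y_0;q^n)$ by $j(q^{-pr-m}y_0;q^n)$ and the denominator theta function by a theta function in $y_0$, and then use $(\ref{equation:1.8})$ (again with base $q^n$, resp.\ $q^Q$) to normalize the arguments to $j(q^{pr'}y_0;q^n)$ and $j(-q^{c+r'p(2n+p)}(-y_0)^p;q^Q)$. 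The exponent bookkeeping on the denominator rests on the elementary identity $Q-2c-mp=-kp(2n+p)$, which one checks directly from the definitions; this is exactly what forces $Q-c-rp(2n+p)-mp\equiv c+r'p(2n+p)\pmod Q$, with quotient $-j_r$.

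After these substitutions the $r$-th summand of the first sum at $y=y_0$ becomes the $r'$-th summand of the second sum at $y=y_0$ multiplied by a scalar prefactor built from powers of $q$, $\zeta_n$, $y_0$, and signs $(-1)^{\bullet}$; it then remains to verify that this prefactor equals $1$. The powers of $y_0$ collapse using $y_0^2=\zeta_n^{-1}q^m$; the apparent discrepancy between the factors $q^{pr^2}$ and $q^{pr'^2}$ is absorbed by the $q$-shifts produced by $(\ref{equation:1.8})$ together with $r+r'+k=nj_r$; and, although $j_r$ and $2k+pj_r$ need not be even, the signs pair off to give $+1$. Summing over $r$, which runs bijectively over the $r'$, shows the two sums coincide at $y=y_0$, so $R_{n,p}^1(y_0,q)=0$.

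I expect the prefactor computation in the last paragraph to be the only real obstacle: three successive applications of theta reflection and $(\ref{equation:1.8})$ accumulate a good deal of clutter in the $q$-exponents and sign factors, and the cancellation has to be organized carefully rather than checked line by line. A more conceptual alternative would be to run the argument back on the residue formula of Proposition \ref{proposition:Mnp-residue1}: at $x=\zeta_n y$ the Appell--Lerch parameters $z_1=y^n/x^n$ and $z_0=x^n/y^n$ of $M_{n,p}$ both degenerate to $1$, and the flip identity $(\ref{equation:mxqz-flip})$ should exchange the two constituent sums, so that the self-dual locus $y^2=\zeta_n^{-1}q^m$ forces the vanishing; but the direct term-by-term check is the most robust route and the one I would write up.
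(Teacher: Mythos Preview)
Your proposal is correct and follows essentially the same route as the paper's proof: at $y=y_0$ one uses $\zeta_n y_0 = q^m/y_0$ together with the theta identities (\ref{equation:1.7}) and (\ref{equation:1.8}) to match the two sums term by term under the reindexing $r\leftrightarrow -(r+k)\bmod n$. The paper applies (\ref{equation:1.7}) to the second sum and then substitutes $r\to n-r-k$, handling the boundary by splitting the shifted summation range, whereas you apply the identities to the first sum and phrase the matching as a bijection on $\{0,\dots,n-1\}$; these are the same argument with the roles of the two sums swapped, and your key exponent identity $Q-2c-mp=-kp(2n+p)$ is exactly what drives the paper's simplification after (\ref{equation:1.8}).
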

\begin{proof}  Without loss of generality we prove the ``$+$'' case.  We have
\begin{align*}
R_{n,p}^1&(\zeta_n^{-1/2}q^{((2k+1)n+kp)/2},q)\\
&=\frac{\zeta_n \zeta_n^{-1/2}q^{((2k+1)n+kp)/2}}{n}\\
&\ \ \ \ \cdot\Big \{\sum_{r=0}^{n-1} \frac{\zeta_n^{-r}j(q^{pr+((2k+1)n+kp)/2}\zeta_n^{1/2};q^n)}{q^{pr^2}j((-1)^{p+1}q^{n(np+\binom{p}{2})+rp(2n+p)+p((2k+1)n+kp)/2}\zeta_n^{p/2};q^{np(2n+p)})} \\
&\ \ \ \ \ \ \ \ \ -\sum_{r=0}^{n-1}\frac{\zeta_n^{r}j(q^{pr+((2k+1)n+kp)/2}\zeta_n^{-1/2};q^n)}{q^{pr^2}j((-1)^{p+1}q^{n(np+\binom{p}{2})+rp(2n+p)+p((2k+1)n+kp)/2}\zeta_n^{-p/2};q^{np(2n+p)})}\Big \}.
\end{align*}
We show that the two sums are equal.  Using (\ref{equation:1.7}), we rewrite the second sum as
\begin{align*}
\sum_{r=0}^{n-1}\frac{\zeta_n^{r}j(q^{n-pr-((2k+1)n+kp)/2}\zeta_n^{1/2};q^n)}{q^{pr^2}j((-1)^{p+1}q^{n(np+\binom{p}{2})+np-rp(2n+p)-p((2k+1)n+kp)/2}\zeta_n^{p/2};q^{np(2n+p)})}.
\end{align*}
Making the substitution $r\rightarrow n-r-k$, the second sum becomes
\begin{align*}
\sum_{r=1-k}^{n-k}&\frac{\zeta_n^{-r-k}j(q^{-n(p+2k)+pr+((2k+1)n+kp)/2}\zeta_n^{1/2};q^n)}{q^{p(n-r-k)^2}j((-1)^{p+1}q^{-np(2n+p)+n(np+\binom{p}{2})+rp(2n+p)+p((2k+1)n+kp)/2}\zeta_n^{p/2};q^{np(2n+p)})}\\
=&\sum_{r=1-k}^{n-k}\frac{\zeta_n^{-r}j(q^{pr+((2k+1)n+kp)/2}\zeta_n^{1/2};q^n)}{q^{pr^2}j((-1)^{p+1}q^{n(np+\binom{p}{2})+rp(2n+p)+p((2k+1)n+kp)/2}\zeta_n^{p/2};q^{np(2n+p)})},
\end{align*}
where the second equality follows from (\ref{equation:1.8}) and simplifying.  Using the summation convention of Proposition \ref{proposition:f-functionaleqn}, we have for a generic function $f$ that
\begin{equation*}
\sum_{r=1-k}^{n-k}f(r)=\sum_{r=0}^{n-k}f(r)+\sum_{r=1-k}^{-1}f(r)=\sum_{r=0}^{n-k}f(r)+\sum_{r=n-k+1}^{n-1}f(r-n).
\end{equation*}
Hence we need to show
\begin{equation*}
\sum_{r=n-k+1}^{n-1}f(r-n)=\sum_{r=n-k+1}^{n-1}f(r).
\end{equation*}
Focusing on the left-hand side,
\begin{align*}
\sum_{r=n-k+1}^{n-1}&\frac{\zeta_n^{-r+n}j(q^{p(r-n)+((2k+1)n+kp)/2}\zeta_n^{1/2};q^n)}{q^{p(r-n)^2}j((-1)^{p+1}q^{n(np+\binom{p}{2})+(r-n)p(2n+p)+p((2k+1)n+kp)/2}\zeta_n^{p/2};q^{np(2n+p)})}\\
=&\sum_{r=n-k+1}^{n-1}\frac{\zeta_n^{-r}j(q^{-np+pr+((2k+1)n+kp)/2}\zeta_n^{1/2},q^n)}{q^{p(r-n)^2}j((-1)^{p+1}q^{-np(2n+p)+n(np+\binom{p}{2})+rp(2n+p)+p((2k+1)n+kp)/2}\zeta_n^{p/2};q^{np(2n+p)})}\\
=&\sum_{r=n-k+1}^{n-1}\frac{\zeta_n^{-r}j(q^{pr+((2k+1)n+kp)/2}\zeta_n^{1/2};q^n)}{q^{pr^2}j((-1)^{p+1}q^{n(np+\binom{p}{2})+rp(2n+p)+p((2k+1)n+kp)/2}\zeta_n^{p/2};q^{np(2n+p)})},
\end{align*}
where the last equality follows from (\ref{equation:1.8}) and simplifying.  
\end{proof}

\begin{proposition}  \label{proposition:Rnp1-residue}  Let $n$, $p$, and $\zeta_n$ be as in Definition \ref{definition:defRnp1} with $\zeta_n\ne 1$, and let $k$ and $r$ be integers with $0\le r\le n-1$.  $R_{n,p}^1(y,q)$ is meromorphic for $y\ne 0$ and has at most simple poles at points $y_0$, where $y_0$ satisfies one of the following two conditions:
\begin{align*}
\textup{I}.& \ y_0^p=(-1)^{p+1}\zeta_n^{-p}q^{knp(2n+p)-n(np+\binom{p}{2})-rp(2n+p)},\\
\textup{II}.& \ y_0^p=(-1)^{p+1}q^{knp(2n+p)-n(np+\binom{p}{2})-rp(2n+p)}.
\end{align*}
The respective residue at such a $y_0$ is given by
\begin{align*}
\textup{I}.& \ \ \ {(-1)^{k+1}\zeta_n^{1-r}y_0^2q^{np(2n+p)\binom{k}{2}}j(q^{pr}\zeta_n y_0;q^n)}/\big ({npq^{pr^2}J_{np(2n+p)}^3}\big ),\\
\textup{II}.& \ \ \ {(-1)^{k}\zeta_n^{1+r}y_0^2q^{np(2n+p)\binom{k}{2}}j(q^{pr}y_0;q^n)}/\big ({npq^{pr^2}J_{np(2n+p)}^3}\big ).
\end{align*}
\end{proposition}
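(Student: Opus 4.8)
The plan is to read the poles and residues straight off Definition \ref{definition:defRnp1} using Proposition \ref{proposition:H1Thm1.3}. First I would note that, apart from the overall factor $\zeta_n y/n$, the function $R_{n,p}^1(y,q)$ is a sum of $2n$ terms, each of the shape $H_r(y)/j\big(\beta_r\,y^p;\,q^{np(2n+p)}\big)$, where $H_r(y)$ is a holomorphic function of $y$ on $\mathbb{C}^*$ (the numerator $\zeta_n^{-r}q^{-pr^2}j(q^{pr}\zeta_n y;q^n)$ for the first sum, and $-\zeta_n^{r}q^{-pr^2}j(q^{pr}y;q^n)$ for the second), while $\beta_r=-(-\zeta_n)^p q^{n(np+\binom{p}{2})+rp(2n+p)}$ for the first sum and $\beta_r=-(-1)^p q^{n(np+\binom{p}{2})+rp(2n+p)}$ for the second. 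By Proposition \ref{proposition:H1Thm1.3} with $b=p$ and $m=np(2n+p)$, each such term is meromorphic on $\mathbb{C}^*$ with at most simple poles, located precisely at the $y_0$ with $y_0^p=q^{k\cdot np(2n+p)}/\beta_r$; unwinding these conditions (and using $-(-1)^p=(-1)^{p+1}$) produces exactly the two families \textup{I} and \textup{II} in the statement.

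The second step is to verify that these $2n$ pole lines are pairwise disjoint, so that at any fixed $y_0$ at most one summand is singular. Within the type-\textup{I} family the exponent of $q$ in $y_0^p$ has the form $(\text{const})+p(2n+p)(kn-r)$, and since $0\le r\le n-1$ the value $kn-r$ determines $r$ modulo $n$, hence $r$, and then $k$; the same argument handles type \textup{II}. A coincidence between a type-\textup{I} line and a type-\textup{II} line would force $\zeta_n^{-p}$ to be a power of $q$, which is impossible when $0<|q|<1$ unless $\zeta_n^{p}=1$; but $(n,p)=1$ together with $\zeta_n\ne 1$ gives $\zeta_n^{p}\ne1$. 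Hence $R_{n,p}^1(y,q)$ is meromorphic on $\mathbb{C}^*$ with at most simple poles, exactly at the listed points.

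Finally, at a type-\textup{I} pole $y_0$ only the $r$-th term of the first sum is singular, so
\[
\operatorname{Res}_{y=y_0}R_{n,p}^1(y,q)=\frac{\zeta_n y_0}{n}\cdot\frac{\zeta_n^{-r}j(q^{pr}\zeta_n y_0;q^n)}{q^{pr^2}}\cdot\operatorname{Res}_{y=y_0}\frac{1}{j(\beta_r y^p;q^{np(2n+p)})},
\]
and Proposition \ref{proposition:H1Thm1.3} evaluates the last residue as $(-1)^{k+1}q^{np(2n+p)\binom{k}{2}}y_0/\big(pJ_{np(2n+p)}^3\big)$; multiplying out yields the stated type-\textup{I} residue. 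For a type-\textup{II} pole only the $r$-th term of the second sum contributes, and the explicit minus sign in front of that sum converts the $(-1)^{k+1}$ supplied by Proposition \ref{proposition:H1Thm1.3} into the $(-1)^{k}$ appearing in the statement. There is no analytic difficulty beyond Proposition \ref{proposition:H1Thm1.3}; the only genuine work is the sign-and-root-of-unity bookkeeping through these substitutions, together with the short disjointness argument of the second paragraph, which I expect to be the most delicate point.
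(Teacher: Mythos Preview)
Your proposal is correct and follows exactly the paper's approach: the paper's proof is the single sentence ``This follows from the definition of $R_{n,p}^1(y,q)$ and Proposition \ref{proposition:H1Thm1.3},'' and you have simply unpacked that sentence, including the disjointness check (via $(n,p)=1$ and $\zeta_n\ne 1$) that justifies why only one summand contributes at each pole. Your bookkeeping for the signs and roots of unity is accurate, and the residue computations match the stated formulas.
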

\begin{proof}  This follows from the definition of $R_{n,p}^1(y,q)$ and Proposition \ref{proposition:H1Thm1.3}.
\end{proof}
\begin{corollary} \label{corollary:Rn21-residue}   Define $v:=kn-r-(n-1)/2$.  For $p=2$, the residues of Proposition \ref{proposition:Rnp1-residue} can be written respectively
\begin{align*}
\textup{I}.& \ \ \ {(-1)^{v-1}\zeta_n^{-1-r}q^{-4(n+1)\binom{v-1}{2}-\tfrac12(n^2-4n-3)}J_{2n,4n}}/\big ( {2nJ_{4n(n+1)}^3}\big ),\\
\textup{II}.& \ \ \ {(-1)^{v}\zeta_n^{1+r}q^{-4(n+1)\binom{v-1}{2}-\tfrac12(n^2-4n-3)}J_{2n,4n}}/\big ({2nJ_{4n(n+1)}^3}\big ).
\end{align*}
\end{corollary}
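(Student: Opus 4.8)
The plan is to specialize Proposition \ref{proposition:Rnp1-residue} to the case $p=2$ and rewrite the resulting residues in the claimed closed form by elementary manipulation of theta functions and exponents of $q$. First I would substitute $p=2$ throughout: the modulus $np(2n+p)$ becomes $2n(2n+2)=4n(n+1)$, so $J_{np(2n+p)}^3$ becomes $J_{4n(n+1)}^3$ and the factor $np$ in the denominator becomes $2n$. The binomial $\binom{p}{2}=\binom{2}{2}=1$, so the exponent $n(np+\binom{p}{2})=n(2n+1)$, and $rp(2n+p)=2r(2n+2)=4r(n+1)$. Thus from Proposition \ref{proposition:Rnp1-residue} the defining relation for $y_0$ in case I becomes $y_0^2=-\zeta_n^{-2}q^{4kn(n+1)-n(2n+1)-4r(n+1)}$, and the residue there is $(-1)^{k+1}\zeta_n^{1-r}y_0^2 q^{4n(n+1)\binom{k}{2}}j(q^{2r}\zeta_ny_0;q^n)/(2n q^{2r^2}J_{4n(n+1)}^3)$.

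The main work is to eliminate $y_0^2$ using the defining relation for the pole, and to collapse the $q$-powers. Setting $v:=kn-r-(n-1)/2$, one checks by a direct exponent computation that $4kn(n+1)-n(2n+1)-4r(n+1) = 4(n+1)(kn-r) - n(2n+1) = 4(n+1)\big(v+(n-1)/2\big) - n(2n+1)$, and after expanding this simplifies so that the power of $q$ carried by $y_0^2$ combines with the remaining explicit powers ($q^{4n(n+1)\binom{k}{2}}$, $q^{-2r^2}$, and whatever comes out of applying (\ref{equation:1.8}) to shift $j(q^{2r}\zeta_n y_0;q^n)$ to $J_{2n,4n}$) to yield the single exponent $-4(n+1)\binom{v-1}{2}-\tfrac12(n^2-4n-3)$. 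Concretely, I would use the functional equation (\ref{equation:1.8}) to write $j(q^{2r}\zeta_n y_0;q^n)$ in terms of $j(\zeta_n y_0;q^n)$ (up to a power of $q$ and of $\zeta_n y_0$), then substitute $y_0^2$ from the pole equation; since $j(\zeta_n y_0;q^n)^2$ or an analogous square is forced by the $y_0^2$ substitution, one recognizes $J_{2n,4n}$ via identity (\ref{equation:1.9}) (the relation $j(-x;q)=J_{1,2}j(x^2;q^2)/j(x;q)$, or more directly, at the pole $\zeta_n^2 y_0^2$ is a specific power of $q$, so $j$ evaluated there is a $J$-constant up to a $q$-power). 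The $\zeta_n$-bookkeeping is straightforward: $\zeta_n^{1-r}$ times whatever power of $\zeta_n$ comes from the shift gives $\zeta_n^{-1-r}$ in case I and $\zeta_n^{1+r}$ in case II, and the sign $(-1)^{k+1}$ turns into $(-1)^{v-1}$ since $v\equiv kn-r$ modulo the relevant parity — here one must be slightly careful because $n$ could be even or odd, but the statement of the corollary implicitly fixes the parity through the appearance of $(n-1)/2$ as (half-)integer, and the claimed sign $(-1)^{v-1}$ vs $(-1)^v$ matches $(-1)^{k+1}$ vs $(-1)^k$ after accounting for the parity of $n-r-(n-1)/2$.

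I would carry this out in two nearly identical blocks, one for each of cases I and II, since they differ only by the presence or absence of $\zeta_n^{-p}=\zeta_n^{-2}$ in the pole equation and correspondingly by a factor of $\zeta_n$ in the residue and an overall sign; handling I in full detail and then remarking that II follows \emph{mutatis mutandis} is the cleanest presentation. The expected main obstacle is purely computational: correctly tracking the accumulated powers of $q$ through the substitution of $y_0^2$ and the theta-shift (\ref{equation:1.8}), and verifying that they telescope to exactly $-4(n+1)\binom{v-1}{2}-\tfrac12(n^2-4n-3)$. A useful sanity check is to confirm the relation $4n(n+1)\binom{k}{2} + (\text{power from } y_0^2 \text{ substitution}) + (\text{power from shift}) - 2r^2 = -4(n+1)\binom{v-1}{2} - \tfrac12(n^2-4n-3)$ at, say, $k=1,r=0$, where $v=n-(n-1)/2=(n+1)/2$, since if it holds there and the dependence on $k$ and $r$ is quadratic on both sides with matching leading terms, it holds identically. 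There is no conceptual difficulty beyond this; the corollary is a bookkeeping consequence of Proposition \ref{proposition:Rnp1-residue} together with (\ref{equation:1.8}) and (\ref{equation:1.9}).
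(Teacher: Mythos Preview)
Your overall plan---specialize Proposition~\ref{proposition:Rnp1-residue} to $p=2$, substitute the pole value of $y_0^2$, and collapse the $q$-powers---is the right shape, but there is a genuine gap in how you propose to evaluate $j(q^{2r}\zeta_n y_0;q^n)$. The pole condition determines only $y_0^2$, not $y_0$; your suggestion to shift with (\ref{equation:1.8}) and then invoke (\ref{equation:1.9}) does not eliminate the dependence on $y_0$, since (\ref{equation:1.9}) expresses $j(-x;q)$ in terms of $j(x^2;q^2)$ \emph{and} $j(x;q)$. No square $j(\zeta_n y_0;q^n)^2$ ever appears in the residue, so that remark does not help either.

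The device the paper uses is (\ref{equation:jsplit}) with $m=2$, namely
\[
j(z;q^n)=j(-q^{n}z^{2};q^{4n})-z\,j(-q^{3n}z^{2};q^{4n}),
\]
applied with $z=q^{2r}\zeta_n y_0$. Both theta functions on the right depend only on $\zeta_n^2 y_0^2$, which is a known power of $q$. Substituting, the second summand has argument $q^{4n(k(n+1)-r-(n-1)/2)}$ and hence vanishes (recall $(n,2)=1$ forces $n$ odd, so the exponent is an integer multiple of $4n$), while the first summand becomes a power of $q$ times $J_{2n,4n}$ via (\ref{equation:1.8}). That is precisely where $J_{2n,4n}$ and the sign $(-1)^{v-1}$ come from; the remaining exponent bookkeeping then collapses to $-4(n+1)\binom{v-1}{2}-\tfrac12(n^2-4n-3)$ as claimed. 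Case~II is handled identically.
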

\begin{proof}  We prove the residue for type \textup{I}; the proof for type \textup{II} is similar.  Using the respective value for $y_0^2$ above and the respective residue from Proposition \ref{proposition:Rnp1-residue}, the residue is
\begin{equation}\label{equation:Rn21I-residue}
{(-1)^k\zeta_{n}^{-1-r}q^{4n(n+1)\binom{k+1}{2}-n(2n+1)-4r(n+1)}j(q^{2r}\zeta_ny_0;q^n)}/\big ({2nq^{2r^2}J_{4n(n+1)}^3}\big ).
\end{equation}
Using (\ref{equation:jsplit}) with $m=2$ yields allows us to expand the $j(q^{2r}\zeta_ny_0,q^n)$ term, to have 
\begin{align}
j(q^{2r}\zeta_ny_0;q^n)&=j(-q^{n+4r}\zeta_n^2y_0^2;q^{4n})-q^{2r}\zeta_ny_0j(-q^{3n+4r}\zeta_n^2y_0^2;q^{4n}).\label{equation:Rn21-split}
\end{align}
We consider the first theta function of the right-hand side of (\ref{equation:Rn21-split}) and substitute for $y_0^2$, to obtain
\begin{align*}
j(-q^{n+4r}\zeta_n^2y_0^2;q^{4n})&=j(q^{4n(k(n+1)-r)-2n^2};q^{4n})
=j(q^{4n(k(n+1)-r-(n+1)/2)}q^{2n};q^{4n})\\
&=(-1)^{(k(n+1)-r-(n+1)/2)}{q^{-4n\binom{k(n+1)-r-(n+1)/2}{2}-2n(k(n+1)-r-(n+1)/2)}}J_{2n,4n},
\end{align*}
where the last equality follows from (\ref{equation:1.8}).  Considering the second theta function of the right-hand side of (\ref{equation:Rn21-split}) and substituting for $y_0^2$, produces
\begin{align*}
j(-q^{3n+4r}\zeta_n^2y_0^2;q^{4n})=j(q^{4n(k(n+1)-r-(n-1)/2))};q^{4n})=0.
\end{align*}
Inserting (\ref{equation:Rn21-split}) into (\ref{equation:Rn21I-residue}) and collecting terms produces the desired result.
\end{proof}

\begin{corollary} \label{corollary:Rn31-residue} For $p=3$, the poles and residues of  Proposition \ref{proposition:Rnp1-residue} can be written respectively 
\begin{align*}
\textup{I}.& \ y_0^3=\zeta_n^{-3}q^{3kn(2n+3)-3n(n+1)-3r(2n+3)} \ (\text{or } y_0=\omega \zeta_n^{-1}q^{kn(2n+3)-n(n+1)-r(2n+3)}),\\
\textup{II}.& \ y_0^3=q^{3kn(2n+3)-3n(n+1)-3r(2n+3)} \ (\text{or } y_0=\omega q^{kn(2n+3)-n(n+1)-r(2n+3)}),
\end{align*}
where $\omega^3=1$ and $\omega\ne 1$.  (The poles where $\omega=1$ are easily seen to be removable.)  Define \\$v:=kn-r-(n-1)/2$.  The respective residue at such a $y_0$ is given by
\begin{align*}
\textup{I}.& \ \ \ {(-1)^{n}\zeta_n^{-1-r}\omega^{1-2v}(1-\omega)q^{-2(2n+3)\binom{v-1}{2}-\tfrac34(n-3)(n+1)}J_{3n}}/\big ({3nJ_{3n(2n+3)}^3}\big ),\\
\textup{II}.& \ \ \ {(-1)^{n+1}\zeta_n^{1+r}\omega^{1-2v}(1-\omega)q^{-2(2n+3)\binom{v-1}{2}-\tfrac34(n-3)(n+1)}J_{3n}}/\big ({3nJ_{3n(2n+3)}^3}\big ).
\end{align*}
\end{corollary}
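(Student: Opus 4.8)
The plan is to run, for $p=3$, the same reduction that proves Corollary \ref{corollary:Rn21-residue}: start from the residues recorded in Proposition \ref{proposition:Rnp1-residue}, split the theta factor $j(q^{pr}\zeta_n y_0;q^n)$ via (\ref{equation:jsplit}) (now with $m=3$ rather than $m=2$), and simplify the resulting pieces with (\ref{equation:1.8}).

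First one would dispose of the poles with $\omega=1$. Writing $M:=kn(2n+3)-n(n+1)-r(2n+3)$, a type-I pole with $\omega=1$ sits at $y_0=\zeta_n^{-1}q^{M}$, and one checks at once that $q^{3r}\zeta_n y_0=q^{M+3r}$ with $M+3r=kn(2n+3)-n(n+1)-2rn$ an integral multiple of $n$; hence $j(q^{3r}\zeta_n y_0;q^n)=0$, so the unique term of $R_{n,3}^1(y,q)$ with a pole at $y_0$ has a vanishing numerator there and the singularity is removable. The type-II case is identical with $\zeta_n$ replaced by $1$.

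Now fix a genuine pole, say of type I with $\omega\neq1$, and set $z:=q^{M+3r}$, so that $q^{3r}\zeta_n y_0=\omega z$ and, by Proposition \ref{proposition:Rnp1-residue}, the residue equals $\tfrac{(-1)^{k+1}\zeta_n^{1-r}y_0^2q^{np(2n+p)\binom{k}{2}}}{npq^{pr^2}J_{np(2n+p)}^3}\,j(\omega z;q^n)$ with $y_0^2=\omega^2\zeta_n^{-2}q^{2M}$. Applying (\ref{equation:jsplit}) with $m=3$ and using $\omega^3=1$ gives
\begin{equation*}
j(\omega z;q^n)=\sum_{t=0}^{2}(-1)^{t}q^{n\binom{t}{2}}\omega^{t}z^{t}\,j\big(q^{3n(t+1)}z^{3};q^{9n}\big).
\end{equation*}
Since $M+3r=n\mu$ is a multiple of $n$ we have $z^{3}=q^{3n\mu}$, so the inner theta function is $j(q^{3n(t+1+\mu)};q^{9n})$. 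Exactly one $t\in\{0,1,2\}$ makes $t+1+\mu\equiv0\pmod3$, and for that $t$ the argument is an integral power of $q^{9n}$, so the term vanishes; for the remaining two values of $t$, reducing modulo $9n$ with (\ref{equation:1.8}) turns the theta function into a power of $q$ times $J_{3n}$, using $j(q^{6n};q^{9n})=j(q^{3n};q^{9n})=J_{3n}$. Summing the two surviving terms, the factors $\pm\omega^{t}$ collapse (via $1+\omega+\omega^2=0$) to $\omega^{1-2v}(1-\omega)$ with $v:=kn-r-(n-1)/2$, and combining this with $y_0^2=\omega^2\zeta_n^{-2}q^{2M}$ and the prefactor above yields the stated type-I residue. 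The type-II residue follows from the same computation with $\zeta_n$ replaced by $1$, which changes $(-1)^{n}$ to $(-1)^{n+1}$ and flips the sign of the exponent of $\zeta_n$.

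I expect the only real obstacle to be the exponent bookkeeping in the last step: one must track the powers of $q$ contributed by $z$, by $y_0^2$, by the prefactor, and by the several applications of (\ref{equation:1.8}), and verify that they coalesce into the single exponent $-2(2n+3)\binom{v-1}{2}-\tfrac34(n-3)(n+1)$ expressed in $v$. This is routine but lengthy, exactly as in the closing paragraph of the proof of Corollary \ref{corollary:Rn21-residue}; no new idea is required.
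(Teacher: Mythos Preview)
Your proposal is correct, but it takes a more circuitous route than the paper's proof. You follow the template of Corollary~\ref{corollary:Rn21-residue} too literally: there, $y_0^2$ (not $y_0$) was known explicitly, so (\ref{equation:jsplit}) with $m=2$ was needed to separate the even and odd powers of $y_0$. For $p=3$, however, the pole condition gives $y_0$ itself as $y_0=\omega\zeta_n^{-1}q^{M}$ (respectively $\omega q^{M}$), so no splitting is required. The paper simply substitutes to get $q^{3r}\zeta_n y_0=\omega q^{n\mu}$ with $\mu=k(2n+3)-(n+1)-2r$, applies (\ref{equation:1.8}) once to obtain
\[
j(q^{3r}\zeta_n y_0;q^n)=(-\omega)^{-\mu}q^{-n\binom{\mu}{2}}j(\omega;q^n),
\]
and uses the product identity $j(\omega;q^n)=(1-\omega)J_{3n}$. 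This avoids your three-term expansion and the case analysis on $t+1+\mu\pmod 3$; the factor $\omega^{1-2v}(1-\omega)$ falls out immediately from $(-\omega)^{-\mu}\cdot\omega^{2}\cdot(1-\omega)$ after inserting $y_0^2$. Your approach yields the same answer, but the paper's is shorter and the exponent bookkeeping you flag as the main obstacle is correspondingly lighter.
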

\begin{proof}  The first part follows from specializing Proposition \ref{proposition:Rnp1-residue} to $p=3$.  We note that when we identify the corresponding pole of $T_{n,3}^1(y,q)$ in Lemma \ref{lemma:Tn31-residue}, the pole must involve the same primitive third root of unity.  We prove the residue for type \textup{I}; the proof for type \textup{II} is similar.  Specializing the residue in Proposition \ref{proposition:Rnp1-residue} to $p=3$ yields
\begin{equation}\label{equation:Rn31I-residue}
{(-1)^{k+1}\zeta_{n}^{1-r}y_0^2q^{3n(2n+3)\binom{k}{2}}j(q^{3r}\zeta_ny_0;q^n)}/\big ({3nq^{3r^2}J_{3n(2n+3)}^3}\big ).
\end{equation}
Using the respective value for $y_0$ from type $I$ above, we have
\begin{equation*}
y_0^2=\omega^2\zeta_n^{-2}q^{2kn(2n+3)-2n(n+1)-2r(2n+3)}.
\end{equation*}
Noting that $j(\omega,q^n)=(1-\omega)J_{3n}$, it follows that
\begin{align*}
j(q^{3r}\zeta_ny_0;q^{n})=&j(\omega q^{n(k(2n+3)-(n+1)-2r)};q^n)
=(-\omega)^{k(2n+3)-(n+1)-2r}\frac{j(\omega;q^n)}{q^{n\binom{k(2n+3)-(n+1)-2r}{2}}}.
\end{align*}
We insert both results back into (\ref{equation:Rn31I-residue}) and collect terms to produce the desired result.
\end{proof}

\begin{corollary} \label{corollary:Rn41-residue}  For $p=4$, the poles and residues of  Proposition \ref{proposition:Rnp1-residue} can be written respectively
\begin{align*}
\textup{I}. & \ y_0^4=-\zeta_n^{-4}q^{4kn(2n+4)-n(4n+6)-4r(2n+4)} \ (\text{or } y_0^2=i\zeta_n^{-2}q^{2kn(2n+4)-n(2n+3)-2r(2n+4)}),\\
\textup{II}. & \ y_0^4=-q^{4kn(2n+4)-n(4n+6)-4r(2n+4)} \ (\textup{or } y_0^2=iq^{2kn(2n+4)-n(2n+3)-2r(2n+4)}),
\end{align*}
where $i^2=-1$.  Define $v:=kn-r-(n-1)/2$.  The respective residue at such a $y_0$ is given by
\begin{align*}
\textup{I}.& \ \ \ \frac{i^{(2\cdot \delta)}(-1)^{(v-1)/2+\delta}}{4nJ_{4n(2n+4)}^3\zeta_n^{1+r}}\cdot q^{-2(2n+4)\binom{v-1}{2}-n^2+n+3} \cdot \Big ( (1-i)J_{4n,16n}+i\zeta_n y_0q^{2-(2n+4)v}J_{8n,16n}\Big ),\\
\textup{II}.&\ \ \ \frac{i^{(2\cdot \delta)}(-1)^{(v+1)/2+\delta}\zeta_n^{1+r}}{4nJ_{4n(2n+4)}^3}\cdot q^{-2(2n+4)\binom{v-1}{2}-n^2+n+3}  \cdot \Big ( (1-i)J_{4n,16n}+i y_0q^{2-(2n+4)v}J_{8n,16n}\Big ),
\end{align*}
where $\delta:=\{(v-1)/2\},$ with $0\le \{x\}<1$ denoting the fraction part of $x$.
\end{corollary}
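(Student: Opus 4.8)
The plan is to follow the template already used for Corollaries \ref{corollary:Rn21-residue} and \ref{corollary:Rn31-residue}, the only real difference being a heavier dose of parity bookkeeping. First I would specialize Proposition \ref{proposition:Rnp1-residue} to $p=4$, so that the residue of $R_{n,4}^1(y,q)$ of type \textup{I} at a pole $y_0$ is
\[
\frac{(-1)^{k+1}\zeta_n^{1-r}\,y_0^2\,q^{4n(2n+4)\binom{k}{2}}\,j(q^{4r}\zeta_n y_0;q^n)}{4n\,q^{4r^2}\,J_{4n(2n+4)}^3},
\]
and analogously for type \textup{II}. Since $y_0^4$ is fixed by the pole condition, $y_0^2$ is determined up to sign; I would record the computation for the representative $y_0^2=i\zeta_n^{-2}q^{2kn(2n+4)-n(2n+3)-2r(2n+4)}$, the opposite sign being handled in exactly the same way. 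Using this value of $y_0^2$ to clear the prefactor and introducing $v:=kn-r-(n-1)/2$ collapses the various exponents, just as in the earlier corollaries.

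The key step is the evaluation of $j(q^{4r}\zeta_n y_0;q^n)$. Applying the splitting identity (\ref{equation:jsplit}) with $m=2$ gives
\[
j(q^{4r}\zeta_n y_0;q^n)=j(-q^{n+8r}\zeta_n^2 y_0^2;q^{4n})-q^{4r}\zeta_n y_0\,j(-q^{3n+8r}\zeta_n^2 y_0^2;q^{4n}).
\]
Into each of the two theta functions on the right I would substitute the known value of $\zeta_n^2 y_0^2$; a short computation with (\ref{equation:1.8}) then shows that the first one equals an explicit monomial in $q$, $i$ and $\zeta_n$ times $j(-i;q^{4n})$, while the second equals such a monomial times $j(-iq^{2n};q^{4n})$. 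A further application of (\ref{equation:jsplit}) with $m=2$ (together with (\ref{equation:1.7}) and the vanishing $j(q^{16n};q^{16n})=0$) identifies $j(-i;q^{4n})$ with an explicit nonzero constant times $J_{4n,16n}$ and $j(-iq^{2n};q^{4n})$ with $J_{8n,16n}$. Substituting back, the two pieces reassemble into the bracketed combination $(1-i)J_{4n,16n}+i\zeta_n y_0 q^{2-(2n+4)v}J_{8n,16n}$ of the statement, and the type \textup{II} case is the identical computation carrying the sign changes dictated by Proposition \ref{proposition:Rnp1-residue}.

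The main obstacle is entirely combinatorial: because $n$ is odd but $v=kn-r-(n-1)/2$ may be either even or odd, the powers $i^{\,v}$ produced in the second step — and the binomial coefficients $\binom{v+2k-1}{2}$ produced by (\ref{equation:1.8}) — do not have a uniform closed form across the two parities. The device that repairs this is to write $v-1=2\lfloor (v-1)/2\rfloor+2\delta$ with $\delta:=\{(v-1)/2\}\in\{0,\tfrac12\}$, so that $i^{\,v}=i\,(-1)^{\lfloor(v-1)/2\rfloor}\,i^{2\delta}$; this is precisely what produces the factors $i^{2\delta}$ and $(-1)^{(v\mp1)/2+\delta}$ in the final answer. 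Once the even and odd cases are merged in this way and the accumulated powers of $q$ are combined into the single exponent $-2(2n+4)\binom{v-1}{2}-n^2+n+3$, the remaining simplification is routine.
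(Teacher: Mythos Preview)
Your approach is correct and differs from the paper's in one useful respect. The paper expands $j(q^{4r}\zeta_n y_0;q^n)$ via the $m=4$ case of (\ref{equation:jsplit}), obtaining four theta functions in base $q^{16n}$ whose arguments involve $y_0^4$; after substituting the known value of $\zeta_n^4 y_0^4$ those arguments become powers $q^{4n(\text{integer})}$, and whether each one lands on $J_{4n,16n}$, $J_{8n,16n}$, or $0$ depends on the parity of $v$, so the paper treats the even and odd cases separately and only reunites them in the final formula. Your two-step $m=2$ expansion instead produces two theta functions in base $q^{4n}$ whose arguments involve $y_0^2$; substituting $\zeta_n^2 y_0^2=iq^{2(2n+4)v-n-4}$ and pulling off the power of $q^{4n}$ with (\ref{equation:1.8}) leaves the \emph{parity-independent} values $j(-i;q^{4n})=(1+i)J_{4n,16n}$ and $j(-iq^{2n};q^{4n})=J_{8n,16n}$, with all of the parity information pushed into the scalar prefactor $i^{1-v}$, which your $\delta$ device then rewrites in the form of the statement. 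The two routes are of course equivalent (two applications of the $m=2$ split amount to one of the $m=4$ split), but yours avoids the explicit case analysis and makes the appearance of the bracket $(1-i)J_{4n,16n}+i\zeta_n y_0 q^{2-(2n+4)v}J_{8n,16n}$ immediate once one notes $1+i=i(1-i)$. The exponent bookkeeping --- in particular the cancellation of all $k$-dependence in $4n(2n+4)\binom{k}{2}-4r^2-4n\binom{2k+v-1}{2}+2(2n+4)v$ to leave $-2(2n+4)\binom{v-1}{2}-n^2+n+3$ --- goes through exactly as you describe.
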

\begin{proof}The first part follows from specializing Proposition \ref{proposition:Rnp1-residue} to $p=4$.  When we identify the corresponding pole of $T_{n,4}^1(y,q)$ in Lemma \ref{lemma:Tn41-residue}, the pole must involve the same primitive fourth root of unity.  We prove the residue for type \textup{I}; the proof for type \textup{II} is similar.  Specializing the residue in Proposition \ref{proposition:Rnp1-residue} to $p=4$ yields
\begin{equation}
{(-1)^{k+1}i\zeta_n^{-1-r}q^{2kn(2n+4)-n(2n+3)-2r(2n+4)+4n(2n+4)\binom{k}{2}}j(q^{4r}\zeta_ny_0;q^n)}/\big ({4nq^{4r^2}J_{4n(2n+4)}^3}\big ).\label{equation:Rn41-residue}
\end{equation}
Using (\ref{equation:jsplit}) with $m=4$ we can rewrite the $j(q^{4r}\zeta_ny_0,q^n)$ term as  
\begin{align}
j(q^{4r}\zeta_ny_0;q^n)&=j(-q^{6n+16r}\zeta_n^4y_0^4,q^{16n})-q^{4r}\zeta_ny_0j(-q^{10n+16r}\zeta_n^4y_0^4;q^{16n})\label{equation:cat2}\\
&\ \ +q^{n+8r}\zeta_n^2y_0^2j(-q^{14n+16r}\zeta_n^4y_0^4;q^{16n})-q^{3n+12r}\zeta_n^3y_0^3j(-q^{18n+16r}\zeta_n^4y_0^4;q^{16n}).\notag
\end{align}
We have two cases to consider depending on whether $kn-r-(n-1)/2$ is even or odd.  We consider the even case; the odd case is similar.   We work with the sum of the first and third summands of the right-hand side of (\ref{equation:cat2}).  Substituting for $y_0^4$ and $y_0^2,$
\begin{align*}
j(&-q^{6n+16r}\zeta_n^4y_0^4;q^{16n})+q^{n+8r}\zeta_n^2y_0^2j(-q^{14n+16r}\zeta_n^4y_0^4;q^{16n})\\
&=j(q^{16nk+8n(kn-r-(n-1)/2-1)}q^{4n};q^{16n})\\
&\ \ \ \ \ +iq^{8nk+4n(kn-r-(n+1)/2)}j(q^{16nk+8n(kn-r-(n-1)/2)}q^{4n};q^{16n}).
\end{align*}
Under the even assumption, we use (\ref{equation:1.8}) and simplify to rewrite this sum as
\begin{align}
(-1)^{k+1+(kn-r-(n-1)/2)/2)}q^{-16n\binom{k+(kn-r-(n-1)/2)/2}{2}+4n(k-1+(kn-r-(n-1)/2)/2)}(1-i)J_{4n,16n}.\label{equation:13-even}
\end{align}
We consider the sum of the second and fourth summands.  Substituting for $y_0^4$ and $y_0^2,$
\begin{align*}
-q^{4r}\zeta_ny_0&j(-q^{10n+16r}\zeta_n^4y_0^4;q^{16n})-q^{3n+12r}\zeta_n^3y_0^3j(-q^{18n+16r}\zeta_n^4y_0^4;q^{16n})\\
=&-q^{4r}\zeta_ny_0\Big [ j(q^{16nk+8n(kn-r-(n-1)/2)};q^{16n})\\
&+iq^{8nk+4n(kn-r-(n-1)/2)-2n}j(q^{16nk+8n(kn-r-(n-1)/2)}q^{8n};q^{16n})\Big ].
\end{align*}
Again under the even assumption, we use (\ref{equation:1.8}) and simplify to rewrite this sum as
\begin{align}
(-1)^{k+1+(kn-r-(n-1)/2)/2}i\zeta_ny_0q^{-16n\binom{k+(kn-r-(n-1)/2)/2}{2}-2n+4r}J_{8n,16n}.\label{equation:24-even}
\end{align}
Inserting  (\ref{equation:13-even}) and (\ref{equation:24-even}) into (\ref{equation:Rn41-residue}), we obtain
\begin{align*}
(-1&)^{(kn-r-(n-1)/2)/2}i\zeta_n^{-1-r}q^{2kn(2n+4)-n(2n+3)-2r(2n+4)+4n(2n+4)\binom{k}{2}-4r^2}\\
\cdot &\frac{q^{-16n\binom{k+(kn-r-(n-1)/2)/2}{2}+4n(k-1+(kn-r-(n-1)/2)/2)}}{4nJ_{4n(2n+4)}^3}\\
\cdot & \Big \{ (1-i)J_{4n,16n}+i\zeta_n y_0 q^{-4nk+4r+2n-2n(kn-r-(n-1)/2)}J_{8n,16n}\Big \}.
\end{align*}
Simplifying and collecting terms produces the desired result.
\end{proof}


\subsection{Functional equations and poles for $R_{n,p}^2(y,q)$}

\begin{definition} \label{definition:Rnp2-def} Let $n$ and $p$ be positive integers with $(n,p)=1$, $y\in \mathbb{C}^*$ be generic, and $x_0$ be such that $x_0^p=(-1)^{p+1}q^{knp(2n+p)+n(np+\binom{p+1}{2})-rp(2n+p)}$ for some $k,r \in \mathbb{Z}$  with $0\le r\le n-1$. Then
\begin{equation}\label{equation:Rnp2-def}
R_{n,p}^2(y,q):=(-1)^{k+1}q^{np(2n+p)\binom{k+1}{2}-pr^2}\cdot\frac{x_{0}^{kn+n+1-r}}{y^{kn+n-r}}\cdot\frac{j(q^{pr}x_0;q^n)}{p j(x_0^n/y^n;q^{np(2n+p)})}.
\end{equation}
\end{definition}

\begin{proposition} \label{proposition:Mnp-residue2} Let $n$, $p$, $y$, and $x_0$ be as in Definition \ref{definition:Rnp2-def}. Then
\begin{align*}
\lim_{x\rightarrow x_0} &(x-x_0)M_{n,p}(x,y,q)=R_{n,p}^2(y,q).
\end{align*}
\end{proposition}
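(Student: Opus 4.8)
The plan is to read the residue straight off the definition $M_{n,p}(x,y,q)=g_{n,n+p,n}(x,y,q,y^{n}/x^{n},x^{n}/y^{n})$ together with the explicit form of the Appell-Lerch sum in Definition \ref{definition:mdef}. First I would specialise $(\ref{equation:mdef-2})$ to $a=c=n$, $b=n+p$, noting that $b^{2}-ac=p(2n+p)$ and $n\binom{n+p+1}{2}-n\binom{n+1}{2}=n\big(np+\binom{p+1}{2}\big)$; then the first sum of $M_{n,p}$ consists of the terms $(-y)^{t}q^{n\binom{t}{2}}j(q^{(n+p)t}x;q^{n})\,m(-q^{n(np+\binom{p+1}{2})-tp(2n+p)}(-y)^{n}/(-x)^{n+p},q^{np(2n+p)},x^{n}/y^{n})$ for $0\le t\le n-1$, while the second sum is obtained from it by interchanging $x$ and $y$ and replacing $x^{n}/y^{n}$ by $y^{n}/x^{n}$. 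Writing each Appell-Lerch sum as $j(z;Q)^{-1}\sum_{s}(-1)^{s}Q^{\binom{s}{2}}z^{s}/(1-Q^{s-1}\mathfrak{x}z)$ as in Definition \ref{definition:mdef}, the cancellation $\big((-y)^{n}/(-x)^{n+p}\big)(x^{n}/y^{n})=(-1)^{p}x^{-p}$ shows that the $t$-th inner denominator in the first sum is $1-(-1)^{p+1}q^{np(2n+p)(s-1)+n(np+\binom{p+1}{2})-tp(2n+p)}x^{-p}$, whereas the analogous product in the second sum equals $(-1)^{p}y^{-p}$, which carries no $x$; moreover the factors $j(q^{(n+p)t}x;q^{n})$ are holomorphic for $x\ne0$ and $j(x^{n}/y^{n};q^{np(2n+p)})^{-1}$ contributes only poles of type $\textup{I}'$. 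Hence a pole of $M_{n,p}$ in $x$ of type $\textup{II}'$, i.e.\ at $x_{0}$ with $x_{0}^{p}=(-1)^{p+1}q^{knp(2n+p)+n(np+\binom{p+1}{2})-rp(2n+p)}$, can arise only from an inner denominator of the first sum.

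Next I would identify the unique contributing term. Matching exponents in $np(2n+p)(s-1)-tp(2n+p)=knp(2n+p)-rp(2n+p)$ gives $t-r=n(s-1-k)$, and since $0\le t,r\le n-1$ this forces $t=r$ and $s=k+1$. Because $x_{0}$ satisfies $\textup{II}'$ exclusively, $j(x_{0}^{n}/y^{n};q^{np(2n+p)})$ is nonzero there (this is where genericity of $y$ is used), so $M_{n,p}$ has a simple pole at $x_{0}$ coming solely from the $(t,s)=(r,k+1)$ summand of the first sum.

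Then I would evaluate the residue. The vanishing denominator $1-(-1)^{p+1}q^{np(2n+p)k+n(np+\binom{p+1}{2})-rp(2n+p)}x^{-p}$ equals $1-x_{0}^{p}x^{-p}$ near $x_{0}$ and so has derivative $p/x_{0}$ at $x_{0}$; hence, exactly as in Proposition \ref{proposition:mnp2Lerch-residue} (with the constant ``$z$'' there taken to be $x_{0}^{n}/y^{n}$), the relevant $s$-sum contributes $(-1)^{k+1}q^{np(2n+p)\binom{k+1}{2}}x_{0}^{n(k+1)+1}\big/\big(p\,y^{n(k+1)}\big)$. Multiplying by the surviving prefactor $(-y)^{r}q^{n\binom{r}{2}}j(q^{(n+p)r}x_{0};q^{n})\big/j(x_{0}^{n}/y^{n};q^{np(2n+p)})$ of the $t=r$ term gives $\lim_{x\to x_{0}}(x-x_{0})M_{n,p}(x,y,q)$. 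Finally I would rewrite $j(q^{(n+p)r}x_{0};q^{n})=j\big((q^{n})^{r}q^{pr}x_{0};q^{n}\big)=(-1)^{r}q^{-n\binom{r}{2}-pr^{2}}x_{0}^{-r}j(q^{pr}x_{0};q^{n})$ by $(\ref{equation:1.8})$; after substitution the two factors $(-1)^{r}$ multiply to $1$ and the $q^{n\binom{r}{2}}$ cancel, leaving precisely $R_{n,p}^{2}(y,q)$ as in Definition \ref{definition:Rnp2-def}.

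Everything here is routine residue bookkeeping. The two places that deserve care are the claim that among the $2n$ Appell-Lerch summands in $g_{n,n+p,n}$ exactly one, and within it only a single value of the inner summation index, is singular at a point of type $\textup{II}'$, and the concluding theta-function reindexing via $(\ref{equation:1.8})$, which is the likeliest spot for a stray sign or a slipped exponent.
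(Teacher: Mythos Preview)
Your proof is correct and follows exactly the approach the paper indicates: the paper's own proof is the single line ``This follows from the definition of $M_{n,p}(x,y,q)$ and Proposition \ref{proposition:mnp2Lerch-residue},'' and you have carefully unpacked precisely that computation, including the identification of the unique singular summand $(t,s)=(r,k+1)$ and the theta reindexing via $(\ref{equation:1.8})$. Your handling of the $x$-dependence of $z=x^{n}/y^{n}$ (evaluating it at $x_{0}$ since the pole is simple) is the right way to apply Proposition \ref{proposition:mnp2Lerch-residue} here.
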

\begin{proof}
This follows from the definition of $M_{n,p}(x,y,q)$ and Proposition \ref{proposition:mnp2Lerch-residue}.
\end{proof}

\begin{proposition}  \label{proposition:Rnp2-functionaleqn} Let $n,$ $p,$ $y,$ and $x_0$ be as in Definition \ref{definition:Rnp2-def}. Then
\begin{align*}
R_{n,p}^2(q^{p(2n+p)}y,q)=q^{n(np+\binom{p+1}{2})}{(-y)^n}{(-x_0)^{-(n+p)}}R_{n,p}^2(y,q).
\end{align*}
\end{proposition}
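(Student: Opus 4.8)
The plan is to argue directly from the explicit formula of Definition \ref{definition:Rnp2-def}. The key preliminary observation is that the pole $x_0$, together with the integers $k$ and $r$ it determines through $x_0^p=(-1)^{p+1}q^{knp(2n+p)+n(np+\binom{p+1}{2})-rp(2n+p)}$ (the normalization $0\le r\le n-1$ making the pair $(k,r)$ unique), depends only on $q$, $n$, $p$ and not on $y$. Consequently $R_{n,p}^2(q^{p(2n+p)}y,q)$ is obtained from (\ref{equation:Rnp2-def}) merely by substituting $q^{p(2n+p)}y$ for $y$, with $x_0,k,r$ left fixed. Only two factors of (\ref{equation:Rnp2-def}) are affected: the power $y^{-(kn+n-r)}$ picks up a factor $q^{-p(2n+p)(kn+n-r)}$, and the denominator theta function becomes $j\bigl(q^{-np(2n+p)}x_0^n/y^n;q^{np(2n+p)}\bigr)$.

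The next step is to rewrite this last theta function using identity (\ref{equation:1.8}) with shift index $-1$ and base $q^{np(2n+p)}$, namely $j\bigl(q^{-np(2n+p)}x_0^n/y^n;q^{np(2n+p)}\bigr)=-q^{-np(2n+p)}(x_0^n/y^n)\,j\bigl(x_0^n/y^n;q^{np(2n+p)}\bigr)$. Combining the two changes and using $-p(2n+p)(kn+n-r)+np(2n+p)=p(2n+p)(r-kn)$ gives
\[
R_{n,p}^2(q^{p(2n+p)}y,q)=-\,q^{p(2n+p)(r-kn)}\,\frac{y^n}{x_0^n}\;R_{n,p}^2(y,q).
\]
Finally, inserting the defining relation for $x_0$ in the form $q^{p(2n+p)(kn-r)}=(-1)^{p+1}x_0^{p}\,q^{-n(np+\binom{p+1}{2})}$ turns the prefactor $-q^{p(2n+p)(r-kn)}y^n/x_0^n$ into $(-1)^{p+2}q^{n(np+\binom{p+1}{2})}\,y^n/x_0^{\,n+p}$, and since $(-1)^{p+2}=(-1)^p=(-1)^{2n+p}$ this equals $q^{n(np+\binom{p+1}{2})}(-y)^n(-x_0)^{-(n+p)}$, which is the assertion.

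This is a short computation, so there is no genuine obstacle; the only point demanding care is the bookkeeping of the powers of $q$ and the signs $(-1)^{(\cdot)}$. For a proof that instead parallels the surrounding results, one could combine Proposition \ref{proposition:Mnp-residue2} with the functional equation of Proposition \ref{proposition:Genfg-functional} in its $y$-form for $M_{n,p}(x,y,q)$: applying $\lim_{x\to x_0}(x-x_0)(\,\cdot\,)$ to both sides of that functional equation annihilates the theta-function correction terms (they are entire in $x$, hence analytic at $x_0\neq 0$), leaving $R_{n,p}^2(q^{p(2n+p)}y,q)$ on the left and $q^{n(np+\binom{p+1}{2})}(-y)^n(-x_0)^{-(n+p)}R_{n,p}^2(y,q)$ on the right. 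Either route yields the proposition.
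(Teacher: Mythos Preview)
Your proof is correct and follows exactly the route indicated in the paper's one-line proof: apply (\ref{equation:1.8}) to the theta function in the denominator and then substitute the defining value of $x_0^p$ from Definition~\ref{definition:Rnp2-def} to simplify the exponent of $q$ and the sign. Your alternative residue argument via Propositions~\ref{proposition:Mnp-residue2} and~\ref{proposition:Genfg-functional} is also valid and is a pleasant structural observation, though the direct computation you give first is the shortest path.
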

\begin{proof}  This follows (\ref{equation:1.8}) and using the value of $x_0^p$ from Definition \ref{definition:Rnp2-def}.
\end{proof}

\begin{proposition}  \label{proposition:Rnp2-residue}  Let $\zeta_n$ be an $n$-th root of unity and let $y$, $x_0$, $k$, and $r$ be as in Definition \ref{definition:Rnp2-def}.  $R_{n,p}^2(y,q)$ is meromorphic for $y\ne 0$ and has at most simple poles at points $y_0$, where $y_0$ satisfies the following condition:
\begin{align*}
\textup{I}. \ y_0=\zeta_n x_0q^{tp(2n+p)}.
\end{align*}
The respective residue at such a $y_0$  at $t=0$ is given by
\begin{align*}
\textup{I}.& \ \ \ {(-1)^{k+1}\zeta_n^{1+r}x_0^2q^{np(2n+p)\binom{k+1}{2}}j(q^{pr}x_0;q^n)}/\big ({npq^{pr^2}J_{np(2n+p)}^3}\big ).
\end{align*}
Given the residue at $t=0$, one can use the functional equation of Proposition \ref{proposition:Rnp2-functionaleqn} to compute the residue for general $t\in \mathbb{Z}.$
\end{proposition}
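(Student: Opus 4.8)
The plan is to read both assertions directly off the explicit formula for $R_{n,p}^2(y,q)$ in Definition \ref{definition:Rnp2-def}. Viewing $n,p,k,r,x_0,q$ as fixed and $y$ as the variable, that formula is a $y$-independent constant
\[
C:=\frac{(-1)^{k+1}q^{np(2n+p)\binom{k+1}{2}-pr^2}\,x_0^{kn+n+1-r}\,j(q^{pr}x_0;q^n)}{p}
\]
times $y^{-(kn+n-r)}$ times $1/j(x_0^n/y^n;q^{np(2n+p)})$. The first two factors are holomorphic and nonvanishing on $\mathbb{C}^*$, and $j(x_0^n/y^n;q^{np(2n+p)})$ is not identically zero in $y$, so $R_{n,p}^2(y,q)$ is meromorphic for $y\neq 0$ and its poles are exactly the zeros of that theta product. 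Since $j(w;Q)=0$ precisely when $w$ is an integral power of $Q$, these zeros occur exactly when $x_0^n/y^n=q^{-tnp(2n+p)}$ for some $t\in\mathbb{Z}$, i.e. at $y_0=\zeta_n x_0 q^{tp(2n+p)}$ for $\zeta_n$ an $n$-th root of unity; being simple zeros of a Jacobi theta product, they produce at most simple poles of $R_{n,p}^2$. This is condition I.

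For the residue at $t=0$, i.e. at $y_0=\zeta_n x_0$, I would put the theta quotient into the shape handled by Proposition \ref{proposition:H1Thm1.3}. Applying $j(w;Q)=j(Q/w;Q)$ from (\ref{equation:1.7}) gives $1/j(x_0^n/y^n;q^{np(2n+p)})=1/j(\beta y^n;q^{np(2n+p)})$ with $\beta=q^{np(2n+p)}x_0^{-n}$; then Proposition \ref{proposition:H1Thm1.3}, used with $b=n$, $m=np(2n+p)$ and its internal index equal to $t+1$, yields $\lim_{y\to y_0}(y-y_0)/j(x_0^n/y^n;q^{np(2n+p)})=y_0/\big(nJ_{np(2n+p)}^3\big)$ when $t=0$ (note $\binom{1}{2}=0$). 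Multiplying $C\,y_0^{-(kn+n-r)}$ by $y_0/\big(nJ_{np(2n+p)}^3\big)$ and substituting $y_0=\zeta_n x_0$, the product $x_0^{kn+n+1-r}\cdot y_0^{-(kn+n-r)}\cdot y_0$ collapses to $\zeta_n^{1+r}x_0^2$ once one uses $\zeta_n^n=1$; collecting the remaining constants gives exactly the claimed value $(-1)^{k+1}\zeta_n^{1+r}x_0^2q^{np(2n+p)\binom{k+1}{2}}j(q^{pr}x_0;q^n)/\big(np\,q^{pr^2}J_{np(2n+p)}^3\big)$.

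Finally, to pass from $t=0$ to general $t\in\mathbb{Z}$ I would iterate the functional equation of Proposition \ref{proposition:Rnp2-functionaleqn}, which relates $R_{n,p}^2(q^{p(2n+p)}y,q)$ to $R_{n,p}^2(y,q)$ up to an explicit monomial, and hence transports the residue at $y_0$ to the residue at $q^{p(2n+p)}y_0$ with a controlled factor; this is what the closing sentence of the statement refers to. The only genuine care needed is the bookkeeping in the middle paragraph—choosing the orientation of the theta quotient so that Proposition \ref{proposition:H1Thm1.3} applies with a positive exponent, and reducing the powers of $\zeta_n$ modulo $n$. I expect that to be the main, though entirely routine, obstacle; nothing beyond (\ref{equation:1.7}), (\ref{equation:1.8}) and Proposition \ref{proposition:H1Thm1.3} is involved.
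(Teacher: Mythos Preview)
Your proposal is correct and follows exactly the approach the paper indicates: the paper's own proof is the single sentence ``This follows from the Definition \ref{definition:Rnp2-def} and Proposition \ref{proposition:H1Thm1.3},'' and your write-up is a faithful unpacking of that sentence, including the natural use of (\ref{equation:1.7}) to orient the theta quotient before applying Proposition \ref{proposition:H1Thm1.3} and the reduction $\zeta_n^{1+r-kn-n}=\zeta_n^{1+r}$. There is no gap.
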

\begin{proof}  This follows from the Definition \ref{definition:Rnp2-def} and Proposition \ref{proposition:H1Thm1.3}.
\end{proof}

\begin{corollary} \label{corollary:Rnp2-residue} Define $v:=kn-r+(n+1)/2$.  For $p=2,$ $3,$ and $4$ the values of the residues in Proposition \ref{proposition:Rnp2-residue} at $t=0$ can be written,
\begin{align*}
2)& \ \ \ (-1)^{v}\zeta_n^{1+r}q^{-4(n+1)\binom{v-1}{2}-\tfrac12(n^2-4n-3)}
{J_{2n,4n}}/\big ({2nJ_{4n(n+1)}^3}\big ),\\
3)& \ \ \ {(-1)^{n+1}\zeta_n^{1+r}\omega^{-2v+1}(1-\omega)q^{-2(2n+3)\binom{v-1}{2}-\tfrac34(n^2-2n-3)}J_{3n}}/\big ({3nJ_{3n(2n+3)}^3}\big ),\\
4)& \ \ \ \frac{i^{(2\cdot \delta)}(-1)^{(v+1)/2+\delta}\zeta_n^{1+r}}{4nJ_{4n(2n+4)}^3} \cdot {q^{-2(2n+4)\binom{v-1}{2}-n^2+n+3}}
\cdot \Big ( (1-i)J_{4n,16n}+ix_0q^{2-(2n+4)v}J_{8n,16n}\Big ),
\end{align*}
where $\omega^2+\omega+1=0$, $i^2=-1$, and $\delta:=\{(v-1)/2\}$ with $0\le \{\alpha \}<1$ denoting the fractional part of $\alpha$.
\end{corollary}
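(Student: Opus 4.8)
The plan is to specialize the $t=0$ residue
\[
\frac{(-1)^{k+1}\zeta_n^{1+r}x_0^2q^{np(2n+p)\binom{k+1}{2}}j(q^{pr}x_0;q^n)}{np\,q^{pr^2}J_{np(2n+p)}^3}
\]
of Proposition \ref{proposition:Rnp2-residue} to $p=2,3,4$ in turn, proceeding exactly as in the proofs of Corollaries \ref{corollary:Rn21-residue}, \ref{corollary:Rn31-residue}, and \ref{corollary:Rn41-residue}. For each $p$ I would first use the value $x_0^p=(-1)^{p+1}q^{knp(2n+p)+n(np+\binom{p+1}{2})-rp(2n+p)}$ of Definition \ref{definition:Rnp2-def} to eliminate the factor $x_0^2$ (and, when $p=4$, to record the induced value of $x_0^2$), collapsing the explicit powers of $q$, and then expand $j(q^{pr}x_0;q^n)$ by the splitting identity (\ref{equation:jsplit}) with $m=p$. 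This yields $p$ theta products with monomial arguments in $x_0$; substituting for $x_0^p$ again lets (\ref{equation:1.8}) be applied to each.

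The decisive simplification is that after this substitution most of the $p$ theta terms vanish, since their arguments become $q^{mn}$ times a power of the modulus. For $p=2$ only the term producing $J_{2n,4n}$ survives; for $p=3$ the pole with $\omega=1$ has already been discarded, and using $j(\omega;q^n)=(1-\omega)J_{3n}$ leaves a single surviving term; for $p=4$ one pairs the first and third summands of the expansion (combining to a multiple of $J_{4n,16n}$) with the second and fourth (combining to a multiple of $J_{8n,16n}$), which forces a split into the two cases according to the parity of $v-1=kn-r+(n-1)/2$ and accounts for the factors $i^{2\delta}$ and $(-1)^{(v+1)/2+\delta}$ with $\delta=\{(v-1)/2\}$; one case is written out and the other is identical. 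Collecting terms and writing $v:=kn-r+(n+1)/2$ then puts the leftover power of $q$ into the displayed form; note $v$ is exactly $n$ more than the quantity $kn-r-(n-1)/2$ of Corollaries \ref{corollary:Rn21-residue}--\ref{corollary:Rn41-residue}, so the formulas here differ from those only by that reindexing together with the replacement of $y_0$ by $x_0$.

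I expect the only genuine obstacle to be the exponent and sign bookkeeping --- carrying the powers of $q$, the $\zeta_n$ factors, and (for $p=3,4$) the $\omega$ or $i$ factors correctly through the substitution of $x_0^p$, the expansion (\ref{equation:jsplit}), and the two uses of (\ref{equation:1.8}). There are no new ideas beyond those already deployed in the proofs of the three earlier corollaries, so the argument is essentially a transcription of those with the parameters of Definition \ref{definition:Rnp2-def} in place of those of Proposition \ref{proposition:Rnp1-residue}.
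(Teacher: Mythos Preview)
Your proposal is correct and follows the paper's approach essentially verbatim: the paper specializes Proposition \ref{proposition:Rnp2-residue} to each $p$, substitutes the value of $x_0^p$ from Definition \ref{definition:Rnp2-def}, and rewrites $j(q^{pr}x_0;q^n)$ exactly as in the corresponding earlier corollary (using (\ref{equation:jsplit}) with $m=2$ for $p=2$, direct substitution $x_0=\omega q^{\cdots}$ and (\ref{equation:1.8}) for $p=3$, and (\ref{equation:jsplit}) with $m=4$ for $p=4$). One small clarification: for $p=3$ the paper does not actually use (\ref{equation:jsplit}) with $m=3$ but rather substitutes $x_0=\omega q^{\cdots}$ directly to obtain $j(\omega;q^n)=(1-\omega)J_{3n}$, which is what your $p=3$ remark already anticipates.
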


\begin{proof}  We prove the $p=2$ case.  Using Proposition \ref{proposition:Rnp2-residue} and the value of $x_0^2$ from Definition \ref{definition:Rnp2-def} yields
\begin{align}
(-1)^{k}\zeta_n^{1+r}q^{4n(n+1)\binom{k+1}{2}+4kn(n+1)+n(2n+3)-4r(n+1)-2r^2}{j(q^{2r}x_0;q^n)}/\big ({2nJ_{4n(n+1)}^3}\big ).\label{equation:Rn22I-residue}
\end{align}
We rewrite $j(q^{2r}x_0,q^n)$.  Using (\ref{equation:jsplit}) with $m=2$ yields
\begin{equation}
j(q^{2r}x_0,q^n)=j(-q^{n+4r}x_0^2;q^{4n})-q^{2r}x_0j(-q^{3n+4r}x_0^2;q^{4n}).\label{equation:dog}
\end{equation}
We take the first theta function of the right-hand side of (\ref{equation:dog}) and substitute for $x_0^2$,
\begin{align*}
j(&-q^{n+4r}x_0^2;q^{4n})=j(q^{4n(k(n+1)-r)+2n^2+4n};q^{4n})
=j(q^{4n(k(n+1)-r+(n+1)/2)}q^{2n};q^{4n})\\
&=(-1)^{(k(n+1)-r+(n+1)/2)}{q^{-4n\binom{k(n+1)-r+(n+1)/2}{2}-2n(k(n+1)-r+(n+1)/2)}}J_{2n,4n},
\end{align*}
by (\ref{equation:1.8}).  Taking the second theta function of the right-hand side of (\ref{equation:dog}) and substituting for $x_0^2$ produces
\begin{equation*}
j(-q^{3n+4r}x_0^2;q^{4n})=j(q^{4n(k(n+1)-r+(n+3)/2)};q^{4n})=0.
\end{equation*}
Inserting these into (\ref{equation:Rn22I-residue}) produces the desired result.

 We prove $p=3$ case.  From Definition \ref{definition:Rnp2-def}, we have
\begin{equation}
x_0^3=q^{3kn(2n+3)+3n(n+2)-3r(2n+3)} \ ({\text{or }} x_0=\omega q^{kn(2n+3)+n(n+2)-r(2n+3)}),
\end{equation}
where $\omega^2+\omega+1=0.$  (The pole with $\omega=1$ is easily seen to be removable in $M_{n,3}(x,y,q)$ and $\Theta_{n,3}(x,y,q)$).  When we identify the corresponding pole of $\Theta_{n,3}(x,y,q)$, the pole must involve the same primitive third root of unity.  Specializing Proposition \ref{proposition:Rnp2-residue} to $p=3$ and substituting for $x_0$ yields
\begin{align}
(-1)^{k+1}\zeta_n^{1+r}\omega^{2}
q^{3n(2n+3)\binom{k+1}{2}+2kn(2n+3)+2n(n+2)-2r(2n+3)-3r^2}{j(q^{3r}x_0;q^n)}/\big({3nJ_{3n(2n+3)}^3}\big ).\label{equation:Rn32I-residue}
\end{align}
We rewrite the theta function in the numerator.  Substituting for $x_0$,
\begin{align*}
j(q^{3r}x_0;q^{n})&=j(\omega q^{n(k(2n+3)+n+2-2r)};q^n)\\
&=(-1)^{k(2n+3)+n+2-2r}q^{-n\binom{k(2n+3)+n+2-2r}{2}}\omega^{-(k(2n+3)+n+2-2r)}j(\omega;q^n).&(\text{by }(\ref{equation:1.8}))
\end{align*}
We note that $j(\omega,q^n)=(1-\omega)J_{3n}$, insert everything back into (\ref{equation:Rn32I-residue}) and simplify.

We prove the $p=4$ case.  From Definition \ref{definition:Rnp2-def}, we have
\begin{equation}
x_0^4=-q^{4kn(2n+4)+n(4n+10)-4r(2n+4)}\ ({\text{or }} x_0^2=iq^{2kn(2n+4)+n(2n+5)-2r(2n+4)}),
\end{equation}
where $i^2=-1.$  When we identify the corresponding pole of $\Theta_{n,4}(x,y,q)$, the pole must involve the same primitive fourth root of unity.  Specializing Proposition \ref{proposition:Rnp2-residue} to $p=4$ and substituting for $x_0$ yields
\begin{align}
(-1)^{k+1}i \zeta_n^{1+r}
q^{4n(2n+4)\binom{k+1}{2}+2kn(2n+4)+n(2n+5)-2r(2n+4)-4r^2}{j(q^{4r}x_0;q^n)}/\big ({4nJ_{4n(2n+4)}^3}\big ).\label{equation:Rn42I-residue}
\end{align}
To rewrite $j(q^{4r}x_0,q^n)$, we argue as in the proof of Corollary \ref{corollary:Rn41-residue}.  The result follows.
\end{proof}


\subsection{Functional equations, zeros, and poles for $T_{n,p}^1(y,q)$}\label{section:Tnp1}

\begin{definition} \label{definition:defTn21} Let $n\in \mathbb{N}$ with $(n,2)=1$, $y\in\mathbb{C}^*$ generic, and $\zeta_n\ne 1$ an $n$-th root of unity. Then
\begin{equation}
T_{n,2}^1(y,q):=\frac{y^{3}J_{2n,4n}J_{4(n+1),8(n+1)}j(\zeta_n^{-1},q^{n+2}\zeta_n y^2;q^{4(n+1)})j(q^{2n}/\zeta_n^2y^4;q^{8(n+1)})}
{n\zeta_n^{(n-5)/2}q^{(n^2-3)/2}J_{4n(n+1)}^3j(-q^{n+2}\zeta_n^2 y^2,-q^{n+2}y^2;q^{4(n+1)})}.\label{equation:Tn21-def}
\end{equation}
\end{definition}

\begin{definition}\label{definition:defTn31} Let $n\in \mathbb{N}$ with $(n,3)=1$, $y\in\mathbb{C}^*$ generic, and $\zeta_n\ne 1$ an $n$-th root of unity. Also, define $\delta:=\{(n-1)/2\}$ and $\beta:=\delta\cdot (2n+3)$, with $0\le \{\alpha \}<1$ denoting the fractional part of $\alpha.$  Then
\begin{align}
T_{n,3}^1&(y,q):=q^{n\binom{\delta-(n-3)/2}{2}+(n+3)\big (\delta-(n-3)/2\big )\big (\delta+(n+1)/2\big )+n\binom{\delta+(n+1)/2}{2}-3n}(-\zeta_n y)^{\delta-(n-3)/2}\notag\\
&\ \ \cdot(-y)^{\delta+(n+1)/2}\frac{\zeta_nyJ_{3n}J_{3(2n+3)}j(\zeta_n^{-1};q^{3(2n+3)})j(q^{(n+3)/2+\beta}\zeta_ny,q^{(n+3)/2+\beta}y;q^{2n+3})}
 {nJ_{2n+3}^2J_{3n(2n+3)}^3j(q^{3(n+3)/2+3\beta}\zeta_n^3y^3,q^{3(n+3)/2+3\beta}y^3;q^{3(2n+3)})}\notag\\
&\ \ \cdot  \Big \{ j(q^{7(n+1)/2+4+3\beta}\zeta_n^2y^3,q^{7(n+1)/2+4+3\beta}\zeta_ny^3;q^{3(2n+3)})\notag\\
&\ \ \ \ -{q^{n+3+2\beta}}{\zeta_ny^2}j(q^{11(n+1)/2+5+3\beta}\zeta_n^2y^3,q^{11(n+1)/2+5+3\beta}\zeta_ny^3;q^{3(2n+3)})\Big \}\label{equation:Tn31-def}
.
\end{align}
\end{definition}

\begin{definition}  \label{definition:defTn41}Let $n\in \mathbb{N}$ with $(n,2)=1$, $y\in\mathbb{C}^*$ be generic, and $\zeta_n\ne 1$ an $n$-th root of unity.  Define
{\allowdisplaybreaks \begin{align}
T_{n,4}^1(y,q):&=\frac{\zeta_n^{-(n-5)/2}y^3q^{-(n^2+n-3)}j(\zeta_n^{-1};q^{4(2n+4)})}{nJ_{4n(2n+4)}^3j(-q^{2n+8}\zeta_n^4y^4;q^{4(2n+4)})j(-q^{2n+8}y^4;q^{4(2n+4)})} \notag\\
\cdot \Big [&J_{4n,16n}\cdot \frac{j(q^{6n+16}\zeta_n^2y^4;q^{4(2n+4)})j(q^{n+4}\zeta_n y^2;q^{2(2n+4)})j(-q^{2(2n+4)}\zeta_n^{-1};q^{4(2n+4)})}{J_{2(2n+4)}^3J_{8(2n+4)}} \notag \\
&\ \ \cdot  \Big \{j(-q^{2n+8}\zeta_n^2y^4;q^{4(2n+4)})j(q^{2(2n+4)}\zeta_n^{-2};q^{4(2n+4)})J_{4(2n+4)}^2\notag\\
&\ \ \ \ \ \ +\frac{q^{n+4}\zeta_n^2y^2 j(-q^{6n+16}\zeta_n^2y^4;q^{4(2n+4)})j(q^{4n+8}\zeta_n^{-1},-\zeta_n^{-1};q^{4(2n+4)})^2}{J_{4(2n+4)}}\Big \}\notag\\
&-qJ_{8n,16n}\cdot \frac{j(q^{2n+8}\zeta_n^2y^4;q^{4(2n+4)})j(q^{3n+8}\zeta_ny^2;q^{2(2n+4)})j(-\zeta_n^{-1};q^{4(2n+4)})}{J_{2(2n+4)}^2} \notag\\
&\ \ \cdot  \Big \{\frac{q^{n+1}j(-q^{2n+8}\zeta_n^2y^4;q^{4(2n+4)})j(q^{4n+8}\zeta_n^{-2};q^{4(2n+4)})J_{8(2n+4)}}{yJ_{4(2n+4)} }\notag\\ 
&\ \ \ \ \ \ +\frac{q\zeta_n yj(-q^{6n+16}\zeta_n^2y^4;q^{4(2n+4)})j(q^{4(2n+4)}\zeta_n^{-2};q^{8(2n+4)})^2}{J_{8(2n+4)} }\Big \} \Big ].\label{equation:Tn41-def}
\end{align}}%
\end{definition}

\begin{lemma}  \label{lemma:Tnp1-def}For $p=2$ (resp. $3,4$), let $n$, $y$, and $\zeta_n$ be as in Definition \ref{definition:defTn21} (resp. \ref{definition:defTn31}, \ref{definition:defTn41}). Then
\begin{equation*}
\lim_{x\rightarrow y\zeta_n}(x-y\zeta_n)\Theta_{n,p}(x,y,q)=T_{n,p}^1(y,q).
\end{equation*}
\end{lemma}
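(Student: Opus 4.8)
The plan is to prove Lemma \ref{lemma:Tnp1-def} by a direct residue computation, taking the limit $x \to y\zeta_n$ in the explicit product formula for $\Theta_{n,p}(x,y,q)$ given in the corresponding subtheorem (Theorem \ref{theorem:genfn2}, \ref{theorem:genfn3}, or \ref{theorem:genfn4}), and matching the result with the defining formula for $T_{n,p}^1(y,q)$ in Definition \ref{definition:defTn21}, \ref{definition:defTn31}, or \ref{definition:defTn41}. The three cases $p=2,3,4$ are structurally the same but increasingly intricate; I would write out $p=2$ in full and indicate that $p=3,4$ follow in the identical fashion.

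First I would locate which theta factor in the denominator of $\Theta_{n,p}(x,y,q)$ produces the pole at $x = y\zeta_n$. In the $p=2$ case, $\Theta_{n,2}(x,y,q)$ contains the factor $j(y^n/x^n;q^{4n(n+1)})$ in its denominator; since $(y\zeta_n)^n = y^n$ (as $\zeta_n^n = 1$), this factor vanishes at $x = y\zeta_n$, and it is the unique source of the pole for generic $y$. I would then apply Proposition \ref{proposition:H1Thm1.3} with $G(x) = 1/j(y^n/x^n; q^{4n(n+1)})$, i.e. $\beta z^b \mapsto y^n x^{-n}$ so $b = -n$, $\beta = y^n$, $m = 4n(n+1)$, $k = 0$: this gives the residue of $G$ at $x_0 = y\zeta_n$ as $(-1)^{0+1}q^{0}\,x_0/((-n)J_{4n(n+1)}^3) = y\zeta_n/(n J_{4n(n+1)}^3)$. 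Multiplying by the remaining (now finite) factors of $\Theta_{n,2}(x,y,q)$ evaluated at $x = y\zeta_n$ — namely $y^{(n+1)/2}J_{2n,4n}J_{4(n+1),8(n+1)} j(y/x, q^{n+2}xy; q^{4(n+1)}) j(q^{2n}/x^2y^2; q^{8(n+1)}) / (q^{(n^2-3)/2}x^{(n-3)/2} j(-q^{n+2}x^2, -q^{n+2}y^2; q^{4(n+1)}))$ with $x = y\zeta_n$ — and carefully bookkeeping the powers of $y$ and $\zeta_n$ should reproduce exactly the right-hand side of \eqref{equation:Tn21-def}. The substitutions to track are $y/x = \zeta_n^{-1}$, $xy = \zeta_n y^2$, $x^2 y^2 = \zeta_n^2 y^4$, $x^2 = \zeta_n^2 y^2$, and $x^{(n-3)/2} = \zeta_n^{(n-3)/2} y^{(n-3)/2}$, so the power of $y$ collects as $(n+1)/2 + 1 - (n-3)/2 = 3$ and the power of $\zeta_n$ as $1 - (n-3)/2 = (5-n)/2$, matching the $\zeta_n^{-(n-5)/2}$ and $y^3$ in the definition.

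For $p=3$ and $p=4$ the mechanism is identical: the factor $j(y^n/x^n; q^{3n(2n+3)})$ (resp. $j(y^n/x^n; q^{4n(2n+4)})$) in the denominator of $\Theta_{n,p}(x,y,q)$ is the sole source of the pole at $x = y\zeta_n$ for generic $y$, Proposition \ref{proposition:H1Thm1.3} again gives the residue of its reciprocal as $y\zeta_n/(n J_{3n(2n+3)}^3)$ (resp. $y\zeta_n/(n J_{4n(2n+4)}^3)$), and the remaining product is evaluated at $x = y\zeta_n$ using the same list of monomial substitutions plus the specialization $r = \delta$, $s = \delta$ (with $\delta = \{(n-1)/2\}$) that appears in Definitions \ref{definition:defTn31} and \ref{definition:defTn41}. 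The only genuinely new wrinkle is that in Theorem \ref{theorem:genfn3} and Theorem \ref{theorem:genfn4} the theta $\Theta_{n,p}$ is written with the index shifts $r - (n-1)/2$, $s + (n+1)/2$ absorbed, so one must be careful to substitute $r = \delta + \{(n-1)/2\} $ correctly and to carry the prefactor $q^{n\binom{\delta-(n-3)/2}{2} + \dots}$ through unchanged; this is pure bookkeeping.

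I expect the main obstacle to be purely one of bookkeeping rather than of ideas: keeping the exponents of $q$, the powers of $y$, and the powers of $\zeta_n$ consistent through the long product, especially in the $p=4$ case where $\Theta_{n,4}(x,y,q) = S_1 J_{4n,16n} - q S_2 J_{8n,16n}$ with $S_1, S_2$ themselves sums of two products of theta functions. I would organize the $p=4$ computation by noting that the pole factor $j(y^n/x^n;q^{4n(2n+4)})$ sits outside the braces $\{ \cdots \}$, so the residue is just $y\zeta_n/(nJ_{4n(2n+4)}^3)$ times $\{ J_{4n,16n} S_1 - qJ_{8n,16n} S_2 \}$ evaluated at $x = y\zeta_n$ times the common prefactor $q^{-(n^2+n-3)}x^{-(n-3)/2}y^{(n+1)/2}j(y/x;q^{4(2n+4)}) / j(-q^{2n+8}x^4,-q^{2n+8}y^4;q^{4(2n+4)})$ evaluated at $x=y\zeta_n$; term-by-term this is exactly the large bracketed expression in \eqref{equation:Tn41-def}, and no identity beyond Proposition \ref{proposition:H1Thm1.3} and the elementary substitutions $y/x = \zeta_n^{-1}$, $x^4 = \zeta_n^4 y^4$, etc., is needed. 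Hence the proof is a verification and I would present it as such, doing $p=2$ explicitly and remarking that $p=3,4$ are analogous.
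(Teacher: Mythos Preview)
Your approach is correct and is exactly what the paper does: its entire proof reads ``This is just an application of Proposition \ref{proposition:H1Thm1.3}.'' Your residue computation for $p=2$ is accurate, and the bookkeeping of powers of $y$ and $\zeta_n$ checks out against Definition \ref{definition:defTn21}.

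One small clarification: your remark about ``the specialization $r=\delta$, $s=\delta$'' for $p=3,4$ conflates the $\Theta_{n,p}$ of the subtheorems with the $\theta_{n,p}$ of Theorem \ref{theorem:masterFnp}. The capital-$\Theta$ functions in Theorems \ref{theorem:genfn3} and \ref{theorem:genfn4} are single explicit products (no sum over $r,s$); the parameters $\delta=\{(n-1)/2\}$ and $\beta=\delta\cdot(2n+3)$ in Definition \ref{definition:defTn31} simply encode the two parity cases of $n$ in one formula (compare the Remark following the lemma, which gives the $n$ even and $n$ odd forms of $\Theta_{n,3}$ separately). So for $p=3,4$ you again just substitute $x=\zeta_ny$ into the product and multiply by the residue $y\zeta_n/(nJ_{np(2n+p)}^3)$ from Proposition \ref{proposition:H1Thm1.3}; no index specialization is needed.
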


\begin{proof}   This is just an application of Proposition \ref{proposition:H1Thm1.3}.
\end{proof}
\begin{remark}  For reference we include the simplified versions of $\Theta_{n,3}(x,q,z)$ for $n$ even
\begin{align*}
\Theta_{n,3}&(x,y,q):=\frac{y^{n/2-1}J_{3n}J_{3(2n+3)}j(y/x;q^{3(2n+3)})j(q^{3n/2+3}x,q^{3n/2+3}y;q^{2n+3})}
{q^{n(3n+2)/4}x^{n/2}J_{2n+3}^2j(y^{n}/x^{n};q^{3n(2n+3)})j(q^{9n/2+9}x^3,q^{9n/2+9}y^3;q^{3(2n+3)})}\\
\cdot\Big \{ &j(q^{5n/2+6}x^2y,q^{5n/2+6}xy^2;q^{3(2n+3)})-\frac{q^{n}}{xy}j(q^{n/2+3}x^2y,q^{n/2+3}xy^2;q^{3(2n+3)})\Big \},
\end{align*}
and for $n$ odd
\begin{align*}
\Theta_{n,3}(x,y,q)&:=\frac{q^{-(n(3n+2)-9)/4}J_{3n}J_{3(2n+3)}j(y/x;q^{3(2n+3)})j(q^{(n+3)/2}x,q^{(n+3)/2}y;q^{2n+3})}
{J_{2n+3}^2j(y^{n}/x^{n};q^{3n(2n+3)})j(q^{3(n+3)/2}x^3,q^{3(n+3)/2}y^3;q^{3(2n+3)})}\\
\cdot & y^{(n+1)/2}x^{-(n-3)/2}\Big \{ j(q^{7(n+1)/2+4}x^2y,q^{7(n+1)/2+4}xy^2;q^{3(2n+3)})\\
&-{q^{n+3}}{xy}j(q^{11(n+1)/2+5}x^2y,q^{11(n+1)/2+5}xy^2;q^{3(2n+3)})\Big \}.
\end{align*}
\end{remark}
\begin{lemma}  \label{lemma:Tnp1-functional} For $p=2$ (resp. $3,4$), let $n$, $y$, and $\zeta_n$ be as in Definition \ref{definition:defTn21} (resp. \ref{definition:defTn31}, \ref{definition:defTn41}). Then
\begin{equation*}
T_{n,p}^1(q^{p(2n+p)}y,q)=y^{-2p}\zeta_{n}^{-p}q^{np-(p-1)p(2n+p)}T_{n,p}^1(y,q).
\end{equation*}
\end{lemma}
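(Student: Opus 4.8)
\textbf{Proof proposal for Lemma \ref{lemma:Tnp1-functional}.}

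The plan is to deduce the functional equation for $T_{n,p}^1(y,q)$ from the one already established for $\Theta_{n,p}(x,y,q)$ in Lemma \ref{lemma:GenTheta-functional}, rather than verifying it directly from the (rather long) explicit formulas in Definitions \ref{definition:defTn21}--\ref{definition:defTn41}. Recall that Lemma \ref{lemma:Tnp1-def} identifies $T_{n,p}^1(y,q)$ as the residue of $\Theta_{n,p}(x,y,q)$ at the simple pole $x = \zeta_n y$, i.e.
\begin{equation*}
T_{n,p}^1(y,q) = \lim_{x\to \zeta_n y}(x-\zeta_n y)\,\Theta_{n,p}(x,y,q).
\end{equation*}
First I would invoke Lemma \ref{lemma:GenTheta-functional} in both of its forms. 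The $x$-form gives $\Theta_{n,p}(q^{p(2n+p)}x,y,q) = q^{n(np+\binom{p+1}{2})}(-x)^n(-y)^{-(n+p)}\Theta_{n,p}(x,y,q)$, and the $y$-form (obtained by interchanging $x$ and $y$) gives $\Theta_{n,p}(x,q^{p(2n+p)}y,q) = q^{n(np+\binom{p+1}{2})}(-y)^n(-x)^{-(n+p)}\Theta_{n,p}(x,y,q)$.

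The key step is to compute the residue of $\Theta_{n,p}(x,q^{p(2n+p)}y,q)$ at its pole in the variable $x$. Since $\Theta_{n,p}(x,q^{p(2n+p)}y,q)$ has a simple pole where $x = \zeta_n \cdot q^{p(2n+p)}y$, we substitute $x \mapsto q^{p(2n+p)}x'$ to relate that pole back to $x' = \zeta_n y$ and apply the $x$-form of Lemma \ref{lemma:GenTheta-functional}. Concretely, writing $x = q^{p(2n+p)}x'$ and using the functional equation,
\begin{align*}
\lim_{x\to \zeta_n q^{p(2n+p)}y}&(x - \zeta_n q^{p(2n+p)}y)\,\Theta_{n,p}(x,q^{p(2n+p)}y,q)\\
&= q^{p(2n+p)}\lim_{x'\to \zeta_n y}(x' - \zeta_n y)\,\Theta_{n,p}(q^{p(2n+p)}x',q^{p(2n+p)}y,q).
\end{align*}
Now apply the $x$-form once (with argument $q^{p(2n+p)}y$ in the second slot) and then the $y$-form once to bring both arguments back to $(x',y)$; taking the residue at $x'=\zeta_n y$ of the resulting multiple of $\Theta_{n,p}(x',y,q)$ produces $T_{n,p}^1(y,q)$ times an explicit power of $q$, $(-x')$ and $(-y)$, which one evaluates at $x'=\zeta_n y$. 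On the other hand, the same limit equals $T_{n,p}^1(q^{p(2n+p)}y,q)$ directly from the $y$-shifted version of Lemma \ref{lemma:Tnp1-def}. Equating the two expressions and simplifying the prefactor --- substituting $x' = \zeta_n y$ into $(-x')^n(-y)^{-(n+p)}$ and collecting the $q$-powers and $\zeta_n$-powers --- yields
\begin{equation*}
T_{n,p}^1(q^{p(2n+p)}y,q) = q^{p(2n+p)}\cdot \bigl(\text{prefactor at } x'=\zeta_n y\bigr)\cdot T_{n,p}^1(y,q),
\end{equation*}
and one checks that the prefactor collapses to $y^{-2p}\zeta_n^{-p}q^{np-(p-1)p(2n+p)}/q^{p(2n+p)}$, giving the claim.

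The main obstacle I anticipate is purely bookkeeping: correctly tracking how the residue operation interacts with the two functional equations when \emph{both} arguments of $\Theta_{n,p}$ are simultaneously shifted by $q^{p(2n+p)}$, and then carefully evaluating the accumulated prefactor $q^{\bullet}(-x')^n(-y)^{-(n+p)}$ at $x' = \zeta_n y$ so that the powers of $\zeta_n$, $q$, and $y$ combine into exactly $y^{-2p}\zeta_n^{-p}q^{np-(p-1)p(2n+p)}$. An alternative, if the deduction via $\Theta_{n,p}$ proves awkward for the case $p=3$ (where $T_{n,3}^1$ carries the extra fractional-part shift $\delta$), is to verify the functional equation directly from Definitions \ref{definition:defTn21}--\ref{definition:defTn41} by repeated application of \eqref{equation:1.8} to each theta factor, exactly in the style of the proof of Proposition \ref{proposition:Rnp-functionaleqn}; this is more tedious but entirely routine. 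Either way no new ideas are needed beyond the residue identification already in hand and the $\Theta_{n,p}$ functional equation of Lemma \ref{lemma:GenTheta-functional}.
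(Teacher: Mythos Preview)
Your approach is correct and genuinely different from the paper's. The paper simply says ``This is just repeated application of \eqref{equation:1.8}'', i.e.\ it verifies the functional equation directly from the explicit formulas in Definitions \ref{definition:defTn21}--\ref{definition:defTn41} by shifting each theta factor. Your route instead pulls the functional equation back through the residue identification of Lemma \ref{lemma:Tnp1-def}, using both the $x$- and $y$-forms of Lemma \ref{lemma:GenTheta-functional}. The bookkeeping you worry about is in fact clean: applying the two functional equations to $\Theta_{n,p}(q^{p(2n+p)}x',q^{p(2n+p)}y,q)$ gives the prefactor $q^{2n(np+\binom{p+1}{2})-p(2n+p)(n+p)}(-x')^{-p}(-y)^{-p}$, which at $x'=\zeta_n y$ becomes $\zeta_n^{-p}y^{-2p}q^{np-p^2(2n+p)}$; multiplying by the Jacobian factor $q^{p(2n+p)}$ from the substitution $x=q^{p(2n+p)}x'$ yields exactly $y^{-2p}\zeta_n^{-p}q^{np-(p-1)p(2n+p)}$, with no case distinction on $p$ or on the fractional-part shift $\delta$. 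Your method is more conceptual and handles $p=2,3,4$ uniformly at the cost of relying on Lemmas \ref{lemma:GenTheta-functional} and \ref{lemma:Tnp1-def}; the paper's direct verification is self-contained but must be carried out separately for each of the three lengthy definitions.
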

\begin{proof}  This is just repeated application of (\ref{equation:1.8}).
\end{proof}

\begin{lemma} \label{lemma:Tnp1-zeros} For $p=2$ (resp. $3,4$), let $n$, $y$, and $\zeta_n$ be as in Definition \ref{definition:defTn21} (resp. \ref{definition:defTn31}, \ref{definition:defTn41}).  Then
\begin{equation*}
T_{n,p}^1(\pm \zeta_n^{-1/2}q^{((2k+1)n+kp)/2},q)=0.
\end{equation*}
\end{lemma}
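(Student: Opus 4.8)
The plan is to handle each of $p\in\{2,3,4\}$ by substituting $y=\pm\zeta_n^{-1/2}q^{((2k+1)n+pk)/2}$ directly into the explicit product for $T_{n,p}^1(y,q)$ in Definition~\ref{definition:defTn21}, \ref{definition:defTn31}, or~\ref{definition:defTn41}, and checking that at this value the numerator vanishes while the denominator (and the monomial prefactors, which are finite) does not; this forces $T_{n,p}^1=0$. It is convenient to record the substitution as $y^2=\zeta_n^{-1}q^{(2k+1)n+pk}$, so that $y^4=\zeta_n^{-2}q^{2(2k+1)n+2pk}$ and, for $p=3$, $y^3=\pm\zeta_n^{-3/2}q^{3((2k+1)n+3k)/2}$. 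The sign ambiguity in $y$ is harmless, since it enters the numerator only through odd powers of $y$ multiplying theta functions, none of which affects whether a factor vanishes. By the functional equation of Lemma~\ref{lemma:Tnp1-functional} it in fact suffices to fix the parity of $k$.

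For $p=2$ one computes $q^{n+2}\zeta_n y^2=q^{2(k+1)(n+1)}$ and $q^{2n}\zeta_n^{-2}y^{-4}=q^{-4k(n+1)}$. Hence in the numerator of (\ref{equation:Tn21-def}) the factor $j(q^{n+2}\zeta_n y^2;q^{4(n+1)})$ is $j$ of a power of $q^{4(n+1)}$ and so vanishes whenever $k$ is odd, while $j(q^{2n}\zeta_n^{-2}y^{-4};q^{8(n+1)})$ vanishes for the same reason whenever $k$ is even. The denominator factors become $j(-\zeta_n q^{2(k+1)(n+1)};q^{4(n+1)})$ and $j(-\zeta_n^{-1}q^{2(k+1)(n+1)};q^{4(n+1)})$, which are nonzero because $-\zeta_n^{\pm1}$ has modulus one and is not $1$ (since $n$ is odd, $\zeta_n\ne-1$), and $j(\zeta_n^{-1};q^{4(n+1)})\ne0$ since $\zeta_n\ne1$. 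Thus $T_{n,2}^1$ vanishes at the substituted point. For $p=4$ the same elementary computations give $q^{6n+16}\zeta_n^2y^4=q^{4(k+2)(n+2)}$, $q^{2n+8}\zeta_n^2y^4=q^{4(k+1)(n+2)}$, $q^{n+4}\zeta_n y^2=q^{2(k+1)(n+2)}$, and $q^{3n+8}\zeta_n y^2=q^{2(k+2)(n+2)}$. Each of the two summands in the outer bracket of (\ref{equation:Tn41-def}) contains exactly one factor of the first kind and one of the second: the $J_{4n,16n}$ summand contains $j(q^{6n+16}\zeta_n^2y^4;q^{4(2n+4)})$ (vanishing for $k$ even) and $j(q^{n+4}\zeta_n y^2;q^{2(2n+4)})$ (vanishing for $k$ odd), and the $J_{8n,16n}$ summand contains $j(q^{2n+8}\zeta_n^2y^4;q^{4(2n+4)})$ (vanishing for $k$ odd) and $j(q^{3n+8}\zeta_n y^2;q^{2(2n+4)})$ (vanishing for $k$ even). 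So for every $k$ each summand, and hence the whole bracket, vanishes; and the denominator factors $j(-\zeta_n^{\pm2}q^{4(k+1)(n+2)};q^{4(2n+4)})$ and the constant factors $j(\zeta_n^{-1};q^{4(2n+4)})$, $j(\pm\zeta_n^{-1};q^{4(2n+4)})$, $j(-q^{2(2n+4)}\zeta_n^{-1};q^{4(2n+4)})$ are nonzero because $\zeta_n^2\ne\pm1$ ($n$ odd). Hence $T_{n,4}^1=0$ at the substituted point.

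The case $p=3$ (where only $(n,3)=1$ is assumed, so one carries $\delta=\{(n-1)/2\}$ and $\beta=\delta(2n+3)$ along) is the one requiring real work. After substitution $\zeta_n^2y^3$ and $\zeta_n y^3$ are $\pm\zeta_n^{\pm1/2}$ times powers of $q$, so no individual factor in (\ref{equation:Tn31-def}) becomes a power of its nome; instead the bracketed combination
\[
j\big(q^{A}\zeta_n^2y^3,q^{A}\zeta_n y^3;q^{3(2n+3)}\big)-q^{n+3+2\beta}\zeta_n y^2\,j\big(q^{A+2n+3}\zeta_n^2y^3,q^{A+2n+3}\zeta_n y^3;q^{3(2n+3)}\big),\qquad A=\tfrac{7(n+1)}{2}+4+3\beta,
\]
must be shown to vanish. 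Here the coefficient simplifies to $q^{n+3+2\beta}\zeta_n y^2=q^{(k+1)(2n+3)+2\beta}$, a power of $q$, and $y^6=\zeta_n^{-3}q^{3((2k+1)n+3k)}$ is a power of $q$; I would use these facts by applying the quintuple product identity (\ref{equation:H1Thm1.0}) with nome $q^{2n+3}$ to each of the two theta products, together with the shift rule (\ref{equation:1.8}), to rewrite the bracket as a multiple of $j$ of an integral power of $q^{2n+3}$ relative to that nome, which is $0$; the exact value $q^{(k+1)(2n+3)+2\beta}$ of the coefficient is what makes the two resulting contributions match. As above the denominator factors $j\big(q^{3(n+3)/2+3\beta}\zeta_n^3y^3,q^{3(n+3)/2+3\beta}y^3;q^{3(2n+3)}\big)$ and $j(\zeta_n^{-1};q^{3(2n+3)})$ are nonzero because $\zeta_n^{3/2}$ and $\zeta_n^{-1}$ have modulus one and $\zeta_n^3\ne1$. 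The principal obstacle is exactly this $p=3$ bracket identity: it is a routine but somewhat lengthy theta-function manipulation, and the bookkeeping of the exponents and of $\delta,\beta$ is essentially its only content, paralleling (though less transparently) the summation-shift computation used for Proposition~\ref{proposition:Rnp-zeros}.
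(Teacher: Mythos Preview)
Your treatment of $p=2$ and $p=4$ is exactly what the paper does: in each case one checks that, after the substitution $y^2=\zeta_n^{-1}q^{(2k+1)n+pk}$, a numerator factor becomes $j$ of an integral power of the nome and hence vanishes (splitting on the parity of $k$), while the denominator survives because $(n,p)=1$ forces $\zeta_n^{\pm1},\zeta_n^{\pm2}\ne\pm1$.

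For $p=3$ you have correctly located the crux --- the braced two-term combination must vanish --- but the tool you reach for is not the right one. The quintuple product identity (\ref{equation:H1Thm1.0}) converts a \emph{sum} of two single theta functions $j(qx^3;q^3)+xj(q^2x^3;q^3)$ into a ratio; the bracket in (\ref{equation:Tn31-def}) is a \emph{difference of products of pairs} of theta functions, and there is no evident way to feed that shape into (\ref{equation:H1Thm1.0}). The paper's argument is both simpler and more transparent: after substituting for $y$, the two factors in each product take the form $j(q^{B}\zeta_n^{1/2};q^{3(2n+3)})$ and $j(q^{B}\zeta_n^{-1/2};q^{3(2n+3)})$, and the exponents in the second product differ from those in the first by $-(2n+3)$. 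One then applies only the inversion (\ref{equation:1.7}) (which swaps $\zeta_n^{1/2}\leftrightarrow\zeta_n^{-1/2}$) and the shift (\ref{equation:1.8}) to each factor of the second product; the resulting monomial prefactor is exactly $q^{-n}\zeta_n y_0^{2}$ (equivalently $q^{(k+1)(2n+3)+2\beta}$ in your notation), so the second product becomes the first and the bracket collapses to zero. No higher identity is needed --- the whole computation lives at the level of (\ref{equation:1.7})--(\ref{equation:1.8}), mirroring the shift argument you cite for Proposition~\ref{proposition:Rnp-zeros}.
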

\begin{proof}  We first note that because $(n,p)=1$, the denominator of $T_{n,p}^1(y,q)$ does not vanish.  We prove the $p=2$ case.   It suffices to consider two terms in the numerator of $T_{n,2}^1(y,q)$.  For $k$ odd, we have
\begin{align*}
j(q^{n+2}\zeta_ny^2;q^{4(n+1)})=j(q^{n+2+(2k+1)n+k2};q^{4(n+1)})=j(q^{2(k+1)(n+1)};q^{4(n+1)})=0,
\end{align*}
and for $k$ even, we have
\begin{align*}
j(q^{2n}/\zeta_n^2y^4;q^{8(n+1)})=j(q^{2n-2((2k+1)n+2k)};q^{8(n+1)})=j(q^{-4k(n+1)};q^{8(n+1)})=0.
\end{align*}

 We prove the $p=3$ case with $n$ even because the equations are more compact.  The proof for the $n$ odd case is similar.  Without loss of generality we prove the ``$+$'' sign case.   We focus on the two products inside the braces of (\ref{equation:Tn31-def}) and show that their sum is zero.  We unwind all four theta functions with (\ref{equation:1.8}) and pull out the common factor $-q^{-2n-6}\zeta_n^{-2}y^{-4}$ to obtain
\begin{align*}
j(q^{5n/2+6}& \zeta_n^2y^3,q^{5n/2+6}\zeta_n y^3;q^{3(2n+3)})
-\frac{q^{n}}{\zeta_n y^2}j(q^{n/2+3}\zeta_n^2y^3,q^{n/2+3}\zeta_n y^3;q^{3(2n+3)})\\
&=j(q^{5n/2+6+3[(2k+1)n+3k]/2}\zeta_n^{1/2},q^{5n/2+6+3[(2k+1)n+3k]/2}\zeta_n^{-1/2};q^{3(2n+3)})\\
&\ \ -q^{-2kn-3k}j(q^{n/2+3+3[(2k+1)n+3k]/2}\zeta_n^{1/2},q^{n/2+3+3[(2k+1)n+3k]/2}\zeta_n^{-1/2} ;q^{3(2n+3)}),
\end{align*}
where the equality follows from substituting for $y$.  We rewrite each term of the second product to make the second product look like the first product.  Working with the first term of the second product,
\begin{align*}
j&(q^{n/2+3+3[(2k+1)n+3k]/2}\zeta_n^{1/2};q^{3(2n+3)})
=j(q^{11n/2+6-3[(2k+1)n+3k]/2}\zeta_n^{-1/2};q^{3(2n+3)}) \\
&=j(q^{-3k(2n+3)+5n/2+6+3[(2k+1)n+3k]/2}\zeta_n^{-1/2};q^{3(2n+3)})\\
&=(-1)^kq^{-3(2n+3)\binom{k+1}{2}}q^{k(5n/2+6+3[(2k+1)n+3k]/2)}\zeta_n^{-k/2}j(q^{5n/2+6+3[(2k+1)n+3k]/2}\zeta_n^{-1/2};q^{3(2n+3)}),
\end{align*}
where the first equality follows from (\ref{equation:1.7}), the second equality is just a rearrangement, and the third equality follows from (\ref{equation:1.8}) and simplifying.  Examining the second term of the second product and arguing as above,
\begin{align*}
j(&q^{n/2+3+3[(2k+1)n+3k]/2}\zeta_n^{-1/2};q^{3(2n+3)})\\
&=(-1)^kq^{-3(2n+3)\binom{k+1}{2}}q^{k(5n/2+6+3[(2k+1)n+3k]/2)}\zeta_n^{k/2}j(q^{5n/2+6+3[(2k+1)n+3k]/2}\zeta_n^{1/2};q^{3(2n+3)}),
\end{align*}
and the result follows.

We prove the $p=4$ case.   It suffices to show that for $y=\pm \zeta_n^{-1/2}q^{((2k+1)n+k4)/2}$, that
\begin{align*}
j(q^{6n+16}\zeta_n^2y^4;q^{4(2n+4)})&j(q^{n+4}\zeta_n y^2;q^{2(2n+4)})\\
&=j(q^{2n+8}\zeta_n^2y^4;q^{4(2n+4)})j(q^{3n+8}\zeta_ny^2;q^{2(2n+4)})=0.
\end{align*}
We show the first product of theta functions is equal to zero; the second product is similar.  We have
\begin{align*}
j(q^{6n+16}\zeta_n^2y^4;q^{4(2n+4)})&j(q^{n+4}\zeta_n y^2;q^{2(2n+4)})\\
&=j(q^{(k+2)2(2n+4)};q^{4(2n+4)})j(q^{(k+1)(2n+4)};q^{2(2n+4)})=0,
\end{align*}
where the last equality follows from the fact that when $k$ is even that the first theta function is zero, and that when $k$ is odd that the second theta function is zero.
\end{proof}

\begin{lemma}  \label{lemma:Tn21-residue} Let $n$ and $\zeta_n$ be as in Definition \ref{definition:defTn21}.  $T_{n,2}^1(y,q)$ is meromorphic for $y\ne 0$ and has simple poles at points $y_0$, where $y_0$ satisfies one of the following two conditions:
\begin{align*}
\textup{I}.& \ y_0^2=-\zeta_n^{-2}q^{4\ell (n+1)-n-2}, \\
\textup{II}.& \  y_0^2=-q^{4\ell (n+1)-n-2},
\end{align*}
where $\ell\in \mathbb{Z}$.  The respective residue at such a $y_0$ is given by,
\begin{align*}
\textup{I}.& \ \ \ {(-1)^{\ell+1}\zeta_n^{-2+\ell-(n-1)/2}q^{-4(n+1)\binom{\ell-1}{2}-\tfrac12(n^2-4n-3)}J_{2n,4n}}/\big ({2nJ_{4n(n+1)}^3}\big ),\\
\textup{II}.& \ \ \ {(-1)^{\ell}\zeta_n^{-\ell+1-(n-1)/2}q^{-4(n+1)\binom{\ell-1}{2}-\tfrac12(n^2-4n-3)}J_{2n,4n}}/\big ({2nJ_{4n(n+1)}^3}\big ).
\end{align*}
\end{lemma}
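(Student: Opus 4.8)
The plan is to read off both the pole locations and the residues directly from the explicit product formula \eqref{equation:Tn21-def} for $T_{n,2}^1(y,q)$, using Proposition \ref{proposition:H1Thm1.3} to handle the single theta factor that can vanish in the denominator. First I would identify which factors of \eqref{equation:Tn21-def} can produce poles in $y$: the numerator is a product of theta functions that are entire in $y\ne 0$ (and nonvanishing generically), and $J_{4n(n+1)}^3$ is a nonzero constant, so the only poles come from the denominator factor $j(-q^{n+2}\zeta_n^2 y^2,-q^{n+2}y^2;q^{4(n+1)})=j(-q^{n+2}\zeta_n^2 y^2;q^{4(n+1)})\,j(-q^{n+2}y^2;q^{4(n+1)})$. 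A factor $j(-q^{n+2}\zeta_n^2 y^2;q^{4(n+1)})$ vanishes exactly when $-q^{n+2}\zeta_n^2 y^2$ is an integral power of $q^{4(n+1)}$, i.e. $y_0^2=-\zeta_n^{-2}q^{4\ell(n+1)-n-2}$ for some $\ell\in\mathbb Z$ (condition I), and similarly $j(-q^{n+2}y^2;q^{4(n+1)})$ vanishes when $y_0^2=-q^{4\ell(n+1)-n-2}$ (condition II). Since $(n,2)=1$ and $\zeta_n\ne 1$, these two families are disjoint (a common point would force $\zeta_n^2$ to be a power of $q$, impossible for generic $q$), so each such $y_0$ is a simple pole coming from exactly one of the two factors.

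Next I would compute the residue. Fix $y_0$ satisfying condition I. Write $T_{n,2}^1(y,q)=N(y)/\big(n\zeta_n^{(n-5)/2}q^{(n^2-3)/2}J_{4n(n+1)}^3\, j(-q^{n+2}\zeta_n^2 y^2;q^{4(n+1)})\, j(-q^{n+2}y^2;q^{4(n+1)})\big)$, where $N(y)$ is the numerator. To get the residue I apply Proposition \ref{proposition:H1Thm1.3} with the substitution $z=y$, $b=2$, $\beta=-q^{n+2}\zeta_n^2$, $m=4(n+1)$, and $k=\ell$: this gives the residue of $1/j(-q^{n+2}\zeta_n^2 y^2;q^{4(n+1)})$ at $y_0$ as $(-1)^{\ell+1}q^{4(n+1)\binom{\ell}{2}}y_0/(2J_{4n(n+1)}^3)$. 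Then
\[
\operatorname*{Res}_{y=y_0}T_{n,2}^1(y,q)=\frac{N(y_0)}{n\zeta_n^{(n-5)/2}q^{(n^2-3)/2}J_{4n(n+1)}^3\, j(-q^{n+2}y_0^2;q^{4(n+1)})}\cdot\frac{(-1)^{\ell+1}q^{4(n+1)\binom{\ell}{2}}y_0}{2J_{4n(n+1)}^3}.
\]
Now I would substitute $y_0^2=-\zeta_n^{-2}q^{4\ell(n+1)-n-2}$ into $N(y_0)$ and into the remaining denominator theta factor, and simplify. Using \eqref{equation:1.8} repeatedly, the theta factors in $N(y_0)$ collapse: $j(\zeta_n^{-1};q^{4(n+1)})$ is a constant; $j(q^{n+2}\zeta_n y_0^2;q^{4(n+1)})=j(q^{4\ell(n+1)}(-\zeta_n^{-1});q^{4(n+1)})$ becomes a power of $q$ times $j(-\zeta_n^{-1};q^{4(n+1)})$; and $j(q^{2n}/\zeta_n^2 y_0^4;q^{8(n+1)})=j(q^{2n-8\ell(n+1)+2n+4}\zeta_n^2 q^{-4\ell(n+1)}\cdots)$—more precisely $y_0^4=\zeta_n^{-4}q^{8\ell(n+1)-2n-4}$, so this factor becomes $j(q^{2n}\zeta_n^2\cdot\zeta_n^{4}q^{-8\ell(n+1)+2n+4}\cdots)$—which, after pulling out powers of $q$ via \eqref{equation:1.8}, reduces to a constant times $J_{8(n+1)}$-type quantities; finally $j(-q^{n+2}y_0^2;q^{4(n+1)})=j(-q^{n+2}(-\zeta_n^{-2})q^{4\ell(n+1)-n-2};q^{4(n+1)})=j(q^{4\ell(n+1)}\zeta_n^{-2};q^{4(n+1)})$ also reduces to a power of $q$ times a constant. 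The remaining $J_{2n,4n}$ and $J_{4(n+1),8(n+1)}$ factors combine with the $J_{4n,16n}$, $J_{8n,16n}$, $J_{4n(n+1)}$ bookkeeping; here one uses the standard product-to-product identities among the $J$'s (in particular \eqref{equation:1.9} and \eqref{equation:1.10}) to collapse everything to the single factor $J_{2n,4n}$ claimed in the statement. Tracking all the powers of $q$ and $\zeta_n$ through these substitutions yields the stated residue for type I; the type II computation is identical with $\zeta_n^2$ deleted from the relevant arguments, which accounts for the change $\zeta_n^{-2+\ell-(n-1)/2}\mapsto\zeta_n^{-\ell+1-(n-1)/2}$ and the sign change $(-1)^{\ell+1}\mapsto(-1)^\ell$ (the latter because the roles of the two denominator factors are swapped, shifting a $(-1)$ between $N$ and the surviving denominator factor).

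The main obstacle is purely bookkeeping: correctly tracking the accumulated powers of $q$ and of $\zeta_n$ as one substitutes $y_0^2$ (and $y_0^4$) into the several theta factors and applies \eqref{equation:1.8} to each. In particular one must be careful that the half-integer exponents $(n-1)/2$, $(n-5)/2$, $(n^2-3)/2$, etc., combine to integers — this is where the hypothesis $(n,2)=1$ (so $n$ is odd) is used, guaranteeing that $\zeta_n^{(n-1)/2}$ and the like are well-defined powers of $\zeta_n$ and that the $q$-exponents such as $-\tfrac12(n^2-4n-3)=-4(n+1)\binom{?}{2}+\cdots$ come out integral. I do not expect any conceptual difficulty beyond organizing this computation; the identity Proposition \ref{proposition:H1Thm1.3} supplies the residue of the lone singular factor, and all other manipulations are the elementary theta transformations \eqref{equation:1.8}--\eqref{equation:1.10} listed in Section \ref{section:prelim}.
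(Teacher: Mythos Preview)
Your approach is correct and essentially identical to the paper's: apply Proposition~\ref{proposition:H1Thm1.3} to the singular denominator factor, substitute $y_0^2$ into the remaining theta factors, reduce each via \eqref{equation:1.7}--\eqref{equation:1.8}, and then collapse the resulting quotient $j(\zeta_n;q^{4(n+1)})j(-\zeta_n;q^{4(n+1)})j(\zeta_n^2 q^{4(n+1)};q^{8(n+1)})/j(\zeta_n^2;q^{4(n+1)})$ to $J_{4(n+1)}^3/J_{4(n+1),8(n+1)}$ using \eqref{equation:1.9}--\eqref{equation:1.10}, which cancels against the numerator constants. Two slips to fix when you write it up: the residue from Proposition~\ref{proposition:H1Thm1.3} has $J_{4(n+1)}^3$ (i.e.\ $J_m^3$ with $m=4(n+1)$), not $J_{4n(n+1)}^3$, in its denominator; and the factors $J_{4n,16n}$, $J_{8n,16n}$ you mention belong to the $p=4$ case and do not appear here.
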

\begin{proof} The first part follows from Definition \ref{definition:defTn21}.  We prove the residue for poles of type \textup{I}.  The proof of the residue for poles of type \textup{II} is similar.  We take such a $y_0$ and use Proposition \ref{proposition:H1Thm1.3} to obtain
{\allowdisplaybreaks \begin{align}
\lim_{y\rightarrow y_0}(y-y_0)T_{n,2}^1(y,q)=&\frac{(-1)^{\ell+1}y_0^{4}q^{4(n+1)\binom{\ell}{2}-(n^2-3)/2}}{2n\zeta_n^{(n-5)/2}}\cdot
\frac{J_{2n,4n}J_{4(n+1),8(n+1)}}{J_{4n(n+1)}^3J_{4(n+1)}^3}\notag\\
&\cdot
\frac{j(\zeta_n^{-1};q^{4(n+1)})j(q^{n+2}\zeta_n y_0^2;q^{4(n+1)})j(q^{2n}/\zeta_n^2y_0^4;q^{8(n+1)})}{j(-q^{n+2}y_0^2;q^{4(n+1)})}.\label{equation:Tn21I-residue}
\end{align}}%
We rewrite the quotient of theta functions in the second line of (\ref{equation:Tn21I-residue}).  For the first numerator term,
\begin{align*}
j(\zeta_n^{-1};q^{4(n+1)})&=j(q^{4(n+1)}\zeta_n;q^{4(n+1)})=-\zeta_n^{-1}j(\zeta_n;q^{4(n+1)}).&(\text{by }(\ref{equation:1.7}), (\ref{equation:1.8}))
\end{align*}
Substituting for $y_0^2$ and using (\ref{equation:1.8}), the second term in the numerator can be written
\begin{align*}
j(q^{n+2} \zeta_n y_0^2;q^{4(n+1)})&=j(-\zeta_n^{-1}q^{4\ell (n+1)};q^{4(n+1)})=(-1)^{\ell}q^{-4(n+1)\binom{\ell}{2}}(-\zeta_n^{-1})^{-\ell}j(-\zeta_n^{-1};q^{4(n+1)})\\
&=\zeta_n^{\ell-1}q^{-4(n+1)\binom{\ell}{2}}j(-\zeta_n;q^{4(n+1)}).
\end{align*}
Again using (\ref{equation:1.8}), the third term in the numerator becomes
\begin{align*}
j(q^{2n}  /\zeta_n^2y_0^4;q^{8(n+1)})&=j(\zeta_n^2q^{-8\ell(n+1)+4(n+1)};q^{8(n+1)})\\
&=(-1)^{\ell}\zeta_n^{2\ell}q^{-4(n+1)\ell^2}j(\zeta_n^2q^{4(n+1)};q^{8(n+1)}).
\end{align*}
Also using (\ref{equation:1.8}), the term in the denominator can be written
\begin{align*}
j(-q^{n+2}y_0^2;q^{4(n+1)})&=j(\zeta_n^{-2}q^{4\ell (n+1)};q^{4(n+1)})
=(-1)^{\ell}q^{-4(n+1)\binom{\ell}{2}}(\zeta_n^{-2})^{-\ell}j(\zeta_n^{-2};q^{4(n+1)})\\
&=(-1)^{\ell+1}\zeta_n^{2(\ell-1)}q^{-4(n+1)\binom{\ell}{2}}j(\zeta_n^{2};q^{4(n+1)}).
\end{align*}
Furthermore, we can use elementary theta function properties to show
\begin{align*}
\frac{j(\zeta_n;q^{4(n+1)})j(-\zeta_n;q^{4(n+1)})j(\zeta_n^2q^{4(n+1)};q^{8(n+1)})}{j(\zeta_n^2;q^{4(n+1)})}=&\frac{j(\zeta_n^2;q^{8(n+1)})j(\zeta_n^2q^{4(n+1)};q^{8(n+1)})J_{4(n+1)}^2}{j(\zeta_n^2;q^{4(n+1)})J_{8(n+1)}}\\
=&J_{4(n+1)}J_{8(n+1)}={J_{4(n+1)}^3}/{J_{4(n+1),8(n+1)}}.
\end{align*}
Substituting the four rewritten theta functions into (\ref{equation:Tn21I-residue}) and simplifying produces the desired result.
\end{proof}

\begin{lemma}  \label{lemma:Tn31-residue} Let $n$ and $\zeta_n$ be as in Definition \ref{definition:defTn31}.  $T_{n,3}^1(y,q)$ is meromorphic for $y\ne 0$ and has simple poles at points $y_0$, where $y_0$ satisfies one of the following two conditions:
\begin{align*}
\textup{I}. & \ y_0^3=\zeta_n^{-3}q^{3\ell^* (2n+3)-3(n+3)/2} \ (\text{or } y_0=\omega\zeta_n^{-1}q^{\ell^* (2n+3)-(n+3)/2}),\\
\textup{II}. & \ y_0^3=q^{3\ell^* (2n+3)-3(n+3)/2} \ (\text{or } y_0=\omega q^{\ell^* (2n+3)-(n+3)/2}) ,
\end{align*}
where $\omega^3=1$ with $\omega\ne 1$ and where $\ell^*:=\ell-\{(n-1)/2\}$ with $\ell\in \mathbb{Z}$ and $0\le\{x\}<1$ denoting the fractional part of $x$.  (The poles where $\omega=1$ are easily seen to be removable.)  The respective residue at such a $y_0$ is given by
\begin{align*}
\textup{I}.& \ \ \ {(-1)^{n}\zeta_n^{-2+\ell^*-(n-1)/2}\omega^{1-2\ell^*}(1-\omega)q^{-2(2n+3)\binom{\ell^*-1}{2}-\tfrac34(n-3)(n+1)}J_{3n}}/\big ({3nJ_{3n(2n+3)}^3}\big ),\\
\textup{II}.& \ \ \ {(-1)^{n+1}\zeta_n^{1-\ell^*-(n-1)/2}\omega^{1-2\ell^*}(1-\omega)q^{-2(2n+3)\binom{\ell^*-1}{2}-\tfrac34(n-3)(n+1)}J_{3n}}/\big ({3nJ_{3n(2n+3)}^3}\big ).
\end{align*}
\end{lemma}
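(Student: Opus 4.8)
The plan is to mirror, almost verbatim, the structure of the proof of Lemma~\ref{lemma:Tn21-residue}: the statement about where the poles live is immediate from the shape of the denominator of $T_{n,3}^1(y,q)$ in Definition~\ref{definition:defTn31}, namely the factor $j(q^{3(n+3)/2+3\beta}\zeta_n^3y^3,q^{3(n+3)/2+3\beta}y^3;q^{3(2n+3)})$ together with $J_{3n(2n+3)}^3$ in the denominator, and these produce exactly the two stated families of $y_0$ (the $\zeta_n^{-3}$ family from the $\zeta_n^3y^3$ theta factor and the plain family from the $y^3$ theta factor); poles coming from other theta factors in the numerator or from cancellation are routine to check. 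So the only real content is the residue computation at such a $y_0$.

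First I would fix a $y_0$ of type \textup{I}, apply Proposition~\ref{proposition:H1Thm1.3} to the theta factor $1/j(q^{3(n+3)/2+3\beta}\zeta_n^3y^3;q^{3(2n+3)})$ to extract the residue as a limit, and then substitute $y_0^3=\zeta_n^{-3}q^{3\ell^*(2n+3)-3(n+3)/2}$ (equivalently $y_0=\omega\zeta_n^{-1}q^{\ell^*(2n+3)-(n+3)/2}$ for a fixed primitive cube root $\omega$) into each of the remaining theta functions in $T_{n,3}^1(y,q)$. Each substitution, followed by an application of (\ref{equation:1.8}) and (\ref{equation:1.7}), turns a theta function with a $y_0$-dependent argument into a power of $q$ and $\zeta_n$ times a theta function with argument independent of $\ell^*$, exactly as in the $p=2$ case; the factor $j(\zeta_n^{-1};q^{3(2n+3)})$ is handled by $j(\zeta_n^{-1};Q)=-\zeta_n^{-1}j(\zeta_n;Q)$, and the factor $j(\omega;q^n)=(1-\omega)J_{3n}$ (used already in the proof of Corollary~\ref{corollary:Rn31-residue}) accounts for the $J_{3n}(1-\omega)$ appearing in the claimed residue. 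The bracketed difference of two products of theta functions collapses — this is precisely the identity established inside the proof of Lemma~\ref{lemma:Tnp1-zeros} for the $p=3$ case, that after substitution the two products inside the braces coincide up to the indicated powers — so the two terms combine rather than cancel, producing a single theta product which simplifies via elementary identities (analogous to the $J_{4(n+1)}^3/J_{4(n+1),8(n+1)}$ collapse in Lemma~\ref{lemma:Tn21-residue}) to the stated answer. The type \textup{II} residue is obtained by the same computation with $\zeta_n$ replaced by $1$ in the relevant places, or alternatively one may note that formally replacing $\zeta_n\mapsto 1$ in the type \textup{I} argument and tracking the sign change coming from the $\omega$-parity gives the claimed $(-1)^{n+1}$ versus $(-1)^n$.

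The main obstacle is purely bookkeeping: keeping the exponents of $q$, $\zeta_n$, and $\omega$ straight through the chain of substitutions, since $T_{n,3}^1(y,q)$ carries the prefactor $q^{n\binom{\delta-(n-3)/2}{2}+(n+3)(\delta-(n-3)/2)(\delta+(n+1)/2)+n\binom{\delta+(n+1)/2}{2}-3n}(-\zeta_ny)^{\delta-(n-3)/2}(-y)^{\delta+(n+1)/2}$ in front of the $\zeta_nyJ_{3n}J_{3(2n+3)}/\big(nJ_{2n+3}^2J_{3n(2n+3)}^3\big)$, so there are several $y_0$-powers to collect; one must also recall that $\ell^*=\ell-\{(n-1)/2\}$ and that $v:=kn-r-(n-1)/2$ from Corollary~\ref{corollary:Rn31-residue} matches $\ell^*$ under the pole correspondence $\ell=kn-r-(n-1)/2+\{(n-1)/2\}$ used in the proof of Proposition~\ref{proposition:MTsub-gen}, which is why the two residues sum to zero. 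No genuinely new idea is needed beyond what is already deployed for $p=2$; the computation is just longer because of the extra bracketed structure in $\Theta_{n,3}$, and the cube-root-of-unity arithmetic (e.g.\ using $\omega^3=1$ to reduce $\omega^{2\ell^*}$) must be done carefully rather than the two-valued sign arithmetic of the $p=2$ case. I would carry out the type \textup{I} computation in full and then remark that type \textup{II} is identical with $\zeta_n\to 1$.
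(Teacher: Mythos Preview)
Your overall plan is right in spirit, and the pole locations and the use of Proposition~\ref{proposition:H1Thm1.3} are correct. But there is one concrete gap: the step where you say the bracketed difference of two theta products ``collapses'' via the argument inside the proof of Lemma~\ref{lemma:Tnp1-zeros}. That argument shows that at the \emph{zeros} $y=\pm\zeta_n^{-1/2}q^{((2k+1)n+3k)/2}$ the two products inside the braces are \emph{equal}, so their difference vanishes. At a \emph{pole} $y_0=\omega\zeta_n^{-1}q^{\ell^*(2n+3)-(n+3)/2}$ the two products are not equal, and no amount of rearranging with (\ref{equation:1.7}) and (\ref{equation:1.8}) will make them coincide. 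What actually happens is that after substituting for $y_0^3$ and unwinding with (\ref{equation:1.8}), the braced expression becomes (up to a monomial prefactor) the combination
\[
j(\zeta_nq^{2n+3},\zeta_n^2q^{2n+3};q^{3(2n+3)})-\omega\zeta_n\,j(\zeta_nq^{2(2n+3)},\zeta_n^2q^{2(2n+3)};q^{3(2n+3)}),
\]
and this is precisely the right-hand side of Proposition~\ref{proposition:prop-f141} with $q\mapsto q^{2n+3}$, $y\mapsto\zeta_nq^{2n+3}$, and $\omega\mapsto\omega^2$. That proposition converts the two-term sum into the single product $J_{3(2n+3)}^{-1}j(\omega\zeta_n;q^{2n+3})j(\zeta_nq^{2(2n+3)},\zeta_nq^{2n+3};q^{3(2n+3)})$, which is what you need. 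Without Proposition~\ref{proposition:prop-f141} there is no mechanism to collapse the brace.

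Two smaller points. First, the factor $(1-\omega)$ in the residue does not come from $j(\omega;q^n)$; it comes from $j(\omega;q^{2n+3})=(1-\omega)J_{3(2n+3)}$, which arises when you substitute $y_0$ into $j(q^{(n+3)/2+\beta}\zeta_ny;q^{2n+3})$. The $J_{3n}$ in the answer is simply the $J_{3n}$ already sitting in the numerator of Definition~\ref{definition:defTn31}. Second, after the Proposition~\ref{proposition:prop-f141} step you are left with a ratio
\[
\frac{j(\zeta_n,\zeta_nq^{2n+3},\zeta_nq^{2(2n+3)};q^{3(2n+3)})\,j(\omega\zeta_n,\omega^2\zeta_n;q^{2n+3})}{j(\zeta_n^3;q^{3(2n+3)})},
\]
and this collapses to $J_{3(2n+3)}^2J_{2n+3}^2$ via (\ref{equation:1.10}) and (\ref{equation:1.12}), not via the $p=2$ mechanism you cite. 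The type~\textup{II} computation is indeed parallel, but it comes from the other denominator factor $j(q^{3(n+3)/2+3\beta}y^3;q^{3(2n+3)})$ rather than from formally setting $\zeta_n=1$.
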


\begin{proof} [Proof of Lemma \ref{lemma:Tn31-residue}] We prove the $n$ even case because the equations involved are more compact.  The $n$ odd case is similar.  The first part follows from Definition \ref{definition:defTn31}.  When we consider the corresponding pole for $R_{n,3}^1(y,q),$ we must choose the same third root of unity.  We prove the residue for poles of type \textup{I}.  The proof for poles of type \textup{II} is similar.  Using Proposition \ref{proposition:H1Thm1.3} and (\ref{equation:1.8}) yields
\begin{align}
\lim_{y\rightarrow y_0}&(y-y_0)T_{n,3}^1(y,q)=\frac{(-1)^{\ell+1}\zeta_n^{1-n/2}y_0 q^{3(2n+3)\binom{\ell}{2}-n(3n+2)/4}}{3n}\cdot
\frac{J_{3n}J_{3(2n+3)}}{J_{3(2n+3)}^3J_{3n(2n+3)}^3J_{2n+3}^2} \notag\\
\cdot&
\frac{j(\zeta_n^{-1};q^{3(2n+3)})j(q^{3n/2+3}\zeta_n y_0,q^{3n/2+3}y_0;q^{2n+3})}
{j(q^{9n/2+9}y_0^3;q^{3(2n+3)})} \label{equation:Tn31I-residue}
\\
\cdot & \Big \{ j(q^{5n/2+6}\zeta_n^2y_0^3,q^{5n/2+6}\zeta_n y_0^3;q^{3(2n+3)})-\frac{q^{n}}{\zeta_n y_0^2}j(q^{n/2+3}\zeta_n^2y_0^3,q^{n/2+3}\zeta_n y_0^3;q^{3(2n+3)})\Big \}.\notag
\end{align}
Substituting in for $y_0$, the first line of (\ref{equation:Tn31I-residue}) can be written
\begin{equation}
{(-1)^{\ell+1}\omega \zeta_n^{-n/2}q^{3(2n+3)\binom{\ell}{2} +\ell(2n+3)-3n/2-3}}\cdot {J_{3n}}/{3nq^{n(3n+2)/4}J_{3(2n+3)}^2J_{3n(2n+3)}^3J_{2n+3}^2}.\label{equation:genfn3-firstline}
\end{equation}
We rewrite the quotient of theta functions in the second line of (\ref{equation:Tn31I-residue}).  The first theta function in the numerator becomes
\begin{align*}
j(\zeta_n^{-1};q^{3(2n+3)})=j(q^{3(2n+3)}\zeta_n;q^{3(2n+3)})=-\zeta_n^{-1}j(\zeta_n;q^{3(2n+3)}).
\end{align*}
Substituting for $y_0$, the second theta function in the numerator can be written
\begin{align*}
j(q^{3n/2+3}\zeta_n y_0;q^{2n+3})&=j(\omega q^{\ell (2n+3)};q^{2n+3})=(-1)^{\ell}q^{-(2n+3)\binom{\ell}{2}}\omega^{-\ell}j(\omega;q^{2n+3})&({\text{by }} (\ref{equation:1.8}))\\
&=(-1)^{\ell}\omega^{-\ell}(1-\omega)q^{-(2n+3)\binom{\ell}{2}}J_{3(2n+3)}.
\end{align*}
Using (\ref{equation:1.8}), the third theta function in the numerator can be written
\begin{align*}
j(q^{3n/2+3}y_0;q^{2n+3})&=j(\omega \zeta_n^{-1}q^{\ell(2n+3)};q^{2n+3})
=(-1)^{\ell}q^{-(2n+3)\binom{\ell}{2}}(\omega \zeta_n^{-1})^{-\ell}j(\omega \zeta_n^{-1};q^{2n+3}) \\
&=(-1)^{\ell+1}\omega^{1-\ell}\zeta_n^{\ell-1}q^{-(2n+3)\binom{\ell}{2}}j(\omega^2 \zeta_n;q^{2n+3}).
\end{align*}
Again using (\ref{equation:1.8}), the theta function in the denominator becomes
\begin{align*}
j(q^{9n/2+9}y_0^3;q^{3(2n+3)})&=j(\zeta_n^{-3}q^{3\ell (2n+3)};q^{3(2n+3)})
=q^{-3(2n+3)\binom{\ell}{2}}(-\zeta_n^{-3})^{-\ell}j(\zeta_n^{-3};q^{3(2n+3)})\\
&=(-1)^{\ell+1}\zeta_n^{3(\ell-1)}q^{-3(2n+3)\binom{\ell}{2}}j(\zeta_n^{3};q^{3(2n+3)}).
\end{align*}
Assembling the pieces, the second line of (\ref{equation:Tn31I-residue}) can thus be written
\begin{equation}\label{equation:genfn3-secondline}
{(-1)^{\ell+1}\omega^{1-2\ell}(1-\omega)\zeta_n^{1-2\ell}q^{(2n+3)\binom{\ell}{2}}J_{3(2n+3)}j(\zeta_n;q^{3(2n+3)})j(\omega^2\zeta_n;q^{2n+3})}/{j(\zeta_n^3;q^{3(2n+3)})}.
\end{equation}
We rewrite the expression in braces from (\ref{equation:Tn31I-residue}).  The first theta function of the first product becomes
\begin{align*}
j(q^{5n/2+6}\zeta_n^2y_0^3;&q^{3(2n+3)})=j(\zeta_n^{-1}q^{3\ell(2n+3)-(2n+3)};q^{3(2n+3)})\\
&=(-1)^{\ell}q^{-3(2n+3)\binom{\ell}{2}}(\zeta_n^{-1}q^{-(2n+3)})^{-\ell} j(\zeta_n^{-1}q^{-(2n+3)};q^{3(2n+3)})&({\text{by }} (\ref{equation:1.8}))\\
&=(-1)^{\ell+1}\zeta_n^{\ell-1}q^{-3(2n+3)\binom{\ell}{2}+(\ell-1)(2n+3)}j(\zeta_nq^{2n+3};q^{3(2n+3)}).
\end{align*}
The arguments for the other three theta functions are similar.  We have
\begin{align*}
j(q^{5n/2+6}\zeta_ny_0^3;q^{3(2n+3)})
&=(-1)^{\ell+1}\zeta_n^{2(\ell-1)}q^{-3(2n+3)\binom{\ell}{2}+(\ell-1)(2n+3)}j(\zeta_n^2 q^{2n+3};q^{3(2n+3)}),\\
j(q^{n/2+3}\zeta_n^2y_0^3;q^{3(2n+3)})
&=(-1)^{\ell+1}\zeta_n^{\ell-1}q^{-3(2n+3)\binom{\ell}{2}+2(\ell-1)(2n+3)}j(\zeta_n q^{2(2n+3)};q^{3(2n+3)}),\\
j(q^{n/2+3}\zeta_n y_0^3;q^{3(2n+3)})
&=(-1)^{\ell+1}\zeta_n^{2(\ell-1)}q^{-3(2n+3)\binom{\ell}{2}+2(\ell-1)(2n+3)}j(\zeta_n^2 q^{2(2n+3)};q^{3(2n+3)}).
\end{align*}
Noting that ${q^n}/({\zeta_n y_0^2})=\omega \zeta_n q^{n-2\ell(2n+3)+3n+6}=\omega \zeta_n q^{-2(\ell-1)(2n+3)}$, the braced expression becomes
\begin{align*}
\zeta_n^{3(\ell-1)}&q^{-6(2n+3)\binom{\ell}{2}+2(\ell-1)(2n+3)}
\cdot\Big ( j(\zeta_nq^{2n+3},\zeta_n^2q^{2n+3};q^{3(2n+3)})\notag \\
&-\omega \zeta_n j(\zeta_nq^{2(2n+3)},\zeta_n^2q^{2(2n+3)};q^{3(2n+3)})\Big ).
\end{align*}
Using Proposition \ref{proposition:prop-f141} with the substitutions $\omega=\omega^2$, $q= q^{2n+3}$, and $y= \zeta_n q^{2n+3}$, we obtain
\begin{equation}
\zeta_n^{3(\ell-1)}q^{-6(2n+3)\binom{\ell}{2}+2(\ell-1)(2n+3)}
j(\omega \zeta_n;q^{2n+3})
\cdot \frac{ j(\zeta_n q^{2(2n+3)},\zeta_n q^{2n+3};q^{3(2n+3)})}{J_{3(2n+3)}}. \label{equation:genfn3-thirdline}
\end{equation}
Putting (\ref{equation:genfn3-firstline}), (\ref{equation:genfn3-secondline}), and (\ref{equation:genfn3-thirdline}) together yields
\begin{align*}
\lim_{y\rightarrow y_0}&(y-y_0)T_{n,3}^1(y,q)
=\frac{\zeta_n^{-2+\ell-n/2}\omega^{\ell-1}(1-\omega)q^{-(2n+3)(\ell^2-4\ell+3)-\tfrac34n^2}}{3n}\cdot\frac{J_{3n}}{J_{3(2n+3)}^2J_{3n(2n+3)}^3J_{2n+3}^2} \\
&\cdot\frac{j(\zeta_n,\zeta_nq^{2n+3},\zeta_nq^{2(2n+3)};q^{3(2n+3)})j(\omega \zeta_n,\omega^2\zeta_n;q^{2n+3})}{j(\zeta_n^3,q^{3(2n+3)})}\cdot
\end{align*} 
Using identities (\ref{equation:1.10}) and (\ref{equation:1.12}), the above second line becomes $J_{3(2n+3)}^2J_{2n+3}^2$, and the result follows.
\end{proof}

\begin{lemma}  \label{lemma:Tn41-residue} Let $n$ and $\zeta_n$ be as in Definition \ref{definition:defTn41}.  $T_{n,4}^1(y,q)$ is meromorphic for $y\ne 0$ and has simple poles at points $y_0$, where $y_0$ satisfies one of the following two conditions:
\begin{align*}
\textup{I}. & \ y_0^4=-\zeta_n^{-4}q^{4\ell (2n+4)-(2n+8)} \ (\text{or } y_0^2=i\zeta_n^{-2}q^{2\ell (2n+4)-(n+4)}),\\
\textup{II}. & \  y_0^4=-q^{4\ell (2n+4)-(2n+8)} \ (\text{or } y_0^2=iq^{2\ell (2n+4)-(n+4)}),
\end{align*}
where $\ell\in \mathbb{Z}$ and $i^2=-1$.  The respective residue at such a $y_0$ is given by
\begin{align*}
\textup{I}.& \ \ \ {i^{1-\ell}\zeta_n^{\ell-2-(n-1)/2}}q^{-2(2n+4)\binom{\ell-1}{2}-n^2+n+3}
 \frac{ (1-i)J_{4n,16n}+i\zeta_n y_0q^{2-\ell (2n+4)}J_{8n,16n}}{4nJ_{4n(2n+4)}^3},\\
\textup{II}.& \ \ \ {-i^{1-\ell}\zeta_n^{-\ell+1-(n-1)/2}}q^{-2(2n+4)\binom{\ell-1}{2}-n^2+n+3}
\frac{ (1-i)J_{4n,16n}+i y_0q^{2-\ell (2n+4)}J_{8n,16n}}{4nJ_{4n(2n+4)}^3} .
\end{align*}
\end{lemma}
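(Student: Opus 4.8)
The plan is to follow the template used for Lemmas \ref{lemma:Tn21-residue} and \ref{lemma:Tn31-residue}.  First I would dispose of the location of the poles.  Inspecting Definition \ref{definition:defTn41}, the only factors in the denominator of $T_{n,4}^1(y,q)$ that can vanish for $y\ne 0$ are $j(-q^{2n+8}\zeta_n^4y^4;q^{4(2n+4)})$ and $j(-q^{2n+8}y^4;q^{4(2n+4)})$; setting each equal to $q^{4\ell(2n+4)}$ for $\ell\in\mathbb{Z}$ gives exactly conditions \textup{I} and \textup{II}.  Since $(n,4)=1$, none of the theta factors in the numerator of (\ref{equation:Tn41-def}) vanishes simultaneously with one of these denominator factors away from the finitely many intersection points, which play no role here, so each such $y_0$ is a simple pole.

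To evaluate the residue at a pole $y_0$ of type \textup{I}, I would apply Proposition \ref{proposition:H1Thm1.3} to $G(y):=1/j(-q^{2n+8}\zeta_n^4y^4;q^{4(2n+4)})$, so that $b=4$, $m=4(2n+4)$, $\beta=-q^{2n+8}\zeta_n^4$ and $k=\ell$, whose residue at $y_0$ is $(-1)^{\ell+1}q^{4(2n+4)\binom{\ell}{2}}y_0/\big(4J_{4(2n+4)}^3\big)$.  Then $\lim_{y\to y_0}(y-y_0)T_{n,4}^1(y,q)$ is this residue times the remaining theta factors of (\ref{equation:Tn41-def}) evaluated at $y=y_0$, and the task becomes simplifying that product.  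Using the pole relation $y_0^4=-\zeta_n^{-4}q^{4\ell(2n+4)-(2n+8)}$ together with its square root $y_0^2=i\zeta_n^{-2}q^{2\ell(2n+4)-(n+4)}$, I would rewrite every surviving theta function by (\ref{equation:1.8}) and (\ref{equation:1.7}), extracting the associated powers of $q$, $\zeta_n$ and $i$.  As in the $p=2,3$ cases, many arguments become integral powers of the relevant base and the corresponding theta products vanish; what remains should reassemble into precisely the two pieces $(1-i)J_{4n,16n}$ and $i\zeta_ny_0q^{2-\ell(2n+4)}J_{8n,16n}$.

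The serious work, and the main obstacle, is the bookkeeping needed to collapse the elaborate numerator of (\ref{equation:Tn41-def}).  Unlike the $p=2$ case, here the numerator is a sum of two blocks, the $J_{4n,16n}$ block and the $-qJ_{8n,16n}$ block, each itself a sum of two products of several theta functions built from $J_{2(2n+4)}$, $J_{4(2n+4)}$ and $J_{8(2n+4)}$.  After substituting for $y_0$ one must combine these using the elementary relations (\ref{equation:1.9})--(\ref{equation:1.12}), the Jacobi-triple-product splitting (\ref{equation:jsplit}) with $m=2$ and $m=4$, exactly as in the proof of Corollary \ref{corollary:Rn41-residue}, and the theta-product factorizations (\ref{equation:H1Thm1.1})--(\ref{equation:H1Thm1.2B}), so that the $J_{2(2n+4)},J_{4(2n+4)},J_{8(2n+4)}$ factors telescope and the $\zeta_n$-dependence simplifies down to the displayed form.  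The delicate points are tracking the signs, consolidating the $q$-exponents, which enter as $\binom{\ell}{2}$ and $\ell^2$ terms and must combine into the $\binom{\ell-1}{2}$ appearing in the statement, and keeping the powers of $i$ straight; a good internal check is that the answer must match the residue of $R_{n,4}^1(y,q)$ from Corollary \ref{corollary:Rn41-residue} term by term under $\ell\leftrightarrow v$, which is the cancellation that makes $g(y)=R_{n,4}^1(y,q)-T_{n,4}^1(y,q)$ analytic in the proof of Proposition \ref{proposition:MTsub-gen}.  The residue at a pole of type \textup{II} is obtained by the identical computation with $\zeta_n$ removed from the active denominator factor and an overall sign change, precisely as in Lemmas \ref{lemma:Tn21-residue} and \ref{lemma:Tn31-residue}.
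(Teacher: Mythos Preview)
Your proposal is correct and follows essentially the same route as the paper: apply Proposition \ref{proposition:H1Thm1.3} to the vanishing denominator factor, substitute the pole relations $y_0^4=-\zeta_n^{-4}q^{4\ell(2n+4)-(2n+8)}$ and $y_0^2=i\zeta_n^{-2}q^{2\ell(2n+4)-(n+4)}$ into each surviving theta function via (\ref{equation:1.8}), and then collapse the two braced blocks of (\ref{equation:Tn41-def}) separately before assembling. The paper's execution is slightly more specific than your sketch: the first brace (lines three and four of (\ref{equation:Tn41-def})) is combined using (\ref{equation:1.12}), (\ref{equation:1.10}), and then the key step (\ref{equation:H1Thm1.1}) with $x=i$, $y=i\zeta_n$, which is what produces the factor $(1-i)$; the second brace (lines six and seven) is combined using (\ref{equation:1.10}) and then (\ref{equation:jsplit}) with $m=2$, which produces the $i\zeta_ny_0$ piece; the final clean-up uses (\ref{equation:1.10}) and (\ref{equation:1.12}) exactly as you anticipated.
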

\begin{proof}  The first part follows from Definition \ref{definition:defTn41}.  When we identify the corresponding pole of $R_{n,4}^1(y,q)$ in Corollary \ref{corollary:Rn41-residue}, the pole must involve the same primitive fourth root of unity. We prove the residue for poles of type \textup{I}.  The proof of the residue for poles of type \textup{II} is similar.  We take such a $y_0$ and use Proposition \ref{proposition:H1Thm1.3} to obtain
{\allowdisplaybreaks \begin{align}
\lim_{y\rightarrow y_0}(y-y_0)&T_{n,4}^1(y,q)=\frac{(-1)^{1+\ell}\zeta_n^{-(n-5)/2}y_0^4q^{4(2n+4)\binom{\ell}{2}-(n^2+n-3)}j(\zeta_n^{-1};q^{4(2n+4)})}{4nJ_{4n(2n+4)}^3J_{4(2n+4)}^3j(-q^{2n+8}y_0^4;q^{4(2n+4)})} \notag\\
\cdot \Big [&J_{4n,16n}\cdot \frac{j(q^{6n+16}\zeta_n^2y_0^4;q^{4(2n+4)})j(q^{n+4}\zeta_n y_0^2;q^{2(2n+4)})j(-q^{2(2n+4)}\zeta_n^{-1};q^{4(2n+4)})}{J_{2(2n+4)}^3J_{8(2n+4)}} \notag \\
&\cdot  \Big \{j(-q^{2n+8}\zeta_n^2y_0^4;q^{4(2n+4)})j(q^{2(2n+4)}\zeta_n^{-2};q^{4(2n+4)})J_{4(2n+4)}^2\notag\\
&+\frac{q^{n+4}\zeta_n^2y_0^2 j(-q^{6n+16}\zeta_n^2y_0^4;q^{4(2n+4)})j(q^{4n+8}\zeta_n^{-1};q^{4(2n+4)})^2j(-\zeta_n^{-1};q^{4(2n+4)})^2}{J_{4(2n+4)}}\Big \}\notag\\
-&qy_0^{-1}J_{8n,16n}\cdot \frac{j(q^{2n+8}\zeta_n^2y_0^4;q^{4(2n+4)})j(q^{3n+8}\zeta_ny_0^2;q^{2(2n+4)})j(-\zeta_n^{-1};q^{4(2n+4)})}{J_{2(2n+4)}^2} \notag\\
&\cdot  \Big \{\frac{q^{n+1}j(-q^{2n+8}\zeta_n^2y_0^4;q^{4(2n+4)})j(q^{4n+8}\zeta_n^{-2};q^{4(2n+4)})J_{8(2n+4)}}{J_{4(2n+4)} }\notag\\ 
&+\frac{q\zeta_n y_0^2j(-q^{6n+16}\zeta_n^2y_0^4;q^{4(2n+4)})j(q^{4(2n+4)}\zeta_n^{-2};q^{8(2n+4)})^2}{J_{8(2n+4)} }\Big \} \Big ].\label{equation:Tn41-residue}
\end{align}}%
We rewrite the above residue line by line.  We focus on the first line of (\ref{equation:Tn41-residue}).  Using (\ref{equation:1.8}),
\begin{equation*}
j(\zeta_n^{-1};q^{4(2n+4)})=-\zeta_n^{-1}j(\zeta_n;q^{4(2n+4)}).
\end{equation*}
Substituting for $y_0^4$, using  (\ref{equation:1.8}) and simplifying, we obtain
\begin{equation*}
j(-q^{2n+8}y_0^4;q^{4(2n+4)})=(-1)^{1+\ell}\zeta_n^{4(\ell-1)}q^{-4(2n+4)\binom{\ell}{2}}j(\zeta_n^4;q^{4(2n+4)}).
\end{equation*}
Hence the first line can be rewritten
\begin{equation}
{\zeta_n^{-4\ell+1-(n-1)/2}q^{(2n+4)4\ell^2-n^2-3n-5} j(\zeta_n;q^{4(2n+4)})}/{4nJ_{4n(2n+4)}^3J_{4(2n+4)}^3j(\zeta_n^4;q^{4(2n+4)})}.\label{equation:Tn41-Q}
\end{equation}
We work on the second line of (\ref{equation:Tn41-residue}).  Substituting for $y_0^4$ or $y_0^2$ and using (\ref{equation:1.8}) yields
\begin{align*}
j(q^{6n+16}\zeta_n^2y_0^4;q^{4(2n+4)})&=\zeta_n^{2\ell}q^{-2\ell^2(2n+4)}j(-q^{2(2n+4)}\zeta_n^2;q^{4(2n+4)}),\\
j(q^{n+4}\zeta_ny_0^2;q^{2(2n+4)})&=(-1)^{\ell+1}i^{1-\ell}\zeta_n^{\ell-1}q^{-2(2n+4)\binom{\ell}{2}}j(-i\zeta_n;q^{2(2n+4)}),\\
j(-q^{2(2n+4)}\zeta_n^{-1};q^{4(2n+4)})&=j(-\zeta_nq^{2(2n+4)};q^{4(2n+4)}).
\end{align*}
Hence the second line can be rewritten
\begin{equation}
\frac{(-1)^{1+\ell}i^{1-\ell}\zeta_n^{3\ell-1}J_{4n,16n}j(-i\zeta_n;q^{2(2n+4)})j(-\zeta_nq^{2(2n+4)},-q^{2(2n+4)}\zeta_n^2;q^{4(2n+4)})}
{q^{(2n+4)(3\ell^2-\ell)}J_{2(2n+4)}^3J_{8(2n+4)}}.\label{equation:Tn41-A}
\end{equation}
We focus on the sum of the third and fourth lines of (\ref{equation:Tn41-residue}).  Substituting for $y_0^4$ and using  (\ref{equation:1.8}),
\begin{align}
(-1)^{\ell +1}\zeta_n^{2\ell-2}q^{-4(2n+4)\binom{\ell}{2}}&j(q^{2(2n+4)}\zeta_n^2;q^{4(2n+4)})\label{equation:cat3}\\
\cdot &\Big \{ j(\zeta_n^2;q^{4(2n+4)})J_{4(2n+4)}^2-i\frac{j(q^{2(2n+4)}\zeta_n,-\zeta_n;q^{4(2n+4)})^2}{J_{4(2n+4)}}\Big \}.\notag
\end{align}
Using (\ref{equation:1.12}) and then (\ref{equation:1.10}), we have
\begin{align*}
j(\zeta_n^2;q^{4(2n+4)})&=j(\zeta_n,-\zeta_n;q^{2(2n+4)})\cdot {J_{4(2n+4)}}/{J_{2(2n+4)}^2}\\
=j(\zeta_n&,\zeta_nq^{2(2n+4)},-\zeta_n,-\zeta_nq^{2(2n+4)};q^{4(2n+4)}) /{J_{4(2n+4)}^3}.
\end{align*}
Hence the above expression in braces in (\ref{equation:cat3}) becomes
\begin{align*}
\frac{j(q^{2(2n+4)}\zeta_n,-\zeta_n;q^{4(2n+4)})}{J_{4(2n+4)}}\cdot \Big \{ j(\zeta_n,-\zeta_nq^{2(2n+4)};q^{4(2n+4)}) -ij(q^{2(2n+4)}\zeta_n,-\zeta_n;q^{4(2n+4)})\Big \}.
\end{align*} 
Using (\ref{equation:H1Thm1.1}) with the substitutions, $x=i, \ y=i\zeta_n, \ q=q^{2(2n+4)},$ we have that
\begin{align*}
 j(\zeta_n;&q^{4(2n+4)})j(-\zeta_nq^{2(2n+4)};q^{4(2n+4)})-ij(q^{2(2n+4)}\zeta_n;q^{4(2n+4)})j(-\zeta_n;q^{4(2n+4)})\\
 &=j(i;q^{2(2n+4)})j(i\zeta_n;q^{2(2n+4)})
 =j(i\zeta_n;q^{2(2n+4)})(1-i){J_{2(2n+4)}J_{8(2n+4)}}/{J_{4(2n+4)}}.
\end{align*}
Thus the sum of the third and fourth lines can be written
\begin{align}
(1-i)\cdot (-1)^{\ell +1}\zeta_n^{2\ell-2}&q^{-4(2n+4)\binom{\ell}{2}}\frac{J_{2(2n+4)}J_{8(2n+4)}}{J_{4(2n+4)}^2}\notag\\
&\cdot j(q^{2(2n+4)}\zeta_n,-\zeta_n,q^{2(2n+4)}\zeta_n^2;q^{4(2n+4)})j(i\zeta_n;q^{2(2n+4)}).\label{equation:Tn41-alpha}
\end{align}
We proceed to the fifth line of (\ref{equation:Tn41-residue}).  We note that $y_0^{-1}=y_0\cdot y_0^{-2}=-i\zeta_n^2y_0q^{-2\ell(2n+4)+(n+4)}$. Substituting for $y_0^4$, using  (\ref{equation:1.8}), and simplifying, we obtain
\begin{align*}
j(q^{2n+8}\zeta_n^2y_0^4;q^{4(2n+4)})&=\zeta_n^{2(\ell-1)}q^{-4(2n+4)\binom{\ell}{2}}j(-\zeta_n^2;q^{4(2n+4)}),\\
j(q^{3n+8}\zeta_ny_0^2;q^{2(2n+4)})&=(-1)^{\ell}i^{-\ell}\zeta_n^{\ell}q^{-\ell^2(2n+4)}j(-i\zeta_nq^{2n+4};q^{2(2n+4)}),\\
j(-\zeta_n^{-1};q^{4(2n+4)})&=\zeta_n^{-1}j(-\zeta_n;q^{4(2n+4)}).
\end{align*}
Hence the fifth line can be rewritten
\begin{align}
(-1)^{\ell}i^{1-\ell}\zeta_n^{3\ell-1}y_0q^{n+5-(2n+4)3\ell^2}\cdot &\frac{J_{8n,16n}}{J_{2(2n+4)}^2}j(-\zeta_n^2,-\zeta_n;q^{4(2n+4)})j(-i\zeta_nq^{2n+4};q^{2(2n+4)}).\label{equation:Tn41-B}
\end{align}
We rewrite the sum of the sixth and seventh lines of (\ref{equation:Tn41-residue}).  Substituting for $y_0^4$ and $y_0^2$, using  (\ref{equation:1.8}) and simplifying, we obtain
\begin{align}
(-1)^{\ell+1}&\zeta_n^{2(\ell-1)}q^{-4(2n+4)\binom{\ell}{2}+n+1}j(\zeta_n^2q^{2(2n+4)};q^{4(2n+4)})\label{equation:cat4}\\
&\cdot \Big \{ j(\zeta_n^2;q^{4(2n+4)}){J_{8(2n+4)}}/{J_{4(2n+4)}}-i \zeta_n q^{-2n-4}j(\zeta_n^2q^{4(2n+4)};q^{8(2n+4)})^2/{J_{8(2n+4)}}\Big \}.\notag
\end{align}
Using (\ref{equation:1.10}) yields
\begin{equation*}
j(\zeta_n^2;q^{4(2n+4)})=j(\zeta_n^2;q^{8(2n+4)})j(\zeta_n^2q^{4(2n+4)};q^{8(2n+4)})\cdot {J_{4(2n+4)}}/{J_{8(2n+4)}^2},
\end{equation*}
thus the expression in braces in (\ref{equation:cat4}) becomes
\begin{align*}
{j(\zeta_n^2q^{4(2n+4)};q^{8(2n+4)})}\cdot& \Big \{ j(\zeta_n^2;q^{8(2n+4)})
-i \zeta_n q^{-2n-4}j(\zeta_n^2q^{4(2n+4)};q^{8(2n+4)})\Big\}/{J_{8(2n+4)}}\\
&={j(\zeta_n^2q^{4(2n+4)};q^{8(2n+4)})}\cdot j(i\zeta_nq^{-(2n+4)};q^{2(2n+4)})/{J_{8(2n+4)}}\\
&=-i\zeta_nq^{-(2n+4)}{j(\zeta_n^2q^{4(2n+4)};q^{8(2n+4)})}\cdot j(i\zeta_nq^{2n+4};q^{2(2n+4)})/{J_{8(2n+4)}},
\end{align*}
where the first equality follows from (\ref{equation:jsplit}) with $m=2$, and the second equality follows from (\ref{equation:1.8}).   Hence the sum of the sixth and seventh lines can be written
\begin{align}
\frac{(-1)^{\ell}i\zeta_n^{2\ell-1}q^{-4(2n+4)\binom{\ell}{2}-n-3}}{J_{8(2n+4)}}j(\zeta_n^2q^{2(2n+4)};q^{4(2n+4)})j(\zeta_n^2q^{4(2n+4)};q^{8(2n+4)}) j(i\zeta_nq^{2n+4};q^{2(2n+4)}).\label{equation:Tn41-beta}
\end{align}
Combining (\ref{equation:Tn41-Q}), (\ref{equation:Tn41-A}), (\ref{equation:Tn41-alpha}), (\ref{equation:Tn41-B}), and (\ref{equation:Tn41-beta}), we have
\begin{align*}
&\lim_{y\rightarrow y_0}(y-y_0)T_{n,4}^1(y,q)
=\frac{\zeta_n^{-4\ell+1-(n-1)/2}q^{(2n+4)4\ell^2-n^2-3n-5} j(\zeta_n;q^{4(2n+4)})}{4nJ_{4n(2n+4)}^3J_{4(2n+4)}^3j(\zeta_n^4;q^{4(2n+4)})}\\
&\cdot \Big [\frac{(-1)^{1+\ell}i^{1-\ell}\zeta_n^{3\ell-1}J_{4n,16n}j(-q^{2(2n+4)}\zeta_n^2;q^{4(2n+4)})j(-i\zeta_n;q^{2(2n+4)})j(-\zeta_nq^{2(2n+4)};q^{4(2n+4)})}{q^{(2n+4)(3\ell^2-\ell)}J_{2(2n+4)}^3J_{8(2n+4)}}\\
&\cdot(1-i)\cdot  (-1)^{\ell +1}\zeta_n^{2\ell-2}q^{-4(2n+4)\binom{\ell}{2}}\frac{J_{2(2n+4)}J_{8(2n+4)}}{J_{4(2n+4)}^2}\cdot \notag \\
&\cdot j(q^{2(2n+4)}\zeta_n,-\zeta_n,q^{2(2n+4)}\zeta_n^2;q^{4(2n+4)})j(i\zeta_n;q^{2(2n+4)})\\
&+(-1)^{\ell}i^{1-\ell}\zeta_n^{3\ell-1}y_0q^{-(2n+4)3\ell^2+n+5}\cdot \frac{J_{8n,16n}}{J_{2(2n+4)}^2}j(-\zeta_n^2,-\zeta_n;q^{4(2n+4)})j(-i\zeta_nq^{2n+4};q^{2(2n+4)})\\
&\cdot\frac{(-1)^{\ell}i\zeta_n^{2\ell-1}}{q^{4(2n+4)\binom{\ell}{2}+n+3}J_{8(2n+4)}}j(\zeta_n^2q^{2(2n+4)};q^{4(2n+4)})j(\zeta_n^2q^{4(2n+4)};q^{8(2n+4)}) j(i\zeta_nq^{2n+4};q^{2(2n+4)})\Big ].
\end{align*}
Collecting terms and simplifying using (\ref{equation:1.10}) and (\ref{equation:1.12}) produces the desired result.
\end{proof}


\subsection{Functional equations and poles for $T_{n,p}^2(y,q)$}

\begin{definition} \label{definition:defTn22}Let $n\in \mathbb{N}$ with $(n,2)=1$, $y\in \mathbb{C}^*$ generic, and $x_0$ such that $x_0^2=-q^{4\ell (n+1)-n-2}$ for some $\ell \in \mathbb{Z}$.  Then
\begin{align*}
T_{n,2}^2(y,q):&=
\frac{(-1)^{\ell+1}q^{4(n+1)\binom{\ell}{2}}y^{(n+1)/2}}{2q^{(n^2-3)/2}x_{0}^{(n-5)/2}}\cdot
\frac{J_{2n,4n}J_{4(n+1),8(n+1)}}{J_{4(n+1)}^3}\\
&\ \ \ \ \cdot\frac{j(y/x_0,q^{n+2}x_0y;q^{4(n+1)})j(q^{2n}/x_0^2y^2;q^{8(n+1)})}{j(y^n/x_0^n;q^{4n(n+1)})j(-q^{n+2}y^2;q^{4(n+1)})}.
\end{align*}
\end{definition}

\begin{definition}  \label{definition:defTn32} Let $n\in \mathbb{N}$ with $(n,3)=1$, $y\in\mathbb{C}^*$ generic, and $x_0$ such that $x_0^3=q^{3\ell (2n+3)-3(n+3)/2-3\beta}$ for some $\ell \in \mathbb{Z}$.  Also, define $\delta:=\{(n-1)/2\}$ and $\beta:=\delta\cdot (2n+3)$, with $0\le \{\alpha \}<1$ denoting the fractional part of $\alpha.$  Then
\begin{align}
&T_{n,3}^2(y,q):=q^{n\binom{\delta-(n-3)/2}{2}+(n+3)\big (\delta-(n-3)/2\big )\big (\delta+(n+1)/2\big )+n\binom{\delta+(n+1)/2}{2}-3n}(-x_0)^{\delta-(n-3)/2}\notag\\
&\cdot\frac{(-1)^{\ell+1}q^{3(2n+3)\binom{\ell}{2}} x_0(-y)^{\delta+(n+1)/2} J_{3n}j(y/x_0;q^{3(2n+3)})j(q^{(n+3)/2+\beta}x_0,q^{(n+3)/2+\beta}y;q^{2n+3})}
 {3J_{2n+3}^2J_{3(2n+3)}^2j(y^{n}/x_0^{n};q^{3n(2n+3)})j(q^{3(n+3)/2+3\beta}y^3;q^{3(2n+3)})}\notag\\
& \cdot  \Big \{ j(q^{7(n+1)/2+4+3\beta}x_0^2y,q^{7(n+1)/2+4+3\beta}x_0y^2;q^{3(2n+3)})\label{equation:Tn32-def}\\
&\ \  -{q^{n+3+2\beta}}{x_0y}j(q^{11(n+1)/2+5+3\beta}x_0^2y,q^{11(n+1)/2+5+3\beta}x_0y^2;q^{3(2n+3)})\Big \}.\notag
\end{align}
\end{definition}

\begin{definition}  \label{definition:defTn42} Let $n\in \mathbb{N}$ with $(n,2)=1$, $y\in \mathbb{C}^*$ generic, and $x_0$ such that $x_0^4=-q^{4\ell (2n+4)-(2n+8)}$ for some $\ell \in \mathbb{Z}$. Then
{\allowdisplaybreaks
 \begin{align}
T_{n,4}^2(y,q):&=\frac{(-1)^{1+\ell}x_0^{-(n-5)/2}y^{(n+1)/2}q^{4(2n+4)\binom{\ell}{2}-(n^2+n-3)}j(y/x_0;q^{4(2n+4)})}{4J_{4(2n+4)}^3j(y^n/x_0^n;q^{4n(2n+4)})j(-q^{2n+8}y^4;q^{4(2n+4)})} \notag\\
&  \cdot \Big [J_{4n,16n}\cdot \frac{j(q^{6n+16}x_0^2y^2;q^{4(2n+4)})j(q^{n+4}x_0 y;q^{2(2n+4)})j(-q^{2(2n+4)}y/x_0;q^{4(2n+4)})}{J_{2(2n+4)}^3J_{8(2n+4)}} \notag \\
&\ \ \ \ \cdot  \Big \{j(-q^{2n+8}x_0^2y^2;q^{4(2n+4)})j(q^{2(2n+4)}y^2/x_0^2;q^{4(2n+4)})J_{4(2n+4)}^2\notag\\
&\ \ \ \ +\frac{q^{n+4}x_0^2 j(-q^{6n+16}x_0^2y^2;q^{4(2n+4)})j(q^{4n+8}y/x_0;q^{4(2n+4)})^2j(-y/x_0;q^{4(2n+4)})^2}{J_{4(2n+4)}}\Big \}\notag\\
&\ \ -qJ_{8n,16n}\cdot \frac{j(q^{2n+8}x_0^2y^2;q^{4(2n+4)})j(q^{3n+8}x_0y;q^{2(2n+4)})j(-y/x_0;q^{4(2n+4)})}{J_{2(2n+4)}^2} \notag\\
&\ \ \ \ \cdot  \Big \{\frac{q^{n+1}j(-q^{2n+8}x_0^2y^2;q^{4(2n+4)})j(q^{4n+8}y^2/x_0^2;q^{4(2n+4)})J_{8(2n+4)}}{yJ_{4(2n+4)} }\notag\\ 
&\ \ \ \ +\frac{qx_0j(-q^{6n+16}x_0^2y^2;q^{4(2n+4)})j(q^{4(2n+4)}y^2/x_0^{2};q^{8(2n+4)})^2}{J_{8(2n+4)} }\Big \} \Big ].\label{equation:Tn42-def}
\end{align}}
\end{definition}

\begin{lemma} \label{lemma:Tnp2-def} For $p=2$ (resp. $3,4$), let $n$, $y$, and $x_0$ be as Definition \ref{definition:defTn22} (resp. \ref{definition:defTn32}, \ref{definition:defTn42}).  Then
\begin{equation*}
\lim_{x\rightarrow x_0}(x-x_0)\Theta_{n,p}(x,y,q)=T_{n,p}^2(y,q).
\end{equation*}
\end{lemma}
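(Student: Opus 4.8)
\textbf{Proof proposal for Lemma \ref{lemma:Tnp2-def}.}

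The plan is to prove all three cases ($p=2,3,4$) by the same mechanism: $\Theta_{n,p}(x,y,q)$ is a finite sum (or in the $p=1$ convention, a single quotient) of theta functions divided by a theta function $j(y^n/x^n;q^{\,pn(2n+p)})$ in the denominator, together with $j$'s of $x^p$ or $y^p$, and so as a function of $x$ it is meromorphic with simple poles precisely where $j(y^n/x^n;q^{\,pn(2n+p)})$ vanishes, i.e. at $x_0$ with $y^n/x_0^n$ an integral power of $q^{\,pn(2n+p)}$, which rearranges to $x_0=\zeta_n y\cdot q^{tp(2n+p)}$, the type I$'$ poles of \eqref{equation:polesoftypeI}. (The other factors $j(-q^{\cdots}x^p;\cdots)$ and $j(-q^{\cdots}y^p;\cdots)$ only create the type II$'$/III$'$ singularities, which are not at $x=x_0$ unless $x_0$ happens to also lie on those families, which we exclude as we are off the intersection points.) Wait — actually $x_0$ as in Definition \ref{definition:defTn22} satisfies $x_0^2=-q^{4\ell(n+1)-n-2}$, a type II$'$-style condition in $x$, not $x=\zeta_n y\cdot q^{\cdots}$. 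So the residue being computed is at a pole of $\Theta_{n,p}$ coming from the $j(-q^{\cdots}x^p;\cdots)$-type factor in the numerator's denominator, not from $j(y^n/x^n;\cdots)$. These are exactly the poles of type II$'$ in \eqref{equation:polesoftypeII}. So the mechanism is: identify which theta factor in the denominator of $\Theta_{n,p}(x,y,q)$ vanishes at $x=x_0$, and apply Proposition \ref{proposition:H1Thm1.3} to extract its residue.

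Concretely, first I would write out $\Theta_{n,p}(x,y,q)$ from the statement of Theorem \ref{theorem:genfn1}--\ref{theorem:genfn4} (for $p=2$ it is $\Theta_{n,2}$ from Theorem \ref{theorem:genfn2}; for $p=3$, $\Theta_{n,3}$ from Theorem \ref{theorem:genfn3}; for $p=4$, $\Theta_{n,4}$ from Theorem \ref{theorem:genfn4}), and locate the unique factor in the denominator — for $p=2$ it is $j(-q^{n+2}x^2,-q^{n+2}y^2;q^{4(n+1)})$, specifically the $j(-q^{n+2}x^2;q^{4(n+1)})$ piece — that vanishes when $x^2=-q^{4\ell(n+1)-n-2}$. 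Then Proposition \ref{proposition:H1Thm1.3}, with $\beta=-q^{n+2}$, $b=2$, $m=4(n+1)$, and $k=\ell$ (so that $x_0^2=q^{4\ell(n+1)}/\beta=-q^{4\ell(n+1)-n-2}$), gives the residue of $1/j(-q^{n+2}x^2;q^{4(n+1)})$ at $x_0$ as $(-1)^{\ell+1}q^{4(n+1)\binom{\ell}{2}}x_0/\big(2J_{4(n+1)}^3\big)$. Multiplying this by the value at $x=x_0$ of all the remaining (analytic at $x_0$) factors of $\Theta_{n,p}(x,y,q)$ — the prefactor $y^{(n+1)/2}q^{-(n^2+n-3)/2}x^{-(n-5)/2}$ (for $p=2$, up to the exact exponents in Theorem \ref{theorem:genfn2}), the numerator theta products $J_{2n,4n}J_{4(n+1),8(n+1)}\,j(y/x,q^{n+2}xy;q^{4(n+1)})\,j(q^{2n}/x^2y^2;q^{8(n+1)})$, and the surviving denominator factors $j(y^n/x^n;q^{4n(n+1)})\,j(-q^{n+2}y^2;q^{4(n+1)})$ — and simplifying yields exactly the expression $T_{n,2}^2(y,q)$ of Definition \ref{definition:defTn22}. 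The $p=3,4$ cases are identical in structure: each $\Theta_{n,p}$ has one denominator factor of the shape $j(q^{\cdots}x;q^{2n+p})$ (for $p=3$, appearing as $j(q^{n^2+n}x,\ldots;q^{2n+3})$) or an analogous $j(-q^{\cdots}x^p;\cdots)$ that vanishes at $x_0$ as specified, and one applies Proposition \ref{proposition:H1Thm1.3} and substitutes.

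The routine but lengthy part — and the one most prone to error — is the bookkeeping of the exponents of $q$ and the root-of-unity/sign factors when passing from the generic $x$ in $\Theta_{n,p}(x,y,q)$ to the explicit $x_0$, especially in the $p=3$ case where the fractional-part shifts $\delta=\{(n-1)/2\}$ and $\beta=\delta(2n+3)$ enter, and in the $p=4$ case where $\Theta_{n,4}$ is a two-term expression ($J_{4n,16n}S_1 - qJ_{8n,16n}S_2$) each of whose summands is itself a product of many theta factors. I expect the main obstacle to be purely computational: carefully tracking the normalizations so that the residue comes out in precisely the packaged form of Definitions \ref{definition:defTn22}--\ref{definition:defTn42}. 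There is no conceptual difficulty — Proposition \ref{proposition:H1Thm1.3} does all the real work, and since the lemma's statement is just ``the residue equals this explicitly-named quantity,'' the proof is a direct (if tedious) verification, which is why the authors can legitimately say ``This is just an application of Proposition \ref{proposition:H1Thm1.3}.''
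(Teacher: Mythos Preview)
Your approach is correct and matches the paper's: the proof is a direct application of Proposition~\ref{proposition:H1Thm1.3} to the unique denominator theta factor of $\Theta_{n,p}(x,y,q)$ that vanishes at $x=x_0$, followed by substitution of $x=x_0$ into the remaining factors; the paper's proof says exactly this (and additionally points to the $n$-even/$n$-odd simplified forms of $\Theta_{n,3}$ given in the remark after Lemma~\ref{lemma:Tnp1-def}). One small correction to your $p=3$ description: the factor $j(q^{n^2+n}x;q^{2n+3})$ sits in the \emph{numerator} of $\Theta_{n,3}$, not the denominator; the vanishing denominator factor is $j(q^{3n^2+3n}x^3;q^{3(2n+3)})$ (or $j(q^{3(n+3)/2+3\beta}x^3;q^{3(2n+3)})$ in the simplified form), and the numerator factor is what makes the $\omega=1$ family of poles removable, leaving only the $\omega\ne1$ poles described in Definition~\ref{definition:defTn32}.
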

\begin{proof}
This follows from Proposition \ref{proposition:H1Thm1.3}.  The $n$ even and $n$ odd specializations of $\Theta_{n,3}(x,y,q)$ can be found in Section \ref{section:Tnp1}.
\end{proof}
\begin{lemma}  \label{lemma:Tnp2-functional}For $p=2$ (resp. $3,4$), let $n$, $y$, and $x_0$ be as Definition \ref{definition:defTn22} (resp. \ref{definition:defTn32}, \ref{definition:defTn42}).  Then
\begin{equation*}
T_{n,p}^2(q^{p(2n+p)}y,q)=q^{n(np+\binom{p+1}{2})}{(-y)^n}{(-x_0)^{-(n+p)}}T_{n,p}^2(y,q).
\end{equation*}
\end{lemma}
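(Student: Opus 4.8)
\textbf{Proof proposal for Lemma \ref{lemma:Tnp2-functional}.}

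The plan is to prove the functional equation directly by substituting $q^{p(2n+p)}y$ for $y$ in the explicit product formula defining $T_{n,p}^2(y,q)$ (Definition \ref{definition:defTn22} for $p=2$, Definition \ref{definition:defTn32} for $p=3$, Definition \ref{definition:defTn42} for $p=4$) and pushing each occurrence of $q^{p(2n+p)}y$ out of the relevant theta functions using the scaling rule \eqref{equation:1.8}, namely $j(q^Nx;q^M)=(-1)^{N/M}q^{-\binom{N/M}{2}M}x^{-N/M}j(x;q^M)$ whenever $M\mid N$. The point is that every theta factor appearing in $T_{n,p}^2$ has a base $q^M$ for which the shift $q^{p(2n+p)}$ applied to the argument corresponds to an \emph{integer} power of the base, so \eqref{equation:1.8} applies cleanly and returns the same theta factor up to an explicit monomial in $q$, $y$, and $x_0$. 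Collecting all these monomial prefactors and the explicit $y$- and $q$-powers that sit outside the theta functions in the definition, one reads off the total multiplier and checks it equals $q^{n(np+\binom{p+1}{2})}(-y)^n(-x_0)^{-(n+p)}$.

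Concretely, for each $p\in\{2,3,4\}$ I would organize the bookkeeping as follows. First, handle the ``prefactor'' part: the definitions contain an explicit factor $y^{(n+1)/2}$ (resp.\ $(-y)^{\delta+(n+1)/2}$ times the $q$-power in the exponent, for $p=3$) together with a $q^{-(n^2-3)/2}$-type constant; under $y\mapsto q^{p(2n+p)}y$ this simply multiplies by $q^{p(2n+p)(n+1)/2}$ (suitably modified for $p=3$), contributing the bulk of the $q$-power in the claimed multiplier. Second, go factor-by-factor through the theta quotient. The denominator contains $j(y^n/x_0^n;q^{np(2n+p)})$, for which $y^n\mapsto q^{np(2n+p)}y^n$ is exactly the base, so \eqref{equation:1.8} with exponent $1$ contributes a single power of $(y^n/x_0^n)^{-1}$ and a trivial $q$-factor; it also contains $j(-q^{\ldots}y^2;q^{\ldots})$ and similar, where $y^2\mapsto q^{2p(2n+p)}y^2$ lands on an integer multiple of the base. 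Third, in the numerator the various $j(\cdot;q^{4(2n+4)})$, $j(\cdot;q^{2(2n+4)})$, $j(\cdot;q^{8(2n+4)})$ (for $p=4$) and the analogues for $p=2,3$ each receive $y$, $y^2$, or $y^3$ shifted by $q^{p(2n+p)}$ raised to that power; since $p(2n+p)$, $2p(2n+p)$, $3p(2n+p)$, $4p(2n+p)$ are all divisible by $p^2(2n+p)$-type bases appearing, \eqref{equation:1.8} applies. I would also use the functional equation for $\Theta_{n,p}(x,y,q)$ recorded in Lemma \ref{lemma:GenTheta-functional} as an independent cross-check: since $T_{n,p}^2(y,q)=\lim_{x\to x_0}(x-x_0)\Theta_{n,p}(x,y,q)$ by Lemma \ref{lemma:Tnp2-def}, the $y$-functional equation for $T_{n,p}^2$ must match the $y$-functional equation inherited from $\Theta_{n,p}$, giving the claimed multiplier without re-deriving it, provided one is careful that $x_0$ is held fixed rather than scaled.

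The main obstacle is purely the combinatorial accounting of exponents: there are many theta factors (especially for $p=4$, where \eqref{equation:Tn42-def} has on the order of a dozen theta factors spread across three different moduli), and each contributes a $q$-power of the form $-\binom{k}{2}M$ together with a monomial in $y$ and $x_0$; a single sign or binomial-coefficient slip will throw off the final multiplier. The cleanest route to avoid this is to lean on the $\Theta_{n,p}$ cross-check: Lemma \ref{lemma:GenTheta-functional} gives $\Theta_{n,p}(x,q^{p(2n+p)}y,q)=q^{n(np+\binom{p+1}{2})}(-y)^n(-x)^{-(n+p)}\Theta_{n,p}(x,q^{p(2n+p)}\cdot 0\ \text{err})$—more precisely, interchanging the roles of $x$ and $y$ in that lemma, $\Theta_{n,p}(x,q^{p(2n+p)}y,q)=q^{n(np+\binom{p+1}{2})}(-y)^n(-x)^{-(n+p)}\Theta_{n,p}(x,y,q)$—and taking $\lim_{x\to x_0}(x-x_0)(\cdot)$ on both sides (with $x_0$ fixed) yields exactly $T_{n,p}^2(q^{p(2n+p)}y,q)=q^{n(np+\binom{p+1}{2})}(-y)^n(-x_0)^{-(n+p)}T_{n,p}^2(y,q)$. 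Thus the lemma reduces to Lemma \ref{lemma:GenTheta-functional} plus the residue identification of Lemma \ref{lemma:Tnp2-def}, and the only thing to verify by hand is that the $x$-dependence of the singular locus of $\Theta_{n,p}$ near $x_0$ is unaffected by the $y$-shift, which is immediate from conditions \eqref{equation:Rnp2-poleI}--\eqref{equation:Rnp2-poleII}. I would present the proof this way — a two-line deduction from Lemma \ref{lemma:GenTheta-functional} — and relegate the direct \eqref{equation:1.8}-expansion to a remark, or omit it entirely as ``straightforward,'' matching the paper's stated convention for this type of lemma.
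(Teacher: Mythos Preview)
Your proposal is correct, and your first approach---pushing $y\mapsto q^{p(2n+p)}y$ through each theta factor via \eqref{equation:1.8}---is exactly what the paper does; its entire proof is the single line ``This is just repeated application of \eqref{equation:1.8}.''

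Your preferred alternative, deducing the functional equation from Lemma~\ref{lemma:GenTheta-functional} by taking $\lim_{x\to x_0}(x-x_0)(\cdot)$ on both sides of $\Theta_{n,p}(x,q^{p(2n+p)}y,q)=q^{n(np+\binom{p+1}{2})}(-y)^n(-x)^{-(n+p)}\Theta_{n,p}(x,y,q)$, is a genuinely different and cleaner route: it bypasses the dozen-or-so exponent bookkeeping steps (especially for $p=4$) entirely, at the cost of invoking Lemma~\ref{lemma:Tnp2-def} and checking that the pole location $x_0$ is independent of $y$---which is immediate from Definitions~\ref{definition:defTn22}--\ref{definition:defTn42}, since the defining condition on $x_0$ involves only $q$ and $\ell$. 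The paper does not take this route, presumably because it treats each of Lemmas~\ref{lemma:Fnp-thetafunctional}, \ref{lemma:GenTheta-functional}, \ref{lemma:Tnp1-functional}, and \ref{lemma:Tnp2-functional} uniformly as one-line \eqref{equation:1.8} exercises; your residue argument trades that uniformity for a proof that is robust against sign and binomial slips.
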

\begin{proof}  This is just repeated application of (\ref{equation:1.8}).
\end{proof}

\begin{lemma}  \label{lemma:Tn22-residue} Let $n$ and $x_0$ be as in Definition \ref{definition:defTn22} and $\zeta_n$ an $n$-th root of unity.  $T_{n,2}^2(y,q)$ is meromorphic for $y\ne 0$ and has poles at points $y_0$, where $y_0$ satisfies at least of the following two conditions:
\begin{align*}
\textup{I}. & \ y_0=\zeta_n x_0q^{4t (n+1)},\\
\textup{II}. & \ y_0^2=-q^{4t (n+1)-n-2}.
\end{align*}
If $y_0$ satisifies \textup{I} or \textup{II} exclusively, then it is a simple pole.  When this is the case, the residues are as follows.  For poles of type \textup{I}, the respective residue at such a $y_0$ for $t=0$ is
\begin{align*}
\textup{I}.& \ \ \ (-1)^{\ell}\zeta_n^{-\ell +1+(n+1)/2}q^{-4(n+1)\binom{\ell-1}{2}-\tfrac12(n^2-4n-3)}{J_{2n,4n}}/\big ({2nJ_{4n(n+1)}^3}\big ).
\end{align*}
Given the residue at $t=0$, one can use the functional equation of Lemma \ref{lemma:Tnp2-functional} to compute the residue for general $t\in \mathbb{Z}$.  Poles of type \textup{II} are removable.
\end{lemma}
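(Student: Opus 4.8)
The plan is to follow the template of Lemma \ref{lemma:Tn21-residue}: read the poles off the denominator in Definition \ref{definition:defTn22}, extract residues with Proposition \ref{proposition:H1Thm1.3}, and simplify the resulting theta quotients using the elementary identities of Section \ref{section:prelim}. The only $y$-dependent factors in the denominator of $T_{n,2}^2(y,q)$ are $j(y^n/x_0^n;q^{4n(n+1)})$ and $j(-q^{n+2}y^2;q^{4(n+1)})$, and each theta product has only simple zeros. The first vanishes exactly when $(y/x_0)^n=q^{4n(n+1)t}$, i.e.\ $y_0=\zeta_n x_0q^{4t(n+1)}$ for some $n$-th root of unity $\zeta_n$ and $t\in\Z$; the second vanishes exactly when $y_0^2=-q^{4t(n+1)-n-2}$. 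At a $y_0$ satisfying exactly one of these the denominator vanishes to order one, so $T_{n,2}^2(y,q)$ has at most a simple pole there, which is the first assertion.

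For the residue at a pole of type \textup{I} it suffices, by Lemma \ref{lemma:Tnp2-functional}, to treat $t=0$, so $y_0=\zeta_n x_0$. Writing $T_{n,2}^2(y,q)=C(y)/j(x_0^{-n}y^n;q^{4n(n+1)})$ with $C(y)$ the remaining factor, which is analytic at $y_0$, Proposition \ref{proposition:H1Thm1.3} with $b=n$, $m=4n(n+1)$, $\beta=x_0^{-n}$ gives
\begin{equation*}
\lim_{y\to y_0}(y-y_0)T_{n,2}^2(y,q)=\frac{-y_0}{nJ_{4n(n+1)}^3}\, C(y_0),
\end{equation*}
since $y_0^n=x_0^n$ forces the index $k$ in that proposition to be $0$. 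To evaluate $C(y_0)$ I would substitute $y_0=\zeta_n x_0$ and $x_0^2=-q^{4\ell(n+1)-n-2}$ into the four theta factors, use (\ref{equation:1.7}) and (\ref{equation:1.8}) to pull out the powers of $q$, $\zeta_n$ and $-1$, and then invoke the product identity
\begin{equation*}
\frac{j(\zeta_n;q^{4(n+1)})\,j(-\zeta_n;q^{4(n+1)})\,j(\zeta_n^2q^{4(n+1)};q^{8(n+1)})}{j(\zeta_n^2;q^{4(n+1)})}=\frac{J_{4(n+1)}^3}{J_{4(n+1),8(n+1)}},
\end{equation*}
which follows from (\ref{equation:1.9}), (\ref{equation:1.10}) and $J_{4(n+1),8(n+1)}=J_{4(n+1)}^2/J_{8(n+1)}$. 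The factor $J_{4(n+1),8(n+1)}/J_{4(n+1)}^3$ then cancels the analogous prefactor in Definition \ref{definition:defTn22}, leaving $J_{2n,4n}$ together with explicit powers of $q$ and $\zeta_n$; collecting these and using $x_0^2=-q^{4\ell(n+1)-n-2}$ once more yields the claimed residue. (In the analogous Lemma \ref{lemma:Tn21-residue} both pole families produce residues by this same mechanism; here the type \textup{II} poles are instead removable, as shown next.)

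For removability of the type \textup{II} poles, suppose $y_0^2=-q^{4m(n+1)-n-2}$ and that $y_0$ does not satisfy condition \textup{I}. Using $x_0^2=-q^{4\ell(n+1)-n-2}$ one computes $q^{2n}/(x_0^2y_0^2)=q^{4(n+1)(1-m-\ell)}$ and $(y_0/x_0)^2=q^{4(n+1)(m-\ell)}$, so $y_0/x_0=\varepsilon q^{2(n+1)(m-\ell)}$ with $\varepsilon=\pm1$. If $m-\ell$ is odd, then $m+\ell$ is odd and $j(q^{2n}/x_0^2y^2;q^{8(n+1)})$ vanishes at $y_0$. If $m-\ell$ is even, then, using that $n$ is odd, failure of condition \textup{I} forces $\varepsilon=-1$; since $(q^{n+2}x_0y_0)(y_0/x_0)=q^{n+2}y_0^2=-q^{4m(n+1)}$, this gives $q^{n+2}x_0y_0=q^{2(n+1)(m+\ell)}$, so $j(q^{n+2}x_0y;q^{4(n+1)})$ vanishes at $y_0$. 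In either case the numerator vanishes to order at least one, cancelling the simple zero of the denominator, so the pole is removable.

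The only genuinely structural step is the product identity reducing the four-theta quotient to $J_{4(n+1)}^3/J_{4(n+1),8(n+1)}$ — the same mechanism used in Lemma \ref{lemma:Tn21-residue} — after which everything reduces to routine power-of-$q$ and sign bookkeeping through repeated use of (\ref{equation:1.8}). I expect that bookkeeping, rather than any conceptual difficulty, to be the main labor.
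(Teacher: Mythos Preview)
Your proposal is correct and follows essentially the same route as the paper: extract the type \textup{I} residue via Proposition \ref{proposition:H1Thm1.3}, reduce the resulting theta quotient with exactly the product identity you quote (as in Lemma \ref{lemma:Tn21-residue}), and for type \textup{II} show a numerator factor vanishes by a parity split. The only cosmetic difference is that the paper tracks the sign of $x_0y_0$ rather than of $y_0/x_0$ in the type \textup{II} case split, which amounts to the same two-case analysis.
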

\begin{proof}  We prove the residue for poles of type \textup{I} where $\zeta_n\ne 1$.  The proof where $\zeta_n=1$ is similar. Using Proposition \ref{proposition:H1Thm1.3}, we obtain
\begin{align}
\lim_{y\rightarrow x_0 \zeta_n} (y-\zeta_n x_0) T_{n,2}^2(y,q)&=\frac{(-1)^{\ell}\zeta_n^{(n+3)/2}x_0^4}{2n}\cdot \frac{q^{4(n+1)\binom{\ell}{2}}}{q^{(n^2-3)/2}}\cdot \frac{J_{2n,4n}J_{4(n+1),8(n+1)}}{J_{4(n+1)}^3J_{4n(n+1)}^3}\label{equation:Tn22-2nd-line}\\
&\ \ \ \ \cdot \frac{j(\zeta_n;q^{4(n+1)})j(q^{n+2}\zeta_nx_0^2;q^{4(n+1)})j(q^{2n}\zeta_n^{-2}x_0^{-4};q^{8(n+1)})}{j(-q^{n+2}\zeta_n^2x_0^2;q^{4(n+1)})}.\notag
\end{align}
We rewrite the first line of (\ref{equation:Tn22-2nd-line}).  Substituting $x_0^2=-q^{4\ell(n+1)-n-2}$ from Definition \ref{definition:defTn22} yields
\begin{equation}\label{equation-Tn22-residue-firstline}
\frac{(-1)^{\ell}\zeta_n^{(n+3)/2}}{2n}\cdot \frac{q^{4(n+1)\binom{\ell}{2}+8\ell (n+1)-2n-4}}{q^{(n^2-3)/2}}\cdot \frac{J_{2n,4n}J_{4(n+1),8(n+1)}}{J_{4(n+1)}^3J_{4n(n+1)}^3}.
\end{equation}
We rewrite the quotient in the second line of (\ref{equation:Tn22-2nd-line}).  The first theta function can be left as is.  Substituting for $x_0^2$, the second theta function can be written
\begin{align*}
j(-q^{n+2}\zeta_nx_0^2;q^{4(n+1)})&=j(-\zeta_nq^{4\ell(n+1)};q^{4(n+1)})
=\zeta_n^{-\ell}q^{-4(n+1)\binom{\ell}{2}}j(-\zeta_n;q^{4(n+1)}),
\end{align*}
by (\ref{equation:1.8}). The third theta function can be written
\begin{align*}
j(q^{2n}\zeta_n^{-2}x_0^{-4};q^{8(n+1)})&=j(\zeta_n^{-2}q^{-8\ell (n+1)+4(n+1)};q^{8(n+1)})\\
&=(-1)^{\ell}q^{-8(n+1)\binom{\ell+1}{2}}(\zeta_n^{-2}q^{4(n+1)})^{\ell}j(\zeta_n^{-2}q^{4(n+1)};q^{8(n+1)})&({\text{by }}(\ref{equation:1.8}))\\
&=(-1)^{\ell}\zeta_n^{-2\ell}q^{-8(n+1)\binom{\ell+1}{2}+4\ell (n+1)}j(\zeta_n^{2}q^{4(n+1)};q^{8(n+1)}).&({\text{by }}(\ref{equation:1.7}))
\end{align*}
The theta function in the denominator becomes
\begin{align*}
j(-q^{n+2}\zeta_n^2x_0^2;q^{4(n+1)})&=j(\zeta_n^2q^{4\ell (n+1)};q^{4(n+1)})
=(-1)^{\ell}\zeta_n^{-2\ell}q^{-4(n+1)\binom{\ell}{2}}j(\zeta_n^2;q^{4(n+1)}),
\end{align*}
by (\ref{equation:1.8}).  Combining the four terms, the second line of (\ref{equation:Tn22-2nd-line}) can thus be written
\begin{align}
\zeta_n^{-\ell}&q^{-8(n+1)\binom{\ell+1}{2}+4\ell (n+1)}
\frac{j(\zeta_n;q^{4(n+1)})j(-\zeta_n;q^{4(n+1)})j(\zeta_n^{2}q^{4(n+1)};q^{8(n+1)})}{j(\zeta_n^2;q^{4(n+1)})}\notag\\
&=\zeta_n^{-\ell}q^{-8(n+1)\binom{\ell+1}{2}+4\ell (n+1)}\frac{j(\zeta_n^2;q^{8(n+1)})j(\zeta_n^{2}q^{4(n+1)};q^{8(n+1)})}{j(\zeta_n^2;q^{4(n+1)})}\cdot \frac{J_{4(n+1)}^2}{J_{8(n+1)}}&({\text{by }}(\ref{equation:1.9}))\notag\\
&=\zeta_n^{-\ell}q^{-8(n+1)\binom{\ell+1}{2}+4\ell (n+1)}J_{4(n+1)}J_{8(n+1)},\label{equation-Tn22-residue-secondline}
\end{align}
where the last equality follows from (\ref{equation:1.10}) and simplifying.  Combining (\ref{equation-Tn22-residue-firstline}) and (\ref{equation-Tn22-residue-secondline}) produces the residue.

We prove that poles of type \textup{II} are removable.  Because we are excluding poles of type \textup{I}, it suffices to show that the numerator of $T_{n,2}^2(y_0,q)$ evaluates to zero.  Here we have that
\begin{align*}
x_0^2=&-q^{4\ell(n+1)-n-2}{\text{ and }}
y_0^2=-q^{4t(n+1)-n-2}.
\end{align*}
This implies that $x_0 y_0=\pm q^{2(t+\ell)(n+1)-n-2}$.  Hence if $t+\ell$ is odd, then a numerator term vanishes
\begin{equation*}
j(q^{2n}/x_0^2y_0^2;q^{8(n+1)})=j(q^{-4(t+\ell-1)(n+1)};q^{8(n+1)})=0.
\end{equation*}
If $t+\ell$ is even, we have two cases to deal with.  If $x_0 y_0=q^{2(t+\ell)(n+1)-n-2}$,
then in the numerator,
\begin{equation*}
j(q^{n+2}x_0y_0;q^{4(n+1)})=j(q^{2(t+\ell)(n+1)};q^{4(n+1)})=0.
\end{equation*}
If $x_0 y_0=-q^{2(t+\ell)(n+1)-n-2}$, then upon multiplying both sides by $x_0$ and using the value of $x_0^2$, we have that $y_0=x_0q^{2(t-\ell)(n+1)}$; however, this is a  pole of type \textup{I}, which is excluded.
\end{proof}

\begin{lemma}  \label{lemma:Tn32-residue} Let $n$ and $x_0$ be as in Definition \ref{definition:defTn32} and $\zeta_n$ be an $n$-th root of unity.  We write $x_0=\omega q^{\ell^*(2n+3)-(n+3)/2}$ where $\ell\in \mathbb{Z}$, $\ell^*:=\ell-\{(n-1)/2 \}$ with $0\le\{\alpha\}<1$ denoting the fraction part of $\alpha$, and $\omega^3=1$.  We can assume $\omega\ne 1$ because $\omega=1$ is a removable pole for $\Theta_{n,2}(x,y,q)$. $T_{n,3}^2(y,q)$ is meromorphic for $y\ne 0$ and has poles at points $y_0$, where $y_0$ satisfies at least one of the following two conditions:
\begin{align*}
\textup{I}.& \  y_0=\zeta_n x_0q^{3t (2n+3)},\\
\textup{II}.& \  y_0^3=q^{3t^*(2n+3)-3(n+3)/2} \ (\text{or } y_0=\omega^s q^{t^*(2n+3)-(n+3)/2}\text{ where }s\in \{0,1,2\}),
\end{align*}
where $t\in \mathbb{Z}$ and $t^*:=t-\{(n-1)/2\}$.  If $y_0$ satisfies \textup{I} or \textup{II} exclusively, then it is a simple pole.  When this is the case, the residues are as follows.  For poles of type \textup{I}, the residue at such a $y_0$ for $t=0$ is
\begin{align*}
\textup{I}.& \ \ \ (-1)^{n+1}\zeta_n^{1-\ell^* +(n+1)/2}\omega^{1-2\ell^*}(1-\omega)q^{-2(2n+3)\binom{\ell^*-1}{2}-\tfrac34(n^2-2n-3)}{J_{3n}}/\big ({3nJ_{3n(2n+3)}^3}\big ).
\end{align*}
Given the residue at $t=0$, one can use the functional equation of Lemma \ref{lemma:Tnp2-functional} to compute the residue for general $t\in \mathbb{Z}.$  Poles of type \textup{II} are removable.
\end{lemma}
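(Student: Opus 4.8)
The plan is to follow the strategy of the $p=2$ analogue, Lemma \ref{lemma:Tn22-residue}, together with the residue computation of Lemma \ref{lemma:Tn31-residue}: the function $T_{n,3}^2(y,q)$ has exactly the shape of $T_{n,3}^1(y,q)$ but with the extra denominator factor $j(y^n/x_0^n;q^{3n(2n+3)})$ and the extra numerator factor $j(y/x_0;q^{3(2n+3)})$. First I would read the pole structure off Definition \ref{definition:defTn32}. The numerator of $T_{n,3}^2(y,q)$ is entire in $y$, so poles arise only from the denominator theta functions; the constant $J_{2n+3}^2J_{3(2n+3)}^2$ contributes nothing, $j(y^n/x_0^n;q^{3n(2n+3)})$ vanishes precisely when $y^n/x_0^n=q^{3tn(2n+3)}$, i.e.\ $y=\zeta_n x_0 q^{3t(2n+3)}$ (condition I), and $j(q^{3(n+3)/2+3\beta}y^3;q^{3(2n+3)})$ vanishes precisely when $y^3=q^{3t^*(2n+3)-3(n+3)/2}$ (condition II). Since each of these theta functions has only simple zeros, any $y_0$ satisfying I or II exclusively is a simple pole; this establishes the meromorphy and simplicity claims (the sub-poles with $\omega=1$ being removable already for $\Theta_{n,2}$).

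For the residue at a type-I pole at $t=0$, I would apply Proposition \ref{proposition:H1Thm1.3} to $G(y)=1/j(x_0^{-n}y^n;q^{3n(2n+3)})$ (so $b=n$, $m=3n(2n+3)$, $k=0$), which yields the factor $-y_0/(nJ_{3n(2n+3)}^3)$, and multiply by the remaining pieces of $T_{n,3}^2$ evaluated at $y=y_0=\zeta_n x_0$. Then I would substitute $x_0=\omega q^{\ell^*(2n+3)-(n+3)/2}$ and rewrite every theta function in turn using (\ref{equation:1.8}), (\ref{equation:1.7}) and the evaluation $j(\omega;q^{N})=(1-\omega)J_{3N}$ for the relevant $N$, exactly as in the proof of Lemma \ref{lemma:Tn31-residue}. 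As there, the crucial simplification is that after substituting for $y_0$ and extracting the accumulated powers of $q$ and $\omega$, the braced expression in Definition \ref{definition:defTn32} collapses to a single product of two theta functions via Proposition \ref{proposition:prop-f141} (applied with $q\to q^{2n+3}$, $\omega\to\omega^2$ and an appropriate value of $y$). Combining this with the already-simplified prefactor and collapsing the residual product of theta functions using (\ref{equation:1.10}) and (\ref{equation:1.12}) into powers of $J$'s produces the stated residue, which one checks matches part $3)$ of Corollary \ref{corollary:Rnp2-residue} up to sign bookkeeping. I would carry this out for $n$ even (using the $n$-even form of $\Theta_{n,3}$ recorded after Lemma \ref{lemma:Tnp1-def}), the $n$ odd case being formally identical, and the type-II residue is obtained by the same computation.

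Finally, to show that a type-II pole not simultaneously of type I is removable, I would argue as in the type-II part of Lemma \ref{lemma:Tn22-residue}. Writing $y_0=\omega^s q^{t^*(2n+3)-(n+3)/2}$ and $x_0=\omega q^{\ell^*(2n+3)-(n+3)/2}$, the quantities $x_0^2y_0$ and $x_0y_0^2$ become explicit cube-roots-of-unity times powers of $q$; running through the residue classes of $s$, $\ell$, $t$ modulo $3$ one checks that in each case the argument of one of the numerator theta functions $j(y_0/x_0;q^{3(2n+3)})$, $j(q^{(n+3)/2+\beta}x_0,q^{(n+3)/2+\beta}y_0;q^{2n+3})$, or of one of the four theta functions inside the braces, becomes an integral power of the corresponding modulus, so that factor vanishes, the one exceptional case being precisely when $y_0$ also satisfies condition I, which has been excluded. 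I expect this last step, together with the bracket collapse in the residue computation, to be the main obstacle: the cube-root-of-unity bookkeeping and especially the interaction of the two braced theta products under subtraction are the points where the argument is least mechanical and where matching Proposition \ref{proposition:prop-f141} and isolating the vanishing theta factor require genuine care rather than routine manipulation.
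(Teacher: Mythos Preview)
Your plan for the type~I residue is essentially the paper's proof: apply Proposition~\ref{proposition:H1Thm1.3} to the $j(y^n/x_0^n;\cdot)$ denominator, substitute $x_0=\omega q^{\ell^*(2n+3)-(n+3)/2}$, unwind every theta function with (\ref{equation:1.8}), collapse the braced expression via Proposition~\ref{proposition:prop-f141}, and finish with (\ref{equation:1.10})--(\ref{equation:1.12}). (One small correction: the paper uses the substitution $\omega\mapsto\omega$ here, not $\omega\mapsto\omega^2$ as in Lemma~\ref{lemma:Tn31-residue}; the braced expression for $T_{n,3}^2$ has $\zeta_n$ and $\zeta_n^{-1}$ interchanged relative to $T_{n,3}^1$.)

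Your plan for type~II removability, however, has a real gap. You propose to run through the residue classes of $s,\ell,t$ and find in each case a single numerator theta factor that vanishes. This works for $s=0$ (the factor $j(q^{(n+3)/2+\beta}y_0;q^{2n+3})$ dies), but for $s=1,2$ it fails: neither outer factor $j(y_0/x_0;\cdot)$ nor $j(q^{(n+3)/2+\beta}x_0;\cdot)$ nor $j(q^{(n+3)/2+\beta}y_0;\cdot)$ vanishes, and the braced expression is a \emph{difference} of two theta products, so killing one of the four interior theta factors only kills one summand. In fact for $s=2$ none of the four interior factors vanishes at all, since $x_0^2y_0\sim\omega$ and $x_0y_0^2\sim\omega^2$ are both nontrivial cube roots of unity.

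The missing idea is that Proposition~\ref{proposition:prop-f141} must be applied a second time, now to the braced expression itself (with $y\mapsto \omega^2 y_0 q^{\cdots}$), to convert the difference into a single product of three theta functions. Only after this collapse can one find a vanishing factor: for $s=2$ it is the factor $j(\omega y_0 q^{\cdots};q^{2n+3})$ with modulus $q^{2n+3}$; for $s=1$ one splits further on $2\ell+t\pmod 3$, the case $2\ell+t\equiv 0$ being exactly the excluded type~I overlap. You correctly flagged the ``interaction of the two braced theta products under subtraction'' as the main obstacle, but your proposed case analysis does not resolve it without this extra collapse step.
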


\begin{proof}[Proof of Lemma \ref{lemma:Tn32-residue}.]  We prove the $n$ even case because the equations are more compact.  The proof for $n$ odd is similar.  The first part follows from Definition \ref{definition:defTn32}.  When we compute the residue for the corresponding pole of $R_{n,3}^2(y,q),$ we must choose the same primitive third root of unity.

We prove the residue for poles of type \textup{I} for $\zeta_n\ne 1$.  The case where $\zeta_n=1$ is similar.   Using Proposition \ref{proposition:H1Thm1.3} and simplifying with (\ref{equation:1.8}) yields
\begin{align}
\lim_{y\rightarrow \zeta_n x_0}&(y-\zeta_n x_0)T_{n,3}^2(y,q)=\frac{(-1)^{\ell}\zeta_n^{n/2}x_0 q^{3(2n+3)\binom{\ell}{2}-n(3n+2)/4}}{3n}\cdot
\frac{J_{3n}}{J_{3(2n+3)}^2J_{3n(2n+3)}^3J_{2n+3}^2} \notag\\
\cdot&
\frac{j(\zeta_n;q^{3(2n+3)})j(q^{3n/2+3}x_0,q^{3n/2+3}\zeta_n x_0;q^{2n+3})}
{j(q^{9n/2+9}\zeta_n^3x_0^3;q^{3(2n+3)})}\label{equation:Tn32I-residue}\\
\cdot & \Big \{ j(q^{5n/2+6}\zeta_n x_0^3,q^{5n/2+6}\zeta_n^2 x_0^3;q^{3(2n+3)})
-\frac{q^{n}}{\zeta_n x_0^2}j(q^{n/2+3}\zeta_n x_0^3,q^{n/2+3}\zeta_n^2 x_0^3;q^{3(2n+3)})\Big \}.\notag
\end{align}
Substituting in for $x_0$ from Definition \ref{definition:defTn32}, the first line of (\ref{equation:Tn32I-residue}) can be written
\begin{equation}
{(-1)^{\ell}\zeta_n^{n/2}\omega q^{3(2n+3)\binom{\ell}{2} +\ell(2n+3)-3n/2-3}}\cdot {J_{3n}}/{3nq^{n(3n+2)/4}J_{3(2n+3)}^2J_{3n(2n+3)}^3J_{2n+3}^2}.\label{equation:genfn3-2-firstline}
\end{equation}
We rewrite the quotient of theta functions in the second line of (\ref{equation:Tn32I-residue}).  The first theta function in the numerator can be left as is.  Substituting for $x_0$, the second theta function in the numerator is
\begin{align*}
j(q^{3n/2+3}x_0;q^{2n+3})&=j(\omega q^{\ell (2n+3)};q^{2n+3})=(-1)^{\ell}q^{-(2n+3)\binom{\ell}{2}}\omega^{-\ell}j(\omega;q^{2n+3})&({\text{by }} (\ref{equation:1.8}))\\
&=(-1)^{\ell}\omega^{-\ell}(1-\omega)q^{-(2n+3)\binom{\ell}{2}}J_{3(2n+3)}.
\end{align*}
Using (\ref{equation:1.8}), the third theta function in the numerator becomes
\begin{align*}
j(q^{3n/2+3}\zeta_n x_0;q^{2n+3})&=j(\omega \zeta_n q^{\ell(2n+3)};q^{2n+3})
=(-1)^{\ell}\omega^{-\ell}\zeta_n^{-\ell}q^{-(2n+3)\binom{\ell}{2}}j(\omega \zeta_n;q^{2n+3}).
\end{align*}
Again using (\ref{equation:1.8}), the theta function in the denominator can be rewritten
\begin{align*}
j(q^{9n/2+9}\zeta_n^3x_0^3;q^{3(2n+3)})&=j(\zeta_n^{3}q^{3\ell (2n+3)};q^{3(2n+3)})
=(-1)^{\ell}\zeta_n^{-3\ell}q^{-3(2n+3)\binom{\ell}{2}}j(\zeta_n^{3};q^{3(2n+3)}).
\end{align*}
Assembling the pieces, the second line of (\ref{equation:Tn32I-residue}) can thus be rewritten
\begin{equation}\label{equation:genfn3-2-secondline}
{(-1)^{\ell}\zeta_n^{2\ell}\omega^{-2\ell}(1-\omega)q^{(2n+3)\binom{\ell}{2}}J_{3(2n+3)}j(\zeta_n;q^{3(2n+3)})j(\omega\zeta_n;q^{2n+3})}/{j(\zeta_n^3;q^{3(2n+3)})}.
\end{equation}
We rewrite the expression in braces from (\ref{equation:Tn32I-residue}).  The first theta function of the first product is
\begin{align*}
j(q^{5n/2+6}\zeta_n x_0^3;&q^{3(2n+3)})=j(\zeta_n q^{3\ell(2n+3)-(2n+3)};q^{3(2n+3)})\\
&=(-1)^{\ell}q^{-3(2n+3)\binom{\ell}{2}}(\zeta_n q^{-(2n+3)})^{-\ell} j(\zeta_n q^{-(2n+3)};q^{3(2n+3)})&({\text{by }} (\ref{equation:1.8}))\\
&=(-1)^{\ell+1}\zeta_n^{1-\ell}q^{-3(2n+3)\binom{\ell}{2}+(\ell-1)(2n+3)}j(\zeta_nq^{2(2n+3)};q^{3(2n+3)}).
\end{align*}
The arguments for the next three theta functions are similar.  We have
\begin{align*}
j(q^{5n/2+6}\zeta_n^2x_0^3;q^{3(2n+3)})
&=(-1)^{\ell+1}\zeta_n^{2-2\ell}q^{-3(2n+3)\binom{\ell}{2}+(\ell-1)(2n+3)}j(\zeta_n^2 q^{2(2n+3)};q^{3(2n+3)}),\\
j(q^{n/2+3}\zeta_n x_0^3;q^{3(2n+3)})
&=(-1)^{\ell+1}\zeta_n^{1-\ell}q^{-3(2n+3)\binom{\ell}{2}+2(\ell-1)(2n+3)}j(\zeta_n q^{2n+3};q^{3(2n+3)}),\\
j(q^{n/2+3}\zeta_n^2 x_0^3;q^{3(2n+3)})
&=(-1)^{\ell+1}\zeta_n^{2-2\ell}q^{-3(2n+3)\binom{\ell}{2}+2(\ell-1)(2n+3)}j(\zeta_n^2 q^{2n+3};q^{3(2n+3)}).
\end{align*}
Noting that ${q^n}/({\zeta_n x_0^{2}})=\omega \zeta_n^{-1} q^{-2\ell(2n+3)+2(2n+3)}$, the expression in the braces from (\ref{equation:Tn32I-residue}) becomes
\begin{align*}
\zeta_n^{3-3\ell}&q^{-6(2n+3)\binom{\ell}{2}+2(\ell-1)(2n+3)}
\cdot\Big \{ j(\zeta_nq^{2(2n+3)},\zeta_n^2q^{2(2n+3)};q^{3(2n+3)})\notag \\
&-\omega \zeta_n^{-1} j(\zeta_nq^{2n+3},\zeta_n^2q^{2n+3};q^{3(2n+3)})\Big \}.
\end{align*}
We rewrite the above using Proposition \ref{proposition:prop-f141} with the substitutions $\omega=\omega$, $q= q^{2n+3}$, $y= \zeta_n q^{2n+3}$.  Simplifying with (\ref{equation:1.8}) yields
\begin{equation}
-\zeta_n^{2-3\ell}\omega q^{-6(2n+3)\binom{\ell}{2}+2(\ell-1)(2n+3)}
j(\omega^2 \zeta_n;q^{2n+3}) 
\cdot \frac{j(\zeta_n q^{2(2n+3)},\zeta_n q^{2n+3};q^{3(2n+3)})}{J_{3(2n+3)}}. \label{equation:genfn3-2-thirdline}
\end{equation}
Putting (\ref{equation:genfn3-2-firstline}), (\ref{equation:genfn3-2-secondline}), and (\ref{equation:genfn3-2-thirdline}) together, we obtain
\begin{align*}
\lim_{y\rightarrow y_0}&(y-y_0)T_{n,3}^2(y,q)
=-\frac{\zeta_n^{2-\ell+n/2}\omega^{2-2\ell}(1-\omega)q^{-(2n+3)(\ell^2-4\ell+3)-3n^2/4}}{3n}\\
&\cdot\frac{J_{3n}}{J_{3(2n+3)}^2J_{3n(2n+3)}^3J_{2n+3}^2} \cdot\frac{j(\zeta_n,\zeta_nq^{2n+3},\zeta_nq^{2(2n+3)};q^{3(2n+3)})j(\omega \zeta_n,\omega^2\zeta_n;q^{2n+3})}{j(\zeta_n^3;q^{3(2n+3)})}.
\end{align*} 
Using properties (\ref{equation:1.10}) and (\ref{equation:1.12}), the second line of (\ref{equation:Tn32I-residue}) becomes  $J_{3(2n+3)}^2J_{2n+3}^2$.  The result follows.

We prove that poles of type \textup{II} are removable.  The $s=0$ case is straightforward.  For $s=1,2$, because we are excluding poles of type \textup{I}, it suffices to show that the expression in braces in $T_{n,3}^2(y,q)$ in $(\ref{equation:Tn32-def})$ evaluates to zero.   We unwind the theta functions in the braced expression of $(\ref{equation:Tn32-def})$ with (\ref{equation:1.8}) and pull out a common factor.  Substituting for $x_0$, we see that we must show that the following vanishes
\begin{align}
&j(\omega^2yq^{2\ell(2n+3)-n/2},\omega y^2q^{\ell(2n+3)+n+3};q^{3(2n+3)})\label{equation:dog2}\\
&-\omega^2y^{-1}q^{-\ell(2n+3)+5n/2+3}j(\omega^2yq^{2\ell(2n+3)-5n/2-3},\omega y^2q^{\ell(2n+3)-n};q^{3(2n+3)}).\notag
\end{align} 
Using Proposition \ref{proposition:prop-f141} with the substitutions $\omega=\omega$, $q= q^{2n+3}$, $y=\omega^2 y q^{2\ell(2n+3)-5n/2-3}$ yields
{\allowdisplaybreaks
\begin{align}
&\frac{j(\omega yq^{2\ell(2n+3)-5n/2-3},q^{2n+3})}{J_{3(2n+3)}}\cdot j(\omega^2yq^{2\ell(2n+3)-n/2},\omega^2yq^{2\ell(2n+3)-5n/2-3};q^{3(2n+3)})\notag\\
&=yq^{2\ell(2n+3)-5n/2-3}j(\omega^2yq^{2\ell(2n+3)-5n/2-3},\omega y^2q^{4\ell(2n+3)-n};q^{3(2n+3)})\notag \\
&\ \ \ \ +j(\omega^2yq^{2\ell(2n+3)-n/2},\omega y^2q^{4\ell(2n+3)-5n-6};q^{3(2n+3)})\notag \\
&=(-1)^{\ell+1}\omega^{1-\ell}y^{2-2\ell}q^{-3(2n+3)\binom{\ell}{2}-\ell^2(2n+3)+7\ell n +9\ell -5n-6}\label{equation:Tn32-even-prod}\\
&\ \ \ \ \cdot\Big \{ j(\omega^2yq^{2\ell(2n+3)-n/2},\omega y^2q^{\ell(2n+3)+n+3};q^{3(2n+3)})\notag\\
&\ \ \ \ -\omega^2y^{-1}q^{-\ell(2n+3)+5n/2+3}j(\omega^2yq^{2\ell(2n+3)-5n/2-3},\omega y^2q^{\ell(2n+3)-n};q^{3(2n+3)})\Big \},\notag
\end{align} }%
where the last equality follows from (\ref{equation:1.8}) and simplifying.  To show that (\ref{equation:dog2}) evaluates to zero for $y_0$, we consider the extreme left-hand side of (\ref{equation:Tn32-even-prod}).  For $s=2$, the first theta function in the numerator is
\begin{equation*}
j(\omega yq^{2\ell(2n+3)-5n/2-3},q^{2n+3})=j(q^{(2\ell+t)(2n+3)-2(2n+3)},q^{2n+3})=0.
\end{equation*}
For $s=1$ we have subcases.  When $2\ell+t\equiv 1 \pmod 3$, the second theta function in the numerator is
\begin{equation*}
j(\omega^2yq^{2\ell(2n+3)-n/2};q^{3(2n+3)})=j(q^{(2\ell+t)(2n+3)-(2n+3)};q^{3(2n+3)})=0.
\end{equation*}
When $2\ell+t\equiv 2 \pmod 3$, the third theta function in the numerator becomes
\begin{equation*}
j(\omega^2yq^{2\ell(2n+3)-5n/2-3};q^{3(2n+3)})=j(q^{(2\ell+t)(2n+3)-2(2n+3)};q^{3(2n+3)})=0.
\end{equation*}
When $2\ell+t\equiv 0 \pmod 3$, then $y_0=x_0q^{(t-\ell)(2n+3)}$ is pole of type \textup{I}, which we have excluded.
\end{proof}
\begin{lemma}  \label{lemma:Tn42-residue} Let $n$ and $x_0$ be as in Defintion \ref{definition:defTn42} and $\zeta_n$ and $n$-th root of unity.  We write $x_0^2=iq^{2\ell(2n+4)-(n+4)}$ where $\ell\in\mathbb{Z}$ and $i^2=-1$.  $T_{n,4}^2(y,q)$ is meromorphic for $y\ne 0$ and has poles at points $y_0$, where $y_0$ satisfies at least one of the following two conditions:
\begin{align*}
\textup{I}. & \ y_0=\zeta_n x_0q^{4t (2n+4)}, \\
\textup{II}. & \ y_0^4=-q^{4t (2n+4)-(2n+8)} \ (\textup{or } y_0^2=\pm iq^{2t(2n+4)-(n+4)}),
\end{align*}
where $t\in\mathbb{Z}$.  If $y_0$ satisfies \textup{I} or \textup{II} exclusively, then it is a simple pole.  When this is the case, the residues are as follows.  For poles of type \textup{I}, the residue at such a $y_0$ for $t=0$ is
\begin{align*}
\textup{I}.& \ \ \ \frac{i^{-1-\ell}\zeta_n^{-\ell+1+(n+1)/2}}{4nJ_{4n(2n+4)}^3}q^{-2(2n+4)\binom{\ell-1}{2}-n^2+n+3}
 \cdot \Big \{ (1-i)J_{4n,16n}+i x_0q^{2-\ell (2n+4)}J_{8n,16n}\Big \}.
\end{align*}
Given the residue at $t=0$, one can use the functional equation of Lemma \ref{lemma:Tnp2-functional} to compute the residue for general $t\in \mathbb{Z}.$  Poles of type \textup{II} are removable.
\end{lemma}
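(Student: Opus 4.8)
The plan is to mimic the template already used for the companion lemmas in this subsection and for Lemma~\ref{lemma:Tn41-residue}. The location and simplicity of the poles come straight from Definition~\ref{definition:defTn42}: formula $(\ref{equation:Tn42-def})$ writes $T_{n,4}^2(y,q)$ as a ratio of finite products of theta functions (times constants in $q$), so it is meromorphic for $y\neq 0$, and its only possible poles arise from the two denominator factors $j(y^n/x_0^n;q^{4n(2n+4)})$ and $j(-q^{2n+8}y^4;q^{4(2n+4)})$, whose zero sets are exactly conditions \textup{I} and \textup{II}. Since $(n,4)=1$ the constant $J$'s in the denominator never vanish, and at a point lying on \textup{I} or \textup{II} exclusively exactly one denominator factor has a simple zero, so such a point is a simple pole. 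As in the proofs of Corollary~\ref{corollary:Rnp2-residue} and Lemma~\ref{lemma:Tn41-residue}, I would also record that when such a $y_0$ is matched with the corresponding pole of $R_{n,4}^2(y,q)$ from Proposition~\ref{proposition:Rnp2-residue}, the normalization $x_0^2=iq^{2\ell(2n+4)-(n+4)}$ forces the same primitive fourth root of unity on both sides.

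For poles of type~\textup{I} at $t=0$, I would apply Proposition~\ref{proposition:H1Thm1.3} to the factor $1/j(y^n/x_0^n;q^{4n(2n+4)})$, obtaining the residue as an explicit product of theta functions evaluated at $y_0=\zeta_n x_0$, and then carry out exactly the same line-by-line reduction as in the proof of Lemma~\ref{lemma:Tn41-residue}: substitute $x_0^4=-q^{4\ell(2n+4)-(2n+8)}$ and $x_0^2=iq^{2\ell(2n+4)-(n+4)}$ into each theta factor, normalize arguments with $(\ref{equation:1.8})$ and $(\ref{equation:1.7})$, split the square factors with $(\ref{equation:jsplit})$ taking $m=2$, contract products such as $j(\zeta_n^2;q^{4(2n+4)})$ via $(\ref{equation:1.10})$ and $(\ref{equation:1.12})$, and combine the two summands inside each of the two braces of $(\ref{equation:Tn42-def})$ using the addition formula $(\ref{equation:H1Thm1.1})$ specialized at $x=i$, $y=i\zeta_n$, $q=q^{2(2n+4)}$. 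Collecting the resulting powers of $q$, $i$, $\zeta_n$ and $(-1)$ then yields the stated residue with its $J_{4n,16n}$ and $J_{8n,16n}$ pieces, and the residue for general $t\in\mathbb{Z}$ follows from the functional equation of Lemma~\ref{lemma:Tnp2-functional}.

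For poles of type~\textup{II}, since poles of type~\textup{I} are excluded it suffices to show that the bracketed numerator of $T_{n,4}^2(y_0,q)$ in $(\ref{equation:Tn42-def})$ vanishes when $y_0^4=-q^{4t(2n+4)-(2n+8)}$. Writing $x_0^2=iq^{2\ell(2n+4)-(n+4)}$ and $y_0^2=\pm iq^{2t(2n+4)-(n+4)}$, so that $x_0^2y_0^2=\mp q^{2(t+\ell)(2n+4)-2(n+4)}$, I would examine the outer theta factors $j(q^{6n+16}x_0^2y_0^2;q^{4(2n+4)})$ and $j(q^{n+4}x_0y_0;q^{2(2n+4)})$ multiplying the whole $J_{4n,16n}$ term, and $j(q^{2n+8}x_0^2y_0^2;q^{4(2n+4)})$ and $j(q^{3n+8}x_0y_0;q^{2(2n+4)})$ multiplying the whole $J_{8n,16n}$ term. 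A short case analysis on the parity of $t+\ell$ and on the two sign choices for $y_0^2$ shows that in each case one of these four factors lands in the zero set $j(q^k\cdot(\text{root of unity});q^M)=0$, so both bracketed terms vanish and $T_{n,4}^2$ is finite at $y_0$; the one remaining sub-case, in which $x_0y_0$ carries a spurious sign, forces $y_0=\zeta_n x_0 q^{4(t-\ell)(2n+4)}$, i.e. a pole of type~\textup{I}, which has been excluded, precisely as at the end of the proof of Lemma~\ref{lemma:Tn22-residue}.

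The main obstacle will be the sheer bookkeeping in the type~\textup{I} residue computation: $T_{n,4}^2$ carries nine theta factors and two nested brace expressions, and one must propagate the substitutions for $x_0^2$ and $x_0^4$ together with several applications of $(\ref{equation:jsplit})$ and $(\ref{equation:H1Thm1.1})$ while keeping the prefactors $q^{\bullet}$, $i^{\bullet}$, $\zeta_n^{\bullet}$, $(-1)^{\bullet}$ aligned---this is the same multi-block reduction executed for Lemma~\ref{lemma:Tn41-residue}, now with $\zeta_n y$ replaced by $\zeta_n x_0$ and with the extra scaling $q^{4t(2n+4)}$ handled by the functional equation. The secondary difficulty is making the type~\textup{II} case analysis genuinely exhaustive, since $y_0^4=-q^{\bullet}$ only pins down $y_0^2$ up to a sign, so one must track which primitive fourth root of unity actually occurs in $x_0y_0$.
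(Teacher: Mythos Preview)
Your treatment of the type~\textup{I} residue is correct and matches the paper: apply Proposition~\ref{proposition:H1Thm1.3} to the factor $1/j(y^n/x_0^n;q^{4n(2n+4)})$ and then run the line-by-line reduction of Lemma~\ref{lemma:Tn41-residue} with $\zeta_n y$ replaced by $\zeta_n x_0$. The paper does exactly this and simply writes ``Arguing as we did in the proof of Lemma~\ref{lemma:Tn41-residue} yields the result.''

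Your plan for type~\textup{II} has a genuine gap. You propose to kill the bracketed expression in $(\ref{equation:Tn42-def})$ by showing that one of the four \emph{outer} coefficient factors
\[
j(q^{6n+16}x_0^2y_0^2;q^{4(2n+4)}),\quad j(q^{n+4}x_0y_0;q^{2(2n+4)}),\quad j(q^{2n+8}x_0^2y_0^2;q^{4(2n+4)}),\quad j(q^{3n+8}x_0y_0;q^{2(2n+4)})
\]
always vanishes. This is false. Already in the case $y_0^2=iq^{-(n+4)}$ one has $x_0^2y_0^2=-q^{2\ell(2n+4)-2(n+4)}$, so for instance $j(q^{6n+16}x_0^2y_0^2;q^{4(2n+4)})=j(-q^{2(\ell+1)(2n+4)};q^{4(2n+4)})$, which never vanishes; the other outer factors need not vanish either. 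What actually happens is that the vanishing often comes from the \emph{inner} braced expressions. The paper first uses the functional equation of Lemma~\ref{lemma:Tnp2-functional} to reduce to $t\in\{0,1,2,3\}$, giving eight cases, then does a case split on the parity of $\ell$ and on the sign in $y_0/x_0=\pm q^{-\ell(2n+4)}$. In some sub-cases both products inside a brace are individually zero; in others one outer coefficient vanishes while the companion brace vanishes internally; and in yet others (among the seven cases the paper does not write out) the two products inside a brace are individually nonzero and must be combined via $(\ref{equation:jsplit})$ with $m=2$ or via $(\ref{equation:H1Thm1.1})$ before one sees that their sum is zero. Your ``short case analysis on the parity of $t+\ell$'' with outer factors alone will not close all of these, and the excluded sub-case you flag (where $y_0$ turns out to be of type~\textup{I}) is only one branch of a finer case tree.
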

\begin{proof}   The first part follows from the Definition of \ref{definition:defTn42}.  When we identify the corresponding pole of $R_{n,4}^2(y,q)$, we must have the same primitive fourth root of unity.

We prove the residue for poles of type \textup{I} where $\zeta_n\ne 1$.  The proof when $\zeta_n=1$ is similar.   Using Proposition \ref{proposition:H1Thm1.3}, we have
{\allowdisplaybreaks 
\begin{align*}
\lim_{y\rightarrow \zeta_nx_0}&(y-\zeta_nx_0)T_{n,4}^2(y,q)=\frac{(-1)^{\ell}\zeta_n^{1+(n+1)/2}x_0^{4}q^{4(2n+4)\binom{\ell}{2}-(n^2+n-3)}j(\zeta_n;q^{4(2n+4)})}{4nJ_{4(2n+4)}^3J_{4n(2n+4)}^3j(-q^{2n+8}\zeta_n^4x_0^4;q^{4(2n+4)})}\notag\\
\cdot \Big [&J_{4n,16n}\cdot \frac{j(q^{6n+16}\zeta_n^2x_0^4;q^{4(2n+4)})j(q^{n+4}\zeta_nx_0^2;q^{2(2n+4)})j(-q^{2(2n+4)}\zeta_n;q^{4(2n+4)})}{J_{2(2n+4)}^3J_{8(2n+4)}} \notag \\
&\cdot  \Big \{j(-q^{2n+8}\zeta_n^2x_0^4;q^{4(2n+4)})j(q^{2(2n+4)}\zeta_n^2;q^{4(2n+4)})J_{4(2n+4)}^2\notag\\
&+\frac{q^{n+4}x_0^2 j(-q^{6n+16}\zeta_n^2x_0^4;q^{4(2n+4)})j(q^{4n+8}\zeta_n;q^{4(2n+4)})^2j(-\zeta_n;q^{4(2n+4)})^2}{J_{4(2n+4)}}\Big \}\notag\\
-&q\zeta_n^{-1}x_0^{-1}J_{8n,16n}\cdot \frac{j(q^{2n+8}\zeta_n^2x_0^4,q^{4(2n+4)})j(q^{3n+8}\zeta_nx_0^2,q^{2(2n+4)})j(-\zeta_n,q^{4(2n+4)})}{J_{2(2n+4)}^2} \notag\\
&\cdot  \Big \{\frac{q^{n+1}j(-q^{2n+8}\zeta_n^2x_0^4;q^{4(2n+4)})j(q^{4n+8}\zeta_n^2;q^{4(2n+4)})J_{8(2n+4)}}{J_{4(2n+4)} }\notag\\ 
&+\frac{q\zeta_n x_0^2j(-q^{6n+16}\zeta_n^2x_0^4;q^{4(2n+4)})j(q^{4(2n+4)}\zeta_n^{2};q^{8(2n+4)})^2}{J_{8(2n+4)} }\Big \} \Big ].
\end{align*}}%
Arguing as we did in the proof of Lemma \ref{lemma:Tn41-residue} yields the result. 

We prove that poles of type \textup{II} are removable.  Because poles of type \textup{I} are excluded, it suffices to show that the expression in brackets in Definition \ref{definition:defTn42} evaluates to zero.
Because of the functional equation satisfied by $T_{n,4}^2(y,q)$ in Proposition \ref{lemma:Tnp2-functional}, we only need to consider poles of type \textup{II} where $t=0,1,2,3$.  We prove the case $y_0^2=iq^{-(n+4)}$; the other seven cases are similar so the proofs will be omitted.  The term in brackets in Definition \ref{definition:defTn42} can thus be written
{\allowdisplaybreaks
 \begin{align}
 \Big [&J_{4n,16n}\cdot \frac{j(q^{6n+16}x_0^2y_0^2;q^{4(2n+4)})j(q^{n+4}x_0 y_0;q^{2(2n+4)})j(-q^{2(2n+4)}y_0/x_0;q^{4(2n+4)})}{J_{2(2n+4)}^3J_{8(2n+4)}} \notag \\
&\cdot  \Big \{j(-q^{2n+8}x_0^2y_0^2;q^{4(2n+4)})j(q^{2(2n+4)}y_0^2/x_0^2;q^{4(2n+4)})J_{4(2n+4)}^2\notag\\
&+\frac{q^{n+4}x_0^2 j(-q^{6n+16}x_0^2y_0^2;q^{4(2n+4)})j(q^{4n+8}y_0/x_0;q^{4(2n+4)})^2j(-y_0/x_0;q^{4(2n+4)})^2}{J_{4(2n+4)}}\Big \}\notag\\
-&qy_0^{-1}J_{8n,16n}\cdot \frac{j(q^{2n+8}x_0^2y_0^2,q^{4(2n+4)})j(q^{3n+8}x_0y_0,q^{2(2n+4)})j(-y_0/x_0,q^{4(2n+4)})}{J_{2(2n+4)}^2}\notag\\
&\cdot  \Big \{\frac{q^{n+1}j(-q^{2n+8}x_0^2y_0^2;q^{4(2n+4)})j(q^{4n+8}y_0^2/x_0^2;q^{4(2n+4)})J_{8(2n+4)}}{J_{4(2n+4)} }\notag\\ 
&+\frac{qx_0y_0j(-q^{6n+16}x_0^2y_0^2;q^{4(2n+4)})j(q^{4(2n+4)}y_0^2/x_0^{2};q^{8(2n+4)})^2}{J_{8(2n+4)} }\Big \} \Big ].\label{equation:Tn42-residue-II}
\end{align}}
Because $y_0^2=iq^{-(n+4)}$ and $x_0^2=i q^{2\ell(2n+4)-(n+4)}$, we have
\begin{align}
y_0/x_0=&\pm q^{-\ell(2n+4)}\label{equation:eq2},\\
x_0y_0=&\pm i q^{\ell(2n+4)-(n+4)},\label{equation:eq4}
\end{align}
where the sign in (\ref{equation:eq2}) determines the respective sign in (\ref{equation:eq4}).   Inserting everything into (\ref{equation:Tn42-residue-II}) produces
{\allowdisplaybreaks
\begin{align}
 \Big [&J_{4n,16n}\cdot \frac{j(-q^{2(\ell+1)(2n+4)};q^{4(2n+4)})j(\pm i q^{\ell(2n+4)};q^{2(2n+4)})j(\mp q^{(2-\ell)(2n+4)};q^{4(2n+4)})}{J_{2(2n+4)}^3J_{8(2n+4)}} \notag \\
&\cdot  \Big \{j(q^{2\ell(2n+4)};q^{4(2n+4)})j(q^{2(1-\ell)(2n+4)};q^{4(2n+4)})J_{4(2n+4)}^2\notag\\
&+\frac{q^{n+4}x_0^2 j(q^{2(1+\ell)(2n+4)};q^{4(2n+4)})j(\pm q^{(2-\ell)(2n+4)};q^{4(2n+4)})^2j(\mp q^{-\ell(2n+4)};q^{4(2n+4)})^2}{J_{4(2n+4)}}\Big \}\notag\\
-&qy_0^{-1}J_{8n,16n}\cdot \frac{j(-q^{2\ell(2n+4)};q^{4(2n+4)})j(\pm iq^{(\ell+1)(2n+4)};q^{2(2n+4)})j(\mp q^{-\ell(2n+4)};q^{4(2n+4)})}{J_{2(2n+4)}^2}\notag\\
&\cdot  \Big \{\frac{q^{n+1}j(q^{2\ell(2n+4)};q^{4(2n+4)})j(q^{2(1-\ell)(2n+4)};q^{4(2n+4)})J_{8(2n+4)}}{J_{4(2n+4)} }\notag\\ 
&\pm i \cdot \frac{q^{\ell(2n+4)-n-3}j(q^{2(1+\ell)(2n+4)};q^{4(2n+4)})j(q^{2(2-\ell)(2n+4)};q^{8(2n+4)})^2}{J_{8(2n+4)} }\Big \} \Big ],\label{equation:Tn42-residue-II-b}
\end{align}}%
where the sign choice follows from (\ref{equation:eq2}).  We now have several subcases to consider depending on the parity of $\ell$ and the sign choice in (\ref{equation:eq2}).   Suppose $\ell$ is odd, then expressions in the first and second set of braces are both zero, i.e. both products within each set of braces are zero.  Suppose $\ell$ is even and we take the plus sign choice in (\ref{equation:eq2}).   Here we must have that $\ell \equiv 2 \pmod 4$, otherwise we would have a pole of type \textup{I} which we have excluded.  Indeed, we would have $y_0=x_0q^{4(\ell/4)(2n+4)}$.  So we have that $\ell \equiv 2 \pmod 4$.  Here both products within each set of braces are zero.  Suppose $\ell$ is even and we take the minus sign choice in (\ref{equation:eq2}).  If $\ell\equiv 0 \pmod 4$, then both products within the first set of braces is zero and the coefficient of the second set of braces is zero.   If $\ell \equiv 2 \pmod 4$, then the coefficient of the first set of braces is zero and the both products within the second set of braces are zero.  For the other seven cases, when an expression within braces evaluates to zero it is not always that both products are zero.  One may have to argue as the proof of Lemma \ref{lemma:Tn41-residue} and use (\ref{equation:jsplit}) with $m=2$ or use $(\ref{equation:H1Thm1.1})$ to combine the two products into a single product which is easily seen to be zero.
\end{proof}


\section{Proof of Corollary \ref{corollary:cor-P1-KP}}\label{section:P1-KP-corollaries}

We prove (\ref{equation:KP-(5.22)}), which is equivalent to $f_{5,5,1}(q^5,q^2,q)=J_2J_{10}.$  Specializing Theorem \ref{theorem:main-acdivb},
\begin{align*}
&f_{5,5,1}(x,y,q)=j(x;q^5)m(-q^4x^{-1}y,q^4,-1)+j(y;q)m(-q^{10}xy^{-5},q^{20},-1)\\
&-\frac{1}{4\overline{J}_{4,16}\overline{J}_{20,80}}\sum_{d=0}^{4}
q^{2d(d+1)}j\big (q^{4+4d}y;q^{5}\big )  j\big (-q^{16-4d}xy^{-1};q^{20}\big ) \frac{J_{20}^3j\big (q^{14+4d}y^{-4};q^{20}\big )}
{j\big (q^{10}xy^{-5},q^{4+4d}x^{-1}y;q^{20}\big )},
\end{align*}
Substituting in for $x,y$ yields
\begin{align*}
f_{5,5,1}&(q^5,q^2,q)=0+0-\frac{1}{4\overline{J}_{4,16}\overline{J}_{20,80}}\sum_{d=0}^4q^{2d(d+1)}\frac{j(q^{6+4d};q^5)j(-q^{19-4d};q^{20})J_{20}^3j(q^{6+4d};q^{20})}{J_{5,20}j(q^{1+4d};q^{20})}\\
&=-\frac{1}{4\overline{J}_{4,16}\overline{J}_{20,80}}\Big [ \frac{J_{6,5}\overline{J}_{1,20}J_{20}^3J_{6,20}}{J_{5,20}J_{1,20}}+0+q^{12}\frac{J_{14,5}\overline{J}_{11,20}J_{20}^3J_{14,20}}{J_{5,20}J_{9,20}}\\
&\ \ \ \ +q^{24}\frac{J_{18,5}\overline{J}_{7,20}J_{20}^3J_{18,20}}{J_{5,20}J_{13,20}}+q^{40}\frac{J_{22,5}\overline{J}_{3,20}J_{20}^3J_{22,20}}{J_{5,20}J_{17,20}}\Big ]\\
&=\frac{1}{4\overline{J}_{4,16}\overline{J}_{20,80}}\cdot \frac{J_{20}^3}{J_{5,20}}\cdot \Big [ q^{-1}J_{1,5}J_{6,20}\Big \{  \frac{\overline{J}_{1,20}}{J_{1,20}}-\frac{\overline{J}_{9,20}}{J_{9,20}}\Big \}+ J_{2,5}J_{2,20}\Big \{  \frac{\overline{J}_{7,20}}{J_{7,20}}+\frac{\overline{J}_{3,20}}{J_{3,20}}\Big \} \Big ],
\end{align*}
where the last line follows from (\ref{equation:1.8}).  Applying (\ref{equation:H1Thm1.2A}) to the expression in the first set of braces and (\ref{equation:H1Thm1.2B}) to the expression in the second set of braces yields
\begin{align*}
f_{5,5,1}&(q^5,q^2,q)=\frac{1}{2\overline{J}_{4,16}\overline{J}_{20,80}}\cdot \frac{J_{20}^3}{J_{5,20}}\cdot J_{10,40}\cdot \Big [ \frac{J_{1,5}J_{6,20}}{J_{1,20}J_{9,20}}\cdot J_{8,40}+ \frac{J_{2,5}J_{2,20}}{J_{7,20}J_{3,20}}\cdot J_{16,40} \Big ]\\
&=\frac{1}{2\overline{J}_{4,16}\overline{J}_{20,80}}\cdot \frac{J_{20}^3}{J_{5,20}}\cdot J_{10,40}\cdot \frac{J_{10}}{J_{20}^2}\cdot \frac{J_5}{J_{10}^2}\Big [ \frac{J_{1,10}J_{6,10}J_{6,20}}{J_{1,10}}\cdot J_{8,40}+ \frac{J_{2,10}J_{7,10}J_{2,20}}{J_{3,10}}\cdot J_{16,40} \Big ],
\end{align*}
where the last line follows from (\ref{equation:1.10}).  Simplifying,
{\allowdisplaybreaks
\begin{align*}
f_{5,5,1}&(q^5,q^2,q)=\frac{J_{4}J_{10}}{2J_{8}^2J_{40}}\Big [ J_{6,10}J_{6,20} J_{8,40}+ J_{2,10}J_{2,20} J_{16,40} \Big ]\\
&=\frac{J_{4}J_{10}}{2J_{8}^2J_{40}}\cdot \frac{J_{40}}{J_{20}^2}\cdot \Big [ J_{6,10}J_{6,20} J_{4,20}\overline{J}_{4,20}+ J_{2,10}J_{2,20} J_{8,20}\overline{J}_{8,20} \Big ]\\
&=\frac{J_{4}J_{10}}{2J_{8}^2J_{40}}\cdot \frac{J_{40}}{J_{20}^2}\cdot \frac{J_{20}^2}{J_{10}}\cdot \Big [ J_{6,10}^2\overline{J}_{4,20}+ J_{2,10}^2\overline{J}_{8,20} \Big ]
=\frac{J_{4}}{2J_{8}^2} \Big [ J_{6,10}^2\overline{J}_{4,20}+ J_{2,10}^2\overline{J}_{8,20} \Big ],
\end{align*}}%
where the second and third equalities follow from (\ref{equation:1.12}) and (\ref{equation:1.10}) respectively, and the last equality follows from simplifying.
We recall the identities $J_{1,5}j(q^4;-q^5)=J_{2,4}\overline{J}_{2,10}$ and $J_{2,5}j(q^2;-q^5)=J_{2,4}\overline{J}_{4,10}$ and use them to obtain
{\allowdisplaybreaks
\begin{align*}
&f_{5,5,1}(q^5,q^2,q)=\frac{J_{4}}{2J_{8}^2} \Big [ \frac{J_{4,10}^2J_{2,10}j(q^8;-q^{10})}{J_{4,8}}+ \frac{J_{2,10}^2J_{4,10}j(q^4;-q^{10})}{J_{4,8}} \Big ]\\
&=\frac{J_{4}}{2J_{8}^2}\cdot  \frac{J_{2,10}J_{4,10}}{J_{4,8}}\Big [ J_{4,10}j(q^8;-q^{10})+ J_{2,10}j(q^4;-q^{10}) \Big ]\\
&=\frac{J_{4}}{2J_{8}^2}\cdot  \frac{J_{2,10}J_{4,10}}{J_{4,8}}\Big [ J_{4,10}\frac{J_{8,20}\overline{J}_{18,20}}{J_{10,40}}+ J_{2,10}\frac{J_{4,20}\overline{J}_{14,20}}{J_{10,40}} \Big ]
=\frac{J_2J_{10}}{2J_8J_{40}}\Big [ J_{6,20}\overline{J}_{2,20}+J_{2,20}\overline{J}_{6,20}\Big ],
\end{align*}}
where the third equality follows from (\ref{equation:1.11}), and the last follows (\ref{equation:1.10}) and the identity $J_{1,5}J_{2,5}=J_1J_5$.  The result then follows from applying (\ref{equation:H1Thm1.2B}) to the expression in brackets.

\section{Proofs of the corollaries to the four subtheorems}\label{section:cor-proofs}

The following proposition will facilitate the proofs of the corollaries.

\begin{proposition}\label{proposition:prop-singshift}  Let $\ell\in \mathbb{Z}$, $p\in \{ 1,2,3,4\}$ and $n\in \mathbb{N}$ with $(n,p)=1$.  For generic $x,y\in\mathbb{C}^*$
\begin{equation*}
f_{n,n+p,n}(x,y,q)=g_{n,n+p,n}(x,y,q,q^{\ell np}y^n/x^n,q^{-\ell np}x^n/y^n)-(-x)^{\ell}q^{n\binom{\ell}{2}}\Theta_{n,p}(q^{\ell n}x,q^{\ell (n+p)}y,q).
\end{equation*}
\end{proposition}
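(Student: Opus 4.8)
The plan is to reduce Proposition~\ref{proposition:prop-singshift} to Theorem~\ref{theorem:genfn1} (for $p=1$) or to the appropriate subtheorem among Theorems~\ref{theorem:genfn2}--\ref{theorem:genfn4} (for $p=2,3,4$) by exploiting the functional equations satisfied by each of the three building blocks $f_{n,n+p,n}$, $M_{n,p}$, and $\Theta_{n,p}$ under a shift of the arguments $x\mapsto q^{\ell n}x$, $y\mapsto q^{\ell(n+p)}y$. The key observation is that the claimed identity is simply the statement of Theorem~\ref{theorem:genfn1}/the relevant subtheorem with $x,y$ replaced by $q^{\ell n}x$, $q^{\ell(n+p)}y$, once one checks that each term transforms correctly. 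More precisely, I will show: (i) $f_{n,n+p,n}(q^{\ell n}x,q^{\ell(n+p)}y,q)=(-x)^{-\ell}q^{-n\binom{\ell}{2}}f_{n,n+p,n}(x,y,q)$ up to the shift, (ii) $g_{n,n+p,n}(q^{\ell n}x,q^{\ell(n+p)}y,q,y^n/x^n,x^n/y^n)$ matches $g_{n,n+p,n}(x,y,q,q^{\ell np}y^n/x^n,q^{-\ell np}x^n/y^n)$ up to the same scalar, and (iii) $\Theta_{n,p}(q^{\ell n}x,q^{\ell(n+p)}y,q)$ appears with the right coefficient.

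For step (i), I would iterate Corollary~\ref{corollary:fabc-funceqnspecial}, or directly apply Proposition~\ref{proposition:f-functionaleqn} with a suitable choice of $(\ell',k')$: taking the shift $q^{a\ell'+bk'}x$, $q^{b\ell'+ck'}y$ with $a=c=n$, $b=n+p$ forces $n\ell'+(n+p)k'=\ell n$ and $(n+p)\ell'+nk'=\ell(n+p)$, which is solved by $\ell'=k'=0$---so that does not directly give the shift I want. Instead, the right move is to use the substitution $k'=\ell$, $\ell'=0$ in Proposition~\ref{proposition:f-functionaleqn}, giving $f_{n,n+p,n}(x,y,q)=(-y)^{\ell}q^{n\binom{\ell}{2}}\cdot(\text{shift in }y\text{ only})+\dots$, or rather to track the two-variable shift via the pair of parameters in Proposition~\ref{proposition:f-functionaleqn} and handle the resulting sum of $j$-terms. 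Since $f_{n,n+p,n}$, $M_{n,p}$, and $\Theta_{n,p}-$ or rather $w(x,y):=M_{n,p}(x,y,q)-\Theta_{n,p}(x,y,q)-f_{n,n+p,n}(x,y,q)=0$ by the subtheorem, the cleanest route is: substitute $x\to q^{\ell n}x$, $y\to q^{\ell(n+p)}y$ into the subtheorem identity $f_{n,n+p,n}(x,y,q)=M_{n,p}(x,y,q)-\Theta_{n,p}(x,y,q)$, then rewrite the left side using Proposition~\ref{proposition:f-functionaleqn} and rewrite $M_{n,p}$ at the shifted argument using identity~(\ref{equation:mxqz-fnq-z}), which absorbs the $q$-powers inside the $m$-functions and converts $y^n/x^n$ into $q^{\ell np}y^n/x^n$ (since $(q^{\ell(n+p)}y)^n/(q^{\ell n}x)^n=q^{\ell np}y^n/x^n$), exactly as needed.

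The main technical obstacle will be bookkeeping: verifying that the extra theta terms produced by Proposition~\ref{proposition:f-functionaleqn} when shifting $f_{n,n+p,n}$ by $(q^{\ell n},q^{\ell(n+p)})$ exactly cancel against the analogous extra terms produced inside $g_{n,n+p,n}$ when the $z$-variables are shifted by powers of $q$. These terms are finite sums of $j(q^{mb}\cdot;q^a)$'s; I expect that the functional equations in Proposition~\ref{proposition:Genfg-functional} (for $f_{n,n+p,n}$ and for $g_{n,n+p,n}(x,y,q,-1,-1)$), together with the fact that $M_{n,p}$ satisfies the \emph{same} functional equation (stated in the text just after the definition of $M_{n,p}$), make this cancellation automatic. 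Concretely, the argument is: both sides of the claimed identity, viewed as functions of $x$ with $y$ fixed, satisfy the same functional equation under $x\mapsto q^{p(2n+p)}x$ (this follows from Proposition~\ref{proposition:Genfg-functional}, Lemma~\ref{lemma:GenTheta-functional}, and the transformation of the scalar $(-x)^{\ell}q^{n\binom{\ell}{2}}$), and their difference is analytic in $x\ne0$ by Proposition~\ref{proposition:MTsub-gen}; hence by the vanishing-Laurent-coefficient argument used in the proof of Theorem~\ref{theorem:masterFnp} the difference is identically zero.

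Alternatively---and this is the shortest path---Proposition~\ref{proposition:prop-singshift} is literally Theorem~\ref{theorem:genfn1} (resp.\ the relevant subtheorem) applied at $(q^{\ell n}x,q^{\ell(n+p)}y)$ after using identity~(\ref{equation:mxqz-fnq-z}) to strip the $q$-powers from the $m$-functions and Proposition~\ref{proposition:f-functionaleqn} to strip them from $f$; so I would present the proof as a two-line deduction once the scalar factor $(-x)^{\ell}q^{n\binom{\ell}{2}}$ and the shifted $z$-arguments are matched. I would write it this way: apply the subtheorem with $x$ replaced by $q^{\ell n}x$ and $y$ replaced by $q^{\ell(n+p)}y$; use Proposition~\ref{proposition:f-functionaleqn} with parameters $\ell$ and $\ell$ (noting $a\ell+b\cdot 0$ does not match, so instead one tracks the combined shift) to write $f_{n,n+p,n}(q^{\ell n}x,q^{\ell(n+p)}y,q)$ in terms of $f_{n,n+p,n}(x,y,q)$ plus a correction; observe the correction is supplied by the shift in the $z_1,z_0$ arguments of $g_{n,n+p,n}$ via~(\ref{equation:mxqz-fnq-z}) and~(\ref{equation:mxqz-fnq-x}); simplify the $q$-exponents in $\Theta_{n,p}(q^{\ell n}x,q^{\ell(n+p)}y,q)$ using~(\ref{equation:1.8}); and collect terms. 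The only genuine computation is the exponent arithmetic, which I will suppress in the plan.
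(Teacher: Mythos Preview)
Your overall strategy matches the paper's: substitute $(q^{\ell n}x,\,q^{\ell(n+p)}y)$ into the relevant subtheorem, relate the shifted $f_{n,n+p,n}$ back to the unshifted one via Proposition~\ref{proposition:f-functionaleqn}, and rewrite the $g$-term. But two of your key steps are misidentified.

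First, you never find the correct parameters in Proposition~\ref{proposition:f-functionaleqn}. With $a=c=n$ and $b=n+p$, the choice $(\ell,k)=(\ell,0)$ gives exactly the shift $q^{a\ell+bk}x=q^{n\ell}x$, $q^{b\ell+ck}y=q^{(n+p)\ell}y$; your attempts $(\ell',k')=(0,\ell)$ and $(\ell,\ell)$ produce the wrong shifts. The paper uses $(\ell,0)$, which yields
\[
f_{n,n+p,n}(x,y,q)=(-x)^{\ell}q^{n\binom{\ell}{2}}f_{n,n+p,n}(q^{n\ell}x,q^{(n+p)\ell}y,q)+\sum_{m=0}^{\ell-1}(-x)^mq^{n\binom{m}{2}}j(q^{m(n+p)}y;q^n).
\]

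Second, the appearance of $q^{\ell np}y^n/x^n$ in the $z$-slot has nothing to do with (\ref{equation:mxqz-fnq-z}): it is simply $(q^{\ell(n+p)}y)^n/(q^{\ell n}x)^n=q^{\ell np}y^n/x^n$, so $M_{n,p}(q^{\ell n}x,q^{\ell(n+p)}y,q)$ \emph{is} $g_{n,n+p,n}(q^{\ell n}x,q^{\ell(n+p)}y,q,q^{\ell np}y^n/x^n,q^{-\ell np}x^n/y^n)$ by definition. Identity (\ref{equation:mxqz-fnq-z}) would not help anyway: the modulus inside the $m$-functions is $q^{np(2n+p)}$, so shifting $z$ by $q^{\ell np}$ does not reduce to anything.

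The step you gloss over is the actual content: showing that $(-x)^{\ell}q^{n\binom{\ell}{2}}$ times the expanded $g$-sum at the shifted arguments, \emph{plus} the trailing theta sum above, equals $g_{n,n+p,n}(x,y,q,q^{\ell np}y^n/x^n,q^{-\ell np}x^n/y^n)$. The paper does this by unwinding each $j(q^{pr+\cdots}\,\cdot\,;q^n)$ with (\ref{equation:1.8}), reindexing $r\mapsto r-\ell$ in one of the two sums, splitting $\sum_{r=\ell}^{n+\ell-1}=\sum_{r=\ell}^{n-1}+\sum_{r=n}^{n+\ell-1}$, and then using (\ref{equation:mxqz-altdef1}) on the tail to produce the extra $1$'s that recombine with the theta sum. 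This is a short but genuine computation, not a formality; your proposal does not supply it. Your alternative route through Proposition~\ref{proposition:MTsub-gen} and the Laurent-coefficient argument would work in principle, but it amounts to re-proving the subtheorem with a different choice of $z$, which is considerably heavier than the direct manipulation.
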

\begin{proof}  Recall the summation convention of Proposition \ref{proposition:f-functionaleqn}.  Using Proposition \ref{proposition:f-functionaleqn} with $k=0$, we have
\begin{align*}
f_{n,n+p,n}(x,y,q)&=(-x)^{\ell}q^{n\binom{\ell}{2}}\Big [g_{n,n+p,n}(q^{n\ell}x,q^{(n+p)\ell}y,q,q^{\ell np}y^n/x^n,q^{-\ell np}x^n/y^n)\\
&\ \ \ \ -\Theta_{n,p}(q^{\ell n}x,q^{\ell (n+p)}y,q)\Big ]+\sum_{m=0}^{\ell-1}(-x)^mq^{n\binom{m}{2}}j(q^{m(n+p)}y;q^n).
\end{align*}
We focus on the $g_{n,n+p,n}$ term and write it out to obtain
{\allowdisplaybreaks \begin{align}
\sum_{r=0}^{n-1}&\frac{q^{-\ell pr}x^r}{q^{pr^2}y^r}j(q^{pr+(n+p)\ell}y;q^n)
m\Big (-q^{n(np+\binom{p+1}{2})-(r+\ell)p(2n+p)}\frac{(-x)^n}{(-y)^{n+p}},q^{np(2n+p)},\frac{q^{\ell pn}y^n}{x^n}\Big )\label{equation:singshift-I}\\
&+\sum_{r=0}^{n-1}\frac{q^{\ell pr}y^r}{q^{pr^2}x^r}j(q^{pr+n\ell}x;q^n)
m\Big (-q^{n(np+\binom{p+1}{2})-rp(2n+p)}\frac{(-y)^n}{(-x)^{n+p}},q^{np(2n+p)},\frac{q^{-\ell pn}x^n}{y^n}\Big ).\notag
\end{align}}%
We unwind both theta functions with (\ref{equation:1.8}) and then replace $r$ with $r-\ell$ in the first sum to have
\begin{align}
&\frac{(-1)^{\ell}}{x^{\ell}q^{n\binom{\ell}{2}}}\Big [\sum_{r=\ell}^{n+\ell-1}\frac{x^r}{q^{pr^2}y^r}j(q^{pr}y;q^n)
m\Big (-q^{n(np+\binom{p+1}{2})-rp(2n+p)}\frac{(-x)^n}{(-y)^{n+p}},q^{np(2n+p)},\frac{q^{\ell pn}y^n}{x^n}\Big )\notag\\
&+\sum_{r=0}^{n-1}\frac{y^r}{q^{pr^2}x^r}j(q^{pr}x;q^n)
m\Big (-q^{n(np+\binom{p+1}{2})-rp(2n+p)}\frac{(-y)^n}{(-x)^{n+p}},q^{np(2n+p)},\frac{q^{-\ell pn}x^n}{y^n}\Big )\Big ].\label{equation:singshift-II}
\end{align}
The second sum in (\ref{equation:singshift-II}) is exactly what we want.  We rewrite the first sum in (\ref{equation:singshift-II}) using  $\sum_{r=\ell}^{n+\ell-1}=\sum_{r=\ell}^{n-1}+\sum_{r=n}^{n+\ell-1}:$ 
\begin{align*}
\sum_{r=0}^{\ell-1}\frac{x^r}{q^{pr^2}y^r}j(q^{pr}y;q^n)
\Big [m\Big (-q^{n(np+\binom{p+1}{2})-rp(2n+p)}\frac{(-x)^n}{(-y)^{n+p}},q^{np(2n+p)},\frac{q^{\ell pn}y^n}{x^n}\Big )-1\Big ],
\end{align*}
where we have replaced $r$ with $r+n$, unwound the theta function with (\ref{equation:1.8}), and then used (\ref{equation:mxqz-altdef1}). The result follows.
\end{proof}

\subsection{Proof of Corollary \ref{corollary:f232-tenth}}

Rewriting the respective Hecke-type sum formulas from \cite{C1}, \cite{C2}:
{\allowdisplaybreaks
\begin{align}
J_{1,2}\phi(q)&=f_{2,3,2}(q^2,q^2,q),\\
J_{1,2}\psi(q)&=-q^2f_{2,3,2}(q^4,q^4,q),\\
\overline{J}_{1,4}X(q)&=f_{2,3,2}(-q^3,-q^3,q^2),\\
\overline{J}_{1,4}(2-\chi(q))&=qf_{2,3,2}(-q^{-1},-q^{-1},q^2).
\end{align}}
Specializing Theorem \ref{theorem:genfn1} to $n=2$ and using Proposition \ref{proposition:prop-singshift} yields
\begin{corollary} Let $\ell\in \mathbb{Z}$.  For generic $x,y\in \mathbb{C}^*$
\begin{align}
f_{2,3,2}(x,y,q)=\sum_{r=0}^{1}\Big [ \frac{x^r}{q^{r^2}y^r}j(q^{r}y;q^2)&
m \Big (\frac{q^{6-5r}x^2}{y^{3}},q^{10},\frac{q^{2\ell}y^2}{x^2} \Big )\label{equation:f232} \\
&+\frac{y^r}{q^{r^2}x^r}j(q^{r}x;q^2)
m\Big (\frac{q^{6-5r}{y^2}}{x^{3}},q^{10},\frac{x^2}{q^{2\ell}y^2}\Big )\Big ].\notag
\end{align}
\end{corollary}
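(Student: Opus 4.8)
The statement to be proved is the $n=2$ specialization of Theorem~\ref{theorem:genfn1}, reformulated via Proposition~\ref{proposition:prop-singshift} so that the Appell-Lerch sums carry the parameter $q^{2\ell}y^2/x^2$ rather than simply $y^2/x^2$. The plan is to start from Theorem~\ref{theorem:genfn1} with $n=2$, which reads $f_{2,3,2}(x,y,q)=g_{2,3,2}(x,y,q,y^2/x^2,x^2/y^2)$, and then invoke Proposition~\ref{proposition:prop-singshift} with $n=2$, $p=1$: since $\Theta_{2,1}(x,y,q):=0$ by the convention declared just before Lemma~\ref{lemma:GenTheta-functional}, Proposition~\ref{proposition:prop-singshift} gives
\begin{equation*}
f_{2,3,2}(x,y,q)=g_{2,3,2}(x,y,q,q^{2\ell}y^2/x^2,q^{-2\ell}x^2/y^2)
\end{equation*}
for every $\ell\in\mathbb{Z}$. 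So the only real content is to verify that the right-hand side of the displayed corollary is precisely $g_{2,3,2}(x,y,q,q^{2\ell}y^2/x^2,q^{-2\ell}x^2/y^2)$ once the defining sum~\eqref{equation:mdef-2} is written out with $a=c=2$, $b=3$, $z_1=q^{2\ell}y^2/x^2$, $z_0=q^{-2\ell}x^2/y^2$.

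The key computation is therefore a bookkeeping check of~\eqref{equation:mdef-2}. With $a=c=2$, $b=3$, we have $b^2-ac=5$, $a(b^2-ac)=c(b^2-ac)=10$, and $\binom{b+1}{2}=6$, $\binom{a+1}{2}=\binom{c+1}{2}=3$, $\binom{t}{2}\in\{0,0\}$ for $t\in\{0,1\}$. The first sum of~\eqref{equation:mdef-2} runs over $t=0,1$ and produces terms $(-y)^t q^{2\binom{t}{2}} j(q^{3t}x;q^2)\, m\!\big(-q^{12-3\cdot2-5t}(-y)^2/(-x)^3,q^{10},z_0\big)$; simplifying the exponent $a\binom{b+1}{2}-c\binom{a+1}{2}-t(b^2-ac)=12-6-5t=6-5t$ and noting $(-y)^2/(-x)^3 = y^2/(-x^3)$, one should match this against $\tfrac{y^r}{q^{r^2}x^r}j(q^r x;q^2)\,m\!\big(q^{6-5r}y^2/x^3,q^{10},x^2/(q^{2\ell}y^2)\big)$ after absorbing the sign via $m(x,q,z)$ linearity in the numerator argument — here one should be careful that the $(-y)^t q^{2\binom{t}2} = (-1)^t y^t$ and $q^{r^2}$ factors reconcile, which they do since $r^2\equiv r\pmod 2$ is not what is needed; rather $q^{r^2}=q^r$ for $r\in\{0,1\}$, so $q^{-r^2}=q^{-r}$ matches the $q^{-pr^2}$ pattern of Proposition~\ref{proposition:prop-singshift}. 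The second sum of~\eqref{equation:mdef-2} is handled identically with $x\leftrightarrow y$, $z_1\leftrightarrow z_0$, giving the other summand. So the check splits into: (i) exponent arithmetic inside each $j(\cdot)$ and $m(\cdot,q^{10},\cdot)$, (ii) sign reconciliation using that $-(-y)^a/(-x)^b = -(y^2)/(-x^3) = y^2/x^3$ etc., and (iii) confirming the $z$-arguments $z_0,z_1$ land on $x^2/(q^{2\ell}y^2)$ and $q^{2\ell}y^2/x^2$ as claimed.

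I do not anticipate a genuine obstacle here: the corollary is a direct specialization plus a transparent unwinding of the definition of $g_{2,3,2}$. The only place requiring care — and what I would call the ``main obstacle,'' such as it is — is tracking the signs and the $q^{r^2}$ versus $q^{r}$ factors when matching Proposition~\ref{proposition:prop-singshift}'s general form to~\eqref{equation:mdef-2} in the $p=1$, $n=2$ case, together with confirming that the sign $(-1)^t$ coming from $(-y)^t$ in~\eqref{equation:mdef-2} is exactly what converts $-q^{6-5r}(-y)^2/(-x)^3$ and $-q^{6-5r}(-x)^2/(-y)^3$ into the displayed $q^{6-5r}x^2/y^3$ and $q^{6-5r}y^2/x^3$. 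Since $(-y)^2/(-x)^3=-y^2/x^3$, the outer minus sign in~\eqref{equation:mdef-2} indeed yields $+q^{6-5r}y^2/x^3$, so no residual sign survives; this is the computation I would present explicitly and then declare the proof complete by citing Theorem~\ref{theorem:genfn1} and Proposition~\ref{proposition:prop-singshift}.
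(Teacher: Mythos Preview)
Your proposal is correct and follows exactly the paper's approach: the paper states just before the corollary that it is obtained by ``Specializing Theorem~\ref{theorem:genfn1} to $n=2$ and using Proposition~\ref{proposition:prop-singshift},'' which is precisely what you do, together with the routine use of \eqref{equation:1.8} to rewrite $j(q^{3t}x;q^2)$ as $-q^{-1}x^{-1}j(qx;q^2)$ when matching the $g_{2,3,2}$ sum to the displayed formula. Your sign and exponent bookkeeping is right (in particular $-q^{6-5t}(-y)^2/(-x)^3=q^{6-5t}y^2/x^3$ and $q^{r^2}=q^r$ for $r\in\{0,1\}$), though the phrase ``$m(x,q,z)$ linearity'' is a misnomer---no such property is used or needed.
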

\noindent To prove (\ref{equation:10th-phi(q)}), we use (\ref{equation:f232}) with $\ell=1$
\begin{align}
\phi(q)&=f_{2,3,2}(q^2,q^2,q)/J_{1,2}=-q^{-2}m(q^{-1},q^{10},q^2)-q^{-2}m(q^{-1},q^{10},q^{-2})\notag\\
&=-q^{-1}m(q,q^{10},q^{-2})-q^{-1}m(q,q^{10},q^{2})&(\text{by } (\ref{equation:mxqz-flip}))\notag\\
&=-q^{-1}m(q,q^{10},q)-q^{-1}m(q,q^{10},q^2).&(\text{by Cor \ref{corollary:mxqz-flip-xz}})\notag
\end{align}
For (\ref{equation:10th-psi(q)}), we use (\ref{equation:f232}) with $\ell=2$.   To prove (\ref{equation:10th-BigX(q)}), we use (\ref{equation:f232}) with $\ell=1$
\begin{align*}
X(q)&=f_{2,3,2}(-q^3,-q^3,q^2)/\overline{J}_{1,4}\\
&=m(-q^9,q^{20},q^{4}) +q^{-3}m(-q^{-1},q^{20},q^{4})
+m(-q^{9},q^{20},q^{-4})+ q^{-3}m(-q^{-1},q^{20},q^{-4}) \\
&=m(-q^9,q^{20},q^{4}) +q^{-3}m(-q^{-1},q^{20},q^{4})
+m(-q^{9},q^{20},q^{16})+ q^{-3}m(-q^{-1},q^{20},q^{16})\\
&=m(-q^2,q^5,q)+m(-q^2,q^5,q^4),
\end{align*}
where the the last two equalities follow from (\ref{equation:mxqz-fnq-z}) and Corollary \ref{corollary:msplit-n=2} respectively.  For (\ref{equation:10th-chi(q)}), we use (\ref{equation:f232}) with $\ell=2$ and argue as above and use (\ref{equation:mxqz-fnq-x}) as well as (\ref{equation:mxqz-flip})

\subsection{Proof of Corollary \ref{corollary:f343-seventh}}
We rewrite the respective Hecke-type sums from \cite{H2}:
\begin{align}
J_1{\mathcal{F}}_0(q)&=f_{3,4,3}(q^2,q^2,q)\label{equation:F0-7th},\\
J_1{\mathcal{F}}_1(q)&=qf_{3,4,3}(q^4,q^4,q)\label{equation:F1-7th},\\
J_1{\mathcal{F}}_2(q)&=f_{3,4,3}(q^3,q^3,q)\label{equation:F2-7th}.
\end{align}
Specializing Theorem \ref{theorem:genfn1} to $n=3$ and using Proposition \ref{proposition:prop-singshift} yields
\begin{corollary}  Let $\ell\in \mathbb{Z}$.  For generic $x,y\in \mathbb{C}^*$
\begin{align}
f_{3,4,3}(x,y,q)=\sum_{r=0}^{2}\Big [ \frac{x^r}{q^{r^2}y^r}j(q^{r}y;q^3)&
m \Big (\frac{q^{12-7r}{x^3}}{y^{4}},q^{21},\frac{q^{3\ell}y^3}{x^3} \Big )\label{equation:f343}\\
&+\frac{y^r}{q^{r^2}x^r}j(q^{r}x;q^3)
m\Big (\frac{q^{12-7r}{y^3}}{x^{4}},q^{21},\frac{x^3}{q^{3\ell}y^3}\Big )\Big ].\notag
\end{align}
\end{corollary}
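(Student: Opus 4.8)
The statement to prove is the $n=3$ specialization of Theorem~\ref{theorem:genfn1}, recast via Proposition~\ref{proposition:prop-singshift} into the displayed formula \eqref{equation:f343} for $f_{3,4,3}(x,y,q)$ with a free parameter $\ell\in\mathbb{Z}$. The plan is to simply unwind the definitions. First I would invoke Theorem~\ref{theorem:genfn1} with $n=3$, which gives
\[
f_{3,4,3}(x,y,q)=g_{3,4,3}(x,y,q,y^3/x^3,x^3/y^3).
\]
Then I would apply Proposition~\ref{proposition:prop-singshift} with $p=1$, $n=3$ (note $(3,1)=1$ and $\Theta_{3,1}\equiv 0$ by the convention set just before Lemma~\ref{lemma:GenTheta-functional}), which upgrades this to
\[
f_{3,4,3}(x,y,q)=g_{3,4,3}(x,y,q,q^{3\ell}y^3/x^3,q^{-3\ell}x^3/y^3)
\]
for every $\ell\in\mathbb{Z}$, since the $\Theta_{3,1}$ correction term vanishes identically.

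The remaining work is bookkeeping: expand $g_{n,n+p,n}(x,y,q,z_1,z_0)$ from its defining equation \eqref{equation:mdef-2} at $n=3$, $p=1$, so $a=c=3$, $b=4$, $b^2-ac=7$. The first sum runs $t=0,1,2$ and produces terms $(-y)^t q^{3\binom{t}{2}} j(q^{4t}x;q^3)\, m(\cdots,q^{21},z_0)$; the second sum likewise runs $t=0,1,2$ with $x\leftrightarrow y$ and $z_1$. I would compute the exponents: $a\binom{b+1}{2}-c\binom{a+1}{2}=3\cdot 10-3\cdot 6=12$, so the Appell–Lerch modulus argument in the first sum is $-q^{12-7t}(-y)^3/(-x)^4$, and since $(-y)^3/(-x)^4 = -y^3/x^4$ the sign works out to $q^{12-7t}y^3/x^4$; symmetrically the second sum gives $q^{12-7r}x^3/y^4$. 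I would also use $j(q^{bt}x;q^a)=j(q^{4t}x;q^3)$ together with identity \eqref{equation:1.8} if any normalization of the theta argument is needed, though as written the formula \eqref{equation:f343} already has $j(q^r y;q^3)$ and $j(q^r x;q^3)$, so after reindexing $t\to r$ the expressions should match verbatim, with $q^{3\binom{t}{2}}/q^{t^2}$ reconciled since $3\binom{t}{2}=\tfrac{3}{2}t(t-1)$ versus the displayed $q^{-r^2}x^r/y^r$ prefactor — I would double-check this prefactor bookkeeping by comparing against the analogous $n=2$ corollary \eqref{equation:f232} already in the excerpt, where the pattern $x^r q^{-r^2}y^{-r} j(q^r y;q^2)$ appears identically.

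The main (and only genuine) obstacle is matching the exact form of the $q$-power and $(-1)$-sign prefactors between the raw $g_{3,4,3}$ expansion from \eqref{equation:mdef-2} and the cleaned-up display \eqref{equation:f343}: one must track how $(-y)^t q^{c\binom{t}{2}}$ with $c=3$ becomes the stated $x^r q^{-r^2} y^{-r}$ after the reindexing inherent in Proposition~\ref{proposition:prop-singshift}'s proof (which shifts $r\to r-\ell$ and applies \eqref{equation:mxqz-altdef1} to the boundary terms). Since Proposition~\ref{proposition:prop-singshift} is already proved in the excerpt and $\Theta_{3,1}=0$, there is no analytic content left — the corollary is a direct substitution, and I would present it as such, noting the three Hecke-type representations \eqref{equation:F0-7th}, \eqref{equation:F1-7th}, \eqref{equation:F2-7th} from \cite{H2} and then citing that Theorem~\ref{theorem:genfn1} with $n=3$ combined with \eqref{equation:f343} (at suitable choices of $\ell$, e.g. $\ell$ chosen to clear the $z$-arguments to the values recorded in Section~\ref{section:known}) reproduces the Appell–Lerch sum forms of ${\mathcal F}_0,{\mathcal F}_1,{\mathcal F}_2$ listed there, giving the claimed new proof of Hickerson's seventh-order identities.
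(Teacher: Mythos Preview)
Your approach is correct and essentially identical to the paper's: the paper's entire proof of this corollary is the single sentence ``Specializing Theorem~\ref{theorem:genfn1} to $n=3$ and using Proposition~\ref{proposition:prop-singshift} yields [the corollary].'' Your worry about the prefactor bookkeeping is unnecessary --- the passage from $(-x)^t q^{3\binom{t}{2}} j(q^{4t}y;q^3)$ in the raw $g_{3,4,3}$ expansion to $\frac{x^r}{q^{r^2}y^r}j(q^r y;q^3)$ is just \eqref{equation:1.8} applied termwise (e.g.\ at $t=1$, $-x\cdot j(q^4y;q^3)=-x\cdot(-q^{-1}y^{-1})j(qy;q^3)=\tfrac{x}{qy}j(qy;q^3)$), exactly as in the $n=2$ analogue \eqref{equation:f232}.
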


\noindent To prove (\ref{equation:7th-F0(q)}), we use (\ref{equation:f343}) with $\ell=3$
\begin{align*}
{\mathcal{F}}_0(q)&=f_{3,4,3}(q^2,q^2,q)/J_1\\
&=m(q^{10},q^{21},q^9)-q^{-5}m(q^{-4},q^{21},q^{9})+m(q^{10},q^{21},q^{-9})-q^{-5}m(q^{-4},q^{21},q^{-9})\\
&=m(q^{10},q^{21},q^9)-q^{-1}m(q^4,q^{21},q^{-9})+m(q^{10},q^{21},q^{-9})-q^{-1}m(q^4,q^{21},q^{9}),\notag
\end{align*}
where the last line follows from  (\ref{equation:mxqz-flip}).  For (\ref{equation:7th-F1(q)}) (resp. (\ref{equation:7th-F2(q)})) we use (\ref{equation:f343}) with $\ell=1$ (resp. $\ell=2$).
\subsection{Proof of Corollary \ref{corollary:f373-fifth}}
We rewrite the respective Hecke-type sum formulations from \cite{A}:
\begin{align}
J_1 f_0(q)&=f_{3,7,3}(q^2,q^2,q)+q^3f_{3,7,3}(q^7,q^7,q)=f_{3,7,3}(q^{5/8},-q^{5/8},-q^{1/4})\label{equation:f0-hecke},\\
J_1 f_1(q)&=f_{3,7,3}(q^3,q^3,q)+q^4f_{3,7,3}(q^8,q^8,q)=f_{3,7,3}(q^{9/8},-q^{9/8},-q^{1/4}),\\
J_2 F_0(q)&=f_{3,7,3}(q^4,q^6,q^2)-q^7f_{3,7,3}(q^{14},q^{16},q^2)\notag\\
&=\tfrac{1}{4}\Re \{ f_{3,7,3}(q^{7/16},q^{15/16},iq^{1/8}) +f_{3,7,3}(-q^{7/16},-q^{15/16},-iq^{1/8})+2J_2\},\\
J_2 F_1(q)&=f_{3,7,3}(q^6,q^8,q^2)-q^9f_{3,7,3}(q^{16},q^{18},q^2)\notag\\
&=\tfrac{1}{4}\Re \{ f_{3,7,3}(q^{15/16},q^{23/16},iq^{1/8}) +f_{3,7,3}(-q^{15/16},-q^{23/16},-iq^{1/8})\},
\end{align}
where we also used Propositions \ref{proposition:fabc-mod2}, \ref{proposition:H7eq1.14}, and Corollary \ref{corollary:fabc-funceqnspecial}.  Specializing Theorem \ref{theorem:genfn4} to $n=3$ gives
\begin{corollary}  \label{corollary:f373} For generic $x,y\in \mathbb{C}^*$
\begin{align*}
f_{3,7,3}(x,y,q)=\sum_{r=0}^2\Big [&\frac{x^r}{q^{4r^2}y^r}j(q^{4r}y;q^3)m(-q^{66-40r}x^3/y^7,q^{120},y^3/x^3)\\
&+\frac{y^r}{q^{4r^2}x^r}j(q^{4r}x;q^3)m(-q^{66-40r}y^3/x^7,q^{120},x^3/y^3)\Big ]
-\Theta_{3,4}(x,y,q),
\end{align*}
where
\begin{align*}
\Theta_{3,4}(x,y,q):&=\frac{q^{-9}y^2j(y/x;q^{40})}{j(y^3/x^3;q^{120})j(-q^{14}x^4,-q^{14}y^4;q^{40})}\Big \{ J_{12,48}  S_1-qJ_{24,48} S_2\Big \},
\end{align*}
with
\begin{align*}
S_1:&=\frac{j(q^{34}x^2y^2;q^{40})j(q^{7}xy;q^{20})j(-q^{20}y/x;q^{40})}{J_{20}^3J_{80}} \\
&\ \ \ \ \cdot  \Big \{j(-q^{14}x^2y^2,q^{20}y^2/x^2;q^{40})J_{40}^2
+\frac{q^{7}x^2 j(-q^{34}x^2y^2;q^{40})j(q^{20}y/x,-y/x;q^{40})^2}{J_{40}}\Big \},\\
S_2:&=\frac{j(q^{14}x^2y^2;q^{40})j(q^{17}xy;q^{20})j(-y/x;q^{40})}{J_{20}^2} \\
&\ \ \ \ \cdot  \Big \{\frac{q^{4}j(-q^{14}x^2y^2;q^{40})j(q^{20}y^2/x^2;q^{40})J_{80}}{yJ_{40} }
+\frac{qxj(-q^{34}x^2y^2;q^{40})j(q^{40}y^2/x^2;q^{80})^2}{J_{80} }\Big \}.
\end{align*}
\end{corollary}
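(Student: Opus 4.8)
\emph{Proof proposal.} The plan is to obtain Corollary~\ref{corollary:f373} as the $n=3$ specialization of Theorem~\ref{theorem:genfn4}. The first step is purely arithmetic: with $n=3$ one has $2n+4=10$, $n+4=7$, $2n+8=14$, $3n+8=17$, $6n+16=34$, hence $4(2n+4)=40$, $2(2n+4)=20$, $8(2n+4)=80$, $4n(2n+4)=120$, $4n=12$, $8n=24$, $16n=48$, while $n^2+n-3=9$, $(n-3)/2=0$, $(n+1)/2=2$, $p(2n+p)=40$, $np(2n+p)=120$, and $n\binom{p+1}{2}+np=66$. Substituting these values into $\Theta_{n,4}(x,y,q)$, $S_1$, and $S_2$ from Theorem~\ref{theorem:genfn4} reproduces verbatim the expressions for $\Theta_{3,4}(x,y,q)$, $S_1$, $S_2$ in the statement, so this part of the proof is just bookkeeping.

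Next I would identify the displayed double sum with $g_{3,7,3}(x,y,q,y^3/x^3,x^3/y^3)=M_{3,4}(x,y,q)$. Expanding \eqref{equation:mdef-2} with $a=c=3$, $b=7$ (so $b^2-ac=40$), $z_1=y^3/x^3$, $z_0=x^3/y^3$, and using $(-y)^3/(-x)^7=y^3/x^7$, $(-x)^3/(-y)^7=x^3/y^7$, together with $3\binom{8}{2}-3\binom{4}{2}=66$, one obtains
\begin{align*}
M_{3,4}(x,y,q)=\sum_{t=0}^{2}&(-x)^tq^{3\binom{t}{2}}j(q^{7t}y;q^3)\,m\big(-q^{66-40t}x^3/y^7,q^{120},y^3/x^3\big)\\
&+\sum_{t=0}^{2}(-y)^tq^{3\binom{t}{2}}j(q^{7t}x;q^3)\,m\big(-q^{66-40t}y^3/x^7,q^{120},x^3/y^3\big).
\end{align*}
Since $q^{7t}x=(q^3)^t(q^{4t}x)$, a single application of \eqref{equation:1.8} gives $j(q^{7t}x;q^3)=(-1)^tq^{-3\binom{t}{2}-4t^2}x^{-t}j(q^{4t}x;q^3)$, whence $(-y)^tq^{3\binom{t}{2}}j(q^{7t}x;q^3)=q^{-4t^2}x^{-t}y^t\,j(q^{4t}x;q^3)$, and symmetrically with $x$ and $y$ interchanged. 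Substituting these into the last display turns $M_{3,4}(x,y,q)$ into precisely the double sum of the statement, and combining with the first step finishes the proof.

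There is no genuine obstacle here, since Theorem~\ref{theorem:genfn4} supplies all the substance; what remains is the mechanical simplification of exponents and the single invocation of \eqref{equation:1.8} above. The only point requiring minor care is the sign and exponent bookkeeping in that last rewriting, together with the trivial observation that $n=3$ is odd so that Theorem~\ref{theorem:genfn4} applies.
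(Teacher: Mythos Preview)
Your proof is correct and follows exactly the paper's approach: the paper simply states that Corollary~\ref{corollary:f373} is obtained by ``specializing Theorem~\ref{theorem:genfn4} to $n=3$,'' and you have carried out that specialization in detail, including the rewriting of the $g_{3,7,3}$ sum via \eqref{equation:1.8}. One trivial slip: the expression you wrote as ``$n\binom{p+1}{2}+np=66$'' should be $n\bigl(np+\binom{p+1}{2}\bigr)=3(12+10)=66$ (equivalently $3\binom{8}{2}-3\binom{4}{2}=66$, which you also compute), but this does not affect the argument.
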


\noindent For (\ref{equation:5th-f0(q)}), we use (\ref{equation:f0-hecke}) to first obtain the $g_{3,7,3}(x,y,q,y^3/x^3,x^3/y^3)$ expression
\begin{align*}
g_{3,7,3}(q^{5/8},-q^{5/8},-q^{1/4},-1,-1)
&=\Big [j(-q^{5/8};-q^{3/4})+j(q^{5/8};-q^{3/4})\Big ]m(q^{14},q^{30},-1)\\
&\ \ -q^{-1}\Big [ j(-q^{13/8};-q^{3/4}) + j(q^{13/8};-q^{3/4}) \Big ]m(q^{4},q^{30},-1)\\
&\ \   +q^{-4}\Big [ j(-q^{21/8};-q^{3/4}) + j(q^{21/8};-q^{3/4}) \Big ]m(q^{-6},q^{30},-1)\\
&= 2J_1m(q^{14},q^{30},-1)+2q^{-2}J_1m(q^{4},q^{30},-1),
\end{align*}
where the last line follows by applying (\ref{equation:jsplit}) with $m=2$ to each theta function within brackets.   Upon substitution of the values of $x,y,q$ from (\ref{equation:f0-hecke}) into $\Theta_{3,4}(x,y,q)$, three of the four theta quotients vanish
\begin{align*}
\Theta_{3,4}(q^{5/8},-q^{5/8},-q^{1/4})&=\frac{-q^{-1}\overline{J}_{0,10}J_{3,12}j(q^{11};q^{10})J_{3,5}J_{5,10}^2J_{10}^2}{\overline{J}_{0,30}\overline{J}_{6,10}J_{5}^3J_{20}}
=\frac{q^{-2}\overline{J}_{0,10}J_{3,12}J_5J_1}{\overline{J}_{0,30}{J}_{8,20}},
\end{align*}
where the last equality follows from $J_1J_5=J_{1,5}J_{2,5}$ and simplifying. Hence,
\begin{align*}
f_0(q)&= 2m(q^{14},q^{30},-1)+2q^{-2}m(q^{4},q^{30},-1)-\frac{q^{-2}\overline{J}_{0,10}J_{3,12}J_5}{\overline{J}_{0,30}{J}_{8,20}}\\
&=2m(q^{14},q^{30},q^4)+2q^{-2}m(q^{4},q^{30},q^4)\\
&\ \ \ \ +2\frac{J_{30}^3\overline{J}_{4,30}}{\overline{J}_{0,30}J_{4,30}}\Big [ \frac{\overline{J}_{18,30}}{\overline{J}_{14,30}J_{18,30}}+\frac{q^{-2}\overline{J}_{8,30}}{\overline{J}_{4,30}J_{8,30}}\Big ]
-\frac{\overline{J}_{0,10}J_{3,12}J_5}{q^2\overline{J}_{0,30}{J}_{8,20}}&(\text{by Thm \ref{theorem:changing-z-theorem}})\\
&=2m(q^{14},q^{30},q^4)+2q^{-2}m(q^{4},q^{30},q^4)
+\frac{2\overline{J}_{6,30}J_{10}^2}{q^2\overline{J}_{0,30}J_{2,10}}
-\frac{\overline{J}_{0,10}J_{3,12}J_5}{q^2\overline{J}_{0,30}{J}_{8,20}},
\end{align*}
where the last line follows from Proposition \ref{proposition:CHcorollary} with $q=q^{30}, a=-q^{11},b=q^3,c=q^5,d=q^7$ and identity (\ref{equation:1.10}).  It remains to show
\begin{equation}
\frac{2q^{-2}\overline{J}_{6,30}J_{10}^2}{\overline{J}_{0,30}J_{2,10}}
-\frac{q^{-2}\overline{J}_{0,10}J_{3,12}J_5}{\overline{J}_{0,30}{J}_{8,20}}=\frac{J_{5,10}J_{2,5}}{J_1},\label{equation:f0-state}
\end{equation}
which is equivalent to
\begin{equation}
\overline{J}_{5,20}\overline{J}_{6,30}-q^2\overline{J}_{1,5}\overline{J}_{30,120}=J_{2,20}J_{3,12}.\label{equation:f0-state-2}
\end{equation}
Using $J_{1,4}\overline{J}_{1,10}=J_{2,5}J_{4,20},$ (\ref{equation:f0-state-2}) is then equivalent to (\ref{equation:f0f1}) with the $x=1.$  For (\ref{equation:5th-f1(q)}), we argue as above, using Proposition \ref{proposition:CHcorollary} with $q=q^{30}, a=-q^{7},b=q,c=q^5,d=q^9$, to reduce the problem to
\begin{align}
\frac{2J_{10}^2}{\overline{J}_{0,30}}\cdot \frac{q^{-3}\overline{J}_{12,30}}{J_{4,10}}=&\frac{\overline{J}_{0,10}J_{3,12}J_{5}}{\overline{J}_{0,30}}\cdot \frac{q^{-3}}{J_{4,20}}+\frac{J_{5,10}}{J_1}\cdot J_{1,5},\label{equation:f1-state}
\end{align}
which is equivalent to 
\begin{equation}
\overline{J}_{5,20}\overline{J}_{12,30}-q^3\overline{J}_{2,5}\overline{J}_{30,120}=J_{6,20}J_{3,12}.\label{equation:f1-state-2}
\end{equation}
Using the fact that $J_{1,4}\overline{J}_{3,10}=J_{1,5}J_{8,20},$ (\ref{equation:f1-state-2}) is then equivalent to (\ref{equation:f0f1}) with the $x=q^6.$

For (\ref{equation:5th-BigF0(q)}) and (\ref{equation:5th-BigF1(q)}) we argue as above but use  (\ref{equation:F0F1}).

\section*{Acknowledgements}
\noindent{\bf To be entered later.}

\end{document}